\documentclass[12pt]{article}

\usepackage{amsmath,amsfonts,amssymb,mathabx,txfonts}
\usepackage{mathrsfs,upgreek,calligra}
\usepackage{dsfont}
\usepackage[utf8]{inputenc}
\usepackage[T1]{fontenc}
\usepackage{lmodern} \usepackage{ucs}
\usepackage{fullpage}

\usepackage{graphicx}
\usepackage{caption}
\usepackage{subcaption}
\usepackage[all]{xy}

\usepackage{array}
\usepackage{multicol}
\usepackage{multirow}
\usepackage{color}
\usepackage{authblk}
\usepackage{esint}
\usepackage{enumitem,hyperref}

\usepackage{accents}

\usepackage{tikz}
\usetikzlibrary{decorations}
\usetikzlibrary{decorations.pathreplacing}
\usetikzlibrary{arrows}

\newtheorem{lemma}{Lemma}[section]
\newtheorem{theo}[lemma]{Theorem}
\newtheorem{rmk}[lemma]{Remark}
\newtheorem{proposition}[lemma]{Proposition}
\newtheorem{defin}[lemma]{Definition}
\newtheorem{coro}[lemma]{Corollary}
\newtheorem{example}[lemma]{Example}

\DeclareMathAlphabet{\mathpzc}{OT1}{pzc}{m}{it}
\DeclareMathAlphabet{\mathcalligra}{T1}{calligra}{m}{n}
\newcommand*{\overbigdot}[1]{%
   \accentset{\mbox{\large\bfseries .}}{#1}}

\makeatletter
\def\namedlabel#1#2{\begingroup
    #2%
    \def\@currentlabel{#2}%
    \phantomsection\label{#1}\endgroup
}
\makeatother

\makeatletter
\renewcommand*{\eqref}[1]{%
  \hyperref[{#1}]{\textup{\tagform@{\ref*{#1}}}}%
}
\makeatother

\newcommand{\QED}{\mbox{}\hfill \raisebox{-0.2pt}{\rule{5.6pt}{6pt}\rule{0pt}{0pt}} \medskip\par}

\newcommand{\ds}{\displaystyle}
\newcommand{\ud}{\, {\mathrm{d}}}

\title{Effective Dissipation and Asymptotic Stability in a Finite-Level Quantum  System
 with Wave-Induced Memory
}
 
\author[1]{Thierry~Goudon\thanks{ {\tt thierry.goudon@univ-cotedazur.fr}}}
\date{}

\affil[1]{\small Universit\'e C\^ote d'Azur, CNRS, LJAD,

Parc Valrose, F-06108 Nice, France}

\begin{document}
\maketitle

\begin{abstract}
We study  quantum systems modeled by Liouville-Von Neumann equations, with a finite number of energy levels. The system is coupled to a wave equation, intended to reproduce environmental effects on the quantum system. The whole dynamic is energy conservative, but damping effects are expected to be embodied into the energy exchanges with the environment. We analyse how these effects can effectively damp quantum coherences. We exhibit key mechanisms and structural properties
for such a damping to arise, with a neat distinction between pure and mixed cases.
\end{abstract}

\vspace*{.5cm}
{\small
\noindent{\bf Keywords.}
Open quantum  systems. Quantum decoherence. Liouville-von Neumann equation. 
Kawashima-Shizuta condition. Hypocoercivity. 
\\[.3cm]

\noindent{\bf Math.~Subject Classification.} 
35Q40 
}

\section{Introduction}

The state of a quantum system is described by means of  a density matrix $ \rho$:
$\rho$ is a hermitian matrix; 
 the diagonal elements -- the quantum populations --   are interpreted as the 
 probability of finding the system in a given energy level while
 the off-diagonal elements --  the quantum coherences -- describe the connections between the different energy levels
 and the ability of the system in occupying 
 multiple states simultaneously, an effect which is crucial 
 in quantum technologies.
 The density matrix 
  evolves according to the Liouville-Von Neumann equation
 \begin{equation}\label{Liouv}
i \hbar \frac{\ud}{\ud t} \rho = [H, \rho]
\end{equation}
where the hamiltonian $H$ splits into two parts:
a 
  free hamiltonian and 
  a part depending on an interaction potential $\Phi$:
  \[H=H_{\mathrm{free}} +\Phi V,\]
  with $V$ an hermitian matrix.
  The coefficients of the hermitian matrices $H_{\mathrm{free}}$ and $V$ are constant, but 
  $\Phi$ is a time dependent function.
Interactions with the environment, embodied in the definition of the potential $\Phi$,  
are expected to
cause decoherence, leading  the coherences to fade.
This effect is often compared  
to friction in classical 
mechanics, that leads a particle to slow down \cite{CCT}.
This is what we want to analyse.

The question we address belongs to the field of open systems in classical and quantum mechanics:
the particle together with the environment form a conservative system endowed with an Hamiltonian structure, 
but it is expected that energy is eventually dissipated in the ``large'' environment at the expense of the particle, thus causing 
 the damping of momentum or coherences.
 We refer the reader to \cite{CL2,CL,JP2} for formalization of such phenomena.
 In phenomenological models of
open quantum systems, this transfer is usually encoded directly in a
dissipative master equation, see, for instance,
\cite{BreuerPetruccione,CL2,CL,JP2}.  Here we adopt a different point of
view.  The finite-level system and its environment are described together by a
\emph{energy-conservative} evolution, and the effective irreversibility of the subsystem is
expected to result from the propagation of energy towards spatial infinity: radiative relaxation is  the mechanisms by which the quantum
system transfers energy to its environment. 
The purpose of this paper is to make this mechanism quantitative in a simple
Hamiltonian model and to identify the finite-dimensional
structure responsible for the decay.
 Of course, the analysis of the damping 
 largely depends on the modeling adopted for describing the environment: it turns out that  
 dealing with a vibrational field, embodying the environment into a mere scalar wave equation, has received 
 a lot of attention, see  \cite{BdB,KKS,KKSb,Leg,Soffer}.
 An extension of such models to the quantum framework with a quantum particle described 
 by means of the Schr\"odinger equation is discussed in 
 \cite{Simple,SRN3,Vi4,Vi3}.
  Therefore, in what follows the coupling with the environment is  obtained by setting 
 \[
 \Phi(t)=\ds\int_{\mathbb R^3} \sigma(z)\psi(t,z)\ud z\]
 where $\sigma$ is a given form function and $\psi$ satisfies the wave equation
 \begin{equation}\label{wave}
 (\partial^2_t-c^2\Delta)\psi(t,z)=-c^2\sigma(z)\mathrm{Tr}(\rho V),\end{equation}
  with wave speed $c>0$.
In the right hand side of \eqref{wave}, 
$\mathrm{Tr}(\rho V)$, which thus corresponds to the observable that acts as a source for the environment field,
 can be interpreted as the expectation value of a dipole-type transition operator.
 Throughout the paper, $\sigma$ is supposed to be a non negative radially symmetric $C^\infty_c$ function.
Equation \eqref{wave} is set for $z\in \mathbb R^3$ and that the dimension is 3 will be further commented 
later on.
 In \cite{twostates}, we use such a coupling to establish that quantum decoherence holds in the large time asymptotics 
 for \eqref{Liouv}, when the 
 quantum system has only two energy levels: in this case, 
 which is reminicient to the standard spin-boson system \cite[Chapter~5]{CCT}, \cite{Leg},
 as time goes to infinity the 
 quantum system 
 tends to a coherence-free ground state with the populations concentrated on the state of minimal energy.  In fact the analysis of this simple situation is quite close to the case of a single classical particle 
 and we can proceed with arguments quite directly inspired from \cite{BdB,KKS,KKSb}.
 This simple situation already permits to bring out the 
 critical role of the coupling parameters and the wave speed.

 Although the coupled system is conservative, its reduced atomic dynamics has
a genuine dissipative signature.  
It can be understood by reformulating the coupling in order to make
a Volterra memory term appear.  
The leading effects embodied in the memory term are two-fold: on the one-hand it modifies the natural frequencies of the system, on the other hand, it dissipates certain combination of the 
components of the density matrix.
The perturbation on the frequencies should remain moderate enough to keep  conservative features, 
and this is measured by means of a \emph{weak-coupling} assumption.
Next, the damping has to be propagated  to the entire density matrix.
On this issue, the passage from two to several energy levels is not merely a question of
larger matrices.  The field observes only a scalar quantity, so that
the effective damping has rank-one.   Most coherences are not directly
dissipated; decay can occur only if the conservative oscillations repeatedly
transfer every relevant mode into the observed direction.  This is precisely
the finite-dimensional counterpart of the compensating mechanism in the
Kawashima--Shizuta  and in the hypocoercivity theories
\cite{nanard2,nanard,KS,Vill}. 
Here, this can be naturally expressed
through Kalman observability or the Popov--Belevitch--Hautus criterion, an approach strongly inspired from 
\cite{AAM,AAM2,BZ,Hautus,Popov}.  By using these techniques, not only we provide 
 qualitative 
 convergence results, but we also establish sharp decay estimates with explicit coefficients and   decay rate.

The paper is organized as follows.
In Section~\ref{sec:prelim},  we collect some basic facts about density matrices and we establish 
fundamental conservation properties of the system \eqref{Liouv}-\eqref{wave}.
It conserves energy and all quantities $\mathrm{Tr}(\rho^m)$, $m\in \mathbb N\setminus\{0\}$.
Section~\ref{sec:Vmod} focuses on the   three-level V-system, consisting of one ground level
coupled to two excited levels, a standard configuration in quantum optics
\cite{CCT}.  This low-dimensional model already displays the two complementary
features of the problem. 
We start by identifying ground states solutions, obtained by minimization of the energy functional 
under constraints. They are natural candidates for being large time attractors; however, we shall see 
that this intuition is correct only in the case of pure states, when the density matrix is rank-one.
Owing to the small dimension of the system, the large time behavior can be investigated by means of quite ``elementary'' methods and direct computations. It permits us to bring out key mechanisms
 that shape the approach for dealing with larger systems.
In Section~\ref{sec:symmetrizer}, we set up the algebraic
framework in order to deal with the leading linear part of the equations. It makes clearly appear the role of weak damping assumptions and 
of  compatibility conditions between the free hamiltonian and the coupling matrix $V$
 in the construction of an adapted energy 
functional.
Section~\ref{sec:generalized_V} applies this framework to generalized V-systems with arbitrarily large dimensions.
In Section~\ref{sec:comm} we further comments the obtained results and their apparent limitations, 
discussing why they are nevertheless sharp.
We end this Introduction, we a brief recap of our findings:
\begin{itemize}
\item a weak coupling assumption appears as a key structural hypothesis.
\item as far as one deals with V-systems, one observes the damping   of the ground-to-excited coherences; other quantum coherences are only damped ``in average''.
\item we can establish that the asymptotic regime is the ground state in the case of pure quantum states.
\item in this case, the convergence can be shown to hold exponentially fast, with a rate of order $\mathscr O(1/c)$, at least for large enough $c$, meaning with a strong capability of the environment to evacuate energy at infinity. 
\end{itemize}

 \section{Preliminaries}\label{sec:prelim}
 
 In all what follows, given a complex number $z=a+ib$, with $a,b\in \mathbb R$, we denote by $z^*=a-ib$ its conjugate, and given a (non necessarily square) matrix $A$ with complex entries, we denote by $A^*$ its transpose-conjugate. When the matrix $A$ has real coefficients, we use the notation  $A^\top$ for its transpose.
 We identify $u=(z_0,z_1,...,z_{n})\in \mathbb C^{n+1}$ with its column matrix; accordingly we get
 $u^*u=\sum_{j=0}^{n} |z_j|^2\in \mathbb R$ while $uu^*$ is the $\mathcal M_{n+1}(\mathbb C)$
 with entries $z_i z_j^*$. 
 As usual, we denote $e_0,...,e_n$ the canonical basis in $\mathbb C^{n+1}$, with the $j$th component of $e_k$ given by $\delta_{j,k}$. 
Depending on the context, we indifferently denote $u\cdot v$ or $u^*v$ (respectively $u^\top v$) the inner product $\sum_{j=0}^n u_j^*v_j$ of two vectors in $\mathbb C^{n+1}$  (respectively in $\mathbb R^{n+1}$).
  We can also construct the matrix $uv^*$, which is rank-one,  with $\mathrm{Ran}(uv^*)=\mathrm{Span}(u)$ and 
 $\mathrm{Ker}(uv^*)=(\mathrm{Span}(v))^\perp$.
 \\
 
 Here we consider the case where the free hamiltonian $H_{\mathrm{free}}$ is simply given by 
 $\hbar$ times
 a diagonal  matrix $\Omega=\mathrm{diag}(\omega_0,..,\omega_n)$, 
 where 
 \begin{equation}\label{omega}
 0\leq \omega_0<\omega_1<...<\omega_n.\end{equation}
 Hence, $\hbar \omega_j$ gives the amount of energy associated to the $j$th level.
 The coupling matrix $V$ is supposed to be a real-symmetric $(n+1)\times (n+1)$ matrix, with vanishing diagonal terms:
 \[
 V_{k,\ell}\in \mathbb R,\qquad V_{k,\ell}=V_{\ell, k},\qquad 
 V_{k,k}=0.\]
 Given a hermitian matrix $\rho$, 
 we  consider the observables 
$$ \mathrm{Tr}(\rho \Omega)= \ds\sum_{j=0}^n \omega_j \rho_{j,j},$$
and
\[\begin{array}{lll}
\mathrm{Tr}(\rho V)
&=&
\ds\sum_{j,k=0}^n  \rho_{j,k}V_{k,j}
=
\ds\frac12\ds\sum_{j,k=0}^n  (\rho_{j,k}+\rho_{k,j})V_{k,j}
=\ds\frac12\ds\sum_{j,k=0}^n  (\rho_{j,k}+\rho_{j,k}^*)V_{k,j}
\\[.4cm]&&=\ds\sum_{j,k=0}^n  \mathrm{Re}(\rho_{j,k})V_{k,j}
=
 \mathrm{Re}\Big(\mathrm{Tr}(\rho V)\Big)
,\end{array}\]
 which are both real valued.

 \subsection{Properties of density matrices}
 
 For a matrix $\rho\in \mathcal M_{n+1}(\mathbb C)$ to be a density matrix 
 it should  be hermitian $\rho^*=\rho$, non negative (for any $\xi\in \mathbb C^{n+1}$, we have $\rho \xi\cdot \xi\geq 0$; equivalently $\rho$ being diagonalizable, all its eigevalues are real and non negative) and such that 
 \[
 \mathrm{Tr}(\rho)=1.\]
 That $\rho$ is non negative  implies the estimates
\[
|\rho_{j, j+1}|^2\leq \rho_{j, j}\rho_{j+1, j+1}.\]
Indeed, that the density matrix $\rho$ is non negative implies  
the non negativity of the $2\times2$ hermitian matrices 
\[M_j=
\begin{pmatrix}\rho_{j, j} & \rho_{j, j+1}
\\
 \rho_{j, j+1}^*& \rho_{j+1, j+1}
\end{pmatrix}\]
(since the quadratic form $(\alpha, \beta)\mapsto \rho(\alpha e_j +\beta e_{j+1})\cdot (\alpha e_j +\beta e_{j+1})=M_j\begin{pmatrix}\alpha \\ \beta\end{pmatrix}\cdot \begin{pmatrix}\alpha \\ \beta\end{pmatrix}$ should be non negative).
The diagonal terms are non negative, so that  $\mathrm{Tr}(M_j)\geq 0$ and non negativity of $M_j$ 
is now equivalent to $\mathrm{det}(M_j)=\rho_{j, j}\rho_{j+1, j+1}-|\rho_{j, j+1}|^2\geq0.$

Let $$T_2=\mathrm{Tr}(\rho^2)=\ds\sum_{j,k=0}^n|\rho_{j,k}|^2.$$  
Cauchy-Schwarz' inequality yields $$\ds\sum_{j=0}^n\rho_{j,j}=1\leq (n+1)\ds\sum_{j=0}^n\rho_{j,j}^2
\leq 
(n+1)T_2.$$
Moreover, $\rho$ 
being a non negative hermitian matrix, it can be diagonalized in orthonormal basis, with real eigenvalues $\lambda_j\geq 0$. Then, we get $\mathrm{Tr}(\rho)=\sum_{j=0}^n \lambda _j =1$, so that $0\leq \lambda_j\leq 1$ and 
$\mathrm{Tr}(\rho^2)=\sum_{j=0}^n \lambda _j^2 \leq \sum_{j=0}^n \lambda _j=1$.
Hence, we actually have 
\[T_2\in \left[\ds\frac{1}{n+1},1\right].\]
It is interpreted as a purity index. The lower bound is reached with the uniform repartition $\rho_{j, j}=\frac{1}{n+1}(1,...,1)$, 
$\rho_{j, j+1}=0$ and the upper bound,  
referred to as the pure-state case, is
attained  by  rank-one orthogonal projectors,
 the case where populations are concentrated on a single state $\rho_{j,j}=\delta_{j,j_0}$ being a special example.

\begin{proposition}
Let $\rho\in \mathcal M_{n+1}(\mathbb C)$ be  density matrix.
The following assertions are equivalent.
\begin{enumerate}
\item $\mathrm{Tr}(\rho^2)=1$,
\item $\rho=\rho^2$,
\item There exists a vector $u\in \mathbb C^{n+1}$
such that $u^*u=1$ and $\rho=uu^*$.
\end{enumerate}
\end{proposition}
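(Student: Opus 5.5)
The plan is to prove the cycle of implications $1\Rightarrow 2\Rightarrow 3\Rightarrow 1$, working throughout in an orthonormal eigenbasis of $\rho$. As recalled just before the statement, $\rho$ is hermitian and non negative, so it can be written $\rho=\sum_{j=0}^n \lambda_j v_jv_j^*$ with $(v_0,\dots,v_n)$ orthonormal, $\lambda_j\in[0,1]$ and $\sum_j\lambda_j=1$. The key observation is that
\[
\mathrm{Tr}(\rho^2)=\sum_j\lambda_j^2\leq \sum_j\lambda_j=1,
\]
and that equality holds if and only if $\lambda_j^2=\lambda_j$ for every $j$. The reason is that each term of $\sum_j(\lambda_j-\lambda_j^2)$ is non negative because $0\leq\lambda_j\leq1$.

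For $1\Rightarrow 2$: if $\mathrm{Tr}(\rho^2)=1$, then every $\lambda_j\in\{0,1\}$. Hence $\rho^2=\sum_j\lambda_j^2v_jv_j^*=\sum_j\lambda_jv_jv_j^*=\rho$.

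For $2\Rightarrow 3$: if $\rho^2=\rho$, the eigenvalues satisfy $\lambda_j^2=\lambda_j$, so each lies in $\{0,1\}$. Since they sum to $\mathrm{Tr}(\rho)=1$, exactly one of them equals $1$, say $\lambda_{j_0}$. Then $\rho=v_{j_0}v_{j_0}^*$, and we take $u=v_{j_0}$, which satisfies $u^*u=1$.

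For $3\Rightarrow 1$: if $\rho=uu^*$ with $u^*u=1$, then $\rho^2=u(u^*u)u^*=uu^*=\rho$. Therefore $\mathrm{Tr}(\rho^2)=\mathrm{Tr}(\rho)=1$.

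None of these steps is a real obstacle. The only point requiring care is the equality case in $\sum_j\lambda_j^2\leq\sum_j\lambda_j$, and it is settled by the termwise non negativity of $\lambda_j-\lambda_j^2$ noted above.
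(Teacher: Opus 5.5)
Your proof is correct and follows essentially the same route as the paper: you diagonalize $\rho$ in an orthonormal basis, use $\lambda_j\in[0,1]$ and the termwise non negativity of $\lambda_j(1-\lambda_j)$ to force every eigenvalue into $\{0,1\}$, and conclude that exactly one eigenvalue equals $1$. The only difference is the order of the implications: you close the cycle $1\Rightarrow2\Rightarrow3\Rightarrow1$, whereas the paper proves $3\Rightarrow2\Rightarrow1$ and then derives both $2$ and $3$ directly from $1$.
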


\noindent
{\bf Proof.}
(3) implies (2) since $(uu^*)^2=u (u^* u)u^*=uu^*$. (2) implies (1) since $\mathrm{Tr}(\rho)=1$.
Suppose (1). The matrix $\rho$ being hermitian, there exists $P$ such that $PP^*=P^*P=\mathbb I$ and
$\rho=PDP^*$, with $D=\mathrm{diag}(\lambda_0,...,\lambda_n)$, the diagonal of the eigenvalues of $\rho$. We know that $\lambda_j\geq 0$ for all $j\in \{0,...,n\}$, and since $\mathrm{Tr}(\rho)=\sum_{j=0}^n\lambda_j=1$, we have  $\lambda_j\in [0,1]$.
By (1), we get $\mathrm{Tr}(\rho^2)=\sum_{j=0}^n\lambda_j^2=1$, so that
\[
\ds\sum_{j=0}^n\lambda_j(1-\lambda_j)=\mathrm{Tr}(\rho)-\mathrm{Tr}(\rho^2)=0.\]
Since all terms in the sum are non negative, we deduce that $\lambda_j(1-\lambda_j)=0$ and thus 
either $\lambda_j=0$ or $\lambda_j=1$.
Since the sum of the eigenvalues is 1, only one of them can be equal to 1, the others vanish.
It implies that $\rho^2=PD^2P^*=PDP^*=\rho$ and (2) holds.
Moreover, assuming $\lambda_{j_0}=1$ for a certain $j_0\in \{0,...,n\}$, 
 we denote by $u$ the $j_0$th column of $P$ and we get
 $\rho=PDP^*=uu^*$ while $P^*P=\mathbb I$ implies $u^*u=1$, so that (3) holds too.
\QED
 
 \subsection{Some important quantities}
 \label{Nota}
 
 Throughout the paper, 
 we suppose that 
 the coupling in \eqref{wave} and the definition of the potential is driven by a function 
 $\sigma:\mathbb R^3\to [0,\infty)$ which is radially symmetric and such that 
 \begin{equation}
 \label{hyp_supp}
 \sigma\in C^\infty_c(\mathbb R^3),\qquad \mathrm{supp}(\sigma)=B(0,R_*),
 \end{equation}
 \begin{equation}
 \label{hyp_hatsig}
 \textrm{for any $k\in \mathbb R^3$, $\widehat \sigma(k)\neq0$.}
 \end{equation}
 For further purposes, let us introduce $\Gamma$, solution of $-\Delta \Gamma=\sigma$, that can be obtained by means of its Fourier transform
$\widehat \Gamma(\xi)=\frac{\widehat \sigma(\xi)}{|\xi|^2}$,
and set 
\begin{equation}\label{def_kappa}
\kappa=\ds\int_{\mathbb R^3} \Gamma\sigma\ud z=
\ds\int_{\mathbb R^3} 
\ds\frac{|\widehat \sigma(k)|^2}{|k|^2}
\ds\frac{\ud k}{(2\pi)^3}>0.
\end{equation}
n what follows we will work with the space $\overbigdot H^1(\mathbb R^3)$, obtained as the adherence of $C^\infty_c(\mathbb R^3)$, for $\|\psi\|^2_{\overbigdot H^1}=\int_{\mathbb R^3} |\nabla\psi|^2\ud z$. 
It is a subspace of $L^{6}(\mathbb R^3)$.

As it will be detailed later on, the coupling with the wave equation makes the following 
 kernel $k_c$ appear
 \[
 t\longmapsto k_c(t)=\ds\int_{\mathbb R^3}
  \frac{c\sin(c t|k|)}{|k|} |\widehat \sigma(k)|^2
\ds\frac{\ud k}{(2\pi)^3}.
\]
Indeed, 
 the coupling induces a \emph{memory effect}: the potential at time $t$ depends on the past states of the system. 
We collect in  the following claim the critical properties of this kernel  \cite[Lemma~14]{dBGV},
 \cite[Section~2.4]{Vi2}, \cite{twostates} (note that $k_c(t)=ck_1(ct)$).
 
 \begin{lemma}\label{lem_AV}
 Let $\sigma$ satisfy \eqref{hyp_supp}.
 Then, $ k_c\in L^1([0,\infty))$, with 
 $$\ds\int_0^\infty  k_c(t)\ud t=
 \kappa=\int_{\mathbb R^3}\frac{|\widehat \sigma(k)|^2}{|k|^2}\frac{\ud k}{(2\pi)^3}\in (0,\infty),$$
 $t\mapsto  k_c(t)$ is compactly supported in $[0,2R_*/c]$, 
and  it lies in $L^\infty([0,\infty))$,  with $\|k_c\|_{L^\infty}\leq c M_*$ 
($M_*$ being independent of $c$). 
Moreover, 
 $$\ds\int_0^\infty tk_c(t)\ud t=\ds\frac
1{4\pi c}\left( \ds\int_{\mathbb R^3} \sigma(z)\ud z\right)^2=\ds\frac{\upgamma}{c}>0.$$
 \end{lemma}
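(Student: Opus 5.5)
Since $\sigma$ is radial, so is $\widehat\sigma$, and I will work in spherical coordinates in Fourier space. Writing $k=r\theta$ with $r=|k|$ and $\theta\in\mathbb S^2$,
\[
k_c(t)=\frac{4\pi}{(2\pi)^3}\int_0^\infty c\,r\,\sin(crt)\,|\widehat\sigma(r)|^2\ud r .
\]
I would then return to physical space. The function $k\mapsto \sin(ct|k|)/(c|k|)$ is the Fourier transform of the fundamental solution of the wave equation in dimension 3, namely the normalized surface measure $\frac{1}{4\pi c^2 t}\delta_{|z|=ct}$ (Kirchhoff's formula). Plancherel then gives
\[
k_c(t)=c^2\int_{\mathbb R^3}\sigma(z)\,\Big(\frac{1}{4\pi c^2 t}\int_{|y|=ct}\sigma(z-y)\ud S(y)\Big)\ud z
=\frac{1}{4\pi t}\int_{|y|=ct}(\sigma*\sigma)(y)\ud S(y),
\]
using that $\sigma$ is even. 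Set $S=\sigma*\sigma$. It is non negative, radial, smooth, and supported in $\overline{B(0,2R_*)}$. Writing $S(y)=s(|y|)$, this yields the explicit formula
\[
k_c(t)=\frac{4\pi c^2t^2}{4\pi t}\,s(ct)=c^2\,t\,s(ct)\qquad (t\ge 0).
\]

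All the claims follow from this formula.

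\emph{Support.} $s(ct)=0$ for $ct>2R_*$, so $k_c$ is supported in $[0,2R_*/c]$.

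\emph{Boundedness.} $\|k_c\|_{L^\infty}\le c\,\sup_{r\ge 0} r\,s(r)\le c\,2R_*\|S\|_\infty$, so one can take $M_*=2R_*\|\sigma\|_{L^1}\|\sigma\|_{L^\infty}$, which is independent of $c$. Integrability on $[0,\infty)$ follows from boundedness together with the compact support.

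\emph{First moment (the integral).} By the substitution $r=ct$,
\[
\int_0^\infty k_c\ud t=\int_0^\infty r\,s(r)\ud r=\frac1{4\pi}\int_{\mathbb R^3}\frac{S(y)}{|y|}\ud y .
\]
Since $1/(4\pi|y|)$ is the Green function of $-\Delta$, this equals $\int(\Gamma*\sigma)\,\sigma$-type pairings, i.e. $\int \Gamma\sigma\ud z=\kappa$. Equivalently, Parseval gives $\int |\widehat\sigma|^2/|k|^2\,\frac{\ud k}{(2\pi)^3}$. It is positive because $\sigma\ge0$ and $\sigma\not\equiv0$.

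\emph{Second moment.} Again with $r=ct$,
\[
\int_0^\infty t\,k_c\ud t=\frac1c\int_0^\infty r^2s(r)\ud r=\frac{1}{4\pi c}\int_{\mathbb R^3}S(y)\ud y=\frac{1}{4\pi c}\Big(\int\sigma\Big)^2 ,
\]
since $\int S=(\int\sigma)^2$.

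\emph{Main obstacle.} The delicate step is justifying the Kirchhoff/Plancherel identity pointwise in $t>0$, since the symbol $\sin(ct|k|)/|k|$ is not integrable. I would handle it by noting that $|\widehat\sigma|^2$ is Schwartz, so $k_c(t)$ is an absolutely convergent integral. One then either computes the radial integral directly, using that the radial Fourier transform of $S$ gives $r\,s(r)=\frac{1}{2\pi^2}\int_0^\infty \rho\sin(\rho r)\widehat S(\rho)\ud\rho$ with $\widehat S=|\widehat\sigma|^2$, or appeals to the distributional identity tested against $S$. The direct inversion formula for radial functions in $\mathbb R^3$ seems cleanest: it gives $k_c(t)=c^2 t\,s(ct)$ immediately from the first display.
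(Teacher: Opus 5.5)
Your proof is correct. It is also more self-contained than what the paper provides: the paper gives no argument for this lemma. It only cites \cite{dBGV,Vi2,twostates} and records the scaling $k_c(t)=ck_1(ct)$, which reduces everything to $c=1$.

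Your closed formula $k_c(t)=c^2t\,s(ct)$, with $s$ the radial profile of $S=\sigma*\sigma$, contains that scaling (take $k_1(\tau)=\tau s(\tau)$). It yields every claim at once:
\begin{itemize}
\item the support in $[0,2R_*/c]$;
\item the $L^\infty$ bound, with the explicit constant $M_*=2R_*\|\sigma\|_{L^1}\|\sigma\|_{L^\infty}$;
\item both moments, via the substitution $r=ct$;
\item and, as a bonus, the sign $k_c\geq 0$.
\end{itemize}

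The step you flag as the main obstacle is not really one. Since $S\in C^\infty_c$ is radial with $\widehat S=|\widehat\sigma|^2$ (real because $\sigma$ is even), the three-dimensional radial inversion formula $r\,s(r)=\frac{1}{2\pi^2}\int_0^\infty\rho\sin(\rho r)\widehat S(\rho)\,\mathrm d\rho$ holds pointwise. Inserting it into your spherical-coordinate expression gives $k_c(t)=\frac{c}{2\pi^2}\int_0^\infty r\sin(r\,ct)\widehat S(r)\,\mathrm dr=c^2t\,s(ct)$ directly. The Kirchhoff/Plancherel computation can therefore serve as motivation only.

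One sentence should be tightened. The identification with $\kappa$ reads $\frac1{4\pi}\int S(y)|y|^{-1}\,\mathrm dy=\int\sigma\,(G*\sigma)\,\mathrm dz$ with $G(y)=1/(4\pi|y|)$, obtained by Fubini and the evenness of $\sigma$. Then $G*\sigma=\Gamma$ because $\widehat G=1/|k|^2$, consistent with the paper's definition $\widehat\Gamma=\widehat\sigma/|k|^2$. No $\Gamma*\sigma$ is involved.
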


\begin{rmk}
The variable $z$ does not necessarily has the meaning of a space dimension
and one may wonder what happens when the wave equation is set for $z\in \mathbb R^N$.
Firstly, the finiteness of $\kappa$ requires $N\geq 3$.
Secondly, the damping properties of the system are embodied into the first order moment of $k_c$ 
and, 
as already observed in \cite{BdB}, it makes the case $N=3$ specific since otherwise this quantity  
 vanishes, see also \cite{twostates}. 
\end{rmk}

These quantities appear by revisiting the  formulation of the problem.
We split the solution of  the wave equation \eqref{wave}
$$\psi=\psi_I+\psi_S
$$
where $\psi_I$ depends only on the initial data $\Psi_0,\Psi_1$
$$
\widehat\psi_I(t,k)=
\widehat{\Psi_0}(k) \cos(c|k|t) + \widehat{\Psi_1}(k) \frac{\sin(c|k|t)}{c|k|}.
$$
It satisfies the free wave equation with initial data $\Psi_0,\Psi_1$, namely
$$
(\partial^2_t-c^2\Delta)\psi_I=0,\qquad (\psi_I,\partial_t\psi_I)\big|_{t=0}=(\Psi_0,\Psi_1).$$
 Accordingly, $\psi_S$ satisfies 
 \[
 (\partial^2_t-c^2\Delta)\psi_S=-c^2\sigma\mathrm{Tr}(\rho V),\qquad (\psi_S,\partial_t\psi_S)\big|_{t=0}=(0,0).\]
 Its Fourier transform is given by
 \[
 \widehat\psi_S(t,k)=
 - \ds\int_0^t  \widehat{\sigma}(k) \frac{c\sin(c s|k)}{|k|} \mathrm{Tr}(\rho V)(t-s) \ud s.\]
 Therefore, we can decompose the potential as follows
 \begin{equation}\label{defPhiIS}
 \begin{array}{lll}
 \Phi&=&\Phi_I+\Phi_S,
 \\[.4cm]
 \Phi_I(t)&=&\ds\int_{\mathbb R^3}\sigma(z)\psi_I(t,z)\ud z=\ds\int_{\mathbb R^3}\widehat\sigma(k)\widehat\psi_I^*(t,k)\ds\frac{\ud k}{(2\pi)^3},
 \\[.4cm]
 \Phi_S(t)&=&\ds\int_{\mathbb R^3}\sigma(z)\psi_S(t,z)\ud z
 =
 \ds\int_{\mathbb R^3}\widehat\sigma(k)\widehat\psi_S^*(t,k)\ds\frac{\ud k}{(2\pi)^3}
 \\[.4cm]&=&-\ds\int_0^t k_c(t-s)\mathrm{Tr}(\rho V)(s)\ud s.
 \end{array}\end{equation}
 
 From now on, we assume 
 \begin{equation}\label{hyp_CIW}\Psi_0 \in \overbigdot H^1(\mathbb R^3),\, \Psi_1\in L^2(\mathbb R^3),\,
 \mathrm{supp}(\Psi_0,\Psi_1)\subset B(0,R_I).\end{equation}
In turn, the initial data term does not influence that much
the large time behavior.

\begin{lemma}\label{CIW}
Suppose \eqref{hyp_supp} and \eqref{hyp_CIW}.
Then $\Phi_I$ is supported in $[0,\frac{R_*+R_I}{c}]$.
\end{lemma}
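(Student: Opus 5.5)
The plan is to use the strong Huygens principle for the free wave equation in $\mathbb R^3$, combined with the compact support of $\sigma$. Write $\Phi_I(t)=\int_{\mathbb R^3}\sigma(z)\psi_I(t,z)\ud z$. It then suffices to show that for $t>\frac{R_*+R_I}{c}$ the function $\psi_I(t,\cdot)$ vanishes on $B(0,R_*)\supset\mathrm{supp}(\sigma)$. (The statement is about $t\geq 0$; "supported in" is understood up to this closed interval.)

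\textbf{Kirchhoff's formula.} For smooth, compactly supported data,
\[
\psi_I(t,z)=\partial_t\Big(t\,\fint_{\partial B(z,ct)}\Psi_0(y)\ud S(y)\Big)+t\fint_{\partial B(z,ct)}\Psi_1(y)\ud S(y).
\]
So $\psi_I(t,z)$ depends only on the values of $\Psi_0$, $\nabla\Psi_0$ and $\Psi_1$ on the sphere $\partial B(z,ct)$. Take $|z|<R_*$ and $ct>R_*+R_I$. Every $y$ on that sphere satisfies $|y|\geq ct-|z|>R_I$, so $y$ lies outside $\mathrm{supp}(\Psi_0,\Psi_1)$, and a neighbourhood of the sphere does too, which also handles the derivative term. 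Hence $\psi_I(t,z)=0$.

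\textbf{Extension to rough data.} The data in \eqref{hyp_CIW} are only $\Psi_0\in\overbigdot H^1$ and $\Psi_1\in L^2$, so the pointwise formula needs justification. Mollify the data: $\Psi_i^\epsilon=\Psi_i\ast\chi_\epsilon$. These are smooth and supported in $B(0,R_I+\epsilon)$, and they converge in $\overbigdot H^1\times L^2$. By conservation of the free wave energy, the corresponding solutions $\psi_I^\epsilon(t)$ converge in $\overbigdot H^1\subset L^6$, uniformly in $t$. Since $\sigma\in L^{6/5}$, the maps $t\mapsto\Phi_I^\epsilon(t)$ converge uniformly to $\Phi_I$. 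Each $\Phi_I^\epsilon$ vanishes for $t>(R_*+R_I+\epsilon)/c$; letting $\epsilon\to0$ gives the claim.

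\textbf{Fourier alternative.} If one prefers to stay with the Fourier formula defining $\psi_I$, write
\[
\Phi_I(t)=\int\widehat\sigma\,\widehat{\Psi_0}^*\cos(c|k|t)+\int\widehat\sigma\,\widehat{\Psi_1}^*\frac{\sin(c|k|t)}{c|k|}.
\]
The kernels $\cos(c|k|t)$ and $\frac{\sin(c|k|t)}{c|k|}$ are the Fourier multipliers of $\partial_t W(t)$ and $W(t)$, where $W(t)$ is the surface measure on $\partial B(0,ct)$ divided by $4\pi c^2 t$. Then $\Phi_I(t)$ becomes a pairing of $\sigma\ast W(t)$ (or of its time derivative) with $\Psi_0$ and $\Psi_1$. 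The function $\sigma\ast W(t)$ is supported in the annulus $ct-R_*\leq|y|\leq ct+R_*$, which does not meet $B(0,R_I)$ once $ct>R_*+R_I$. This route avoids the regularity issue entirely, because $\sigma\ast W(t)$ is smooth.

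\textbf{Main obstacle.} The only real subtlety is the low regularity of the data; the mollification or duality argument above handles it. The geometric support statement itself is immediate.
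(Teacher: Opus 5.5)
Your proposal is correct and follows the same route as the paper. The paper simply invokes Huygens' principle, namely that $\psi_I(t,\cdot)$ is supported in the shell $\{ct-R_I\leq |z|\leq ct+R_I\}$, and intersects this with $\mathrm{supp}(\sigma)\subset B(0,R_*)$. Your mollification and duality arguments add the justification for the low-regularity data in \eqref{hyp_CIW}, which the paper leaves implicit.
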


\noindent
{\bf Proof.}
This is a direct consequence of Huygens' principle
which tells us that $\psi_I$ is supported in $\{c|t|-R_I\leq |z|\leq c |t|+R_I\}$.
\QED

 \begin{rmk}
 The support assumptions \eqref{hyp_supp} and \eqref{hyp_CIW} are certainly far form optimal.
 They greatly simplify the proofs, but very likely  they can be relaxed by using some fast decay assumptions (see for instance \cite[Lemma~3.3]{KKS}).
 \end{rmk}
 
 \subsection{Conservation laws}
 \label{sec:cons}
 
 The solutions of \eqref{Liouv}-\eqref{wave} satisfy fundamental conservation properties.

 \begin{proposition}[Energy conservation]
 Let us set 
 \[
\mathcal E =\ds\frac12 \int_{\mathbb R^3} \Big(\ds\frac1{c^2} |\partial_t \psi|^2 + |\nabla \psi|^2 \Big) \ud z 
+  \hbar  \mathrm{Tr}(\rho \Omega_{\mathrm{free}})
+ \Phi  \mathrm{Tr}(\rho V).
\]
Then, for any $t\geq 0$, we have $\mathcal E(t)=\mathcal E(0)$.
 \end{proposition}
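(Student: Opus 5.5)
The plan is to differentiate each of the three pieces of $\mathcal E$ in time and check that the derivatives cancel exactly. Throughout, I read $\Omega_{\mathrm{free}}$ as the diagonal matrix $\Omega$ of the free hamiltonian $H_{\mathrm{free}}=\hbar\Omega$. The only tools are the cyclicity of the trace, the structure of \eqref{Liouv}, and one integration by parts in the wave energy. I would first argue formally for smooth, spatially decaying solutions, and then pass to the energy space by density.

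\emph{Wave part.} For $\psi$ solving \eqref{wave}, I multiply the equation by $\frac1{c^2}\partial_t\psi$ and integrate over $\mathbb R^3$. Integrating by parts in $z$ gives
\[
\frac{\ud}{\ud t}\frac12\int_{\mathbb R^3}\Big(\frac1{c^2}|\partial_t\psi|^2+|\nabla\psi|^2\Big)\ud z=\int_{\mathbb R^3}\partial_t\psi\Big(\frac1{c^2}\partial_t^2\psi-\Delta\psi\Big)\ud z=-\mathrm{Tr}(\rho V)\int_{\mathbb R^3}\sigma\,\partial_t\psi\ud z .
\]
Since $\sigma$ does not depend on $t$, the last integral equals $\Phi'(t)$. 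Hence this part contributes $-\Phi'\,\mathrm{Tr}(\rho V)$.

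\emph{Quantum parts.} From \eqref{Liouv} with $H=\hbar\Omega+\Phi V$, I compute $\hbar\frac{\ud}{\ud t}\mathrm{Tr}(\rho\Omega)=-i\,\mathrm{Tr}([H,\rho]\Omega)$. The contribution of $\hbar\Omega$ vanishes, because $\mathrm{Tr}([\Omega,\rho]\Omega)=0$ by cyclicity. What remains is $-i\Phi\,\mathrm{Tr}([V,\rho]\Omega)=-i\Phi\,\mathrm{Tr}(\rho(\Omega V-V\Omega))$.

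Similarly, $\frac{\ud}{\ud t}\mathrm{Tr}(\rho V)=-\frac{i}{\hbar}\mathrm{Tr}([H,\rho]V)$. Here the $\Phi V$ part drops out by the same cyclicity argument, leaving $-i\,\mathrm{Tr}([\Omega,\rho]V)=-i\,\mathrm{Tr}(\rho(V\Omega-\Omega V))$. Therefore
\[
\frac{\ud}{\ud t}\big(\Phi\,\mathrm{Tr}(\rho V)\big)=\Phi'\,\mathrm{Tr}(\rho V)-i\Phi\,\mathrm{Tr}(\rho(V\Omega-\Omega V)).
\]
Adding the three contributions, the two commutator terms are opposite and cancel. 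The term $\Phi'\,\mathrm{Tr}(\rho V)$ cancels the wave contribution, so $\frac{\ud}{\ud t}\mathcal E=0$.

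\emph{Main obstacle.} The algebra is routine; the only delicate point is justifying the computation at the regularity given by \eqref{hyp_CIW}. There, $\psi(t)\in \overbigdot H^1$ and $\partial_t\psi(t)\in L^2$ only, so the integration by parts and the identity $\int\sigma\,\partial_t\psi\ud z=\Phi'$ need care.

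I would handle this with finite speed of propagation. By Lemma~\ref{CIW} and the Duhamel formula \eqref{defPhiIS}, $\psi(t)$ is supported in $B(0,R_I+R_*+ct)$, so no boundary terms appear at infinity. I would then regularize the data $(\Psi_0,\Psi_1)$ by mollification and write the energy identity in integrated form on $[0,t]$. Finally, I would pass to the limit, using the continuity of the wave flow in $\overbigdot H^1\times L^2$ and the continuous dependence of $\rho$ on $\Phi$, which holds because the quantum system is a linear ODE with a continuous coefficient.
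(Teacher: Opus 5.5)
Your proof is correct and is the same computation as the paper's. The wave energy produces $-\Phi'\,\mathrm{Tr}(\rho V)$, which cancels the matching term in $\frac{\mathrm d}{\mathrm dt}\big(\Phi\,\mathrm{Tr}(\rho V)\big)$, and the two remaining commutator terms cancel by cyclicity of the trace.

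Your regularization paragraph goes beyond the paper, which argues only formally. If you keep it, fix two points. First, passing to the limit needs continuous dependence for the \emph{coupled} system, for instance by Gr\"onwall on the Volterra form $\Phi_S(t)=-\int_0^t k_c(t-s)\,\mathrm{Tr}(\rho V)(s)\,\mathrm ds$; continuity of $\rho$ with respect to a given $\Phi$ is not enough. Second, the spatial support of $\psi(t)$ comes from finite propagation speed, not from Lemma~\ref{CIW} itself, which concerns the time support of $\Phi_I$.
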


\noindent
 {\bf Proof.}
 We get
 \[
 \begin{array}{lll}
 \ds\frac{\ud}{\ud t}\mathcal E&=&
 -\ds\int_{\mathbb R^3} \sigma \mathrm{Tr}(\rho V) \partial_t \psi \ud z 
 + \hbar\mathrm{Tr}( \dot \rho \Omega_{\mathrm{free}})
 + \mathrm{Tr}(\rho V)\ds\int_{\mathbb R^3}\sigma\partial_t\psi\ud z
 +\Phi\mathrm{Tr}(\dot \rho V)
 \\
 &=&\mathrm{Tr}\left( \ds\frac{1}{i}[\hbar \Omega_{\mathrm{free}} +\Phi V,\rho] \Omega_{\mathrm{free}}\right)+\Phi\mathrm{Tr}\left( \ds\frac{1}{i\hbar}[\hbar\Omega +\Phi V,\rho] V\right).
 \end{array}\]
 We use the elementary relations
 \[
 \mathrm{Tr}([A,B]A)= \mathrm{Tr}(ABA)-\mathrm{Tr}(BAA)=
 \mathrm{Tr}(A^2B)-\mathrm{Tr}(BA^2)=0,\]
 and 
 \begin{equation}\label{commut}\mathrm{Tr}([A,B]C)= \mathrm{Tr}(ABC)-\mathrm{Tr}(BAC)=
 \mathrm{Tr}(CAB)-\mathrm{Tr}(ACB)=\mathrm{Tr}([C,A]B),\end{equation}
 and we obtain 
 \[
 \ds\frac{\ud}{\ud t}\mathcal E=
 \ds\frac{\Phi}{i}\mathrm{Tr}([V,\rho]\Omega_{\mathrm{free}})
 + \ds\frac{\Phi}{i}\mathrm{Tr}([\Omega_{\mathrm{free}},\rho]V)
 =
 - i \Phi \Big(\mathrm{Tr}([\Omega_{\mathrm{free}},V]\rho])
 +
 \mathrm{Tr}([V,\Omega_{\mathrm{free}}]\rho])\Big)
  =
  0.
 \]
  \QED
 
 \begin{proposition}
 For  any $m\in \mathbb N$, $m\neq 0$, and any $t\geq 0$, we have 
 $$\mathrm{Tr}(\rho^m(t))=\mathrm{Tr}(\rho^m(0)).$$
 \end{proposition}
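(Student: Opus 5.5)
We differentiate $t\mapsto \mathrm{Tr}(\rho^m(t))$ directly and show that the derivative vanishes, using only the commutator structure of the Liouville--von Neumann equation \eqref{Liouv}. The coupling with the wave equation plays no role here. What matters is that, for any fixed solution, $H(t)=\hbar\Omega+\Phi(t)V$ is a time-dependent hermitian matrix with $\Phi$ real-valued and continuous in time. Continuity of $\Phi$ follows from the decomposition \eqref{defPhiIS}, since $\Phi_I$ is continuous and $\Phi_S$ is a convolution of $k_c\in L^1$ with a bounded continuous function.

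First, by the Leibniz rule for matrix products,
\[
\frac{\ud}{\ud t}\rho^m=\sum_{j=0}^{m-1}\rho^j\dot\rho\,\rho^{m-1-j}.
\]
Taking the trace and using its cyclicity, each term equals $\mathrm{Tr}(\rho^{m-1}\dot\rho)$, so
\[
\frac{\ud}{\ud t}\mathrm{Tr}(\rho^m)=m\,\mathrm{Tr}(\rho^{m-1}\dot\rho).
\]
Next we insert $\dot\rho=\frac{1}{i\hbar}[H,\rho]$ and apply \eqref{commut} with $A=H$, $B=\rho$, $C=\rho^{m-1}$:
\[
\mathrm{Tr}\big(\rho^{m-1}[H,\rho]\big)=\mathrm{Tr}\big([H,\rho]\rho^{m-1}\big)=\mathrm{Tr}\big([\rho^{m-1},H]\rho\big).
\]
Expanding the right-hand side gives $\mathrm{Tr}(\rho^{m-1}H\rho)-\mathrm{Tr}(H\rho^{m})$, which vanishes by cyclicity. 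More directly, $\mathrm{Tr}(\rho^{m-1}H\rho-\rho^{m}H)=0$ because $\rho$ commutes with its own powers. Hence $\frac{\ud}{\ud t}\mathrm{Tr}(\rho^m)=0$, and integrating in time gives the claim.

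An alternative route, which also shows that the spectrum of $\rho(t)$ is conserved, is to let $U(t)$ be the unitary propagator solving $i\hbar\dot U=HU$ with $U(0)=\mathbb I$. Then $\rho(t)=U\rho(0)U^*$, so $\rho^m(t)=U\rho^m(0)U^*$ and the traces agree. Both routes require the solution to be regular enough to differentiate, namely $\rho\in C^1$. This holds because $\Phi$ is continuous, so $H$ is continuous in time and the ODE \eqref{Liouv} has a $C^1$ solution. This regularity is the only point needing care; the algebra itself is routine.
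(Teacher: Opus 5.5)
Your proof is correct and is essentially the paper's argument. You differentiate $\mathrm{Tr}(\rho^m)$ to get $m\,\mathrm{Tr}(\rho^{m-1}\dot\rho)$, insert the Liouville--von Neumann equation, and conclude from $\mathrm{Tr}(\rho^{m-1}H\rho-\rho^{m}H)=0$ by cyclicity of the trace. Your alternative via the unitary propagator, $\rho(t)=U(t)\rho(0)U(t)^*$, is also valid and gives the conservation of the spectrum directly, which the paper instead obtains in the following corollary through Newton's identities.
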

 
 \noindent
 {\bf Proof.}
 We compute 
 \[\begin{array}{lll}
 \ds\frac{\ud}{\ud t}\mathrm{Tr}(\rho^m)
 &=&
 \mathrm{Tr}(m\rho^{m-1}\dot \rho)
 =
 \ds\frac{m}{i\hbar}\mathrm{Tr}(\rho^{m-1}H \rho-\rho^m H)
 \\&=&\ds\frac{m}{i\hbar}\mathrm{Tr}(\rho^{m-1}H \rho)-m\mathrm{Tr}(\rho^m H)
 =\ds\frac{m}{i\hbar}\mathrm{Tr}(\rho^m H)-m\mathrm{Tr}(\rho^m H)=0.\end{array}\]
 \QED

  \begin{coro}
 For any $t\geq 0$, the matrix $\rho(t)$ has the same eigenvalues as $\rho(0)$.
 \end{coro}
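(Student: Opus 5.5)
The corollary follows from the preceding proposition on the conservation of $\mathrm{Tr}(\rho^m)$, combined with Newton's identities relating power sums to the coefficients of the characteristic polynomial.

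Let $\lambda_0(t),\dots,\lambda_n(t)$ denote the eigenvalues of $\rho(t)$, repeated with multiplicity. Since $\rho(t)$ is hermitian, they are real. For every $m\geq 1$,
\[
p_m(t)=\mathrm{Tr}(\rho^m(t))=\sum_{j=0}^n\lambda_j(t)^m .
\]
By the previous proposition, $p_m(t)=p_m(0)$ for all $m\geq 1$, and in particular for $m=1,\dots,n+1$.

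The characteristic polynomial is
\[
\det(X\mathbb I-\rho(t))=\sum_{k=0}^{n+1}(-1)^k e_k(t)X^{n+1-k},
\]
where $e_k$ is the $k$th elementary symmetric function of the eigenvalues. Newton's identities give, for $1\leq k\leq n+1$,
\[
k\,e_k=\sum_{i=1}^{k}(-1)^{i-1}e_{k-i}\,p_i,\qquad e_0=1 .
\]
By induction on $k$, each $e_k$ is a polynomial with rational coefficients in $p_1,\dots,p_k$. Division by $k$ is legitimate since we work over $\mathbb C$. Therefore $e_k(t)=e_k(0)$ for every $k$.

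It follows that $\rho(t)$ and $\rho(0)$ have the same characteristic polynomial. Hence they have the same eigenvalues, with the same multiplicities.

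An alternative route avoids Newton's identities. By Vandermonde invertibility, the first $n+1$ power sums of $n+1$ numbers determine the multiset of those numbers.

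A third route works at the level of the dynamics. The equation $\dot\rho=\frac{1}{i\hbar}[H(t),\rho]$ has time-dependent hermitian $H(t)=\hbar\Omega+\Phi(t)V$. Let $U(t)$ solve $i\hbar\dot U=HU$ with $U(0)=\mathbb I$; it is unitary. Then $\rho(t)=U(t)\rho(0)U(t)^*$, and the two matrices are unitarily similar. This route requires $\Phi$ continuous (or locally integrable), which holds from the Volterra formulation \eqref{defPhiIS}.

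Neither route has a genuine obstacle. The only point to check is that conservation is used for the powers $m=1,\dots,n+1$, all of which are covered by the proposition.
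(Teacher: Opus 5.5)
Your main argument is the paper's own proof. Conservation of $\mathrm{Tr}(\rho^m)$ for $m=1,\dots,n+1$, together with Newton's identities, makes the characteristic polynomial of $\rho(t)$ independent of time. Your explicit recursion $k e_k=\sum_{i=1}^k(-1)^{i-1}e_{k-i}p_i$ only spells out what the paper illustrates on the $3\times 3$ case, so the proof is correct. Your third route, writing $\rho(t)=U(t)\rho(0)U(t)^*$ with a unitary propagator, is also valid and slightly stronger, since it gives unitary similarity rather than just equal spectra.
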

 
 \noindent
 {\bf Proof.}
The characteristic polynomial of $\rho(t)$ can be expressed by means of the quantities
$\mathrm{Tr}(\rho(t))$, $\mathrm{Tr}(\rho^2(t))$,...,$\mathrm{Tr}(\rho^{n+1}(t))$, according to Newton's identities.
For instance for a $3\times 3$ density matrix,  the characteristic polynomial is given by
\[
  \det(\mu \mathbb I-\rho)
  =\mu^3-\mu^2+\frac{1-\mathrm{Tr}(\rho^2)}{2}\mu
   -\frac{1-3\mathrm{Tr}(\rho^2)+2\mathrm{Tr}(\rho^3)}{6},
\]
the zeroth order term being $-\mathrm{det}(\rho)$.
Hence  the characteristic polynomial does not change with time, and its roots neither.

In particular, if the initial data is a pure state, it remains a pure state forever: the solution has the form 
$\rho(t)=u(t)u^*(t)$, with $u:[0,\infty)\to \mathbb C^{n+1}$. 
 \QED

 \section{The V-model}
 \label{sec:Vmod}

 We consider  the V-model with only 3 energy levels,
a ground state and two  excited states, and where 
 the interaction matrix only connect the ground state and the excited states.
 Adopting a slight change of notation,  
 we write the density matrix  as follows
\[
\rho = \begin{pmatrix}
\rho_{0} & \rho_{1/2} & \rho_{3/2} \\[.3cm]
\rho_{1/2}^* & \rho_{1} & \rho_X \\[.3cm]
\rho_{3/2}^* & \rho_X^* & \rho_{2}
\end{pmatrix},
\]
making a clear distinction for the coherence between the excited levels 1 and 2.
 The Hamiltonian is thus  defined from 
\[
H_{\mathrm{free}}=\hbar \begin{pmatrix}
\omega_{0} & 0 &0 \\[.3cm]
0 & \omega_{1} & 0 \\[.3cm]
0 & 0 & \omega_{2}
\end{pmatrix},
\qquad
V= \begin{pmatrix}
0 & V_{1/2} & V_{3/2} \\[.3cm]
V_{1/2} & 0 & 0 \\[.3cm]
V_{3/2} & 0 & 0
\end{pmatrix},\]
where  the components $V_{1/2},V_{3/2}$ are supposed to be real.
 The wave equation is thus sourced by 
 $$\mathrm{Tr}(\rho V)=V_{1/2}(\rho_{1/2}+\rho_{1/2}^*)+V_{3/2}(\rho_{3/2}+\rho_{3/2}^*)
 =2\mathrm{Re}(V_{1/2}\rho_{1/2}+V_{3/2}\rho_{3/2})
 .$$
For further purposes, we introduce the following notations 
\[\omega_{1/2}=\omega_1-\omega_0,\qquad \omega_{3/2}=\omega_2-\omega_0,\qquad
\omega_X=\omega_2-\omega_1,\]
and we set
\[\begin{array}{ll}
\bar \omega=\ds\frac{\omega_0+\omega_1+\omega_2}{3},\qquad&
\sigma_\omega=\sqrt{\ds\sum_{j=0}^2 (\omega_j-\bar \omega)^2}
=\sqrt{\ds\sum_{j=0}^2 \omega_j^2-3\bar \omega^2},\\
V=(V_{1/2},V_{3/2})\in \mathbb R^2, 
\qquad&
\mathpzc{v}=|v|=\sqrt{V_{1/2}^2+V_{3/2}^2}\\
v_\omega=(V_{1/2}\omega_{1/2},V_{3/2}\omega_{3/2})\in \mathbb R^2.\end{array}\]
The following condition
\begin{equation}\label{small}
\ds\frac{2\kappa}
{\hbar}\left(\ds\frac{V^2_{1/2}}{\omega_{1/2}}+\ds\frac{V^2_{3/2}}{\omega_{3/2}} \right)<1
\end{equation}
will appear in the discussion. In can be interpreted as a weak coupling assumption.

Then, the evolution of the components of the density matrix is driven by the system
\begin{equation}\label{Vsystem}
\begin{array}{l}
\dot\rho_0=-\ds\frac2\hbar \Phi\mathrm{Im}(V_{1/2}\rho_{1/2}+V_{3/2}\rho_{3/2}),
\\[.3cm]
\dot\rho_1=\ds\frac2\hbar \Phi\mathrm{Im}(V_{1/2}\rho_{1/2}),
\\[.3cm]
\dot\rho_2=\ds\frac2\hbar \Phi\mathrm{Im}(V_{3/2}\rho_{3/2}),
\\[.3cm]
\dot\rho_{1/2}=i\omega_{1/2}\rho_{1/2}-\ds\frac i \hbar \Phi \big[V_{1/2}(\rho_1-\rho_0)+V_{3/2}\rho_X^*)\big],
\\[.3cm]
\dot\rho_{3/2}=i\omega_{3/2}\rho_{3/2}-\ds\frac i \hbar \Phi \big[V_{3/2}(\rho_2-\rho_0)+V_{1/2}\rho_X)\big],
\\[.3cm]
\dot\rho_{X}=i\omega_{X}\rho_{X}-\ds\frac i \hbar \Phi (V_{1/2}\rho_{3/2}-V_{3/2}\rho_{1/2}^*)
.\end{array}\end{equation}
where we remind the reader that 
\[
\Phi(t)=\ds\int_{\mathbb R^3} \sigma(z) \psi(t,z)\ud z,\qquad
(\partial^2_t-c^2\Delta)\psi
=-2c^2\sigma \mathrm{Re}(V_{1/2}\rho_{1/2}+V_{3/2}\rho_{3/2}).\]
Despite its simplicity, this model can be experimentally challenged in optical devices \cite{Nature}.
According to Section~\ref{sec:cons}, the V-model \eqref{Vsystem} conserves 
\[\begin{array}{l}
\mathrm{Tr}(\rho)=\rho_0+\rho_1+\rho_2=1,\\
\mathrm{Tr}(\rho^2)=
\rho_0^2+\rho_1^2+\rho_2^2
+2(|\rho_{1/2}|^2+|\rho_{3/2}|^2+|\rho_{X}|^2)
=T_2\in [1/3,1],\\
\mathrm{Tr}(\rho^3)
\in [1/9,1]
\end{array}\]
and
 the  energy functional
\[
\mathcal E =\ds\frac12 \int_{\mathbb R^3} \Big(\ds\frac{1}{c^2} |\partial_t \psi|^2 + |\nabla \psi|^2 \Big) \ud z 
+  \hbar (\omega_0 \rho_0 +\omega_1 \rho_1 +\omega_2 \rho_2 )
+ 2\Phi \, \mathrm{Re} \big( V_{1/2} \rho_{1/2} + V_{3/2} \rho_{3/2} \big).
\]

\begin{rmk}
The V-model is commonly used with  further approximations that get rid of the equation for $\rho_X$, 
intended to apply in the  
regime
$\omega_X\gg 1$: either one brutally disregards 
 $\rho_X$, replacing it by 0, or one sets
 $\rho_X=\frac\Phi{\omega_X\hbar}(V_{1/2}\rho_{3/2}-V_{3/2}\rho_{1/2}^*)$ (adiabatic approximation).
 However, in general the solution of the underlying system is not a density matrix.
 \end{rmk}

\subsection{Ground states}
\label{subsec:ground_states_fixed_spectrum}

A natural candidate for the large-time asymptotics is obtained by minimizing
the energy functional
\[
\mathcal E(\rho,\psi,\varpi)=\hbar\mathrm{Tr}(\rho\Omega_{\mathrm{free}})+\mathrm{Tr}(\rho V)\ds\int_{\mathbb R^3}\sigma\psi\ud z
+\ds\frac12
\ds\int_{\mathbb R^3}\Big(\ds\frac{\varpi^2}{c^2}+|\nabla\psi|^2\Big)\ud z
\]
over density matrices that fulfil  the constraints \[
\mathrm{Tr}(\rho)=1,\qquad \mathrm{Tr}(\rho^2)=T_2,\qquad
\mathrm{Tr}(\rho^3)=T_3.\]
Equivalently, the minimizer is sought in the set of density matrices with prescribed spectrum
\[\rho=
\left\{U \mathrm{diag}(\lambda_0,\lambda_1, \lambda_2) U^*,\ UU^*=\mathbb I\right\},\]
where the eigenvalues, entirely determined by $T_2$ and $T_3$, have been ordered in non increasing order $0\leq\lambda_2\leq \lambda_1\leq \lambda_0\leq 1$.

For a fixed
\(\rho\), minimization with respect to \(\varpi\) and \(\psi\) yields
\[
  \varpi=0,
  \qquad
  -\Delta\psi=-\sigma\operatorname{Tr}(\rho V),\]
and thus
\[
  \psi=-\Gamma\operatorname{Tr}(\rho V), \qquad \Phi=
\ds\int_{\mathbb R^3} \sigma\psi\ud z
=-\mathrm{Tr}(\rho V)\ds\int_{\mathbb R^3} \sigma\Gamma\ud z
=-\kappa\mathrm{Tr}(\rho V),
\]
which leads  to deal with the reduced energy 
\begin{equation}
\label{reduced_energy_fixed_spectrum}
  \mathscr E_{\rm red}(\rho)
  =\hbar\operatorname{Tr}(\rho\Omega_{\mathrm{free}})
   -\frac{\kappa}{2}\bigl(\operatorname{Tr}(\rho V)\bigr)^2.
\end{equation}

\begin{proposition}
\label{prop:ground_state_fixed_spectrum}
Assume \eqref{small}.
Then, the unique minimizer of \(\mathscr E\) under the constraints
$\mathrm{Tr}(\rho^2)=T_2$, $\mathrm{Tr}(\rho^3)=T_3$ is
given by the rearrangement of the prescribed spectrum 
\[
  \rho_{\mathrm {min}}
  =\operatorname{diag}(\lambda_0,\lambda_1,\lambda_2).
\]
Its energy is
$
  \mathscr E_{\min}
  =\hbar\bigl(
       \lambda_0\omega_0+
       \lambda_1\omega_1+
       \lambda_2\omega_2
     \bigr).
$
\end{proposition}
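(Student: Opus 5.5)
The plan is to bound the interaction term in \eqref{reduced_energy_fixed_spectrum} by the excess of the free energy over its value at the rearranged spectrum. Once this is done, the weak coupling assumption \eqref{small} makes the free part dominate. Write $\omega_{j}-\omega_0$ for the gaps (so $\omega_{1/2},\omega_{3/2}$ in the V-notation) and $S=\frac{V_{1/2}^2}{\omega_{1/2}}+\frac{V_{3/2}^2}{\omega_{3/2}}$. First, since $\mathrm{Tr}(\rho V)=2\mathrm{Re}(V_{1/2}\rho_{1/2}+V_{3/2}\rho_{3/2})$, the Cauchy--Schwarz inequality with weights $\omega_{1/2},\omega_{3/2}$ gives
\[
\bigl(\mathrm{Tr}(\rho V)\bigr)^2\leq 4S\,X(\rho),\qquad X(\rho)=\omega_{1/2}|\rho_{1/2}|^2+\omega_{3/2}|\rho_{3/2}|^2 .
\]
Second, I would establish the key inequality
\[
X(\rho)\leq \mathrm{Tr}(\rho\Omega)-\textstyle\sum_{k}\lambda_k\omega_k=:D(\rho)
\]
for every $\rho$ in the unitary orbit. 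Granting it, we get $\mathscr E_{\rm red}(\rho)-\mathscr E_{\min}\geq \hbar D-2\kappa S D=\hbar D\bigl(1-\frac{2\kappa S}{\hbar}\bigr)\geq 0$ by \eqref{small}. Here we use $\mathrm{Tr}(\rho_{\min}V)=0$, because $V$ has zero diagonal. Hence $\rho_{\min}$ is a minimizer with the stated energy.

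For the key inequality, I would use an Abel summation on the ordered frequencies \eqref{omega}:
\[
D(\rho)=\sum_{m=1}^{2}(\omega_m-\omega_{m-1})\Bigl(\sum_{k<m}\lambda_k-\sum_{j<m}\rho_{jj}\Bigr).
\]
Each bracket is nonnegative by the Ky Fan maximum principle, and
\[
X(\rho)=\sum_{m=1}^{2}(\omega_m-\omega_{m-1})\sum_{j\geq m}|\rho_{0j}|^2 .
\]
It therefore suffices to prove, for $m=1,2$ and with $P_m$ the projector onto $\mathrm{Span}(e_0,\dots,e_{m-1})$ and $Q_m=\mathbb I-P_m$,
\[
(\rho Q_m\rho)_{00}\leq \sum_{k<m}\lambda_k-\mathrm{Tr}(P_m\rho).
\]
In the pure case $\rho=uu^*$ this is immediate: the left side is $|u_0|^2\|Q_mu\|^2\leq \|Q_mu\|^2=1-\|P_mu\|^2$, which is exactly the right side since $\lambda=(1,0,0)$.

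The mixed case is the main obstacle. Crude trace bounds, such as replacing $(\rho Q\rho)_{00}$ by $\mathrm{Tr}(P\rho Q\rho)$ and applying Ky Fan to $\rho^2$, are too lossy, as the pure case already shows. I would instead write $\rho=\sum_k\lambda_k u_ku_k^*$. The $(0,0)$ entry then only involves the vector $w=\rho e_0$, and
\[
(\rho Q\rho)_{00}=\|Qw\|^2\leq \lambda_0\,\langle Q\rho e_0,e_0\rangle_\rho ,
\]
using $\rho^2\leq\lambda_0\rho$. The goal is then to compare with the Schur-type decomposition of the deficit $\sum_{k<m}\lambda_k-\mathrm{Tr}(P\rho P)$. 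If this direct route stalls, I would fall back on a first-variation argument instead. At a minimizer $\rho$, criticality along $\rho\mapsto e^{\theta A}\rho e^{-\theta A}$ gives $[\hbar\Omega-\kappa\mathrm{Tr}(\rho V)V,\rho]=0$. So $\rho$ commutes with the effective Hamiltonian $h_a=\hbar\Omega+aV$, where $a=-\kappa\mathrm{Tr}(\rho V)$. One then checks that \eqref{small} forces $a=0$ via the self-consistency relation $\mathrm{Tr}(\rho V)=-a\,\cdot$(a quantity bounded by $2S/\hbar$), which is a contraction.

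Uniqueness comes from the equality cases. If $\mathscr E_{\rm red}(\rho)=\mathscr E_{\min}$, the chain of inequalities and the strict inequality in \eqref{small} force $D(\rho)=0$. Each Ky Fan bracket then vanishes. With strictly increasing $\omega_j$, this means $\rho$ leaves the flags $\mathrm{Span}(e_0,\dots,e_{m-1})$ invariant with the top eigenvalues, so $\rho$ is diagonal with entries in non-increasing order, i.e.\ $\rho=\rho_{\min}$. When some $\lambda_k$ coincide, $\rho$ is still uniquely equal to $\operatorname{diag}(\lambda_0,\lambda_1,\lambda_2)$, since it acts as a multiple of the identity on the corresponding eigenspace.
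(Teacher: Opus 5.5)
Your overall structure matches the paper's, and those parts are correct: the weighted Cauchy--Schwarz bound $(\mathrm{Tr}(\rho V))^2\le 4S\,X(\rho)$, the Abel summation of the free-energy excess into the brackets $\lambda_0-\rho_{00}$ and $\lambda_0+\lambda_1-\rho_{00}-\rho_{11}=\rho_{22}-\lambda_2$, the conclusion under \eqref{small}, and the equality analysis. The gap is that the key inequality $X(\rho)\le D(\rho)$, which carries the whole proof, is only established for pure states. For mixed states you offer an undefined quantity ($\langle Q\rho e_0,e_0\rangle_\rho$), an unspecified ``Schur-type'' comparison, and a fallback.

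The tool you name, $\rho^2\le\lambda_0\rho$ read at entry $(0,0)$, does settle $m=1$, since $|\rho_{01}|^2+|\rho_{02}|^2=(\rho^2)_{00}-\rho_{00}^2\le\rho_{00}(\lambda_0-\rho_{00})\le\lambda_0-\rho_{00}$. It does not settle $m=2$. Bounds obtained this way from $w=\rho e_0$ are expressed through $\rho_{00}$ and need not vanish when $\rho_{22}=\lambda_2$. The bracket $\rho_{22}-\lambda_2$ is governed by the lower end of the spectrum and sits on the $(2,2)$ entry.

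The missing idea is the paper's. Since the spectrum lies in $[\lambda_2,\lambda_0]$, one has $(\lambda_0\mathbb I-\rho)(\rho-\lambda_2\mathbb I)\ge0$, i.e. $\rho^2\le(\lambda_0+\lambda_2)\rho-\lambda_0\lambda_2\mathbb I$. The paper writes this as $\sum_\ell\alpha_\ell\lambda_\ell^2\le(\lambda_0+\lambda_2)\rho_0-\lambda_0\lambda_2$ with $\alpha_\ell=|u^{(\ell)*}e_0|^2$. Evaluated at the diagonal entries $(0,0)$ and $(2,2)$, it gives
\[
|\rho_{01}|^2+|\rho_{02}|^2\le(\lambda_0-\rho_{00})(\rho_{00}-\lambda_2),\qquad
|\rho_{02}|^2+|\rho_{12}|^2\le(\lambda_0-\rho_{22})(\rho_{22}-\lambda_2).
\]
These yield both of your brackets, even with the extra factor $\lambda_0-\lambda_2\le1$ that leads to the sharper \eqref{small_prec}.

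Your fallback does not close the gap either. Its first part is sound. A minimizer exists by compactness of the unitary orbit, and criticality gives $[\hbar\Omega_{\mathrm{free}}+aV,\rho]=0$ with $a=-\kappa\mathrm{Tr}(\rho V)$. Once $a=0$ is known, $\rho$ commutes with $\Omega_{\mathrm{free}}$, which has simple spectrum, so $\rho$ is a diagonal rearrangement of its eigenvalues. The rearrangement inequality then gives both existence and uniqueness.

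However, ``one then checks that \eqref{small} forces $a=0$'' is precisely the content of Proposition~\ref{prop:optim}, and it takes real work. You need a matrix $K$ with $[K,\Omega_{\mathrm{free}}]=-V$ to obtain $\mathrm{Tr}(\rho V)=\frac a\hbar\mathrm{Tr}(\rho O)$ with $O=[K,V]$. You then need Sylvester's law of inertia and the identity $u^*Ou=2\hbar(\lambda-\hbar\omega_0)|u_0|^2/a^2$ on eigenvectors of $\hbar\Omega_{\mathrm{free}}+aV$. Together these prove the one-sided bound $-\mathrm{Tr}(\rho O)\le 2S$. That one-sided bound is all that is needed, since the self-consistency relation reads $a\bigl(1+\frac\kappa\hbar\mathrm{Tr}(\rho O)\bigr)=0$; ``contraction'' is not the right mechanism. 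As written, both of your routes assert the decisive estimate rather than prove it.
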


\begin{rmk}
The ground state 
 assigns the largest eigenvalue of \(\rho\) to the lowest energy level.
 As a matter of fact, if the initial state is pure, its spectrum is \((1,0,0)\), and
Proposition~\ref{prop:ground_state_fixed_spectrum} gives
\[
  \rho_{\mathrm {min}}
  =
  \begin{pmatrix}
    1&0&0\\
    0&0&0\\
    0&0&0
  \end{pmatrix}=e_0e_0^*.
\]
 We shall see that  condition \eqref{small} can be slightly  precised into
  \begin{equation}\label{small_prec}
  \ds\frac{2\kappa(\lambda_0-\lambda_2)}{\hbar}
  \left(
    \frac{V_{1/2}^2}{\omega_{1/2}}
    +
    \frac{V_{3/2}^2}{\omega_{3/2}}
  \right)<1\end{equation}
which requires a more detailed knowledge of the spectrum.
\end{rmk}

\noindent
{\bf Proof.}
Since $\rho$ derives from $ \operatorname{diag}(\lambda_0,\lambda_1,\lambda_2)$ by a unitary transformation
\[
  \rho=\sum_{\ell=0}^2\lambda_\ell u^{(\ell)} u^{(\ell)*},\qquad (u^{(0)},u^{(1)},u^{(2)})\in \mathbb C^3,\qquad
  \ds\sum_{\ell=0}^2 u^{(\ell)} u^{(\ell)*}=\mathbb I
,\]
we infer
\[  \lambda_2\leq \rho_j\leq\lambda_0,\qquad j\in \{0,1,2\}.
\]
Using \(\rho_0+\rho_1+\rho_2=\lambda_0+\lambda_1+\lambda_2=1\), we find
\begin{equation}\label{free_energy_excess}\begin{array}{lll}
  \operatorname{Tr}(\rho\Omega_{\mathrm{free}})
  -\operatorname{Tr}(\rho_{\mathrm {min}}\Omega_{\mathrm{free}})
  &=&(\omega_1-\omega_0)(\lambda_0-\rho_0)
    +(\omega_2-\omega_1)(\rho_2-\lambda_2)
  \\
  &  =&\omega_{1/2}(\lambda_0-\rho_0)
    +(\omega_{3/2}-\omega_{1/2})(\rho_2-\lambda_2)
    \geq 0.
\end{array}\end{equation}
It remains to control the non positive interaction term in
\eqref{reduced_energy_fixed_spectrum}.  Since
\[
  \operatorname{Tr}(\rho V)
  =2\operatorname{Re}
    \bigl(V_{1/2}\rho_{1/2}+V_{3/2}\rho_{3/2}\bigr),
\]
the Cauchy--Schwarz inequality gives
\begin{equation}
\label{observable_offdiag_bound}
  \bigl|\operatorname{Tr}(\rho V)\bigr|^2
  \leq4\left(
  \ds\frac{V^2_{1/2}}{\omega_{1/2}}+\ds\frac{V^2_{3/2}}{\omega_{3/2}}
  \right)\bigl(
  \omega_{1/2}|\rho_{1/2}|^2+\omega_{3/2}|\rho_{3/2}|^2\bigr).
\end{equation}
Based on the 
 spectral decomposition of $\rho$ we set
\[
  \alpha_\ell=|u^{(\ell)*}e_0|^2,
\]
so that 
\[\alpha_\ell\geq0,\qquad \ds\sum_{\ell=0}^2 \alpha_\ell=1,\qquad 
  \rho_0=\sum_{\ell=0}^2\alpha_\ell\lambda_\ell,
  \qquad
  (\rho^2)_{0}
  =\sum_{\ell=0}^2\alpha_\ell\lambda_\ell^2.
\]
By definition
\[
(\rho^2)_0=
  \rho_0^2+|\rho_{1/2}|^2+|\rho_{3/2}|^2
  =\ds\sum_{\ell=0}^2 \alpha_\ell \lambda_\ell^2
\]
yields
\[|\rho_{1/2}|^2+|\rho_{3/2}|^2=
\ds\sum_{\ell=0}^2 \alpha_\ell \lambda_\ell^2
-
\rho_0^2
\]
For any \(x\in[\lambda_2,\lambda_0]\), one has
\[
  x^2-(\lambda_0+\lambda_2)x+\lambda_0\lambda_2
  =- (\lambda_0-x)(x-\lambda_2)\leq0.
\]
In particular it holds for $x\in \{\lambda_0,\lambda_1,\lambda_2\}$.
It follows that 
\[
\ds\sum_{\ell=0}^2 \alpha_\ell \lambda_\ell^2 \leq 
(\lambda_0+\lambda_2)\ds\sum_{\ell=0}^2 \alpha_\ell \lambda_\ell-\lambda_0\lambda_2\ds\sum_{\ell=0}^2 \alpha_\ell 
=(\lambda_0+\lambda_2)\rho_0-\lambda_0\lambda_2
\]
which implies
\begin{align}
  |\rho_{1/2}|^2+|\rho_{3/2}|^2
  &\leq
  (\lambda_0+\lambda_2)\rho_0-\lambda_0\lambda_2-\rho_0^2
  =
  (\lambda_0-\rho_0)(\rho_0-\lambda_2)
  \leq(\lambda_0-\lambda_2)(\lambda_0-\rho_0).
  \label{offdiag_spectral_bound}
\end{align}
We proceed similarly working with $(\rho^2)_2$ and we obtain
\begin{align}
  |\rho_{3/2}|^2+|\rho_{X}|^2
    \leq(\lambda_0-\lambda_2)(\rho_2-\lambda_2).
  \label{offdiag_spectral_bound2}
\end{align}
Combining \eqref{offdiag_spectral_bound} and \eqref{offdiag_spectral_bound2}
we deduce that 
\[\begin{array}{lll}
\omega_{1/2}|\rho_{1/2}|^2+\omega_{3/2}|\rho_{3/2}|^2
&\leq &
\omega_{1/2}(|\rho_{1/2}|^2+|\rho_{3/2}|^2)+(\omega_{3/2}-\omega_{1/2})|\rho_{3/2}|^2
\\&\leq&
(\lambda_0-\lambda_2)\Big(\omega_{1/2}(\lambda_0-\rho_0)
+
(\omega_{3/2}-\omega_{1/2})(\rho_2-\lambda_2)\Big)
\\&\leq&
(\lambda_0-\lambda_2)\big(
 \mathrm{Tr}(\rho\Omega_{\mathrm{free}})- \mathrm{Tr}(\rho_{\mathrm{min}}\Omega_{\mathrm{free}})\big),
\end{array}\]
where we use \eqref{free_energy_excess}.
Coming back to 
 \eqref{observable_offdiag_bound}
yields
\[
\big| \mathrm{Tr}(\rho V)\big|^2
 \leq4  (\lambda_0-\lambda_2) \left(
  \ds\frac{V^2_{1/2}}{\omega_{1/2}}+\ds\frac{V^2_{3/2}}{\omega_{3/2}}
  \right)
 \big(
 \mathrm{Tr}(\rho\Omega_{\mathrm{free}})- \mathrm{Tr}(\rho_{\mathrm{min}}\Omega_{\mathrm{free}})\big).
\]
Since \(\operatorname{Tr}(\rho_{\mathrm {min}}V)=0\), identities
\eqref{reduced_energy_fixed_spectrum} and
\eqref{free_energy_excess} now imply
\[
  \mathscr E_{\rm red}(\rho)
  -\mathscr E_{\rm red}(\rho_{\mathrm {min}})
  \geq
   \Bigg[
    \hbar
    -
    2\kappa(\lambda_0-\lambda_2)
    \left(
      \frac{V_{1/2}^2}{\omega_{1/2}}
      +
      \frac{V_{3/2}^2}{\omega_{3/2}}
    \right)
  \Bigg] \big(
 \mathrm{Tr}(\rho\Omega_{\mathrm{free}})- \mathrm{Tr}(\rho_{\mathrm{min}}\Omega_{\mathrm{free}})\big).
\]
All terms on the right hand side are non-negative under
\eqref{small_prec}, showing that  \(\rho_{\mathrm {min}}\) is a minimizer.

Coming back to \eqref{free_energy_excess}, we see that equality holds iff $
  \rho_0=\lambda_0$ and 
$  \rho_2=\lambda_2$, thus also $\rho_1=\lambda_1$, by using the trace constraint.
Then, the equality $\mathrm{Tr}(\rho^2)=
\rho_0^2+\rho_1^2+\rho^2_2+2(|\rho_{1/2}|^2+|\rho_{3/2}|^2
+|\rho_{X}|^2)=
\lambda_0^2+\lambda_1^2+\lambda^2_2
$ implies $\rho_{1/2}=\rho_{3/2}=\rho_X=0$:
we get \(\rho=\rho_{\mathrm {min}}\), which proves uniqueness.  Finally, since
\(\operatorname{Tr}(\rho_{\mathrm {min}}V)=0\), 
coming back to the wave field 
 gives \(\psi=0,\varpi=0\).
Observe that 
\eqref{small} implies
\eqref{small_prec} since \(\lambda_0-\lambda_2\leq1\).
\QED

While it looks natural to expect convergence for large times towards the ground state
exhibited in Proposition~\ref{prop:ground_state_fixed_spectrum}, we shall see that this happens only in the pure state case, the mixed case being more intricate and with persistent coherences.

\subsection{Large time behavior and decoherence}
\label{LargeTimeVsys}
We are going to prove the following statement.
This Section is devoted to the identification of the large time behavior, 
the convergence rate is discussed in the forthcoming section and it appeals to different methods.

\begin{theo}\label{th_cv} Assume \eqref{small}.
Then
$\lim_{t\to \infty}\big(q_{1/2},p_{1/2},q_{3/2},p_{3/2}\big)(t)=0$.
n the pure case ($T_2=1$), if, moreover, the initial energy $\mathcal E(0)$ is smaller than $\hbar \omega _1$, then we have \[
\ds\lim_{t\to \infty}\rho(t)=\begin{pmatrix}1 & 0 & 0 \\
0 & 0 & 0\\
0 & 0 & 0\end{pmatrix}=e_0e_0^\top.\]
Finally, in the latter case, there exists $K_*,\lambda_*,c_*>0$ such that for any $c\geq c_*$, 
$\|\rho(t)-e_0e_0^\top\|\leq K_*e^{-\lambda_* t/c}.
$
\end{theo}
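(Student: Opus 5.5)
The plan is to follow the Kunze--Komech--Spohn strategy for particles coupled to a wave field: first extract a dissipation estimate from energy conservation, then upgrade it to convergence by an observability argument on the ground-to-excited coherences. Throughout I take $q_{1/2}+ip_{1/2}=\rho_{1/2}$ and $q_{3/2}+ip_{3/2}=\rho_{3/2}$; if $q,p$ denote rotated real and imaginary parts, the argument is unchanged.

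\textbf{Step 1 (energy coercivity and the dissipation estimate).} I would split $\psi=\psi_I+\psi_S$ as in \eqref{defPhiIS}. Completing the square in $\mathcal E$, the wave energy plus the interaction term equals
\[
\frac12\int_{\mathbb R^3}\Big(\frac{|\partial_t\psi|^2}{c^2}+|\nabla(\psi+\Gamma\,\mathrm{Tr}(\rho V))|^2\Big)\ud z-\frac\kappa2\big(\mathrm{Tr}(\rho V)\big)^2 .
\]
Hence $\mathcal E\ge \mathscr E_{\rm red}(\rho)$, plus a nonnegative field part. By the proof of Proposition~\ref{prop:ground_state_fixed_spectrum}, \eqref{small} makes $\mathscr E_{\rm red}(\rho)-\mathscr E_{\min}$ comparable to the excess free energy. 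This bounds both the field and the atomic excess energy uniformly in time.

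Next, Lemma~\ref{CIW} shows that $\Phi_I$ vanishes for $t\ge (R_*+R_I)/c$. After that time only $\Phi_S$ remains. Using the strong Huygens principle in dimension $3$ and the moment $\upgamma/c$ from Lemma~\ref{lem_AV}, I would write the energy radiated through a large sphere as a Larmor-type flux:
\[
\frac{\upgamma}{c}\int_0^T\Big|\frac{\ud}{\ud t}\mathrm{Tr}(\rho V)\Big|^2\ud t
\]
up to boundary terms. The boundary terms are controlled by the uniform bounds. Energy conservation then gives $\frac{\ud}{\ud t}\mathrm{Tr}(\rho V)\in L^2(0,\infty)$. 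An equivalent route is to integrate the Volterra form of $\Phi_S$ against $\frac{\ud}{\ud t}\mathrm{Tr}(\rho V)$, using that the cosine transform of $t\,k_c$ is positive.

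\textbf{Step 2 (convergence of the coherences).} The right-hand side of \eqref{Vsystem} is bounded, so $\frac{\ud^2}{\ud t^2}\mathrm{Tr}(\rho V)$ is bounded. Together with Step~1 this gives $\frac{\ud}{\ud t}\mathrm{Tr}(\rho V)\to0$. Since $\frac{\ud}{\ud t}\mathrm{Tr}(\rho V)=-2\mathrm{Im}(V_{1/2}\omega_{1/2}\rho_{1/2}+V_{3/2}\omega_{3/2}\rho_{3/2})+\mathscr O(\Phi)$, I also need $\Phi$ small. I would get this from the memory representation, since $k_c$ is compactly supported, together with an averaging argument (Barbalat).

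On time windows of fixed length, the pair $(\rho_{1/2},\rho_{3/2})$ is then close to $(a e^{i\omega_{1/2}t},\,b e^{i\omega_{3/2}t})$. The combination $\mathrm{Im}(V_{1/2}\omega_{1/2}a e^{i\omega_{1/2}t}+V_{3/2}\omega_{3/2}b e^{i\omega_{3/2}t})$ tends to $0$. Because $\omega_{1/2}\ne\omega_{3/2}$ and $V_{1/2},V_{3/2}\neq0$, these exponentials are independent, so $a,b\to0$. This is the PBH/Kalman observability step, and it yields the first claim.

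\textbf{Step 3 (the pure case).} If $T_2=1$, then $\rho=uu^*$, so $|\rho_{1/2}|^2=\rho_0\rho_1$ and $|\rho_{3/2}|^2=\rho_0\rho_2$. Step~2 gives $\rho_0\rho_1\to0$ and $\rho_0\rho_2\to0$. By continuity, and since $\rho_0+\rho_1+\rho_2=1$, either $\rho_0\to1$ or $\rho_0\to0$.

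In the second alternative, the coherences and $\Phi$ vanish asymptotically. The conserved energy is then at least $\hbar(\omega_1\rho_1+\omega_2\rho_2)+o(1)\ge\hbar\omega_1+o(1)$, which contradicts $\mathcal E(0)<\hbar\omega_1$. Hence $\rho_0\to1$, so $\rho_1,\rho_2\to0$, and $|\rho_X|^2=\rho_1\rho_2\to0$. This gives $\rho(t)\to e_0e_0^\top$.

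\textbf{Step 4 (exponential rate; the main obstacle).} Write $u=(u_0,u_1,u_2)$ with $u_0\approx e^{i\theta}$. Factoring out the phase, the small components satisfy a linear system driven by $-\frac1\hbar\Phi_S(V_{1/2},V_{3/2})^\top$, plus quadratic remainders, where $\Phi_S=-k_c\ast 2\mathrm{Re}(V_{1/2}u_1+V_{3/2}u_2)$.

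For large $c$, Lemma~\ref{lem_AV} gives the expansion $k_c\ast f\approx\kappa f-\frac{\upgamma}{c}\dot f$. The first term shifts the frequencies; \eqref{small} keeps the shifted operator conservative and coercive. The second term is a rank-one damping of size $\upgamma/c$.

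I would build a Lyapunov functional consisting of the quadratic energy plus a small $\mathscr O(1/c)$ Kalman/Hautus compensator, an explicit symmetric matrix making the rank-one damping coercive on all of $(u_1,u_2)$. The error from replacing the kernel by its two moments is $\mathscr O(1/c^2)$, since $k_c$ is supported on $[0,2R_*/c]$, and it can be absorbed once $c\ge c_*$. The quadratic terms are absorbed because Step~3 ensures smallness after some finite time. This step, controlling the memory remainder uniformly while keeping the $1/c$ rate, is the delicate one.
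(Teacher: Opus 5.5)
The decisive step of the first assertion is missing. In Step~2 you need $\Phi\to0$, and you propose to get it from the memory representation plus a Barbalat-type averaging. From $\dot y\to0$ (with $y=\mathrm{Tr}(\rho V)$) and the compact support of $k_c$, the memory representation only yields $\Phi(t)+\kappa y(t)\to0$; nothing forces $y$ itself to vanish. Your derivation does not use \eqref{small}: the uniform bounds of Step~1 hold without it, by Lemma~\ref{lem:estim}. It also cannot be valid without it. Beyond the weak-coupling threshold, solutions starting from zero wave data settle on self-consistent dressed equilibria $\rho=aa^*$, $(\hbar\Omega_{\mathrm{free}}+\Phi^\infty V)a=\lambda a$, $\Phi^\infty=-\kappa\mathrm{Tr}(\rho V)\neq0$ (see \eqref{eq:V_num_dressed_equilibrium}). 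Along them $\dot y\to0$, but the field and the coherences persist.

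The missing idea is Lemma~\ref{Phi0}. Pass to limits of time translates and use $\varpi_S\to0$; this gives a constant limit field $\Phi^\infty=-\kappa v\cdot q^\infty$ and a linear limit dynamics. Take Ces\`aro means of the limit trajectory to produce an admissible $\rho$ with $[\hbar\Omega_{\mathrm{free}}+\Phi^\infty V,\rho]=0$ and $\Phi^\infty=-\kappa\mathrm{Tr}(\rho V)$. Then exclude it when $\Phi^\infty\neq0$. With $[K,\Omega_{\mathrm{free}}]=-V$ and $O=[K,V]$, this forces $-\mathrm{Tr}(\rho O)=\hbar/\kappa$. The spectral analysis of $\hbar\Omega_{\mathrm{free}}+\Phi^\infty V$ in Proposition~\ref{prop:optim} gives $-\mathrm{Tr}(\rho O)\le 2\bigl(V_{1/2}^2/\omega_{1/2}+V_{3/2}^2/\omega_{3/2}\bigr)<\hbar/\kappa$ under \eqref{small}, a contradiction. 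Only once $\Phi^\infty=0$ is established does your free-oscillation argument with the independent exponentials $e^{i\omega_{1/2}t}$, $e^{i\omega_{3/2}t}$ apply; it then coincides with the paper's.

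Step~1 also overstates what the radiation argument gives. For finite $c$ the flux through a large sphere is not the Larmor quantity $\frac{\upgamma}{c}\int|\dot y|^2$. Letting $R\to\infty$, one controls $\int_0^\infty\int_{\mathbb S^2}|\sigma_c^\#\star\dot y|^2\ud\varsigma\ud s$, that is, $|\widehat{\dot y}(\omega)|^2$ weighted by $|\widehat\sigma(\omega\mathrm U/c)|^2$, which decays at high frequency. The Volterra variant leads to the same weight, and Larmor's coefficient $\upgamma/c$ is only its value at $\omega=0$. So $\dot y\in L^2(0,\infty)$ does not follow. The paper applies Barbalat to $\sigma_c^\#\star\dot y$ and then Wiener's Tauberian theorem, which is exactly where \eqref{hyp_hatsig} is needed; your proposal never uses that hypothesis.

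Note also that $\frac{\ud}{\ud t}\mathrm{Tr}(\rho V)=-v_\omega\cdot p$ holds exactly, since the $\Phi$-terms cancel because $[V,\Phi V]=0$. So no smallness of $\Phi$ is required to pass from $\dot y\to0$ to $v_\omega\cdot p\to0$. Your Step~3 is correct once the first assertion is available. Step~4 follows the paper's route: two-moment expansion of $k_c$, a conservative symmetrizer plus an $\mathscr O(1/c)$ Kalman compensator, and the memory remainder of order $1/c^2$ handled through time-weighted sup norms as in Section~\ref{sec:nonlin}.
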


We rewrite the system \eqref{Vsystem} by introducing new (real valued) unknowns
\[\begin{array}{ll}
2\rho_{1/2}=q_{1/2}+ip_{1/2},\qquad&
2\rho_{3/2}=q_{3/2}+ip_{3/2},\qquad
2\rho_{X}=q_{X}+ip_{X},
\\
\xi_{1/2}=\rho_1-\rho_0,\qquad &\xi_{3/2}=\rho_2-\rho_0.
\end{array}
\]
By using the contraint $\mathrm{Tr}(\rho)=1$, we can reduce the size of the system, 
dealing only with $8$ real quantities:
 \eqref{Vsystem} becomes
\begin{equation}\label{Vsystembis}
\begin{array}{l}
\dot\xi_{1/2}=\ds\frac\Phi\hbar (2V_{1/2}p_{1/2}+V_{3/2}p_{3/2}),
\\[.3cm]
\dot\xi_{3/2}=\ds\frac\Phi\hbar(V_{1/2}p_{1/2}+2V_{3/2}p_{3/2}),
\\[.3cm]
\dot q_{1/2}=-\omega_{1/2}p_{1/2}-\ds\frac \Phi \hbar V_{3/2}p_X,
\\[.3cm]
\dot p_{1/2}=\omega_{1/2}q_{1/2}-\ds\frac{2 \Phi} \hbar V_{1/2}\xi_{1/2}
-\ds\frac \Phi \hbar V_{3/2} q_{X},
\\[.3cm]
\dot q_{3/2}=-\omega_{3/2}p_{3/2}+\ds\frac \Phi \hbar V_{1/2}p_X,
\\[.3cm]
\dot p_{3/2}=\omega_{3/2}q_{3/2}-\ds\frac{2 \Phi} \hbar V_{3/2}\xi_{3/2}
-\ds\frac \Phi \hbar V_{1/2} q_{X},
\\[.3cm]
\dot q_{X}=-\omega_{X}p_{X}+\ds\frac \Phi {\hbar}  (V_{1/2}p_{3/2}+V_{3/2}p_{1/2}),
\\[.3cm]
\dot p_{X}=\omega_{X}q_{X}-\ds\frac \Phi {\hbar} \Phi (V_{1/2}q_{3/2}-V_{3/2}q_{1/2})
.\end{array}\end{equation}
We are going to use the following shorthand notation: 
we denote by $X$ the vector of the unknowns
in $\mathbb R^8$ that we split into
$\xi=(\xi_{1/2},\xi_{3/2})\in \mathbb R^2$, 
$q=(q_{1/2},q_{3/2})$, $p=(p_{1/2},p_{3/2})$ and $\Xi=(q_X,p_X)$; namely we have $X=(\xi,q,p,\Xi)$.
We will also denote $\mathbb V_\omega=(0,0,v_\omega,0)$.

\begin{defin}\label{AdmX}
We say that the vector $X=(\xi,q,p,\Xi)\in (\mathbb R^2)^4$ is \emph{admissible}
if its components are compatible with a definition from a density matrix; namely, 
setting 
$\rho_0=\frac13(1-\xi_{1/2}-\xi_{3/2})$,
$\rho_1=\frac13(1+2\xi_{1/2}-\xi_{3/2})$ and $\rho_2=\frac13(1+2\xi_{3/2}-\xi_{1/2})$,
$\rho_{1/2}=\frac{q_{1/2}+ip_{1/2}}{2}$, $\rho_{3/2}=\frac{q_{3/2}+ip_{3/2}}{2}$, 
$\rho_{X}=\frac{q_{X}+ip_{X}}{2}$,
then $\rho$ is a density matrix.
\end{defin}

Note that the simplex $\{\rho_0,\rho_1,\rho_2\geq 0,\ \rho_0+\rho_1+\rho_2=1\}$
becomes a triangle in the $(\xi_{1/2},\xi_{3/2})$ plane, with vertices $(-1,-1)$, $(1,0)$, $(0,1)$. 
Since the set of density matrices is compact and convex in $\mathcal M_3(\mathbb C)$, the set of admissible vectors in $\mathbb R^8$ is equally 
compact and convex.

\begin{lemma}\label{lem:estim} 
Given $X_{\mathrm{init}}$ an admissible vector as initial condition,  
the associated solution of \eqref{Vsystembis} satisfies
\[\begin{array}{l}
\textrm{$t\mapsto X(t)$ is  bounded in $L^\infty([0,\infty))$},
\\
\textrm{$(t,z)\mapsto \varpi(t,z)=\partial_t\psi(t,z)$ is  bounded in $L^\infty([0,\infty);L^2(\mathbb R^3))$},
\\
\textrm{$(t,z)\mapsto \psi(t,z)$ is  bounded in $L^\infty([0,\infty);\overbigdot {H}^1(\mathbb R^3))$}.
\end{array}\]
\end{lemma}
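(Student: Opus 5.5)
The bound on $X$ needs no dynamics: I would simply combine Definition~\ref{AdmX} with the preservation of the density-matrix structure. Since $\mathrm{Tr}(\rho^m)$ is conserved for all $m$, the Corollary of Section~\ref{sec:cons} shows that $\rho(t)$ keeps the spectrum of $\rho(0)$. It therefore stays hermitian, non negative and of unit trace, so $X(t)$ remains admissible for all $t\geq0$. Admissible vectors form a compact set. Concretely, $0\leq\rho_j\leq 1$ gives $|\xi_{1/2}|,|\xi_{3/2}|\leq 1$, and $|\rho_{j,k}|^2\leq \rho_j\rho_k\leq 1$ (or $T_2\leq 1$) bounds $q,p,\Xi$. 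This yields the $L^\infty$ bound on $X$. In particular $|\mathrm{Tr}(\rho V)|\leq 2(|V_{1/2}|+|V_{3/2}|)$ uniformly in time, and $\hbar\mathrm{Tr}(\rho\Omega_{\mathrm{free}})\in[0,\hbar\omega_2]$.

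For the field I would use the conserved energy $\mathcal E$. Set
\[
W(t)=\frac12\int_{\mathbb R^3}\Big(\frac1{c^2}|\partial_t\psi|^2+|\nabla\psi|^2\Big)\ud z .
\]
Conservation gives
\[
W(t)=\mathcal E(0)-\hbar\mathrm{Tr}(\rho\Omega_{\mathrm{free}})-\Phi\,\mathrm{Tr}(\rho V).
\]
The only term that needs care is the coupling term $\Phi\,\mathrm{Tr}(\rho V)$, since $\Phi$ depends on $\psi$ and is not a priori bounded. I would control it by $W$ itself. Writing $\sigma=-\Delta\Gamma$ and integrating by parts, then applying Cauchy--Schwarz,
\[
|\Phi|=\Big|\int\nabla\Gamma\cdot\nabla\psi\ud z\Big|\leq \|\nabla\Gamma\|_{L^2}\|\nabla\psi\|_{L^2}=\sqrt{\kappa}\,\|\psi\|_{\overbigdot H^1}\leq \sqrt{2\kappa W},
\]
because $\|\nabla\Gamma\|_{L^2}^2=\int\Gamma\sigma\ud z=\kappa$ by \eqref{def_kappa}. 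This step is legitimate for $\psi\in\overbigdot H^1$ by density of $C^\infty_c$, using $\Gamma\in\overbigdot H^1$, which holds since $\kappa<\infty$.

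With $M=\sup_t|\mathrm{Tr}(\rho V)|$ from the first step, this gives
\[
W\leq |\mathcal E(0)|+M\sqrt{2\kappa W},
\]
and Young's inequality $M\sqrt{2\kappa W}\leq \tfrac12 W+\kappa M^2$ then yields $W(t)\leq 2|\mathcal E(0)|+2\kappa M^2$ for all $t$. Here $\mathcal E(0)$ is finite by \eqref{hyp_CIW}. This bounds $\partial_t\psi$ in $L^\infty(L^2)$ and $\psi$ in $L^\infty(\overbigdot H^1)$, and as a byproduct $\Phi$ is bounded too.

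I see no real obstacle. The only delicate point is the estimate on $|\Phi|$ through $\kappa$, which is what turns energy conservation into a coercive bound even without the smallness assumption \eqref{small}. Implicitly one also uses that the coupled problem is globally well posed with $\psi\in C([0,\infty);\overbigdot H^1)$ and $\partial_t\psi\in C([0,\infty);L^2)$, so that energy conservation is justified, for instance by regularizing the data.
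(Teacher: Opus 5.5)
Your proof is correct and follows essentially the paper's argument. The bound on $X$ comes from the preserved density-matrix structure, and the field bounds come from energy conservation, using $\hbar\mathrm{Tr}(\rho\Omega_{\mathrm{free}})\geq 0$, the estimate $|\Phi\,\mathrm{Tr}(\rho V)|\leq \sup_t|\mathrm{Tr}(\rho V)|\,\sqrt{\kappa}\,\|\nabla\psi\|_{L^2}$ and Young's inequality. The remaining differences only affect constants: you absorb into the whole field energy rather than into $\|\nabla\psi\|_{L^2}^2$ alone, and you spell out the $\Gamma$-based bound on $|\Phi|$, which the paper uses without comment here.
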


\noindent
{\bf Proof.}
The components of $X(t)$ are uniformly bounded since $\rho(t)$ is a density matrix 
and $\mathrm{Tr}(\rho)$, $\mathrm{Tr}(\rho^2)$ are conserved.
To be more specific, all components of $X$ have a modulus smaller than 1.
Next, we use the energy conservation  to estimate
\[\begin{array}{lll}
\ds\frac{1}{2c^2} \|\varpi\|^2_{L^2(\mathbb R^3)}+ \ds\frac{1}2 \|\nabla \psi\|^2_{L^2(\mathbb R^3)}
&\leq& \mathscr E-\mathrm{Tr}(\rho V) \ds\int_{\mathbb R^3} \sigma(z)\psi(z) \ud z
\\[.3cm]
&\leq &\mathscr E_{\mathrm{init}}+ 4\kappa \mathpzc{v}^2+\ds\frac{1}4 \|\nabla \psi\|^2_{L^2(\mathbb R^3)},\end{array}\]
with $|\mathrm{Tr}(\rho(t) V)|=|v\cdot q(t)|\leq \mathpzc{v}|q(t)|\leq \mathpzc{v}<\infty$.
It follows that 
\[\ds\frac{1}{2c^2} \|\varpi\|^2_{L^2(\mathbb R^3)}+\ds\frac{1}4 \|\nabla \psi\|^2_{L^2(\mathbb R^3)}\leq 
\mathscr E_{\mathrm{init}}+4\kappa \mathpzc{v}^2. \]
\QED

\begin{lemma}\label{lem:cv_p}
$\lim_{t\to \infty} 
v_\omega \cdot p(t)
=0$.
\end{lemma}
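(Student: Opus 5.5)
The starting point is a direct computation on \eqref{Vsystembis}. Since $\mathrm{Tr}(\rho V)=v\cdot q$, the $\Phi$-terms in $\frac{\ud}{\ud t}(V_{1/2}q_{1/2}+V_{3/2}q_{3/2})$ are $-\frac{\Phi}{\hbar}V_{1/2}V_{3/2}p_X+\frac{\Phi}{\hbar}V_{3/2}V_{1/2}p_X=0$. Therefore
\[
\ds\frac{\ud}{\ud t}\mathrm{Tr}(\rho V)=\ds\frac{\ud}{\ud t}(v\cdot q)=-v_\omega\cdot p .
\]
Writing $f(t)=v\cdot q(t)$, the claim becomes $f'(t)\to0$. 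Moreover $f''$ is uniformly bounded, because $X$ is bounded and $\Phi$ is bounded by Lemma~\ref{lem:estim} (note $|\Phi|\leq\|\sigma\|_{L^{6/5}}\|\psi\|_{L^6}$). So $f'$ is uniformly Lipschitz, and it suffices to show that $f'$ is small in an integrated sense.

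To get integrated smallness I would follow the route of \cite{KKS,twostates} and extract the dissipation from the memory term. Set $K_c(t)=\int_t^\infty k_c(s)\ud s$. By Lemma~\ref{lem_AV}, $K_c$ is supported in $[0,2R_*/c]$, $K_c(0)=\kappa$ and $\int_0^\infty K_c=\upgamma/c$. Integrating by parts in \eqref{defPhiIS} gives
\[
\Phi(t)=\Phi_I(t)-\kappa f(t)+K_c(t)f(0)+\ds\int_0^t K_c(t-s)f'(s)\ud s .
\]
The Liouville equation gives $\hbar\frac{\ud}{\ud t}\mathrm{Tr}(\rho\Omega)=-\Phi f'$. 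Consequently, the reduced energy $\mathscr E_{\rm red}(\rho)=\hbar\mathrm{Tr}(\rho\Omega)-\frac\kappa2 f^2$ satisfies
\[
\ds\frac{\ud}{\ud t}\mathscr E_{\rm red}(\rho(t))=-f'(t)\ds\int_0^tK_c(t-s)f'(s)\ud s-f'(t)\big(\Phi_I(t)+K_c(t)f(0)\big).
\]
The last term is supported in a bounded time interval, by Lemma~\ref{CIW} and the support of $K_c$. Under \eqref{small}, Proposition~\ref{prop:ground_state_fixed_spectrum} shows that $\mathscr E_{\rm red}$ is bounded below on density matrices, and it is trivially bounded above. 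Integrating over $[0,T]$ therefore yields
\[
\sup_T \ds\int_0^T\!\!\int_0^t K_c(t-s)f'(t)f'(s)\ud s\ud t<\infty .
\]

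Next I would interpret this double integral spectrally. Extend $f'$ by $0$ outside $[0,T]$. The quadratic form equals $\frac1{2\pi}\int_{\mathbb R}\mathrm{Re}\,\widehat{K_c}(\tau)\,|\widehat{f'\mathbf 1_{[0,T]}}(\tau)|^2\ud\tau$. A computation from the definition of $k_c$ gives $\mathrm{Re}\,\widehat{K_c}(\tau)\propto\int_{|k|=|\tau|/c}|\widehat\sigma(k)|^2\ud S(k)/|\tau|^2$ (up to harmless factors). This is nonnegative, and by \eqref{hyp_hatsig} it is strictly positive for $\tau\neq0$, with limit proportional to $\upgamma/c>0$ as $\tau\to0$. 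Hence, for any fixed Schwartz function $\chi$ with compactly supported Fourier transform, we get $\int_0^\infty|(\chi*f')(t)|^2\ud t<\infty$, uniformly in $T$.

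The main obstacle is the last step: passing from this filtered $L^2$ bound back to pointwise decay of $f'$, because $\mathrm{Re}\,\widehat{K_c}$ decays at high frequencies and the bound is not a plain $L^2$ bound on $f'$. I would handle it as follows. First, the filtered $L^2$ bound together with uniform continuity of $\chi*f'$ gives $\chi*f'(t)\to0$. Second, a compactness argument: if $f'(t_n)\not\to0$, then the translates $f'(t_n+\cdot)$ are equi-Lipschitz, so by Arzel\`a--Ascoli a subsequence converges locally uniformly to a bounded limit $g$ with $\chi*g\equiv0$ for every such $\chi$, hence $\widehat g=0$ and $g=0$. This contradicts $g(0)=\lim f'(t_n)\neq0$. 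If the uniform-in-$\chi$ version is delicate, I would instead use the almost-periodic structure of the limit dynamics, whose frequencies are built from the $\omega$'s, to show $\mathrm{supp}\,\widehat g$ lies in a finite set where $\mathrm{Re}\,\widehat{K_c}>0$, forcing $g=0$.
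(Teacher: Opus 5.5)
Your proof is correct, and it reaches the paper's key estimate by a genuinely different route.

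\textbf{The paper's route (wave side).} It considers the localized energy $\mathscr E_R$ and controls the outgoing flux through $\{|z|=R\}$. For this it uses the Kirchhoff formula \eqref{representation}, which gives $\nabla\psi_S\approx-\frac1c\varpi_S\frac{z}{|z|}$ up to the error terms $\mathcal I_1,\mathcal I_2$. Letting $R\to\infty$, it obtains $\int_0^\infty\int_{\mathbb S^2}\mathcal J\,\ud\varsigma\,\ud s<\infty$ with $\mathcal J=|\sigma_c^\#\star(v_\omega\cdot p)|^2$. It concludes with Barbalat's lemma and Wiener's Tauberian theorem, applied to the single filter $\sigma_c^\#$.

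\textbf{Your route (atomic side).} After integrating the memory kernel by parts, $\mathscr E_{\rm red}$ dissipates exactly the quadratic form of the causal kernel $K_c$. Your formula for $\mathrm{Re}\,\widehat{K_c}$ normalizes to $\frac1{4\pi c}|\widehat\sigma(\tau/c)|^2$, which equals $\upgamma/c$ at $\tau=0$. It shows that this form is $\frac1{4\pi c}\|\sigma_c^\#\star(f'\mathbf 1_{[0,T]})\|_{L^2}^2$. This is the very same quantity, the energy radiated to infinity, that the paper controls through the flux.

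\textbf{What each route buys.} Your route is shorter and avoids large spheres and the limit $R\to\infty$. It also makes the link with the friction coefficient $\upgamma/c$ of \eqref{PhiS} transparent. It replaces Wiener's theorem by an elementary argument: band-limited filters, Barbalat, Arzel\`a--Ascoli, then $\widehat g=0$. That argument is available because you have $L^2$ control for every band-limited filter, not just decay along one filter. The paper's route works directly with the field and its radiation condition, without computing the Fourier transform of the kernel. It follows the flux argument that is standard for wave--particle systems.

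\textbf{Two minor remarks.}
\begin{itemize}
\item The lower bound on $\mathscr E_{\rm red}$ needs neither \eqref{small} nor Proposition~\ref{prop:ground_state_fixed_spectrum}. $\mathscr E_{\rm red}$ is continuous on the compact set of density matrices, and consistently the lemma is stated without the weak-coupling assumption.
\item Your fallback is unnecessary. The subsequence $(t_n)$ is extracted from $f'$ alone, and then $\chi\star g=\lim_n(\chi\star f')(t_n+\cdot)=0$ holds for each fixed $\chi$ separately. No uniformity in $\chi$ is involved.
\end{itemize}
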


\noindent
{\bf Proof.}
The proof is strongly inspired  from \cite{BdB, KKS}.
The idea consists in 
 considering a \emph{localized} version of the energy: for $0<R<\infty$, let 
\[
\mathscr E_R(t)
=\hbar \mathrm{Tr}(\rho(t)\Omega_{\mathrm{free}})
+
\ds\frac12\ds\int_{B(0,R)}
\Big(\ds\frac{1}{c^2}|\varpi(t,z)|^2
+|\nabla \psi(t,z)|^2\Big)\ud z
+\mathrm{Tr}(\rho(t) V)\ds\int_{\mathbb R^3}\sigma(z)\psi(t,z)\ud z.
\]
It recasts as
\[
\mathscr E_R=\mathscr E
-\ds\frac12
\ds\int_{\{|z|\geq R\}}
\Big(\ds\frac{|\varpi|^2}{c^2}
+|\nabla \psi|^2\Big)\ud z.
\]
We start with the estimates
\[
\mathscr E_R\leq \mathscr E=\mathscr E_{\mathrm{init}},
\]
and 
\begin{equation}\label{below}
\mathscr E_R\geq -\mathscr E_{\mathrm{init}}-4\kappa\mathpzc{v}^2.
\end{equation}
Indeed, we can write
\[\begin{array}{lll}
\mathscr E_R&\geq& \mathscr E-\ds\frac1{2c^2}\|\varpi\|_{L^2(\mathbb R^3)}^2
-\ds\frac{1}{2}\|\nabla\psi\|_{L^2(\mathbb R^3)}^2\\[.3cm]
&\geq&  \hbar\mathrm{Tr}(\rho \Omega_{\mathrm{free}})+\mathrm{Tr}(\rho  V)\ds\int_{\mathbb R^3} \sigma\psi\ud z
\geq \mathrm{Tr}(\rho  V)\ds\int_{\mathbb R^3} \sigma\psi\ud z.
\end{array}\]
We use again
\[
\begin{array}{l}
\left| \mathrm{Tr}(\rho  V)\ds\int_{\mathbb R^3} \sigma(z)\psi(z)\ud z
\right|
\leq \mathpzc{v}
\sqrt \kappa  \|\nabla \psi\|_{L^2(\mathbb R^3)}
\leq 4\kappa\mathpzc{v}^2+\ds\frac{1}4 \|\nabla \psi\|^2_{L^2(\mathbb R^3)}.
\end{array}\]
The estimates established in Lemma~\ref{lem:estim}  finally yield \eqref{below}.

Let us  pick  $R>R_*$, so that 
\[
\ds\frac{\ud}{\ud t}\mathscr E_R
=
-\ds\frac12\ds\frac{\ud}{\ud t}
\ds\int_{\{|z|\geq R\}}
\Big(\ds\frac{|\varpi|^2}{c^2}
+|\nabla \psi|^2\Big)\ud z
=\ds\int_{\{|z|=R\}}
\varpi\nabla \psi\cdot\ds\frac{z}{|z|}
\ud \varsigma(z)
\]
with $\varsigma$ the Lebesgue measure on the sphere.
Integrating this relation over $[t_1,t_2]$, and using the already obtained estimates, we get
\[
\left|\int_{t_1}^{t_2}
\ds\int_{\{|z|=R\}}
\varpi\nabla \psi\cdot\ds\frac{z}{|z|}
\ud \varsigma(z)\ud t\right|=|\mathscr E_R(t_2)-\mathscr E_R(t_1)|\leq 
2\Big(\mathscr E_{\mathrm{init}}+4\kappa\mathpzc{v}^2\Big):=C_0.\]
We go back to the splitting $\psi=\psi_I+\psi_S$ introduced in Section~\ref{Nota}.
Accordingly, we have
\[
\varpi\nabla \psi
=
\varpi_S\nabla \psi_S
+\varpi_I\nabla \psi_I
+\varpi_I\nabla \psi_S+\varpi_S\nabla \psi_I
.\]
However, with the support assumption \eqref{hyp_CIW}, Huygens principle 
implies that $\psi_I$ is supported in $\{c|t|-R_I\leq |z|\leq c |t|+R_I\}$ which does not  
meet the sphere $\{|z|=R\}$ when $t\geq \frac{R+R_I}{c}$.
Therefore, for any $t\geq \frac{R+R_I}{c}$, we have
\begin{equation}\label{interm1}
\ds\int_{\{|z|=R\}}
\varpi\nabla \psi(t,z)\cdot\ds\frac{z}{|z|}
\ud \varsigma(z)
=
\ds\int_{\{|z|=R\}}
\varpi_S\nabla \psi_S(t,z)\cdot\ds\frac{z}{|z|}
\ud \varsigma(z).\end{equation}

The solution of the wave equation is expressed by using the following representation formula:
\begin{equation}\label{representation}
\psi_S(t,z)=-\ds\frac{1}{4\pi }
\ds\int_{|z-z'|\leq ct}
\ds\frac{\sigma(z')}{|z-z'|} \mathrm{Tr}(\rho V)(t-|z-z'|/c)
\ud z'.\end{equation}
Bearing in mind that we consider $|z|=R$
and $\mathrm{supp}(\sigma)\subset B(0,R_*)$, 
on the integration domain we always have $|z-z'|\leq R+R_*$.
Hence, when $t\geq \frac{R+R_*}{c}$, 
we actually have
\begin{equation}\label{psiS}
\psi_S(t,z)=-\ds\frac{1}{4\pi}
\ds\int_{|z'|\leq R_*}
\ds\frac{\sigma(z')}{|z-z'|} \mathrm{Tr}(\rho V) (t-|z-z'|/c)
\ud z'.\end{equation}
We bear in mind that $\mathrm{Tr}(\rho V)= v\cdot q$ 
so that, by using \eqref{Vsystembis},
\[
\ds\frac{\ud}{\ud t}\mathrm{Tr}(\rho V)
=-v_\omega \cdot p. 
\]
It follows that 
\begin{equation}\label{varpiS}
\varpi_S(t,z)=\partial_t \psi_S(t,z)=\ds\frac{1}{4\pi}
\ds\int_{|z'|\leq R_*}
\ds\frac{\sigma(z')}{|z-z'|} 
v_\omega\cdot p
(t-|z-z'|/c)
\ud z',
\end{equation}
and
\begin{equation}\label{gradpsiS}\begin{array}{lll}
\nabla  \psi_S(t,z)&=&\ds\frac{1}{4\pi }
\ds\int_{|z'|\leq R_*}
\ds\frac{\sigma(z')}{|z-z'|^2} \ds\frac{z-z'}{|z-z'|}v\cdot q(t-|z-z'|/c)\ud z'
\\[.3cm]&&-
\ds\frac{1}{4\pi c}
\ds\int_{|z'|\leq R_*}
\ds\frac{\sigma(z')}{|z-z'|} 
v_\omega\cdot p(t-|z-z'|/c)
 \ds\frac{z-z'}{|z-z'|}
\ud z'
\\[.3cm]
&=&
-\ds\frac1c\varpi_S(t,z)\ds\frac{z}{|z|} - \mathcal I_1(t,z)-\mathcal I_2(t,z)
\end{array}\end{equation}
where
\[\begin{array}{l}\mathcal I_1(t,z)=-
\ds\frac{1}{4\pi }\ds\int_{|z'|\leq R_*}
\ds\frac{\sigma(z')}{|z-z'|^2} \ds\frac{z-z'}{|z-z'|}v\cdot q(t-|z-z'|/c)\ud z',
\\[.3cm]
\mathcal I_2(t,z)=
\ds\frac{1}{4\pi c}
\ds\int_{|z'|\leq R_*}
\ds\frac{\sigma(z')}{|z-z'|} 
v_\omega\cdot p
(t-|z-z'|/c)
\left( \ds\frac{z-z'}{|z-z'|}-\ds\frac{z}{|z|}\right)\ud z'.
\end{array}\]
On the  integration domain, we have $|z-z'|\geq |z|-|z'|=R-|z'|\geq R-R_*$.
In particular, $\mathcal I_1$ can be readily dominated as follows 
\[|\mathcal I_1(t,z)|\leq \ds\frac{\mathpzc{v}\|\sigma\|_{L^1(\mathbb R^3)}}{2\pi (R-R_*)^2}
\]
by using $|q|\leq 1$ again.
For $\mathcal I_2$, we start by observing that
\[
\ds\frac{z-z'}{|z-z'|}-\ds\frac{z}{|z|}=\ds\int_0^1 \ds\frac{\ud}{\ud\tau}
\left(\ds\frac{z-\tau z'}{|z-\tau z'|}\right)\ud \tau 
=
\ds\int_0^1 
\ds\frac{- z' |z-\tau z'| +(z-\tau z')z'\cdot \frac{z-\tau z'}{|z-\tau z'|} }{|z-\tau z'|^2}\ud \tau
\]
is dominated by $\frac{|z'|}{|z-\tau z'|}\leq \frac{R_*}{R-R_*}$.
Hence, we get
\[
|\mathcal I_2(t,z)|\leq \ds\frac{  |v_\omega| R_*\|\sigma\|_{L^1(\mathbb R^3)}} {4\pi c(R-R_*)^2}
.\]
since  $|p|\leq 1$.
Going back to \eqref{gradpsiS}, we can find a constant $C_1>0$ such that 
\[
\nabla \psi_S(t,z)=-\ds\frac1c \varpi_S(t,z)\ds\frac{z}{|z|}+\mathcal I(t,z),\qquad
\ds\sup_{t\geq (R+R_*)/c,\ |z|=R} |\mathcal I(t,z)|\leq \ds\frac{C_1}{R^2}.\]
(To be specific, we can set 
$C_1=\frac{\mathpzc{v}\|\sigma\|_{L^1(\mathbb R^3)}}{2\pi }(1+\frac{\|\Omega\| R_*}{c})$.)
Coming back to \eqref{interm1}, we can write
\[
\ds\int_{\{|z|=R\}}
\varpi\nabla \psi(t,z)\cdot\ds\frac{z}{|z|}
\ud \varsigma(z)
=-\ds\frac1c 
\ds\int_{\{|z|=R\}}
\varpi^2_S
\ud \varsigma(z)
+ \ds\int_{\{|z|=R\}}
\varpi_S
\mathcal I \ud \varsigma(z)
\]
and, by using  Cauchy-Schwarz ans Young inequalities,  it follows that
\[\begin{array}{lll}
\ds\frac{1}2 \ds\int_{\{|z|=R\}}
|\varpi_S|^2(t,z)
\ud \varsigma(z)
&\leq& 
\ds\frac{c^2}2 \ds\int_{\{|z|=R\}}
\mathcal I ^2(t,z)
\ud \varsigma(z)
- c \ds\int_{\{|z|=R\}}
\varpi\nabla\psi(t,z)\cdot\ds\frac{z}{|z|}
\ud \varsigma(z)
\\[.4cm]&\leq& 
\ds\frac{c^2C_1^2}{2R^4}\times 4\pi R^2
- c \ds\int_{\{|z|=R\}}
\varpi\nabla\psi(t,z)\cdot\ds\frac{z}{|z|}
\ud \varsigma(z)
,
\end{array}\]
for any $t\geq \max(\frac{R+R_I}{c},\frac{R+R_*}{c})$.
Gathering the obtained estimates, 
for $t_2\geq t_1\geq \max(\frac{R+R_I}{c},\frac{R+R_*}{c})$,
we arrive at 
\[\begin{array}{l}
\ds\int_{t_1}^{t_2}\ds\int_{\{|z|=R\}}
|\varpi_S|^2\ud \varsigma(z)\ud t
\\[.3cm]=
\ds\frac{1}{16\pi^2 }\ds\int_{t_1}^{t_2}\ds\int_{\{|z|=R\}}
\left|\ds\int_{|z'|\leq R_*}
\ds\frac{\sigma(z')}{|z-z'|}v_\omega\cdot p(t-|z-z'|/c)
\ud z'\right|^2\ud \varsigma(z)\ud t
\\[.3cm]\leq 2cC_0+4\pi c^2 C_1^2\ds\frac{t_2-t_1}{R^{2}}.
\end{array}\]
Next, we change variables by writing $z=R\mathrm U$ with $\mathrm U\in \mathbb S^2$,
$s=t-R/c$, and  
$$
\begin{array}{lll}t-|z-z'|/c&=&t-R|\mathrm U-z'/R|/c=s+R(1-|\mathrm U-z'/R|/c)
\\&=&s+\ds\frac R c\left(1-\sqrt {1-2\mathrm U\cdot z'/R+|z'|^2/R^2}\right)
=s+\ds\frac{\mathrm U\cdot z'}{c}+\ds\frac{\varrho(\mathrm U,z'/R)}{c}\end{array}$$
where $|\varrho(\mathrm U,z'/R)|\leq C/R$ for some $C>0$.
We work with $t_1=\frac{R+R_*}{c}$ (assuming, without loss of generality, $R_*>R_I$), and $t_2=T+\frac Rc$, $T>\frac{R_*}{c}$.
Then, the quantity under consideration becomes
\[
\ds\frac{1}{16\pi^2 }\ds\int_{R_*/c}^{T}\ds\int_{\mathbb S^{2}}\left|
\ds\int_{|z'|\leq R_*}
\ds\frac{\sigma(z')}{|\mathrm U-z'/R|} 
v_\omega\cdot p
(s+\mathrm U\cdot z'/c+\varrho/c)
\ud z'\right|^2 \ud \varsigma(\mathrm U)\ud s.
\]
Letting $R$ go to $\infty$ leads to 
\[
\ds\frac{1}{16\pi^2 }\ds\int_{R_*/c}^{T}\ds\int_{\mathbb S^{2}}
\left|
\ds\int_{|z'|\leq R_*}
\sigma(z')
v_\omega\cdot p
(s+\mathrm U\cdot z'/c)
\ud z'\right|^2 \ud \varsigma(\mathrm U)\ud s\leq 2cC_0 .\]
This bound being independent on $T$, we deduce that 
\[\ds\int_{0}^{\infty}\ds\int_{\mathbb S^{2}}
\underbrace{\left|
\ds\int_{|z'|\leq R_*}
\sigma(z') v_\omega\cdot p(s+\mathrm U\cdot z'/c)
\ud z'\right|^2}_{:=\mathcal J(s,\mathrm U)} \ud \varsigma(\mathrm U)\ud s<\infty.\]
The function $\mathcal J(s,\mathrm U)$ is $C^1$ and, moreover, it is uniformly Lipschitz with respect to $s$ 
since 
$p$ and $\dot  p$ belong to $L^\infty((0,\infty))$. Applying Barbalat's lemma, we deduce that $\lim_{s\to \infty} \mathcal J(s,\mathrm U)=0$
uniformly on $ \mathbb S^2$.

Given $\mathrm U \in \mathbb S^2$, we are going to compute 
in further details $\mathcal J(s,\mathrm U)$. To this end, we can assume that the coordinate axis for the $z'$ variable 
are defined with $\mathrm U$ as the unit vector carrying the first coordinate; namely, we set 
$z'=(z_1=z'\cdot \mathrm U, \hat z=(\mathbb I-\mathrm U\otimes\mathrm U)z')$.
We set 
$$\sigma_c^\#(z_1)=\ds\int_{\mathbb R^{2}} c\sigma(c z_1,\hat z)\ud \hat z,
$$
which is even.
Then, we can write
\[\begin{array}{lll}
\mathcal J(s,\mathrm U)&=&\left|\ds\int_{-\infty}^{+\infty} \sigma_c^\#(\tau)
 v_\omega\cdot p(s+\tau) \ud \tau\right|^2
=\left| \sigma^\#_c\star  
v_\omega\cdot p
(s)\right|^2
\xrightarrow[s\to \infty]{}0.
\end{array}\]
With the assumption that $\widehat \sigma$ never vanishes, see \eqref{hyp_hatsig}, 
we get $\widehat{\sigma^\#_c}(\xi)=\widehat\sigma(\mathrm U\xi/c)\neq 0$ and
we can apply tauberian theorems \cite[Theorem~9.7]{Rud}
which allow us to conclude that 
Lemma~\ref{lem:cv_p} holds.
\QED

\begin{coro}\label{coro_cv_varpi}
$\lim_{t\to\infty}\varpi_S(t,z)=0$ for any $z\in \mathbb R^3$. 
\end{coro}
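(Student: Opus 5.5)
We need $\varpi_S(t,z)\to0$ for every fixed $z$. The plan is to use the retarded representation of $\psi_S$ and then apply Lemma~\ref{lem:cv_p} through dominated convergence.

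Fix $z\in\mathbb R^3$. By \eqref{representation}, as soon as $t\geq (|z|+R_*)/c$ the constraint $|z-z'|\leq ct$ is automatically satisfied on $\mathrm{supp}(\sigma)\subset B(0,R_*)$. Hence, exactly as in \eqref{psiS}, we have
\[
\psi_S(t,z)=-\frac1{4\pi}\int_{|z'|\leq R_*}\frac{\sigma(z')}{|z-z'|}\,\mathrm{Tr}(\rho V)(t-|z-z'|/c)\ud z'.
\]
The integration domain no longer depends on $t$. We may therefore differentiate under the integral sign, using $\frac{\ud}{\ud t}\mathrm{Tr}(\rho V)=-v_\omega\cdot p$ as in the proof of Lemma~\ref{lem:cv_p}. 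This yields formula \eqref{varpiS} for all $t\geq (|z|+R_*)/c$, now for an arbitrary point $z$ and not only on the sphere $|z|=R$.

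We then pass to the limit $t\to\infty$ inside the integral. For each fixed $z'$, the argument $t-|z-z'|/c$ tends to infinity, so Lemma~\ref{lem:cv_p} gives $v_\omega\cdot p(t-|z-z'|/c)\to0$. For domination, Lemma~\ref{lem:estim} gives $|p|\leq1$, hence $|v_\omega\cdot p|\leq |v_\omega|$. The integrand is then bounded by $|v_\omega|\,\sigma(z')/|z-z'|$, which is integrable in $z'$ because $\sigma$ is bounded with compact support and $1/|z-z'|$ is locally integrable in $\mathbb R^3$. Dominated convergence gives $\varpi_S(t,z)\to0$.

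The only point requiring care is the case where $z$ lies inside $\mathrm{supp}(\sigma)$, since the kernel is singular there. The local integrability of $1/|z-z'|$ in dimension three handles it, so no real obstacle is expected. Alternatively, since the convergence in Lemma~\ref{lem:cv_p} is uniform in the shift once the shift is large enough, one can bound $|\varpi_S(t,z)|$ by $\frac1{4\pi}\sup_{s\geq t-(|z|+R_*)/c}|v_\omega\cdot p(s)|\int\frac{\sigma(z')}{|z-z'|}\ud z'$. This also gives local uniformity in $z$.
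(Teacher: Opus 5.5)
Your proof is correct and follows the paper's argument: the paper likewise returns to the retarded formula \eqref{varpiS} and applies Lebesgue's dominated convergence theorem, using the integrability of $z'\mapsto \sigma(z')/|z-z'|$ together with Lemma~\ref{lem:cv_p}. You also check explicitly that \eqref{varpiS} holds at an arbitrary point $z$ once $t\geq(|z|+R_*)/c$, which the paper leaves implicit. Your sup-bound variant gives the uniform $L^\infty$ control that the paper mentions afterwards.
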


\noindent
{\bf Proof.}
We go back to \eqref{varpiS} and we apply Lebesgue's theorem, 
using that $z'\mapsto \frac{\sigma(z')}{|z-z'|}$ is integrable:
\[\begin{array}{lll}
\ds\int_{\mathbb R^3} \frac{|\sigma(z')|}{|z-z'|}\ud z'
&\leq& \ds\int_{|z-z'|\leq A} \frac{|\sigma(z')|}{|z-z'|}\ud z'+\ds\int_{|z-z'|\geq A} \frac{|\sigma(z')|}{|z-z'|}\ud z'
\\&\leq&
\|\sigma\|_{L^\infty(\mathbb R^3)}
|\mathbb S^2|\ds\int_0^A \ds\frac{r^2\ud r}{r}
+
 \ds\frac{\|\sigma\|_{L^1(\mathbb R^3)}}{A}
 \leq  \|\sigma\|_{L^\infty(\mathbb R^3)}
 |\mathbb S^2|
 \ds\frac{A^2}2
 + \ds\frac{\|\sigma\|_{L^1(\mathbb R^3)}}{A}.
 \end{array}\]
\QED

Let $\big(t^\nu\big)_{\nu\in\mathbb N}$ be an increasing sequence that tends to $+\infty$.
We are going to establish various estimates: in the discussion we denote by $C>0$ a constant
which depends on the parameters of the model, but which remains independent on $\nu$; the precise value of the constant may vary from a line to another.
We set $X^\nu(t)=X(t+t^\nu)$.
We  know that 
 \[\begin{array}{l}
\textrm{$t\mapsto X^\nu(t)$ is  bounded in $L^\infty([0,\infty))$},
\\
\textrm{$(t,z)\mapsto \varpi^\nu(t,z)$ is  bounded in $L^\infty([0,\infty);L^2(\mathbb R^3))$},
\\
\textrm{$(t,z)\mapsto \psi^\nu(t,z)$ is  bounded in $L^\infty([0,\infty);\overbigdot {H}^1(\mathbb R^3))$},
\end{array}\]
uniformly with respect to $\nu$. 
Moreover, going back to \eqref{Vsystembis}. we see that  
$\frac{\ud}{\ud t}X^\nu$ is equally uniformly bounded in $L^\infty([0,\infty))$.
Given $0<T<\infty$, by virtue of the Arzela-Ascoli theorem, 
we can thus assume, possibly at the price of extracting a subsequence, 
 that $X^\nu$ converge uniformly on $[0,T]$ to $X^\infty$.
 Moreover we can also suppose that $\nabla\psi^\nu$ and $\varpi^\nu$ converge weakly in $L^2((0,T)\times\mathbb R^3)$ with limits denoted $\nabla\psi^\infty$ and $\varpi^\infty$ respectively.
 
 In fact, we can show that $\nabla\psi^\nu$ and $\varpi^\nu$ converge  $C^0([0,T];L^2(\mathbb R^3)-weak)$.
Indeed, let $\zeta\in C^\infty_c(\mathbb R^3))$.
We have 
\[\left|\ds\int_{\mathbb R^3}\varpi^\nu(t,z)
\zeta(z) \ud z \right|
\leq C\|\zeta\|_{L^2(\mathbb R^3)}
\]
and 
\[\begin{array}{lll}
\left|
\ds\frac{\ud}{\ud t}\ds\int_{\mathbb R^3}\varpi^\nu(t,z)
\zeta(z) \ud z \right|&=&\left|\ds\int_{\mathbb R^3}\partial_t \varpi^\nu(t,z)
\zeta(z) \ud z \right|
\\&=&\left|c^2\ds\int_{\mathbb R^3}\nabla\psi^\nu(t,z)\cdot 
\nabla\zeta(z) \ud z -c^2\ds\int_{\mathbb R^3}q^\nu(t)\sigma(z)
\zeta(z) \ud z\right|
\\&\leq& C \|\zeta\|_{H^1(\mathbb R^3)}.\end{array}\]
The Arzela-Ascoli theorem implies that $\big\{\int_{\mathbb R^3}\varpi^\nu(t,z)
\zeta(z) \ud z ,\ \nu\in \mathbb N\big\}$ is compact in $C^0([0,T])$.
By density of $C^\infty_c(\mathbb R^3))$ in $L^2(\mathbb R^3)$ the conclusion equally applies to any trial function $\zeta 
\in L^2(\mathbb R^3)$.
By using the separability of $L^2(\mathbb R^3)$ and a diagonal argument, we conclude that we can still extract a subsequence such that 
\[\ds\lim_{\nu\to \infty}\ds\int_{\mathbb R^3}\varpi^\nu(t,z)
\zeta(z) \ud z=\ds\int_{\mathbb R^3}\varpi^\infty(t,z)
\zeta(z) \ud z\]
holds uniformly 
  on $[0,T]$, for any $\zeta\in L^2(\mathbb R^3)$.
  We justify similarly that 
  \[\ds\lim_{\nu\to \infty}\ds\int_{\mathbb R^3}\nabla\psi^\nu(t,z)
\zeta(z) \ud z=\ds\int_{\mathbb R^3}\nabla\psi^\infty(t,z)
\zeta(z) \ud z\]
holds uniformly 
  on $[0,T]$, for any $\zeta\in L^2(\mathbb R^3)$.
With Lemma~\ref{lem:cv_p} and  Corollary~\ref{coro_cv_varpi}, we obtain 
$$v_\omega\cdot p^\infty=0\textrm{ and } \varpi^\infty=0.$$
(The proof of Corollary~\ref{coro_cv_varpi} provides a $L^\infty$ estimate, uniform wrt $\nu$ on $\varpi^\nu$.)

Accordingly, we deduce that 
$$
\ds\lim_{\nu\to\infty}\Phi^\nu(t)=\Phi^\infty(t)=\ds\lim_{\nu\to \infty} \ds\int_{\mathbb R^3} \sigma\psi^\nu\ud z
=
\ds\int_{\mathbb R^3} \sigma\psi^\infty(t,z)\ud z
$$
satisfies
\[
\ds\frac{\ud}{\ud t}\Phi^\infty(t)=\ds\int_{\mathbb R^3} \sigma\varpi^\infty(t,z)\ud z
=0.\]
Therefore $\Phi^\infty$ is constant.
Passing to the limit in the wave equation, we have
\[-\Delta\psi^\infty=-\sigma v\cdot q^\infty\]
that is $\psi^\infty(t,z)=-\Gamma(z) v\cdot q^\infty(t)$.

Passing to the limit in \eqref{Vsystembis}, we obtain a \emph{linear} system, with coefficients depending
on $\Phi^\infty$: with 
$X^\infty=(
\xi^\infty_{1/2},\xi^\infty_{3/2},q^\infty_{1/2}, q^\infty_{3/2},
p^\infty_{1/2},p^\infty_{3/2},q^\infty_X,p^\infty_X
),
$ we have
\begin{equation}\label{syslininf}
\ds\frac{\ud}{\ud t}X^\infty=A^\infty X^\infty,
\end{equation}
where\[
A^\infty=\begin{pmatrix}
0&0 &0 &  0& 2\frac{\Phi^\infty}\hbar V_{1/2} & \frac{\Phi^\infty}\hbar V_{3/2} & 0 & 0 
\\
0&0 &0&0 & \frac{\Phi^\infty}\hbar V_{1/2} & 2\frac{\Phi^\infty}\hbar V_{3/2} & 0 & 0 
\\
0&0 &0&0 &  -\omega_{1/2} & 0 & 0 & - \frac{\Phi^\infty}\hbar V_{3/2}
\\
0&0 &0&0   & 0&-\omega_{3/2} & 0 &  \frac{\Phi^\infty}\hbar V_{1/2}
\\
- \frac{2\Phi^\infty}\hbar V_{1/2}&0 &\omega_{1/2}&0   & 0 &0& - \frac{\Phi^\infty}\hbar V_{3/2} & 0
\\
0&- \frac{2\Phi^\infty}\hbar V_{3/2} &0&\omega_{3/2}&  0 & 0 &  - \frac{\Phi^\infty}\hbar V_{1/2}& 0
\\
0&0 &0&0   & \frac{\Phi^\infty}{\hbar} V_{3/2} & \frac{\Phi^\infty}{\hbar} V_{1/2}& 0&-\omega_{X} 
\\
0&0 &\frac{\Phi^\infty}{\hbar} V_{3/2}&-\frac{\Phi^\infty}{\hbar} V_{1/2}   & 0&0 &\omega_{X}&0
\end{pmatrix},\qquad
\]
The crucial observation is that, owing to  Lemma~\ref{lem:cv_p}, the solution is constrained to satisfy 
\[v_\omega\cdot p^\infty 
=0,\]
which can be rephrased 
 as $$\mathbb V_\omega\cdot X^\infty=0.$$
In turn, we observe that 
\[
\ds\frac{\ud}{\ud t}v\cdot q^\infty=v_\omega\cdot p^\infty=0\]
and $t\mapsto v\cdot q^\infty$ is constant which thus satisfies
\[
\Phi^\infty=-
\ds\int_{\mathbb R^3}\sigma(z) \Gamma(z)\ud z\, v\cdot q^\infty
=-
\kappa v\cdot q^\infty.\]

 \begin{proposition}\label{prop_cv}
 Assume \eqref{small}.
 Then the limit satisfies
 $q^\infty=0$, $p^\infty=0$.
 
 Moreover, in the pure case, assuming that the initial 
 initial energy is smaller than $\hbar\omega_1$,  
  the limit is completely characterized:
 $\rho^\infty_0=1$, $\rho^\infty_1=\rho^\infty_2=0$, $q^\infty=0$, $p^\infty=0$, $\Xi^\infty=0$.
 \end{proposition}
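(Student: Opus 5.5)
The plan is to exploit the structure of the limit system \eqref{syslininf}. Its coefficient $\Phi^\infty$ is constant, and its solution is subject to the two constraints
\[
v_\omega\cdot p^\infty=0,\qquad v\cdot q^\infty=a \ \text{ constant},\qquad \Phi^\infty=-\kappa a .
\]
I would first show that $a=0$, using \eqref{small} as a contraction argument. Then I would show that $\Phi^\infty=0$ forces $q^\infty=p^\infty=0$ by a frequency argument. Finally, in the pure case, I would identify the full limit from energy considerations.

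\textbf{Step 1: $a=0$.} The $\dot p$ equations of \eqref{syslininf} can be solved for $q^\infty_{1/2}$ and $q^\infty_{3/2}$:
\[
\omega_{1/2}q^\infty_{1/2}=\dot p^\infty_{1/2}+\tfrac{2\Phi^\infty}{\hbar}V_{1/2}\xi^\infty_{1/2}+\tfrac{\Phi^\infty}{\hbar}V_{3/2}q^\infty_X ,
\]
and similarly for $q^\infty_{3/2}$. Dividing by the frequencies and taking the combination $v\cdot q^\infty$ gives
\[
a=\frac{\ud}{\ud t}\Big(\tfrac{V_{1/2}}{\omega_{1/2}}p^\infty_{1/2}+\tfrac{V_{3/2}}{\omega_{3/2}}p^\infty_{3/2}\Big)-\frac{\kappa a}{\hbar}B(t),
\]
where
\[
B=\tfrac{2V_{1/2}^2}{\omega_{1/2}}\xi^\infty_{1/2}+\tfrac{2V_{3/2}^2}{\omega_{3/2}}\xi^\infty_{3/2}+V_{1/2}V_{3/2}\Big(\tfrac1{\omega_{1/2}}+\tfrac1{\omega_{3/2}}\Big)q^\infty_X .
\]
Averaging over $[0,T]$ and letting $T\to\infty$ kills the derivative term, because $p^\infty$ is bounded. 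This yields
\[
|a|\leq \frac{\kappa|a|}{\hbar}\limsup_{T\to\infty}\frac1T\Big|\int_0^T B\ud t\Big| .
\]
It therefore suffices to prove the pointwise bound $|B|\leq 2\big(\frac{V_{1/2}^2}{\omega_{1/2}}+\frac{V_{3/2}^2}{\omega_{3/2}}\big)$ for admissible vectors; then \eqref{small} gives $|a|<|a|$ unless $a=0$.

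This pointwise bound is the step I expect to be the main obstacle. The crude estimates $|\xi|\leq1$ and $|q_X|\leq1$ do not suffice, because of the cross term. I would first try to write $B$ as $\mathrm{Tr}(\rho M)$ for a suitable hermitian matrix $M$ built from $w=(V_{1/2}/\sqrt{\omega_{1/2}},V_{3/2}/\sqrt{\omega_{3/2}})$, and then use $|\mathrm{Tr}(\rho M)|\leq\|M\|$ for density matrices. For instance, $B$ should arise as a commutator-type expression of the form $\mathrm{Tr}(\rho[[\Omega^{-1}\!\!\cdot,V],V])$. If the constant coming out of this is not sharp, I would instead use the positivity bounds of the kind \eqref{offdiag_spectral_bound}--\eqref{offdiag_spectral_bound2} from the proof of Proposition~\ref{prop:ground_state_fixed_spectrum}, in the spirit of that proof.

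\textbf{Step 2: $q^\infty=p^\infty=0$.} Once $\Phi^\infty=0$, the system \eqref{syslininf} decouples: $(q^\infty_{1/2},p^\infty_{1/2})$ and $(q^\infty_{3/2},p^\infty_{3/2})$ are pure rotations with distinct frequencies $\omega_{1/2}\neq\omega_{3/2}$. The constraint $v_\omega\cdot p^\infty\equiv0$ is then a linear combination of $\cos(\omega_{1/2}t)$, $\sin(\omega_{1/2}t)$, $\cos(\omega_{3/2}t)$ and $\sin(\omega_{3/2}t)$ vanishing identically. By linear independence of these functions, each amplitude must vanish, provided $V_{1/2}V_{3/2}\neq0$ (the coupling is nondegenerate). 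Hence $q^\infty=p^\infty=0$. Since the sequence $t^\nu$ was arbitrary, this gives the first assertion of Theorem~\ref{th_cv}.

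\textbf{Step 3: the pure case.} If $T_2=1$, then $\rho^\infty=uu^*$ with $u^*u=1$, and $\rho^\infty_{1/2}=\rho^\infty_{3/2}=0$ means $u_0u_1^*=u_0u_2^*=0$. Suppose $u_0=0$. Since $v\cdot q^\infty=0$, the coupling term of the energy vanishes in the limit, and the field energy is nonnegative and weakly lower semicontinuous. This gives
\[
\mathcal E(0)=\lim_{\nu\to\infty}\mathcal E(t^\nu)\geq \hbar\,\mathrm{Tr}(\rho^\infty\Omega_{\mathrm{free}})\geq\hbar\omega_1,
\]
which contradicts the hypothesis $\mathcal E(0)<\hbar\omega_1$. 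Hence $u_0\neq0$, so $u_1=u_2=0$. Therefore $\rho^\infty=e_0e_0^\top$, which gives $\rho^\infty_0=1$, $\rho^\infty_1=\rho^\infty_2=0$ and $\Xi^\infty=0$.
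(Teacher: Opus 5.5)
The gap is in Step 1; Steps 2 and 3 are fine. In Step 3 you use the rank-one structure $\rho^\infty=uu^*$, as the paper does for generalized V-systems, instead of the sum-of-nonnegative-terms identity from $T_2=1$; both work. In Step 1 your identity relating $a$ to $B$ is correct; it is the time-dependent form of the relation in the proof of Lemma~\ref{admker}. The problem is that the pointwise bound you reduce to is false.

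Write $m=\frac{V_{1/2}^2}{\omega_{1/2}}+\frac{V_{3/2}^2}{\omega_{3/2}}$. Your $B$ is exactly $\mathrm{Tr}(\rho O)$ with $O$ from \eqref{defO}, and over density matrices $\mathrm{Tr}(\rho O)$ fills $[\lambda_{\min}(O),\lambda_{\max}(O)]$. The excited $2\times2$ block of $O$ has trace $2m$ and determinant $-V_{1/2}^2V_{3/2}^2\big(\frac{1}{\omega_{1/2}}-\frac{1}{\omega_{3/2}}\big)^2<0$. So its positive eigenvalue strictly exceeds $2m$, and the pure state on that eigenvector is admissible and violates $|B|\le 2m$. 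The one-sided bound $B\geq -2m$, which is all your contradiction needs, also fails in general. Take $\omega_0=0$, $\omega_{1/2}=10^{-4}$, $V_{1/2}=10^{-3}$, $\omega_{3/2}=V_{3/2}=1$: then $m=1.01$ but $\lambda_{\min}(O)\approx-9.04$. With $\hbar/\kappa=2.5$, condition \eqref{small} holds, yet $-\hbar/\kappa$ is attained by $\mathrm{Tr}(\rho O)$ for some density matrix. So no bound valid for all admissible vectors can close the argument under \eqref{small}. Using $\|O\|$ only gives a cruder condition like \eqref{subopt}, and the positivity inequalities \eqref{offdiag_spectral_bound}--\eqref{offdiag_spectral_bound2} cannot exclude these states.

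The missing ingredient is stationarity of the limit. Apply your averaging to $X^\infty$ itself rather than to $B$. Along some $T_n\to\infty$, $\frac{1}{T_n}\int_0^{T_n}X^\infty\ud t$ converges to an admissible $\mathscr X$ with $A^\infty\mathscr X=0$, as in the proof of Lemma~\ref{Phi0}. This $\mathscr X$ corresponds to a density matrix $\bar\rho$ commuting with $\hbar\Omega_{\mathrm{free}}+\Phi^\infty V$. By linearity, if $\Phi^\infty\neq0$ your identity becomes $\frac{\hbar}{\kappa}=-\mathrm{Tr}(\bar\rho O)$. The sharp estimate $-\mathrm{Tr}(\bar\rho O)\leq 2m$ is available only on this commutant, and is proved as follows:
\begin{enumerate}
\item Expand $\bar\rho$ in a common orthonormal eigenbasis with $\hbar\Omega_{\mathrm{free}}+\Phi^\infty V$.
\item For each eigenpair $(\lambda,u)$, compute $u^*Ou=\frac{2\hbar(\lambda-\hbar\omega_0)}{\Phi^{\infty 2}}|u_0|^2$.
\item By Sylvester's law of inertia, exactly one eigenvalue $\lambda_-$ lies below $\hbar\omega_0$, so only that term is negative.
\item The secular equation for $\lambda_-$ bounds $-u_-^*Ou_-$ by $2m$.
\end{enumerate}
This is Proposition~\ref{prop:optim}. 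Without this spectral analysis of the frozen Hamiltonian, your Step 1, and hence $\Phi^\infty=0$, is not established under \eqref{small}.
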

 
 This will be obtained as a consequence of the following claim.
 
 \begin{lemma}\label{Phi0}
 Assume \eqref{small}.
  Then $\Phi^\infty=0$.
 \end{lemma}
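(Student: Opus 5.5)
The idea is to exploit two facts already established for the limit dynamics \eqref{syslininf}. First, $\Phi^\infty$ is a constant. Second, the observable $\mathrm{Tr}(\rho^\infty V)=v\cdot q^\infty$ is constant in time, with $\Phi^\infty=-\kappa\, v\cdot q^\infty$. I would argue by contradiction through a virial-type identity.

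Since $\Phi^\infty$ is constant, \eqref{syslininf} is exactly the Liouville--von Neumann equation $i\hbar\,\dot\rho^\infty=[H^\infty,\rho^\infty]$ with the frozen hamiltonian $H^\infty=\hbar\Omega+\Phi^\infty V$. Hence $\rho^\infty(t)$ remains a density matrix for all $t$. I then introduce the auxiliary hermitian matrix $A$ solving the homological equation $i[\Omega,A]=V$. Because $V$ only connects level $0$ to the excited levels, this gives $A_{0j}=iV_{j-1/2}/\omega_{j-1/2}$ and $A_{j0}=A_{0j}^*$, all other entries zero. Equivalently, $A=i(e_0a^\top-ae_0^\top)$ with $a=(0,V_{1/2}/\omega_{1/2},V_{3/2}/\omega_{3/2})$. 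Differentiating along \eqref{syslininf} gives
\[
\frac{\ud}{\ud t}\mathrm{Tr}(\rho^\infty A)=\mathrm{Tr}(\rho^\infty V)+\frac{\Phi^\infty}{\hbar}\,\mathrm{Tr}(\rho^\infty B),\qquad B=i[V,A].
\]
A direct computation, writing $V=e_0w^\top+we_0^\top$ with $w=(0,V_{1/2},V_{3/2})$, yields $B=2s\,e_0e_0^\top-(wa^\top+aw^\top)$, where $s=w\cdot a=\frac{V_{1/2}^2}{\omega_{1/2}}+\frac{V_{3/2}^2}{\omega_{3/2}}$. The sign convention should be checked carefully at this point.

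The left-hand side is the derivative of a bounded function. Averaging over $[0,T]$ and letting $T\to\infty$ therefore kills it. Using $\mathrm{Tr}(\rho^\infty V)=-\Phi^\infty/\kappa$, which is constant, I obtain
\[
\frac{\Phi^\infty}{\kappa}=\frac{\Phi^\infty}{\hbar}\,\lim_{T\to\infty}\frac1T\int_0^T\mathrm{Tr}(\rho^\infty B)\ud t .
\]
If $\Phi^\infty\neq0$, this forces $\hbar/\kappa=\langle\mathrm{Tr}(\rho^\infty B)\rangle$, where $\langle\cdot\rangle$ denotes the long-time average. Because $\rho^\infty$ is a density matrix, $\mathrm{Tr}(\rho^\infty B)\le\lambda_{\max}(B)$. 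The contradiction with \eqref{small} then follows as soon as $\lambda_{\max}(B)\le 2s$, since \eqref{small} reads exactly $2\kappa s/\hbar<1$.

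The main obstacle is the spectral bound on $B$. The $e_0$ direction contributes $2s$. The excited block, however, is $-(wa^\top+aw^\top)$, whose largest eigenvalue is $|w||a|-s\ge0$, and this need not be below $2s$ in general. If the crude bound fails, I would replace $\lambda_{\max}(B)$ by a finer estimate of $\mathrm{Tr}(\rho B)$ in the spirit of the proof of Proposition~\ref{prop:ground_state_fixed_spectrum}. Specifically, I would write $\mathrm{Tr}(\rho B)=2s\rho_0-2\mathrm{Re}\big(\sum_{j,k\ge1}w_ja_k\rho_{k,j}\big)$ and control the excited coherences through the positivity of $\rho$. Alternatively, I would add to $A$ a correction supported on the excited block, chosen to cancel that part of $B$, at the price of a further (resonance-free, since $\omega_X\neq0$) homological equation $i[\Omega,A_1]=\cdot$. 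Should both fail, a fallback is to use the constraint $v_\omega\cdot p^\infty=0$ differentiated twice, $\frac{\ud}{\ud t}(v_\omega\cdot p^\infty)=0$, which gives an additional linear relation among $q^\infty$, $\xi^\infty$ and $\Xi^\infty$ proportional to $\Phi^\infty$. That relation can be averaged in the same way. Once $\Phi^\infty=0$ is established, the limit system becomes the free rotation and Proposition~\ref{prop_cv} follows.
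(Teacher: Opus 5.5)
There is a genuine gap: the decisive inequality is never proved. Your setup matches the paper's. Your $A$ is $-iK$ for the matrix $K$ of Proposition~\ref{prop:optim}, which solves $[K,\Omega_{\mathrm{free}}]=-V$. Your $B=i[V,A]$ is $-O$. Averaging the virial identity in time is the Ces\`aro device the paper uses to produce $\mathscr X\in\mathrm{Ker}(A^\infty)$.

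\textbf{What fails.} You need $\lim_{T\to\infty}\frac1T\int_0^T\mathrm{Tr}(\rho^\infty B)\ud t<\hbar/\kappa$. The bound by $\lambda_{\max}(B)=\max(2s,|w||a|-s)$ does fail under \eqref{small}. For instance, $\omega_{1/2}=1$, $\omega_{3/2}=100$, $V_{1/2}=0.1$, $V_{3/2}=1$ give $2s=0.04$ but $|w||a|-s\approx0.08$, so any $\hbar/\kappa\in(0.04,0.08)$ satisfies \eqref{small} yet defeats the bound. This crude route only closes under the stronger condition $2\kappa\mathpzc{v}^2/(\hbar\omega_{1/2})<1$, which is exactly what the elementary proof of Lemma~\ref{admker} uses.

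\textbf{Why your fallbacks do not repair it.}
\begin{itemize}
\item No estimate using only positivity of $\rho^\infty(t)$ and $\mathrm{Tr}(\rho V)=-\Phi^\infty/\kappa$ can work. Mix three states: the top eigenvector of $B$ (it lies in the excited block, where $V$ vanishes), $e_0e_0^\top$, and a small amount of any state with $\mathrm{Tr}(\rho V)\neq0$. This gives density matrices with $\mathrm{Tr}(\rho B)=\hbar/\kappa$ and $\mathrm{Tr}(\rho V)\neq0$ as small as you like.
\item The excited-block corrector $A_1$ does turn the $\Phi^\infty$-linear matrix into $\mathrm{diag}(2s,-2V_{1/2}^2/\omega_{1/2},-2V_{3/2}^2/\omega_{3/2})$, whose top eigenvalue is $2s$. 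But it creates an extra term $(\Phi^\infty/\hbar)^2\,\mathrm{Tr}(\rho\,i[V,A_1])$, up to sign. Here $\mathrm{Tr}(\rho\,i[V,A_1])$ is a multiple of $V_{3/2}q_{1/2}-V_{1/2}q_{3/2}$, whose average has no sign and need not vanish, since the kernel vectors $X_1,X_2$ have $q\neq0$.
\item Differentiating $v_\omega\cdot p^\infty=0$ only adds one more linear relation, with no positivity input.
\end{itemize}

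\textbf{The missing idea.} The decisive structure sits in the time average, not in each $\rho^\infty(t)$. Set $H^\infty=\hbar\Omega_{\mathrm{free}}+\Phi^\infty V$. Then $[H^\infty,\frac1T\int_0^T\rho^\infty\ud t]=\frac{i\hbar}{T}(\rho^\infty(T)-\rho^\infty(0))\to0$. So every Ces\`aro limit point $\bar\rho$ is a density matrix commuting with $H^\infty$ and satisfying $\mathrm{Tr}(\bar\rho B)=\hbar/\kappa$; this is the paper's admissible $\mathscr X\in\mathrm{Ker}(A^\infty)$.

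\textbf{How to finish.}
\begin{enumerate}
\item Expand $\bar\rho=\sum_m\pi_m u^{(m)}u^{(m)*}$ in a common orthonormal eigenbasis of $\bar\rho$ and $H^\infty$.
\item Take an eigenpair $(\lambda,u)$ of $H^\infty$ with $u=(u_0,\hat u)$, and set $v=(V_{1/2},V_{3/2})$, $\Omega=\mathrm{diag}(\omega_{1/2},\omega_{3/2})$. The eigen-equations are $\Phi^\infty v^\top\hat u=(\lambda-\hbar\omega_0)u_0$ and $\Phi^\infty u_0 v+\hbar\Omega\hat u=(\lambda-\hbar\omega_0)\hat u$. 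They give $u^*Bu=2\hbar(\hbar\omega_0-\lambda)|u_0|^2/(\Phi^\infty)^2$.
\item Completing the square in the quadratic form of $H^\infty-\hbar\omega_0\mathbb I$ shows it has signature $(2,1)$. Hence exactly one eigenvalue $\lambda_-$ lies below $\hbar\omega_0$, and every other eigenvector gives $u^*Bu\leq0$.
\item For $\lambda_-$, the secular equation yields $u_-^*Bu_-=2\hbar|u_{-,0}|^2\sum_j V_{j+1/2}^2/(\hbar\omega_{j+1/2}+\hbar\omega_0-\lambda_-)\leq 2s$.
\item Hence $\hbar/\kappa=\mathrm{Tr}(\bar\rho B)\leq 2s$, which contradicts \eqref{small}.
\end{enumerate}
This is Proposition~\ref{prop:optim}, which the paper's proof of the lemma invokes through Lemma~\ref{admker}.
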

 
 Let us temporarily assume that $\Phi^\infty=0$.
 Then, $\xi_{1/2}^\infty$ and $\xi_{3/2}^\infty$ are constant,
 $q,p,\Xi$ are oscillating according to  
 \[\begin{array}{ll}
 q_{1/2}=a_{1/2}\cos(\omega_{1/2}t)+b_{1/2}\sin(\omega_{1/2}t),\qquad&
 p_{1/2}=-a_{1/2}\sin(\omega_{1/2}t)+b_{1/2}\cos(\omega_{1/2}t),
 \\
 q_{3/2}=a_{3/2}\cos(\omega_{3/2}t)+b_{3/2}\sin(\omega_{3/2}t),
 \qquad & p_{3/2}=-a_{3/2}\sin(\omega_{3/2}t)+b_{3/2}\cos(\omega_{3/2}t),
 \\q_{X}=a_{X}\cos(\omega_{X}t)+b_{X}\sin(\omega_{X}t),\qquad 
&
 p_{X}=-a_{X}\sin(\omega_{X}t)+b_{X}\cos(\omega_{X}t).
 \end{array}\]
  However, that $t\mapsto v_\omega\cdot p^\infty(t)$ identically vanishes imposes 
  $a_{1/2}=a_{3/2}=b_{1/2}=b_{3/2}=0$, that means $q_{1/2}=q_{3/2}=0$, $p_{1/2}=p_{3/2}=0$.
  Therefore, the asymptotic solution has the form
  \[\begin{pmatrix}
  \rho_0^\infty & 0 & 0 \\ 0 & \rho_1^\infty & r_X^\infty e^{i\omega_X t}
  \\
  0 & r_X^\infty e^{-i\omega_X t}& \rho_2^\infty
  \end{pmatrix}.
  \]
  The limit coefficients have to fulfil the requirements:
  $$ 
 \rho_0^\infty+\rho_1^\infty+\rho_2^\infty=1
 ,
 \qquad
 \rho_0^{\infty2}+\rho_1^{\infty2}+\rho_2^{\infty2}+2r_X^{\infty2}=T_2,
 \\
 \qquad
 0\leq r_{X}^{\infty 2} \leq \rho_1^\infty\rho_2^\infty.
 $$
 In the pure case, this allows us to completely 
 characterize the 
 limit:
 since
 $$1=(\rho_0^\infty+\rho_1^\infty+\rho_2^\infty)^2=
 T_2$$
 we deduce that 
 \[
 2\rho_0^\infty\rho_1^\infty+2\rho_0^\infty\rho_2^\infty+2(\rho_1^\infty\rho_2^\infty
 -r_X^{\infty 2})=0.\]
 This is a sum of nonnegative terms so that they all vanish.
 Then, either $\rho^\infty_1=\rho^\infty_2=0$, $r_X^\infty=0$ and $\rho^\infty_0=1$, or 
 $\rho^\infty_0=0$.
 We disregard the latter case by energetic considerations.
 Indeed, since $\varpi^\nu$ and $\nabla\psi^\nu$ converge  in $C^0([0,T];L^2(\mathbb R^3)-weak)$, we have
 \[
 \mathscr E_{\mathrm{init}}\geq  \mathscr E^\infty=
 \hbar\ds\sum_{j=0}^2\omega_j\rho^\infty_j.\]
 Since $\omega_1\rho+\omega_2(1-\rho)\geq \omega_1$ for any $\rho\in [0,1]$, 
 we conclude that $\rho_0^\infty$ cannot vanish if $\mathscr E_{\mathrm{init}}<\hbar \omega _1 $.

 We are thus left with the task of showing that $\Phi^\infty$ vanishes.
 To this end, we will be led to study $\mathrm{Ker}(A^\infty)$ 
and the following claim will be critical to conclude.

 \begin{lemma}\label{admker}
 Let $\Phi^\infty\neq 0$. Then the kernel of $A^\infty$ is two-dimensional; it is spanned by
 \[\begin{array}{l}
 X_1=\left(
 -\ds\frac{\Phi^\infty}{2\hbar}\ds\frac{V_{3/2}}{\omega_X},
 \ds\frac{\hbar}{2\Phi^\infty}\ds\frac{1}{V_{3/2}}\Big(\omega_{3/2}-\ds\frac{\Phi^{\infty 2}}{\hbar^2}\ds\frac{V_{1/2}^2}{\omega_X}\Big)
 ,0,1,0,0,
 \ds\frac{\Phi^\infty}{\hbar}\ds\frac{V_{1/2}}{\omega_X},0
 \right)
 ,\\
 X_2=\left(
 \ds\frac{\hbar}{2\Phi^\infty}\ds\frac{1}{V_{1/2}}\Big(\omega_{1/2}+\ds\frac{\Phi^{\infty 2}}{\hbar^2}\ds\frac{V_{3/2}^2}{\omega_X}\Big),
 \ds\frac{\Phi^\infty}{2\hbar}\ds\frac{V_{1/2}}{\omega_X}
 ,1,0,0,0
- \ds\frac{\Phi^\infty}{\hbar}\ds\frac{V_{3/2}}{\omega_X},0
 \right)
.\end{array}\]
In particular any $X\in \mathrm{Ker}(A^\infty)$ satisfies $v_\omega\cdot p=0$.
 If \eqref{small} holds, 
 then $\mathrm{Ker}(A^\infty)$ does not contain an admissible vector such that $v\cdot q
 =-\frac{\Phi^\infty}{\kappa}$. \end{lemma}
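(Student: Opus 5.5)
The proof has three steps: an explicit computation of $\mathrm{Ker}(A^\infty)$, the observation that every kernel vector has $p=0$, and a self-consistency argument showing that $\Phi^\infty=-\kappa\, v\cdot q$ is incompatible with \eqref{small} on admissible kernel vectors. Throughout I write $\varepsilon=\Phi^\infty/\hbar\neq0$ and assume $V_{1/2}V_{3/2}\neq0$, which the formulas for $X_1,X_2$ implicitly require. For the kernel I solve $A^\infty X=0$ row by row.
\begin{itemize}
\item The first two rows form the system $2V_{1/2}p_{1/2}+V_{3/2}p_{3/2}=0$, $V_{1/2}p_{1/2}+2V_{3/2}p_{3/2}=0$. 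Its determinant is $3V_{1/2}V_{3/2}\neq0$, so $p_{1/2}=p_{3/2}=0$.
\item The third row then gives $\varepsilon V_{3/2}p_X=0$, hence $p_X=0$, and the seventh row is automatically satisfied.
\item The remaining rows 5, 6 and 8 read
\[
2\varepsilon V_{1/2}\xi_{1/2}=\omega_{1/2}q_{1/2}-\varepsilon V_{3/2}q_X,\qquad
2\varepsilon V_{3/2}\xi_{3/2}=\omega_{3/2}q_{3/2}-\varepsilon V_{1/2}q_X,\qquad
\omega_X q_X=\varepsilon(V_{1/2}q_{3/2}-V_{3/2}q_{1/2}).
\]
\end{itemize}
These determine $q_X$, $\xi_{1/2}$, $\xi_{3/2}$ uniquely in terms of the free parameters $(q_{1/2},q_{3/2})$. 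The kernel is therefore two-dimensional, and choosing $(q_{1/2},q_{3/2})=(0,1)$ and $(1,0)$ reproduces $X_1$ and $X_2$ (a routine check). Since $p=0$ on the kernel, $v_\omega\cdot p=0$ there as well.

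For the last assertion, I suppose that $X\in\mathrm{Ker}(A^\infty)$ is admissible with $v\cdot q=-\Phi^\infty/\kappa$, i.e. $\varepsilon=-\kappa\, v\cdot q/\hbar$. In particular $v\cdot q\neq0$. I divide the first two kernel relations by $\omega_{1/2}$ and $\omega_{3/2}$, multiply them by $V_{1/2}$ and $V_{3/2}$, and add. This gives
\[
v\cdot q=\varepsilon\Big[\,2\frac{V_{1/2}^2}{\omega_{1/2}}\xi_{1/2}+2\frac{V_{3/2}^2}{\omega_{3/2}}\xi_{3/2}
+V_{1/2}V_{3/2}\Big(\frac1{\omega_{1/2}}+\frac1{\omega_{3/2}}\Big)q_X\Big]=:\varepsilon\,B(X).
\]
Substituting $\varepsilon=-\kappa\, v\cdot q/\hbar$ and dividing by $v\cdot q$ yields the self-consistency identity $B(X)=-\hbar/\kappa$.

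The contradiction then comes from a lower bound $B(X)>-\hbar/\kappa$ valid for every admissible $X$ under \eqref{small}. The diagonal part is easy. By Definition~\ref{AdmX}, $\xi_j=\rho_j-\rho_0\ge-\rho_0\geq-1$, so that part is at least $-2\rho_0\big(V_{1/2}^2/\omega_{1/2}+V_{3/2}^2/\omega_{3/2}\big)$. If the cross term were absent, \eqref{small} would give $B(X)>-\hbar/\kappa$ directly.

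The main obstacle is the cross term in $q_X=2\,\mathrm{Re}\,\rho_X$. My first attempt is to absorb it using admissibility, namely $|q_X|\le 2\sqrt{\rho_1\rho_2}$ together with the positivity of the excited $2\times2$ block. I would then try to bound the whole bracket from below by $-2\big(V_{1/2}^2/\omega_{1/2}+V_{3/2}^2/\omega_{3/2}\big)$, using $\rho_0+\rho_1+\rho_2=1$ and the positive parts $\rho_1,\rho_2$ in $\xi_j$ to dominate $|V_{1/2}V_{3/2}|\big(\omega_{1/2}^{-1}+\omega_{3/2}^{-1}\big)\sqrt{\rho_1\rho_2}$ via Young's inequality.

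If this direct estimate proves too lossy, I will use a structural fallback. A kernel vector corresponds to a density matrix that is stationary for the frozen Liouville equation, so $[\Omega+\varepsilon V,\rho]=0$. Then $\rho$ is diagonal in an eigenbasis of $\Omega+\varepsilon V$, and $\Phi^\infty=-\kappa\,\mathrm{Tr}(\rho V)$ states that $\rho$ is a critical point of the reduced energy \eqref{reduced_energy_fixed_spectrum} on its isospectral orbit. The strict coercivity estimate established in the proof of Proposition~\ref{prop:ground_state_fixed_spectrum} under \eqref{small} shows that the only critical point is the diagonal one. That point has $\mathrm{Tr}(\rho V)=0$, hence $\Phi^\infty=0$, which contradicts $\Phi^\infty\neq0$.
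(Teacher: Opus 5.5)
Your computation of $\mathrm{Ker}(A^\infty)$ is correct and is the paper's ``direct inspection''; you even get the stronger facts $p=0$, $p_X=0$ on the kernel. Your identity $B(X)=-\hbar/\kappa$ is exactly the paper's relation $\mathrm{Tr}(\rho O)=-\hbar/\kappa$, with $O$ as in \eqref{defO}. The gap is the final contradiction under the sharp condition \eqref{small}.

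\textbf{The first route cannot reach the sharp threshold.} It uses only admissibility. Over all density matrices, the infimum of $B(X)=\mathrm{Tr}(\rho O)$ is $\lambda_{\min}(O)$. The excited block of $O$ is $xv^\top+vx^\top$ with $x=\mathrm{diag}(\omega_{1/2},\omega_{3/2})^{-1}v$, and its lowest eigenvalue is $x\cdot v-|x||v|$. This lies below $-2x\cdot v=-2\big(V_{1/2}^2/\omega_{1/2}+V_{3/2}^2/\omega_{3/2}\big)$ as soon as $|x||v|>3\,x\cdot v$.

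For example, take $\omega_{1/2}=1$, $\omega_{3/2}=1000$, $V_{1/2}=0.1$, $V_{3/2}=1$. Then $2x\cdot v=0.022$, while $x\cdot v-|x||v|\approx-0.0895$. So the lower bound you aim for fails for the projector onto that eigenvector, which is a density matrix with $\rho_0=0$. Now choose $\hbar/\kappa=0.05$: \eqref{small} still holds, since $2\kappa\, x\cdot v/\hbar=0.44$. Yet a convex combination of that projector and $e_0e_0^\top$ is admissible and satisfies $B(X)=-\hbar/\kappa$. Your Young-inequality scheme therefore works only when the two Bohr frequencies are not too far apart. Like the paper's crude bound $|\mathrm{Tr}(\rho O)|\le\|O\|$, it yields only a suboptimal criterion such as \eqref{subopt}. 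Any sharp proof must use the remaining kernel relations, equivalently $[\hbar\Omega_{\mathrm{free}}+\Phi^\infty V,\rho]=0$.

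\textbf{The fallback has the right reformulation but an invalid last step.} Kernel plus self-consistency is indeed the same as criticality of $\mathscr E_{\rm red}$ on the isospectral orbit. However, the estimate from Proposition~\ref{prop:ground_state_fixed_spectrum} is a statement about values: it shows that $\rho_{\min}$ is the unique global minimizer, and says nothing about other critical points. Such points necessarily exist on the compact orbit; for instance $\mathrm{diag}(\lambda_2,\lambda_1,\lambda_0)$ is one. So ``the only critical point is the diagonal one'' is false. The real claim, that no critical point has $\mathrm{Tr}(\rho V)\neq0$, is exactly what remains to be proved.

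\textbf{The missing idea (Proposition~\ref{prop:optim}).}
\begin{enumerate}
\item Expand $\rho=\sum_m\uppi_m u^{(m)}u^{(m)*}$ in a common orthonormal eigenbasis of $H=\hbar\Omega_{\mathrm{free}}+\Phi^\infty V$.
\item From the eigen-equations, compute $u^*Ou=2\hbar(\lambda-\hbar\omega_0)|u_0|^2/\Phi^{\infty 2}$ for each eigenpair $(\lambda,u)$.
\item By Sylvester's law of inertia, exactly one eigenvalue $\lambda_-$ of $H$ lies below $\hbar\omega_0$, so all other terms have $u^*Ou\ge0$.
\item Hence $-\mathrm{Tr}(\rho O)\le -u_-^*Ou_-=2\hbar|u_{-0}|^2\,v^\top\big(\hbar\Omega+(\hbar\omega_0-\lambda_-)\mathbb I\big)^{-1}v\le 2v^\top\Omega^{-1}v$.
\end{enumerate}
This is incompatible with $\mathrm{Tr}(\rho O)=-\hbar/\kappa$ under \eqref{small}.
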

 
 \noindent
 {\bf Proof.}
 The eigenspace $\mathrm{Ker}(A^\infty)$ is found by direct inspection.
 The eigenspace $\mathrm{Ker}(A^\infty)$ is two-dimensional and its elements naturally fulfil the constraint $v_\omega\cdot p=0$.
 However, we are going to show that this eigenspace cannot contain admissible  vectors verifying  $v\cdot q
 =-\frac{\Phi^\infty}{\kappa}\neq 0$.
 Here, we detail the proof assuming the suboptimal condition 
 \begin{equation}\label{subopt}\ds\frac{2\kappa \mathpzc{v}^2}
{\hbar \omega_{1/2}}<1
\end{equation}
instead of \eqref{small}, in order to keep a quite elementary discussion.
 That the result holds with the sharp condition \eqref{small} is fully detailed in Proposition~\ref{prop:optim}, in a more general framework.
The components of the eigenvectors satisfy
\[\begin{array}{l}
\omega_{1/2}q_{1/2}=-\ds\frac{2\Phi^\infty}{\hbar}V_{1/2}\xi_{1/2}-\ds\frac{\Phi^\infty}{\hbar }V_{3/2}q_X,
\\[.4cm]
\omega_{3/2}q_{3/2}=-\ds\frac{2\Phi^\infty}{\hbar}V_{3/2}\xi_{3/2}-\ds\frac{\Phi^\infty}{\hbar }V_{1/2}q_X,
\end{array}
\]
Therefore, we get
\[
\mathrm{Tr}(\rho V)=
\mathpzc{v}\cdot q=
-\ds\frac{\Phi^\infty}{\kappa}
=
-\ds\frac{\Phi^\infty}{\hbar}
\left(\ds\frac{2V_{1/2}^2\xi_{1/2}}{\omega_{1/2}}
+\ds\frac{2V_{3/2}^2\xi_{3/2}}{\omega_{3/2}}
+V_{1/2}V_{3/2}
\left(\ds\frac{1}{\omega_{1/2}}+\ds\frac{1}{\omega_{3/2}}\right)
q_X\right).
\]
If $\Phi^\infty\neq 0$, we interpret this relation as
\[
\ds\frac{\hbar}{\kappa}=-
\mathrm{Tr}(\rho O)
\]
where 
\begin{equation}\label{defO}
O
=\begin{pmatrix}
-2V_{1/2}^2/\omega_{1/2}-2V_{3/2}^2/\omega_{3/2} & 0 & 0
\\
0 & 2V_{1/2}^2/\omega_{1/2}& V_{1/2}V_{3/2}(1/\omega_{1/2}+1/\omega_{3/2})
\\
0& V_{1/2}V_{3/2}(1/\omega_{1/2}+1/\omega_{3/2})& 2V_{3/2}^2/\omega_{3/2}
\end{pmatrix}.\end{equation}
Since $O$ is symmetric we have 
$-\|O\|\mathbb I\leq O\leq \|O\|\mathbb I$ and since 
$\rho$ is a density matrix, we deduce that
\[
|\mathrm{Tr}(\rho O)|\leq \|O\|
\leq \ds\frac{2\mathpzc{v}^2}{\omega_{1/2}}.\]
which contradicts \eqref{subopt}.
\QED

 \noindent
 {\bf Proof of Lemma~\ref{Phi0}.}
 We argue by contradiction, assuming that $\Phi^\infty\neq 0$.  The solution $t\mapsto X^\infty(t)$ of the system \eqref{syslininf} inherits the admissibility properties and bounds from the original vector $t\mapsto X(t)$.
 We have seen that 
 $t\mapsto v\cdot q^\infty(t)$ does not depend on time and by construction it coincides with $-\frac{\Phi^\infty}{\kappa}$.
 
  Let $T>0$ and consider the mean value
 \[\langle X^\infty\rangle _T=\ds\frac{1}{T}\ds\int_0^T X^\infty (s)\ud s.\] 
 We have
 \begin{equation}\label{lim0}
 A^\infty \langle X^\infty\rangle _T=\langle \dot X^\infty\rangle _T=\ds\frac{1}{T}\ds\int_0^T A^\infty X^\infty (s)\ud s
 =\ds\frac{1}{T}\ds\int_0^T \dot X^\infty (s)\ud s=\ds\frac{X^\infty(T)-X^\infty(0)}{T}
 \xrightarrow [T\to \infty]{} 0\end{equation}
 since $\{X^\infty(T)
, T>0\}$ is a bounded set.
We can consider an increasing sequence of positive numbers $\big(T_n\big)_{n\in \mathbb N}$ such that  $$\ds\lim_{n\to \infty} T_n=+\infty\quad \textrm{ and  } \quad  
\ds\lim_{n\to \infty} \langle X^\infty\rangle _{T_n}=\mathscr X.$$
As a matter of fact, the set of admissible vectors is convex and the average procedure is convex, so that 
$\mathscr X=(\xi^{\mathscr X}, q^{\mathscr X}, p^{\mathscr X},\Xi^{\mathscr X})$ is still admissible.
 Moreover, \eqref{lim0} implies $A^\infty \mathscr X=0$ and
 we have 
 \[
 v\cdot q^{\mathscr X}=\ds\lim_{n\to \infty} \ds\frac{1}{T_n}\ds\int_0^{T_n} v\cdot q^\infty(s)
\ud s= -\ds\frac{\Phi^\infty}{\kappa}\]
which is supposed to be $\neq 0$, contradicting Lemma~\ref{admker}.
\QED

In general,  fading of $q_X,p_X$ is  false, as shown by the following counter example:
pick $a,b\geq0$, with $a+b\le1$, and $r>0$ with $|r|^2\leq ab$
and set
\[
\rho(t)=
\begin{pmatrix}
1-a-b&0&0\\
0&a&r e^{i\omega_Xt}\\
0& r e^{-i\omega_Xt}&b
\end{pmatrix},
\qquad \psi(t,z)=0.
\]
We claim that this is an exact solution of \eqref{Vsystem}
where $\rho_X$ has a constant nonzero amplitude.
Nevertheless, the following property
completes Theorem~\ref{th_cv}.

\begin{lemma}\label{lem:cesaro}
In the conditions of Theorem~\ref{th_cv}, 
the coherence $\rho_X$ has a vanishing Cesaro mean:
\[
\ds\lim_{T\to \infty}\left(\frac1T\int_0^T\rho_X(t)\ud t\right)=0.
\]
\end{lemma}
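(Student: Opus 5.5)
We argue by contradiction and use the $\omega$-limit machinery already set up before Proposition~\ref{prop_cv}. Suppose the Cesàro means $\frac1T\int_0^T\rho_X\ud t$ do not tend to $0$. Then there is a sequence $T_m\to\infty$ along which $\frac1{T_m}\int_0^{T_m}\rho_X\ud t\to \ell\neq0$.

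The natural tool is the equation for $\rho_X$ in \eqref{Vsystem}:
\[
\dot\rho_X=i\omega_X\rho_X-\frac i\hbar\Phi\,(V_{1/2}\rho_{3/2}-V_{3/2}\rho_{1/2}^*).
\]
Integrating over $[0,T]$ and dividing by $T$ gives
\[
i\omega_X\,\frac1T\int_0^T\rho_X\ud t=\frac{\rho_X(T)-\rho_X(0)}{T}+\frac{i}{\hbar T}\int_0^T\Phi\,(V_{1/2}\rho_{3/2}-V_{3/2}\rho_{1/2}^*)\ud t .
\]
The first term is $O(1/T)$, since $|\rho_X|\le1$. Because $\omega_X=\omega_2-\omega_1>0$ by \eqref{omega}, it suffices to show that the Cesàro mean of $\Phi\,(V_{1/2}\rho_{3/2}-V_{3/2}\rho_{1/2}^*)$ tends to $0$.

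For that, Theorem~\ref{th_cv} gives $q,p\to0$, i.e.\ $\rho_{1/2},\rho_{3/2}\to0$ as $t\to\infty$. It remains to check that $\Phi$ is bounded on $[0,\infty)$. This follows from Lemma~\ref{lem:estim}: $|\Phi|=|\int\sigma\psi|\le\sqrt\kappa\,\|\nabla\psi\|_{L^2}$, which is uniformly bounded. The product $\Phi(V_{1/2}\rho_{3/2}-V_{3/2}\rho_{1/2}^*)$ is therefore a bounded function tending to $0$, and its Cesàro mean tends to $0$ by the elementary fact that Cesàro means of a function converging to $0$ converge to $0$. Hence $\frac1T\int_0^T\rho_X\ud t\to0$ directly, with no subsequence and no contradiction argument needed.

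The only substantive input is the convergence $q,p\to0$ from Theorem~\ref{th_cv}, which rests on Proposition~\ref{prop_cv} and Lemma~\ref{Phi0}, together with the boundedness of $\Phi$. The step to double-check is that the convergence in Theorem~\ref{th_cv} is for the whole trajectory and not just along subsequences. Since every sequence $t^\nu\to\infty$ admits a subsequence along which $(q,p)(t+t^\nu)\to0$ uniformly on compacts, the full limit holds, so this is fine. The key structural point is $\omega_X\neq0$: the argument is exactly a non-resonance averaging, and it would fail for degenerate excited levels.
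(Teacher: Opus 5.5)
Your argument is correct and is essentially the paper's own proof: integrate the $\rho_X$ equation over $[0,T]$, divide by $i\omega_X T$, and use the boundedness of $\rho_X$ and $\Phi$ together with $\rho_{1/2},\rho_{3/2}\to0$ to kill both terms. The contradiction framing in your first paragraph is superfluous, as you yourself note, and can simply be dropped.
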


\noindent
{\bf Proof.} 
This is a consequence of 
\[
\dot\rho_X
=i\omega_X\rho_X-\frac{i}{\hbar}\Phi
\left(
V_{1/2}\rho_{3/2}
-V_{3/2}\rho_{1/2}^*\right),
\]
where we can combine  
\[
\rho_{1/2}(t),\rho_{3/2}(t)\xrightarrow[t\to \infty]{}0
\]
and the boundedness of $\Phi$.
Integrating  between $0$ and $T$ gives
\[
\frac1T\int_0^T\rho_X(t)\ud t
=\frac{\rho_X(T)-\rho_X(0)}{i\omega_XT}
-\frac1{i\omega_XT}\int_0^T F(t)\ud t
,\]
where $F(t)=\frac{i}{\hbar}\Phi
(
V_{1/2}\rho_{3/2}
-V_{3/2}\rho_{1/2}^*)\to0$ as $t$ runs to $\infty$, which allows us to conclude.\QED

\subsection{Exponential rate of convergence to equilibrium (pure state)}

In this Section we establish, once knowing that the unknown $X$ admits a limit as $t\to \infty$, 
that the convergence holds with exponential rate, possibly at the price of 
introducing further assumptions 
on the coupling parameters. 
The proof thus covers the case of pure
 states; for mixed states
the unique identification of the limit is missing, see further comments in Section~\ref{sec:comm}.

The damping phenomena appears more neatly by working with the variables 
\[\begin{pmatrix}
y
\\
z
\end{pmatrix}=\mathscr V
\begin{pmatrix}
q_{1/2}
\\
q_{3/2}
\end{pmatrix},\qquad
\begin{pmatrix}
\eta
\\
\zeta
\end{pmatrix}=\mathscr V
\begin{pmatrix}
p_{1/2}
\\
p_{3/2}
\end{pmatrix},\qquad \mathscr V=\begin{pmatrix}
V_{1/2} & V_{3/2}
\\
V_{3/2}& -V_{1/2}
\end{pmatrix}
,\qquad 
\mathscr V^{-1}=\ds\frac{\mathscr V}{\mathpzc{v}^2}.
\]
As already observed above, the variable $y$ is the one that drives the coupling with the environment, since we have 
$(\partial^2_t-c^2\Delta)\psi(t,z)  =-c^2\sigma(z) y(t)$, and thus the self consistent potential
$\Phi_S$ depends only on the component $y$
\[
\Phi_S(t)=-\ds\int_0^t k_c(t-s)y(s)\ud s.\]
For future use, we denote
\[
a=\ds\frac{\omega_{1/2}V_{1/2}^2 +\omega_{3/2}V_{3/2}^2}{\mathpzc{v}^2},\qquad
b=\ds\frac{(\omega_{3/2}-\omega_{1/2})V_{1/2}V_{3/2}}{\mathpzc{v}^2},
\qquad
d=\ds\frac{\omega_{1/2}V_{3/2}^2 +\omega_{3/2}V_{1/2}^2}{\mathpzc{v}^2}
\]
which are all positive coefficients,
and
\[
\ell=\Big(0,1,0, -\ds\frac ba\Big),\qquad
\widetilde \ell_c=-\kappa e_0-\ds\frac{\upgamma a}{c}\ell.\]
We rewrite the self-consistent part by using integration by parts
\begin{equation}\label{PhiS}
\begin{array}{lll}
\Phi_S(t)&=&
-\left(\ds\int_0^t k_c(t-s)\ud s\right) y(t)
+
\left(\ds\int_0^t k_c(t-s)(t-s)\ud s\right) \dot y(t)
\\[.3cm]&& -\ds\int_0^t k_c(t-s)\left(\ds\int_s^t (\tau-s) \ddot y(\tau)\ud \tau\right)\ud s
\\
[.3cm]
&=&
-\kappa y(t) + \ds\frac\upgamma{c} \dot y(t)+\mathcal R(t) 
\end{array}\end{equation}
with the remainder
\begin{equation}\label{def_R}\begin{array}{lll}
\mathcal R(t)&=&-\left(\ds\int_0^t k_c(s)\ud s-\kappa\right) y(t)
+ \left(\ds\int_0^t s k_c(s)\ud s- \ds\frac\upgamma{c} \right) \dot y(t)
\\&&-\ds\int_0^t k_c(t-s)\left(\ds\int_s^t (\tau-s) \ddot y(\tau)\ud \tau\right)\ud s
\\
&=&-\left(\ds\int_t^\infty k_c(s)\ud s \right)y(t)
+ \left(\ds\int_t^\infty s k_c(s)\ud s\right) \dot y(t)
-\ds\int_0^t k_c(t-s)\left(\ds\int_s^t (\tau-s) \ddot y(\tau)\ud \tau\right)\ud s
.\end{array}\end{equation}
By virtue of Lemma~\ref{lem_AV}, the first two terms vanish for $t\geq \frac{2R_*}{c}$.
Now, from \eqref{Vsystembis} we obtain
\begin{equation}\label{mod_system3}
\begin{array}{lll}
\ds\frac{\ud}{\ud t}
\begin{pmatrix}
y\\\eta\\z\\\zeta
\end{pmatrix}
&=&
\begin{pmatrix}
0 & -a & 0 & b
\\
a& 0 & -b & 0
\\
0 & b & 0 &-d
\\
-b & 0 & d& 0
\end{pmatrix}
\begin{pmatrix}
y\\\eta\\z\\\zeta
\end{pmatrix}-
\ds\frac{2\Phi}\hbar 
\begin{pmatrix}
0\\ V^2_{1/2}\xi_{1/2}+V^2_{3/2}\xi_{3/2}
\\0
\\
V_{1/2}V_{3/2}(\xi_{1/2}-\xi_{3/2})
\end{pmatrix}
-\ds\frac{\Phi}{\hbar}
\begin{pmatrix}
0\\  
2V_{1/2}V_{3/2}q_X
\\
\mathpzc{v}^2p_X
\\
(V_{3/2}^2-V^2_{1/2})q_X
\end{pmatrix}.
\end{array}
\end{equation}
Now, coming back to \eqref{PhiS}, the potential recasts as
\[
\Phi(t)=-\kappa y(t)+\ds\frac\upgamma{c}(-a\eta(t)+b\zeta(t)) +\mathcal R(t)+\Phi_I(t)
=
\widetilde\ell_c\cdot Y(t) +\mathcal R(t)+\Phi_I(t).\]
The last term in \eqref{mod_system3} has the compact form
\[
-\ds\frac{\widetilde \ell_c\cdot Y(t)+\mathcal R(t)+\Phi_I(t)}\hbar B\Xi(t)
,
\]
with the $2\times 4$ matrix
\[
B=\begin{pmatrix} 0 & 0
\\
2V_{1/2}V_{3/2}& 0
\\
0 & \mathpzc{v}^2\\
V_{3/2}^2-V_{1/2}^2& 0\end{pmatrix}.
\]
The components $(q_X,p_X)$ can be expressed by means  of the Duhamel formula
\[
\Xi(t)=e^{\Omega_X t}\Xi(0)
+\ds\frac1\hbar \ds\int_0^t e^{\Omega_X(t-s)} \Phi(s) 
\begin{pmatrix}
2\frac{V_{1/2}V_{3/2}}{\mathpzc{v}^2}\eta(s)+ \frac{V^2_{3/2}-V^2_{1/2}}{\mathpzc{v}^2}\zeta(s)
 \\
 -z(s)
\end{pmatrix}\ud s,\]
where 
\begin{equation}\label{def_OmegaX}
\Omega_X=\begin{pmatrix}0 & -\omega_X\\\omega_X& 0\end{pmatrix},\qquad
e^{\Omega_X t}=\begin{pmatrix}\cos(\omega_X t) & - \sin(\omega_X t)
\\
\sin(\omega_X t) & \cos(\omega_X t)
\end{pmatrix}.\end{equation}
They are completely determined by means of $Y$.

Finally, we write \eqref{mod_system3} as follows
\begin{equation}\label{mod_system4}\begin{array}{lll}
\ds\frac{\ud}{\ud t}
\begin{pmatrix}
y\\\eta\\z\\\zeta
\end{pmatrix}
&=&
\underbrace{\begin{pmatrix}
0 & -a & 0 & b
\\
a-\tilde\kappa& -g  & -b & g\frac{b}{a} 
\\
0 & b & 0 &-d
\\
-b & 0 & d& 0
\end{pmatrix}}_{:=A} 
\underbrace{\begin{pmatrix}
y\\\eta\\z\\\zeta
\end{pmatrix}}_{:=Y}
+
\underbrace{\begin{pmatrix}
0\\ \widetilde{ \mathcal R}
\\0
\\
\widetilde{\mathcal S}
\end{pmatrix} 
-\ds\frac{(\tilde\ell_c\cdot Y+\mathcal R+\Phi_I)}\hbar B\Xi
}_{:=S}
\end{array}
\end{equation}
where we have set
\begin{equation}\label{coef}
\tilde\kappa=\frac{2\kappa \mathpzc{v}^2}{\hbar },\qquad 
g=\frac{2\upgamma a\mathpzc{v}^2}{\hbar c},
\end{equation}
and
\[\begin{array}{l}
\widetilde{ \mathcal R}= 
-\ds\frac{V^2_{1/2}\xi_{1/2}+V^2_{3/2}\xi_{3/2}}{\hbar}\left(\mathcal R+\Phi_I 
\right)
-\ds\frac{V^2_{1/2}(1+\xi_{1/2})+V^2_{3/2}(1+\xi_{3/2})}{\hbar}
\widetilde \ell_c\cdot Y
,
\\[.4cm]
\widetilde{\mathcal S}=-\ds\frac{V_{1/2}V_{3/2}}\hbar (\xi_{1/2}-\xi_{3/2})(\tilde\ell_c\cdot Y+\mathcal R+\Phi_I).
\end{array}\]
We remind the reader that $\widetilde \ell_c\cdot Y=-\kappa y
-\frac{a\upgamma}{ c}\eta
+\frac{b\upgamma}{ c}\zeta
$ and $\Phi_I,\mathcal R$ are defined in \eqref{defPhiIS} and \eqref{def_R} respectively.
Even if its evolution remains coupled to  other variables,  this formulation 
makes a damping appear on the quantity  $\eta$.
 We will see that not only $\eta $ is effectively damped, but the damping propagates to the other variables, which is far from clear at first sight. 

\subsubsection{Analysis of the linear problem \texorpdfstring{$\dot Y=AY$}{Y'=AY}}

The goal of this Section is to investigate the spectral 
properties of the matrix $A$ and more precisely to establish the following decay estimate.

\begin{proposition}\label{estim_expA}
There exists $K_*,\lambda_*,c_*>0$ such that for any $c\geq c_*$, 
we have
$\|e^{At}\|\leq K_*e^{-\lambda_*t/c}$.
\end{proposition}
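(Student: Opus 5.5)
We view $A=A_0+\tilde\kappa\,E-\frac1c G$, where
\[
A_0=\begin{pmatrix}0&-a&0&b\\ a&0&-b&0\\ 0&b&0&-d\\ -b&0&d&0\end{pmatrix}
\]
is skew-symmetric, $E$ carries the entry $-1$ at position $(2,1)$, and $\frac1c G$ collects the damping entries $-g$, $g b/a$ in the second row (recall $g=\mathscr O(1/c)$). The plan is a Lyapunov (hypocoercivity) construction in the spirit of the Kawashima--Shizuta and Kalman/PBH approach announced in the introduction. The $c$-independent part $A_c^0:=A_0+\tilde\kappa E$ is conservative: it derives from the quadratic Hamiltonian obtained from the linearized reduced energy, and the weak coupling assumption \eqref{small} is exactly what makes this energy positive definite. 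More precisely, we look for a symmetric positive definite matrix $S_0$, independent of $c$, such that $S_0A_c^0$ is skew-symmetric. Natural candidates are weights that replace $a$ in the first diagonal slot by $a-\tilde\kappa$. The positivity of $S_0$ should reduce to $a-\tilde\kappa$ times the remaining determinant being positive, i.e. to $\tilde\kappa<a-b^2/d$. Using $ad-b^2=\omega_{1/2}\omega_{3/2}$, this condition is equivalent to \eqref{small}. Therefore $A_c^0$ is diagonalizable with purely imaginary eigenvalues, and $e^{A_c^0t}$ is uniformly bounded.

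Next, we compute the energy dissipation: $\frac{\ud}{\ud t}\, Y\cdot S_0Y = -\frac1c\, Y\cdot (S_0G+G^\top S_0)Y$. Since $G$ only acts through the second row and the damped combination is $\eta-\frac ba\zeta$ (the vector $\ell$), the symmetric part should be $-\frac{g}{c}|\ell\cdot Y|^2$ times a positive weight, up to an adjustment of $S_0$. One must check this is indeed a nonpositive rank-one form, which is where the specific choice $\ell=(0,1,0,-b/a)$ enters; if it is not exactly rank one, the $\mathscr O(1/c^2)$ defect will be absorbed at the end. To propagate the dissipation to all components, we verify the Kalman rank condition for the pair $(A_c^0,\ell)$: the vectors $\ell,\ \ell A_c^0,\ \ell (A_c^0)^2,\ \ell (A_c^0)^3$ must span $\mathbb R^4$. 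Equivalently (PBH), no eigenvector of $A_c^0$ lies in $\ell^\perp$. Using that $\omega_{1/2}\neq\omega_{3/2}$ and $V_{1/2}V_{3/2}\neq0$ (so $b\ne0$), this should reduce to checking that a $4\times4$ determinant is a nonzero polynomial in $(a,b,d,\tilde\kappa)$.

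Then we build the corrected functional $\mathcal L=Y\cdot S_0Y+\frac{\epsilon}{c}Y\cdot KY$, where $K=\sum_{j}\theta_j\big((\ell A_0^{j})^\top (\ell A_0^{j+1})+\text{sym.}\big)$ is the standard hypocoercive correction. Its derivative produces $-\frac{\epsilon}{c}\sum_j|\ell (A_c^0)^{j+1}Y|^2$ plus cross terms. With geometrically decreasing $\theta_j$ and Young's inequality, these cross terms are controlled by the dissipation $\frac gc|\ell\cdot Y|^2$ and by $\mathscr O(\epsilon/c^2)$ remainders. Kalman's condition gives $\sum_{j=0}^3|\ell (A_c^0)^jY|^2\ge \mu|Y|^2$, hence $\frac{\ud}{\ud t}\mathcal L\le -\frac{\lambda}{c}\mathcal L$ once $\epsilon$ is small and $c\ge c_*$. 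Since $\mathcal L$ is equivalent to $|Y|^2$ uniformly in $c$, integrating this inequality yields $\|e^{At}\|\le K_*e^{-\lambda_*t/c}$.

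The main obstacle is the bookkeeping that makes all constants uniform in $c$. The damping and the corrector are both of order $1/c$, so the corrector must be scaled by $\epsilon/c$ rather than $\epsilon$, and one must check that no term of size $1/c$ without a good sign survives. An alternative we keep as backup is a perturbative argument: the eigenvalues of $A_c^0$ are simple and purely imaginary (this needs checking, though generically true), so first-order perturbation shifts each eigenvalue by $-\frac1c\,(\text{positive})$, the positivity coming from PBH. The only remaining care is if two eigenvalues collide, in which case the Lyapunov route above is safer.
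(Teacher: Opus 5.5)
Your overall route is the paper's second one: a conservative symmetrizer, a rank-one dissipation and a Kalman/PBH check. You replace the citation of Tucsnak--Weiss by an explicit Kalman-type corrector scaled by $\epsilon/c$, and your backup is the paper's direct spectral route.

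\textbf{The gap.} You never show that the damping is dissipative in the metric $S_0$, and everything rests on that step. Write the damping part as $-\frac1cG=-gD$ with $D=e_1\ell^\top$ and $e_1=(0,1,0,0)$. Then $S_0D+D^\top S_0=u\ell^\top+\ell u^\top$ with $u=S_0e_1$. By Lemma~\ref{lem:SD} this is nonnegative only if $u=\tau\ell$ with $\tau>0$. Otherwise its eigenvalues on $\mathrm{Span}(u,\ell)$ are $u\cdot\ell\pm|u|\,|\ell|$, one of them strictly negative.

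So your fallback does not work. A misaligned $S_0$ does not leave an $\mathscr O(1/c^2)$ defect. It leaves a wrong-signed term of order $g\sim 1/c$, the same order as the dissipation and larger than your corrector's gain $\epsilon/c$ for small $\epsilon$. The absorption you describe is therefore not available.

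This alignment is a genuine structural condition, not a formality. It is exactly what fails for the ladder model of Section~\ref{sec:ladder_obstruction}, where a positive conservative symmetrizer exists but no compatible one does. Here it holds, but you must exhibit it. Take
\[
S_0=\frac1a\begin{pmatrix} a-\tilde\kappa&0&-b&0\\ 0&a&0&-b\\ -b&0&d&0\\ 0&-b&0&d\end{pmatrix}.
\]
One checks that $S_0A_c^0$ is skew-symmetric and that $S_0e_1=\ell$. Hence $S_0D+D^\top S_0=2\ell\ell^\top$ and $\frac{\ud}{\ud t}S_0Y\cdot Y=-2g(\eta-\frac ba\zeta)^2$. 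Positivity of $S_0$ is then your condition $(a-\tilde\kappa)d>b^2$, together with $ad-b^2=\omega_{1/2}\omega_{3/2}>0$.

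The same point affects your backup. For a simple eigenvalue $i\omega$ of $A_c^0$ with eigenvector $r$, $r^*S_0$ is a left eigenvector. The first-order shift then has real part $-g\,r^*(S_0D+D^\top S_0)r/(2\,r^*S_0r)$. PBH only gives $\ell\cdot r\neq0$; it fixes the sign only once the alignment is known. Otherwise you must compute $\lambda'(0)$ explicitly, as the paper does from $\chi_g=\chi_0+gq$ and $\omega_+^2<P_1<\omega_-^2$.

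\textbf{Further repairs.}

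\emph{Kalman/PBH check.} Saying a determinant is a nonzero polynomial in $(a,b,d,\tilde\kappa)$ does not settle it, since the polynomial could vanish at admissible values. Do it directly:
\begin{itemize}
\item $\ell\cdot Y=0$ means $\eta=\frac ba\zeta$.
\item The first row of $A_c^0Y=\lambda Y$ gives $\lambda y=0$. Since $P_0=(ad-b^2)((a-\tilde\kappa)d-b^2)>0$ excludes $\lambda=0$, we get $y=0$.
\item Rows two and four give $\lambda\eta=-bz$ and $\lambda\zeta=dz$. Hence $b(1+\frac da)z=0$, and since $b\neq0$, $z=\eta=\zeta=0$.
\end{itemize}

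\emph{Corrector.} It must be built with $A_c^0$, not with the skew part $A_0$. Otherwise its derivative along $\dot Y=A_c^0Y$ does not telescope into $-|\ell(A_c^0)^{j+1}Y|^2$. It also needs a minus sign.

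\emph{Simplicity in the backup.} Simplicity of the spectrum is not merely generic. It follows from $P_0>0$ and $P_2^2-4P_0>0$, the latter holding whenever $b\neq0$.

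With these points settled, your Lyapunov argument gives the estimate with constants uniform in $c\geq c_*$.
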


Proposition~\ref{estim_expA} is certainly a key statement in the analysis of the large time behavior 
of the solutions of the V-system \eqref{Vsystem}. 
 Owing to the small dimension of the system, 
 that make explicit computations affordable,
 it is possible to develop a direct approach. 
 However, in view of extension to more complex systems, it could be important to derive the result 
 with a different viewpoint.
 We think important to present the two complementary strategies since
 the simple framework permits us to identify key mechanisms. We will see
 that the matrix $A$ can be 
  split into an oscillatory contribution $\Theta$ and a damping term $gD$ which acts on a reduced set of variables (see \eqref{defSplitA} below). 
That the damping becomes sensitive for all the components of the unknown is reminiscient to  structural conditions arising in the study of hyperbolic systems
with partial diffusion, a criterion referred to as the Kawashima-Shizuta condition \cite{nanard2,nanard,KS}.
Such phenomena can also be related to the Kalman condition that arises in control theory
and have connection to hypocoercivity \cite{Vill}, as investigated in details  \cite{AAM,AAM2,BZ}.
\\

\noindent{\bf Direct spectral analysis.}

\noindent
The characteristic polynomial of $A$ reads
\[
\chi_g(\lambda)=\lambda^4+g\lambda^3+
P_2\lambda^2+g P_1
\lambda 
+P_0\]
with 
\[\begin{array}{l}
P_2=a(a-\tilde \kappa)+2b^2+ d^2,
\\
P_1=(ad-b^2)(d/a),
\\
P_0=  a(a-\tilde \kappa)d^2 -adb^2 -(a-\tilde \kappa)db^2+b^4.
\end{array}\]
The zeroth order term can be recast as
\[
P_0=
(ad-b^2)((a-\tilde \kappa)d -b^2).
\]
Coming back to the definition of the coefficients, we observe that 
\[
ad-b^2
=
\omega_{1/2}\omega_{3/2}
>0.\]
Assumption
\eqref{small} guarantees $a-\tilde \kappa>0$,
as well as the positivity of $P_0$.
Hence, when \eqref{small} holds, all coefficients of 
$P$ are strictly positive and the Routh-Hurwitz criterion \cite{Hurwitz1895,Routh1877} is fulfilled since it reads
\[\begin{array}{l}
g^2 P_1P_2-g^2P_1^2-g^2P_0
\\
\qquad
=\ds\frac{g^2(ad-b^2)}{a^2}\Big(
ad\big(a(a-\tilde \kappa)+2b^2+d^2\big)-d^2(ad-b^2)-a^2\big(
(a-\tilde \kappa)d-b^2
\big)
\Big)
\\
\qquad
=\ds\frac{g^2(ad-b^2)}{a^2} b^2(a+d)^2
>0.
\end{array}
\]
Hence all eigenvalues of $A$ have negative real part.
This allows us to identify  the asymptotic behavior of the linearized system $\dot X=AX$.
 So far, however, this does not provide sufficient information about the dependence of the eigenvalues on $g$, which is essential for the analysis of the nonlinear problem.

We observe that 
\[
\chi_g(\lambda)=\chi_0(\lambda)+ g q(\lambda),\qquad
q(\lambda)=\lambda(\lambda^2+ P_1).\]
This structure can be understood by using the following splitting 
\begin{equation}\label{defSplitA}
A=\Theta-gD,
\qquad
\Theta=\begin{pmatrix}
0 & -a & 0 & b
\\
a-\tilde \kappa& 0  & -b & 0
\\
0 & b & 0 &-d
\\
-b & 0 & d& 0
\end{pmatrix},\qquad
D=\begin{pmatrix}
0 & 0 & 0 & 0
\\
0 & 1  & 0 &  -b/a
\\
0 & 0 & 0 &0
\\
0 & 0 & 0& 0
\end{pmatrix}
\end{equation}

The characteristic polynomial
of $\Theta$ is nothing but \[
\chi_0(\lambda)
=
\lambda^4+P_2\lambda^2+P_0.\]
The discriminant 
$$
\begin{array}{lll}
P_2^2-4P_0&=&a^2(a-\tilde \kappa)^2 + 4b^2a(a-\tilde \kappa)+2d^2a(a-\tilde \kappa)+4b^4+d^4+4b^2d^2
\\
&&
-
4d^2a(a-\tilde \kappa)+4adb^2+4db^2(a-\tilde \kappa)-4b^4
\\
&
=&
\big(a(a-\tilde \kappa)-d^2\big)^2 + +4(a+d)b^2\big(
(a-\tilde \kappa)+d
\big)
\end{array}
$$
 is positive, and thus the  eigenvalues $\lambda$ are obtained as square roots of $\frac12(-P_2\pm\sqrt{P_2^2-4P_0})=-\omega_\pm^2$, which are both negative 
 numbers. Therefore $\Theta$ admits only purely imaginary eigenvalues 
$$\sigma(\Theta)=\{-i\omega_-,i\omega_-,-i\omega_+,i\omega_+\}.$$
This part of the operator leads to an oscillating behavior, while damping relies on the $D$ part.
However, the image of $D$ acts only on the second variable of the system; hence a  transfer mechanism should propagate the damping to the other variables.
We will come back to the analysis of such mechanism later on.
For the time being, we use the explicit formula and  investigate the behavior of the eigenvalues with respect to $g$ by means of the implicit function theorem.
We take the derivative 
\[
\ds\frac{\ud}{\ud g}\big[\chi_g(\lambda (g))\big]=0
=
\big[\chi_0'(\lambda(g))+gq'(\lambda(g))\big]\lambda'(g)+ q(\lambda(g)) 
\]
at $g=0$, with $\lambda(0)= i\omega\in \sigma(\Theta)$.
We get
\[\lambda'(0)=
-\ds\frac{i\omega 
(( i \omega)^2+P_1)}{ 4(i \omega)^3+2P_2( i\omega)}
=
-\ds\frac{P_1-\omega^2}{2(P_2 -2\omega^2)}.
\]
 Coming back to the definition of $\omega_\pm$ it becomes
\[
\lambda'(0)=\mp \ds\frac{P_1-\omega_\pm^2}{2\sqrt{P_2^2-4P_0}}.
\]
We remark that 
\[\begin{array}{lll}
(P_1-\omega_-^2)(P_1-\omega_+^2)
&=&P_1^2-(\omega_-^2+\omega_+^2)P_1+\omega_-^2\omega_+^2
=P_1^2-P_1P_2+\ds\frac14(P_2^2-P_2^2+4P_0)
\\&=&P_1^2-P_1P_2+P_0<0
\end{array}\]
as it as been observed when checking the Routh criterion.
We deduce that 
\[
\omega_+^2<P_1<\omega_-^2\]
holds and thus $\lambda'(0)$ is always a negative real number.
Therefore, we distinguish four branches
\[\begin{array}{ll}
\lambda_1(g)=-i\omega_-- g\ds\frac{\omega_-^2-P_1}{2\sqrt{P_2^2-4P_0}}+g\epsilon_1(g),
\qquad & 
\lambda_2(g)=i\omega_--g\ds\frac{\omega_-^2-P_1}{2\sqrt{P_2^2-4P_0}}+g\epsilon_2(g),
\\
\lambda_3(g)=-i\omega_+-g\ds\frac{P_1-\omega_+^2}{2\sqrt{P_2^2-4P_0}}+g\epsilon_3(g),
\qquad & 
\lambda_4(g)=i\omega_+-g\ds\frac{P_1-\omega_+^2}{2\sqrt{P_2^2-4P_0}}+g\epsilon_4(g),
\end{array}\]
with $\lim_{g\to 0}\epsilon_k(g)=0$. 
Let us set (for instance) $\theta=\frac{\min(P_1-\omega_+^2,\omega_-^2-P_1)}{4\sqrt{P_2^2-4P_0}}$.
Then, we can find $g_0>0$ such that 
\[\mathrm{Re}(\lambda_k(g))\leq - g\theta \]
holds for any $0<g\leq g_0$.

Moreover, for $g>0$ small enough, the $4\times 4$ matrix $A$ admits 
4 distinct eigenvalues; it it therefore diagonalizable: we can find an invertible matrix $\Pi_g$ the columns of which are defined by the eigenvectors $v_1(g),v_2(g),v_3(g), v_4(g)$, such that 
$A=\Pi_g\Lambda_g\Pi_g^{-1}$, with $\Lambda_g=\mathrm{diag}(\lambda_1(g),\lambda_2(g),\lambda_3(g),\lambda_4(g))$.
By applying standard continuity arguments, see e.~g.~\cite[Chapter~II, Sect. 1.4]{Kato}
we can assume that $\|\Pi_g\|\leq 2\|\Pi_0\|$ and $\|\Pi_g^{-1}\|\leq 2\|\Pi_0^{-1}\|$ holds for $g\in [0,g_0]$, with $g_0$ small enough, and thus $\mathrm{Cond}(\Pi_g)\leq 4 \mathrm{Cond}(\Pi_0)$.
It follows that
\[\|e^{At}\|
=\|\Pi_g e^{\Lambda_g t}\Pi_g^{-1}\|\leq \mathrm{Cond}(\Pi_g)\|e^{\Lambda_g t}\|
\leq 4 \mathrm{Cond}(\Pi_0) e^{-\theta g t}\]
Coming back to the definition of $g$ by means of the parameters  of the problem, we can rephrase our findings as stated in Proposition~\ref{estim_expA} (where $\lambda_*=\frac{2\theta\upgamma}{\hbar }(\omega_{1/2}V^2_{1/2}+\omega_{3/2}V^2_{3/2})$).
\\

\noindent{\bf Kalman's conditions.}

\noindent
As announced above,  the proof 
of Proposition~\ref{estim_expA} can be revisited  within this framework of control theory 
\cite{AAM,AAM2,BZ},
 Kawashima-Shizuta's methods \cite{BZ,nanard,nanard2,KS}, or hypocoercivity techniques \cite{Vill}.
This permits us to exhibit key structural properties that can 
be fruitful to extend the analysis for systems in higher dimensions.
Here, we note that
\[\mathrm{rank}(D)=1,\qquad 
\mathrm{Ker}(D)=\{(y,\eta,z,\zeta)\in\mathbb R^4,\ \eta=(b/a)\zeta\}.\]
In fact, we simply have
\[
D=e_1\ell^\top,\qquad e_1=(0,1,0,0),\qquad \ell=(0,1,0,-b/a).\]
In contrast to the standard framework, $\Theta$
is not skew-symmetric -- and the symmetry is precisely broken by the influence of the coupling with the environment -- and $D$ is not symmetric
 and it is not positive semidefinite neither since 
$DY\cdot Y=\eta^2-(b/a)\eta \zeta$ has no definite sign.
The strategy is the following, starting from \eqref{defSplitA}
\begin{itemize}
\item Step 1. We construct a symmetrizer, that is a positive definite matrix $S=S^\top$ such that
$S\Theta+\Theta^\top S=0$ and $SD+D^\top S$ is semidefinite positive.
This matrix allows us to construct a dissipated energy functional $SY\cdot Y\geq 0$ such that
\[
\ds\frac{\ud}{\ud t}
SY\cdot Y= S(\Theta -gD)Y\cdot Y+SY\cdot(\Theta-gD)Y
=-g(SD+D^\top S)Y\cdot Y\leq 0.\]
In contrast, $\Theta$ has a conservative effect on the dynamics since
$\frac{\ud}{\ud t} [e^{\Theta^\top t} S e^{\Theta t}]=
e^{\Theta^\top t} (\Theta^\top S+S\Theta) e^{\Theta t}=0$ 
and $e^{\Theta^\top t} S e^{\Theta t}\big|_{t=0}=S$.
As a consequence $S e^{\Theta t}Y_0\cdot e^{\Theta t}Y_0=SY_0\cdot Y_0$.
\item Step 2. We decompose $(SD+D^\top S)=C^\top C$, and then 
we check the Kalman criterion $$\ds\bigcap_{k\in \mathbb N}\mathrm{Ker}(C\Theta^k)=\{0\}.$$
(Note that $\mathrm{Ker}(C)=\mathrm{Ker}(SD+D^\top S)$ so that the criterion can be equivalently expressed as 
$\bigcap_{k\in \mathbb N}\mathrm{Ker}((SD+D^\top S)\Theta^k)=\{0\}$.)
\end{itemize}

These properties imply the semigroup estimate in Proposition~\ref{estim_expA}, 
see \cite[Sections~1.4--1.5, Remark~6.1.4, and the proof
of Proposition~7.4.5]{TucsnakWeiss2009}.
However, the estimate is not fully explicit, nor sharp which motivates a more constructive approach inspired from the hypocoercivity framework \cite{AAM,AAM2,BZ}, that will be discussed later on.

With the 
requirements in Step~1, we find a symmetrizer
\[
S=\begin{pmatrix}
1-\tilde \kappa/a & 0 & -b/a& 0
\\
0& 1& 0& -b/a
\\
-b/a& 0 & d/a& 0
\\
0 & -b/a& 0& d/a
\end{pmatrix}.\]
Owing to the small size of the system it can be found  by trial and error
or solving linear systems. 
We will discuss a more general approach in the forthcoming section.
The rank-one  structure of $D$ drastically simplifies the computation:
\[SD+D^\top S=Se_1\ell^\top+\ell(Se_1)^\top.
\] 
This has the form $uv^\top +vu^\top$ and  the following statement applies.

\begin{lemma}\label{lem:SD}
Let $u,v$ be two vectors in $\mathbb R^N$. Then the symmetric matrix $M=uv^\top +vu^\top$ is 
 positive semidefinite iff $u=\tau v$ for some $\tau>0$.
\end{lemma}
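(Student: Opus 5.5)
The plan is to work directly with the quadratic form associated with $M$. For any $x\in\mathbb R^N$ we have
\[
x^\top M x = x^\top u\, v^\top x + x^\top v\, u^\top x = 2\,(u\cdot x)(v\cdot x),
\]
so positive semidefiniteness of $M$ is equivalent to $(u\cdot x)(v\cdot x)\geq 0$ for every $x$. Everything then reduces to deciding when two linear forms never take values of opposite signs.

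The sufficiency direction is immediate. If $u=\tau v$ with $\tau>0$, then $x^\top Mx=2\tau (v\cdot x)^2\geq 0$, so $M$ is positive semidefinite.

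For necessity I would argue by contradiction on linear independence. Suppose $u$ and $v$ are linearly independent. Then the linear map $x\mapsto (u\cdot x, v\cdot x)$ from $\mathbb R^N$ to $\mathbb R^2$ has rank $2$: its transpose sends $(\alpha,\beta)$ to $\alpha u+\beta v$, which is injective. The map is therefore onto, so there is an $x$ with $u\cdot x=1$ and $v\cdot x=-1$. For this $x$ we get $x^\top Mx=-2<0$, a contradiction. Hence $u$ and $v$ are collinear. If $v\neq 0$, write $u=\tau v$; then $x^\top Mx=2\tau (v\cdot x)^2$, and choosing $x=v$ shows that $\tau\geq 0$ is needed.

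The only delicate point is the degenerate cases, which the statement implicitly excludes. If $u=0$ or $v=0$, then $M=0$ is trivially semidefinite, but $u=\tau v$ with $\tau>0$ fails unless both vectors vanish. I would therefore state explicitly that $u$ and $v$ are nonzero, which is the situation of interest here since $u=Se_1\neq 0$ and $v=\ell\neq0$. Under this assumption $\tau\geq0$ combined with $u\neq 0$ gives $\tau>0$. The genuinely substantive step is the surjectivity argument producing $x$ with $(u\cdot x,v\cdot x)=(1,-1)$. One can also see it concretely: $M$ has rank $2$ on $\mathrm{Span}(u,v)$, its trace there is $2u\cdot v$ and its determinant there is $(u\cdot v)^2-|u|^2|v|^2<0$ by strict Cauchy–Schwarz, so it has a negative eigenvalue.
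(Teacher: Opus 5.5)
Your proof is correct, but your necessity argument takes a different route from the paper's. Both start from the same reduction $x^\top Mx=2(u\cdot x)(v\cdot x)$.

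The paper then restricts the form to $\mathrm{Span}(u,v)$ through $(\alpha,\beta)\mapsto\alpha u+\beta v$ and writes it as a $2\times 2$ symmetric matrix $\tilde M$. It reads the conclusion off the trace, proportional to $(|u|^2+|v|^2)\,u\cdot v$, and the determinant, proportional to $-(|u|^2|v|^2-(u\cdot v)^2)^2$. Nonnegativity of the determinant forces equality in Cauchy--Schwarz, hence collinearity, and nonnegativity of the trace then gives $u\cdot v\geq 0$.

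You instead use the surjectivity of $x\mapsto(u\cdot x,v\cdot x)$ onto $\mathbb R^2$ when $u,v$ are independent. This exhibits directly an $x$ with $x^\top Mx=-2$, and you then fix the sign of $\tau$ by testing on $x=v$. Your route is coordinate-free, needs no computation, and makes the degenerate cases visible. The paper's computation gives quantitative information: how negative the form becomes, in terms of the Cauchy--Schwarz defect. Your closing remark is essentially a variant of it, since the nonzero eigenvalues of $M$ are $u\cdot v\pm|u||v|$.

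Your caveat is also a genuine correction to the statement. When exactly one of $u,v$ vanishes, $M=0$ is semidefinite, yet $u=\tau v$ with $\tau>0$ is impossible. The paper's argument itself only yields ``collinear with $u\cdot v\geq 0$''. This does no harm in the applications, where both vectors are nonzero, and it is consistent with the later use of $\tau\geq 0$ in the rank-one compatibility condition.
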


This statement applies here since  $Se_1= \ell$ and thus 
we conclude that $SD+D^\top S=2\ell\ell^\top 
$.
It remains to check positivity of $S$, which 
 reduces to the positivity of the $2\times 2$ matrices
\[
S_1=\ds\frac1a\begin{pmatrix} a-\tilde\kappa& -b
\\-b& d\end{pmatrix},\qquad 
S_2=\ds\frac1a\begin{pmatrix} a& -b
\\-b& d\end{pmatrix}.\]
 Positivity is thus guaranteed by $a-\tilde \kappa>0$, $ad-b^2>0$ 
and $(a-\tilde\kappa)d-b^2>0$, the latter being 
a consequence of \eqref{small}. 
In fact, we recover quantities discussed when we analyzed the characteristic polynomial of 
$A$.
\QED

\noindent
{\bf Proof of Lemma~\ref{lem:SD}.}
We consider the quadratic form $$\mathfrak q: x\in \mathbb R^N\longmapsto Mx\cdot x
=2(u\cdot x)(v\cdot x).$$
It vanishes on $(\mathrm{Span}(u,v))^\perp$.
Therefore, it suffices to deal with 
\[
\tilde{\mathfrak q}:(\alpha,\beta)\in \mathbb R^2\longmapsto \ds\frac12M(\alpha u+\beta v)\cdot (\alpha u+\beta v)
=(\alpha |u|^2+\beta u\cdot v)(\alpha u\cdot v +\beta |v|^2).
\]
It turns out that 
\[\tilde{\mathfrak q}(\alpha,\beta)=
\tilde M\begin{pmatrix}\alpha\\\beta\end{pmatrix}\cdot \begin{pmatrix}\alpha\\\beta\end{pmatrix},\qquad
\tilde M=\ds\frac12\begin{pmatrix}
2|u|^2u\cdot v & (u\cdot v)^2+|u|^2|v|^2
\\ (u\cdot v)^2+|u|^2|v|^2 &  2|v|^2u\cdot v\end{pmatrix}.\]
We should impose that the trace  $(u|^2+|v|^2)(u\cdot v)$ is non negative,
and the determinant $-\frac12(|u|^2|v|^2-(u\cdot v)^2)^2 $ is non negative, too.
The former condition is equivalent to $(u\cdot v)\geq 0$ while the latter is fulfilled in  the equality case of the Cauchy-Schwarz inequality, that is when $u$ and $v$ are colinear.
\QED

For the V-model, we thus have
$SD+D^\top S=2\ell\ell^\top$ and we can set  
$C=\sqrt2\ell^\top$.
It simply expresses the dissipation relation
\[
\ds\frac{\ud}{\ud t}SY\cdot Y=-2g|\ell\cdot Y|^2
=-2g\Big(\eta-\ds\frac ba\zeta\Big)^2.\]

Due to the simplicity of the problem, it could be possible to compute explicitly 
$\ell^\top \Theta^k$ and reduce the verification of Kalman's criterion to the computation of a determinant.
Instead, it is convenient to  appeal to the PBH criterion
\cite{Bele,Hautus,Popov}
which here is particularly simple to  check and permits us to avoid the computation of 
the successive power $\Theta^k$.

\begin{lemma}
Let $\mathcal K=\bigcap_{k=0}^\infty\mathrm{Ker}(\ell^\top\Theta^k)\neq \{0\}$.
Then $\mathcal K=\{0\}$ iff the following assertion holds
\[\textrm{if $\Theta Y=\lambda Y$ and $\ell\cdot Y=0$, then $Y=0$.}\]
\end{lemma}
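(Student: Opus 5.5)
The proof is the standard Popov--Belevitch--Hautus argument. I read the statement as: $\mathcal K=\{0\}$ iff no eigenvector of $\Theta$ (allowing complex $\lambda$ and $Y\in\mathbb C^4$) lies in $\mathrm{Ker}(\ell^\top)$. The whole argument rests on one structural fact:
\[
\mathcal K=\bigcap_{k\geq0}\mathrm{Ker}(\ell^\top\Theta^k)
\]
is the largest $\Theta$-invariant subspace contained in $\mathrm{Ker}(\ell^\top)$. I will first record this fact and then prove the two directions separately, working over $\mathbb C$ (complexify $\mathcal K$ if necessary).

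\textbf{Invariance.} Take $Y\in\mathcal K$. For every $k$ we have $\ell^\top\Theta^k(\Theta Y)=\ell^\top\Theta^{k+1}Y=0$, so $\Theta Y\in\mathcal K$ and $\mathcal K$ is $\Theta$-invariant. Taking $k=0$ gives $\mathcal K\subset\mathrm{Ker}(\ell^\top)$. By Cayley--Hamilton it is enough to intersect over $k\leq 3$, although this is not needed for the argument.

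\textbf{Direction ``$\mathcal K=\{0\}\Rightarrow$ assertion''.} Suppose $\Theta Y=\lambda Y$ and $\ell\cdot Y=0$. Then
\[
\ell^\top\Theta^kY=\lambda^k\,\ell\cdot Y=0\quad\text{for all }k,
\]
so $Y\in\mathcal K=\{0\}$. Hence $Y=0$.

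\textbf{Direction ``assertion $\Rightarrow\mathcal K=\{0\}$''.} Argue by contraposition and suppose $\mathcal K\neq\{0\}$. Since $\mathcal K$ is a nonzero $\Theta$-invariant subspace of $\mathbb C^4$, the restriction $\Theta|_{\mathcal K}$ is a linear endomorphism of a nonzero finite-dimensional complex space. Its characteristic polynomial therefore has a root $\lambda$, and there is a corresponding eigenvector $Y\in\mathcal K\setminus\{0\}$ with $\Theta Y=\lambda Y$. Because $\mathcal K\subset\mathrm{Ker}(\ell^\top)$, this $Y$ also satisfies $\ell\cdot Y=0$. The assertion then forces $Y=0$, which is a contradiction.

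\textbf{Where care is needed.} The only delicate point is the field. $\Theta$ is real, but its eigenvalues $\pm i\omega_\pm$ are not, so the eigenvector argument has to be run in $\mathbb C^4$. This costs nothing, because $\ell$ is real: for real $Y$ the condition $\ell\cdot Y=0$ is the same over $\mathbb R$ or $\mathbb C$. Moreover $\mathcal K$ is real-defined, i.e. its complexification is $\bigcap_k\mathrm{Ker}_{\mathbb C}(\ell^\top\Theta^k)$, and this complexification is trivial exactly when the real $\mathcal K$ is trivial. With that checked, the lemma holds, and the Kalman condition of Step~2 reduces to verifying that no eigenvector of $\Theta$ for $\pm i\omega_\pm$ is orthogonal to $\ell$, where $\ell\cdot Y=\eta-(b/a)\zeta$.
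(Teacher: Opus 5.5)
Your proof is correct and follows the paper's argument. One direction uses $\ell^\top\Theta^kY=\lambda^k\ell^\top Y$; the converse uses $\Theta$-invariance of $\mathcal K$ together with an eigenvector of $\Theta|_{\mathcal K}$ over $\mathbb C$. The differences are only cosmetic: you intersect over all $k$, so invariance needs no Cayley--Hamilton step (the paper intersects only over $k\le 3$ and needs it), and you spell out the real/complex bookkeeping that the paper leaves implicit.
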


\noindent
{\bf Proof.}
We start by observing that  $\mathcal K=\bigcap_{k=0}^3\mathrm{Ker}(\ell^\top\Theta^k)\neq \{0\}$
by virtue of Cayley-Hamilton theorem.
If there exists $Y\neq 0$ such that $\Theta Y=\lambda Y$ and $\ell^\top Y=0$, 
then $\ell^\top \Theta^ k Y=\lambda^k \ell^\top Y=0$ for any $k\in \mathbb N$ and
$\bigcap_{k=0}^3\mathrm{Ker}(\ell^\top\Theta^k)\neq \{0\}$.

Conversely, suppose that $\mathcal K=\bigcap_{k=0}^3\mathrm{Ker}(\ell^\top\Theta^k)$
is non trivial.
For any $Y\in \mathcal K$ and $k\in\{0,1,2\}$, we have $\ell^\top \Theta^k(\Theta Y)=
\ell^\top \Theta^{k+1}Y=0$. As a consequence of Cayley-Hamilton theorem, we equally have 
$\ell^\top \Theta^3(\Omega X)=0$ so that $\Theta Y\in \mathcal K$.
Hence $\mathcal K$ is a non trivial subspace of $\mathbb C^4$, left invariant by $\Theta$.
Consequently, $\Theta\big|_{\mathcal K}$ admits an eigenvector $Y\in \mathcal K\setminus\{ 0\}$ which thus satisfies  $\ell\cdot Y=0$.
\QED

Here, it is particularly simple to verify that the PBH 
condition holds. We know that $0\notin \sigma(\Theta)$ and, with $Y=(y,\eta,z,\zeta)$, 
the condition $\ell^\top Y=0$ means $\eta=\frac ba \zeta$.
Therefore, the first eigenvalue-equation
yields $\lambda y=-a \eta + b\zeta=0$, thus $y=0$. Hence the second and forth equations become
$\lambda \eta=-b z$ and $\lambda\zeta =dz$, which leads to 
$z=\frac{\lambda}{ d} \zeta=\frac{\lambda a}{ bd} \eta=-\frac{a}{d}z$. Since $1+\frac ad>0$, this implies $z=0$ and, next,  $\zeta=0$, $\eta=0$.

Hence Step 1 \& 2 of the program described above can be accomplished and, as explained above, 
this leads to the estimate of Proposition~\ref{estim_expA}, see \cite{TucsnakWeiss2009}.

\subsubsection{Estimates for the non linear problem}
\label{sec:nonlin}

We take advantage of the fact that the limit has already been identified, see Theorem~\ref{th_cv}, Proposition~\ref{prop_cv} and Lemma~\ref{Phi0}. 
Let $\epsilon>0$: we can  
find $T^\epsilon>0$ large enough, such that $$0\leq (1+\xi_{1/2}(t))\leq \epsilon,\qquad 
 0\leq (1+\xi_{3/2}(t))\leq \epsilon,\qquad
 |\Xi(t)|\leq \epsilon$$
hold for any $t\geq T^\epsilon$.
Set
\[Y^\epsilon(t)=Y(t+T^\epsilon).\]
It satisfies the same system as $Y$, but with the initial data 
\[Y^\epsilon\big|_{t=0}=Y(T^\epsilon).\]
We are going to establish the decay estimate on $Y^\epsilon$.

We split $\psi^\epsilon=\psi^\epsilon_S+\psi^\epsilon_I$, and $\varpi^\epsilon=\varpi^\epsilon_S+\varpi^\epsilon_I$.
Owing to \eqref{hyp_supp}, we know that
$z\mapsto \psi_I(t,z)$ is supported in $B(0, ct+R_I\}$; hence $\mathrm{supp}(\psi_I(T^\epsilon,\cdot))\subset B(0,R_I+cT^\epsilon)$.
Similarly,  $z\mapsto \psi_S(T^\epsilon,z)
$ is supported in $B(0,R_*+cT^\epsilon)$, see e.~g.~\eqref{representation}.
Therefore, the wave field at time  $T^\epsilon$ is supported in 
$B(0,R^\epsilon)$, with $R^\epsilon=cT^\epsilon+\max(R_I,R_*)$.
By virtue of Huygens' principle, $\psi^\epsilon_I(t,\cdot)$  
is supported in $\{ct-R^\epsilon\leq |z|\leq ct+R^\epsilon\}$.
As a consequence, in the expression of the remainder $S^\epsilon(t):=S(t+T^\varepsilon)$, the terms
\[\ds\int_t^\infty k_c(s)\ud s, \qquad \ds\int_t^\infty s k_c(s)\ud s,\qquad
\Phi^\epsilon_I(t)=\ds\int_{\mathbb R^3} \sigma(z) \psi^\epsilon_I(t,z)\ud z
\]
vanish for $t$ large enough 
(the first two terms vanish for $t\geq \frac{2R_*}{c}$, the last integral vanish 
for $t\geq \frac{R^\epsilon+R_*}{c}$), see Lemma~\ref{CIW}.

Let $0<\lambda<\lambda_*$ and  set 
$$
\mathcal M^\epsilon_0(t)=\sup_{0\leq s\leq t}e^{\lambda s/c}|Y^\epsilon(s)|,\qquad
\mathcal M^\epsilon_1(t)=\sup_{0\leq s\leq t}e^{\lambda s/c}|\dot Y^\epsilon(s)|
.$$
We bear in mind that $Y^\epsilon$ is defined from an admissible solution of the original 
system; in particular we have $|\xi_{1/2}^\epsilon|,|\xi_{3/2}^\epsilon|\leq 1$, 
$|\Xi^\epsilon(0)|=|\Xi(T^\epsilon)|\leq 1$, and by construction, see \eqref{def_OmegaX},  we note that $\|e^{\Omega_X t}\|\leq 1$.
We use the shorthand notation $\lesssim$ 
for estimates that involve constants depending on the parameters of the models, but that do not depend on $\epsilon$ nor on $c\geq c_*$. We keep track of the worst terms; 
for instance, we denote $|\widetilde \ell_c|\lesssim \frac1{c^2}$ or $\epsilon \lesssim 1$.
The most critical term is
\[
e^{\lambda t/c}|\mathcal R^\epsilon(t)|
=e^{\lambda t/c}|\mathcal R(t+T^\epsilon)|
=e^{\lambda t/c}\left|\ds\int_0^t k_c(t-s)\left(\ds\int_s^t(\tau-s)\ddot y^\epsilon(\tau)\ud \tau\right)\ud s
\right|.
\]
We use the fact that $\dot y=-a\eta+b\zeta$, thus $\ddot y=-a\dot \eta+b\dot \zeta$ is a mere combination of components of $\dot Y$.
In turn, we get 
\[\begin{array}{lll}
e^{\lambda t/c}|\mathcal R^\epsilon(t)|
&\lesssim& 
\ds\int_0^t e^{\lambda (t-s)/c}|k_c(t-s)|\left(\ds\int_s^t(\tau-s)
e^{\lambda (s-\tau)/c}
e^{\lambda \tau/c}|\dot Y^\epsilon(\tau)|\ud \tau\right)\ud s
\\[.4cm]
&\lesssim& \mathcal M_1^\epsilon(t)
\ds\int_0^t e^{\lambda (t-s)/c}(t-s)^2|k_c(t-s)|
\ud s
\lesssim\mathcal M_1^\epsilon(t)
\ds\frac{e^{2\lambda R_*/c^2}}{c^2}
\end{array}\]
owing to the finite support property of $k_c$.
We have
\[
e^{\lambda t/c}|\Phi^\epsilon_I(t)|=e^{\lambda t/c}\left|\ds\int_{\mathbb R^3} \sigma(z)\psi_I^\epsilon(t,z)\ud z\right|\leq \sqrt{\kappa}\|\nabla \psi_I^\epsilon(t,\cdot)\|_{L^2(\mathbb R^3)}e^{\lambda (R^\epsilon+R_*)/c^2}
\lesssim e^{\lambda (R^\epsilon+R_*)/c^2},\]
using again finite support properties (and energy bounds).
Next, we estimate
\[\begin{array}{lll}
\left|
e^{\lambda t/c}\ds\frac{\widetilde\ell_c\cdot Y^\epsilon(t)+\mathcal R^\epsilon(t)+\Phi_I^\epsilon(t)}{\hbar}B\Xi^\epsilon(t)\right|&\leq& \ds\frac{\|B\|\epsilon}{\hbar} 
\big(|\widetilde\ell_c|\mathcal M^\epsilon_0(t) + e^{\lambda t/c}(|\mathcal R^\epsilon(t)|+|\Phi_I^\epsilon(t)|)\big)
\\[.4cm]
&\lesssim &\epsilon\left(\mathcal M^\epsilon_0(t)+
\mathcal M_1^\epsilon(t)
\ds\frac{e^{2\lambda R_*/c^2}}{c^2}+
e^{\lambda (R^\epsilon+R_*)/c^2}\right).
\end{array}\]
We obtain similarly estimates on the remainder terms 
$\widetilde {\mathcal R}^\epsilon$ and $\widetilde {\mathcal S}^\epsilon$
by taking advantage of 
the smallness of certain coefficients beyond $T^\epsilon$
\[\begin{array}{lll}
e^{\lambda t/c}|\widetilde {\mathcal R}^\epsilon(t)|&\lesssim &
\epsilon
\mathcal M_0^\epsilon(t)
+
\mathcal M_1^\epsilon(t)
\ds\frac{e^{2\lambda R_*/c^2}}{c^2}+
e^{\lambda (R^\epsilon+R_*)/c^2},
\\[.4cm]
e^{\lambda t/c}|\widetilde {\mathcal S}^\epsilon(t)|
&\lesssim &
\epsilon\left(\mathcal M_0^\epsilon(t)
+\mathcal M_1^\epsilon(t)
\ds\frac{e^{2\lambda R_*/c^2}}{c^2}+
e^{\lambda (R^\epsilon+R_*)/c^2}\right).
\end{array}\]
Therefore,  \eqref{mod_system4} yields
\[\begin{array}{lll}
e^{\lambda t/c}|\dot Y^\epsilon(t)|&\leq& \|A\|
e^{\lambda t/c}|Y^\epsilon(t)|+e^{\lambda t/c}|S^\epsilon(t)|
\\[.4cm]&\lesssim& 
\left(1+
\epsilon\right)\mathcal M^\epsilon_0(t)
+\ds\frac{e^{2\lambda R_*/c^2}}{c^2}
\mathcal M_1^\epsilon(t)
+
e^{\lambda (R^\epsilon+R_*)/c^2}
,\end{array}\]
which implies
\[\mathcal M_1^\epsilon(t)
\lesssim 
\left(1+\epsilon
\right) \mathcal M^\epsilon_0(t)
+\ds\frac{e^{2\lambda R_*/c^2}}{c^2}
\mathcal M_1^\epsilon(t)
+
e^{\lambda (R^\epsilon+R_*)/c^2}
.\]
Hence, 
from now on, we suppose that $c\geq c_*$ is large enough to ensure that
\[
0<\mu_0(c)=1-\ds\frac{e^{2\lambda R_*/c^2}}{c^2}<1,\]
and consequently
\[
\mathcal M_1^\epsilon(t)
\lesssim 
\ds\frac{1+\epsilon
}{\mu_0(c)}
\mathcal M^\epsilon_0(t)
+\ds\frac{e^{\lambda (R^\epsilon+R_*)/c^2}}{\mu_0(c)}.
\]
Now, we make use  of the Duhamel formula
\[
Y^\epsilon(t)=e^{At}Y(T^\epsilon)+\ds\int_0^t e^{A(t-s)}S^\epsilon(s)\ud s
\]
together with Proposition~\ref{estim_expA}.
We are led to 
\[
e^{\lambda t/c}|Y^\epsilon(t)|\leq 
K_*e^{(\lambda-\lambda_*)t/c}
\left(|Y(T^\epsilon)| +
\ds\int_0^t e^{(\lambda_*-\lambda)s/c} \ e^{\lambda s/c}|S^\epsilon(s)|\ud s\right)
.\]
Since $0<\lambda<\lambda_*$, with the estimates obtained above, it becomes
\[
e^{\lambda t/c}|Y^\epsilon(t)|\lesssim
C(\epsilon,c)+\mu_1(\epsilon,c)\ds\int_0^t 
\mathcal M^\epsilon_0(s)
\ud s
\]
with 
\[C(\epsilon,c)=\Big(1+\epsilon+\ds\frac{1}{\mu_0(c)}\Big)e^{\lambda (R^\epsilon+R_*)/c^2},\qquad
\mu_1(\epsilon,c)=
\epsilon+\ds\frac{e^{2\lambda R_*/c^2}}{\mu(c)c^2}.\]
From this, we infer
\[
\mathcal M^\epsilon_0(t)\lesssim C(\epsilon,c)+\mu_1(\epsilon,c)\ds\int_0^t 
\mathcal M^\epsilon_0(s)
\ud s,\]
%
and applying Gr\"onwall's lemma leads to
\[
| Y^\epsilon(t)|\lesssim C(\epsilon,c)e^{-(\lambda-c\mu_1(\epsilon,c))t/c}.
\]
Choosing $c$ large, and then, $\epsilon$ small, ensures the positivity of the rate
\[\lambda'=\lambda-c\mu_1(\epsilon,c)>0.\]
\QED

\section{Construction of the conservative symmetrizer and rank-one perturbation}
\label{sec:symmetrizer}

In this Section, we focus on the linear differential system 
\begin{equation}
\label{abstract_block_oscillator}
  \dot Y=(\Theta-gD)Y,
  \qquad
  Y=\begin{pmatrix}q\\p\end{pmatrix},
  \qquad
  \Theta=
  \begin{pmatrix}
    0&-\Omega\\
    \widetilde\Omega &0
  \end{pmatrix},
\end{equation}
where \(\Omega\) is symmetric definite positive. The matrix \(\widetilde\Omega\) is not assumed to be
symmetric and \(D\) has a reduced rank.
The strategy relies on the construction of suitable symmetrizers, 
which will be adapted to the  block structure.
It is useful to separate two  distinct questions:
\begin{enumerate}
  \item finding an inner product in which the conservative generator is
  skew-adjoint;
  \item deciding whether the  perturbation is dissipative in that
  inner product.
\end{enumerate}
The first question depends only on the conservative generator $\Theta$.  The second one
is an additional compatibility condition involving the dissipative matrix $D$.
The final touch will be to use this construction to improve the analysis of the large time asymptotics 
as performed in Section~\ref{LargeTimeVsys}: 
the asymptotics is governed by the estimate in Proposition~\ref{estim_expA}
which, as it has been derived by general arguments
so far, is not very explicit. 
Inspired by \cite{AAM,AAM2, BZ, Vill}, we will obtain sharper estimates.

\begin{proposition}
\label{prop:block_symmetrizer}
Assume that
$  \widetilde\Omega\Omega
$
is diagonalizable over \(\mathbb R\) with positive eigenvalues: 
\begin{equation}
\label{spectral_decomposition_G}
  \widetilde\Omega\Omega=R\Lambda R^{-1},
  \qquad
  \Lambda=\operatorname{diag}(\mu_0,\ldots,\mu_{n-1}),
  \qquad
  \mu_j>0.
\end{equation}
Then
\begin{equation}
\label{general_block_ML}
  M=(R^{-1})^\top R^{-1},
  \qquad
  L=\Omega^{-1}M\widetilde\Omega\Omega
\end{equation}
are both symmetric positive definite and
\begin{equation}
\label{general_block_S}
  S=
  \begin{pmatrix}
    L&0\\
    0&M
  \end{pmatrix}
\end{equation}
satisfies
\begin{equation}
\label{general_block_symmetry}
  S\Theta+\Theta^\top S=0.
\end{equation}

Conversely, if a block-diagonal positive definite matrix of the form
\eqref{general_block_S} satisfies \eqref{general_block_symmetry}, then
\(\widetilde\Omega\Omega\) is self-adjoint and positive in the scalar product induced by
\(M\).  In particular, \(\widetilde\Omega\Omega\) is diagonalizable and has positive real
eigenvalues.
\end{proposition}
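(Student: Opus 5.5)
The plan is to compute the block form of $S\Theta+\Theta^\top S$ directly and reduce both directions to a statement about the product $G:=\widetilde\Omega\Omega$. With $S=\mathrm{diag}(L,M)$ and $\Theta=\begin{pmatrix}0&-\Omega\\ \widetilde\Omega&0\end{pmatrix}$, we get
\[
S\Theta+\Theta^\top S=\begin{pmatrix}0&-L\Omega+\widetilde\Omega^\top M\\ M\widetilde\Omega-\Omega L&0\end{pmatrix},
\]
using $\Omega^\top=\Omega$. The two off-diagonal blocks are transposes of each other as soon as $L$ and $M$ are symmetric. Hence \eqref{general_block_symmetry} is equivalent to the single relation $\Omega L=M\widetilde\Omega$, that is $L=\Omega^{-1}M\widetilde\Omega$, together with the requirement that this $L$ be symmetric.

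For the direct implication, I define $M=(R^{-1})^\top R^{-1}$; it is symmetric positive definite since $R$ is invertible. The key identity is that $MG=(R^{-1})^\top\Lambda R^{-1}$, which is symmetric and positive definite because the $\mu_j$ are positive. Then I take $L=\Omega^{-1}M\widetilde\Omega$, which coincides with the formula in \eqref{general_block_ML} once it is rewritten as $L=\Omega^{-1}(MG)\Omega^{-1}$. This congruence form shows at once that $L$ is symmetric positive definite, since $\Omega^{-1}$ is symmetric invertible. The block computation above then gives $S\Theta+\Theta^\top S=0$, and $S$ is positive definite as a block-diagonal matrix with positive definite blocks. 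The one point needing care is the exact form of $L$ in \eqref{general_block_ML}: as printed it would read $\Omega^{-1}MG$. I expect the intended formula to be $L=\Omega^{-1}M\widetilde\Omega=\Omega^{-1}MG\Omega^{-1}$, which is the only choice compatible with the block relation. I will state it in that form and note the equivalence.

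For the converse, suppose $S=\mathrm{diag}(L,M)>0$ satisfies \eqref{general_block_symmetry}. The block relation gives $\Omega L=M\widetilde\Omega$, so $MG=M\widetilde\Omega\Omega=\Omega L\Omega$, which is symmetric positive definite by congruence. The symmetry of $MG$ means exactly that $G$ is self-adjoint for $\langle x,y\rangle_M=Mx\cdot y$, since $\langle Gx,y\rangle_M=x\cdot(MG)^\top y=x\cdot MGy=\langle x,Gy\rangle_M$. Positivity of $MG$ means $\langle Gx,x\rangle_M>0$ for $x\neq0$. The spectral theorem in the Euclidean space $(\mathbb R^n,\langle\cdot,\cdot\rangle_M)$ then provides an $M$-orthonormal eigenbasis with real eigenvalues, and these eigenvalues are positive. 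So $G$ is diagonalizable with positive eigenvalues; taking $R$ to be this eigenbasis even recovers $M=(R^{-1})^\top R^{-1}$.

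There is no real obstacle here; the proof is block algebra. The only delicate point is the bookkeeping of transposes and the form of $L$ noted above.
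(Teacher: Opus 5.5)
Your proof is correct and follows essentially the paper's argument. The paper also uses the block computation that reduces \eqref{general_block_symmetry} to $\Omega L=M\widetilde\Omega$ with $L$ symmetric, the choice $M=(R^{-1})^\top R^{-1}$ giving $L=\Omega^{-1}(R^{-1})^\top\Lambda R^{-1}\Omega^{-1}$, and the converse via $M\widetilde\Omega\Omega=\Omega L\Omega$. You are right that the printed formula $L=\Omega^{-1}M\widetilde\Omega\Omega$ is a typo: the paper's own proof works with $L=\Omega^{-1}M\widetilde\Omega$.
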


\noindent {\bf Proof.}
The idea is to search for a symmetrizer under the form \eqref{general_block_S}
 with  $L,M$ required to be symmetric positive definite $n\times n$ matrices.
 We get
 \[
 S\Theta+\Theta^\top S=
 \begin{pmatrix}
0 & \widetilde \Omega^\top M -L\Omega 
\\
M\widetilde \Omega -\Omega L& 0
 \end{pmatrix} .\]
 This quantity vanishes provided 
 \[
 L=\Omega^{-1} M\widetilde \Omega
 =\widetilde \Omega^\top M\Omega^{-1}.\]
 If the second equality holds, then the formula indeed defines a symmetric matrix $L$.

Now, the construction relies on the spectral properties of $\widetilde \Omega\Omega$:
from \eqref{spectral_decomposition_G}, we set 
 \[M=(R^{-1})^\top R^{-1}.\]
 This symmetric matrix satisfies
 \[\begin{array}{lll}
 \Omega^{-1} M\widetilde \Omega
 &=&\Omega^{-1}(R^{-1})^\top R^{-1}R\Lambda R^{-1}\Omega^{-1}
 =\Omega^{-1}(R^{-1})^\top \Lambda R^{-1}\Omega^{-1}
 \\
 &=&
 \Omega^{-1}(R^{-1})^\top \Lambda  R^\top (R^\top)^{-1} R^{-1}\Omega^{-1}
 =\Omega^{-1}(R \Lambda  R^{-1})^\top (R^{-1})^\top R^{-1}\Omega^{-1}
 \\&=&\Omega^{-1}\big(\widetilde \Omega\Omega\big)^\top M\Omega^{-1}
 \\
 &=&
 \widetilde \Omega^\top M\Omega^{-1}.
 \end{array}\]
 If the eigenvalues $\mu_j$ are all positive, this also justifies that $L$ is positive definite.

Conversely, \eqref{general_block_symmetry} gives \(M\widetilde\Omega\Omega=\Omega L\).  Hence
$
  M\widetilde\Omega\Omega=\Omega L\Omega,
$
 is symmetric positive definite.  It means that  \(\widetilde\Omega\Omega\) is self-adjoint and
positive for the inner product induced by \(M\).
\QED

\begin{rmk}\label{im_spec}
It is intuitive to think of $\widetilde \Omega$ as a perturbation from $\Omega$
so that $\Theta$ keeps a purely imaginary spectrum, therefore inducing oscillations in the dynamics.
This is a consequence of the assumption on the spectrum of $\widetilde\Omega\Omega$.
Indeed, we have
\[
\Theta^2=-\begin{pmatrix}\Omega \widetilde\Omega & 0 \\
0 & \widetilde\Omega\Omega\end{pmatrix}.\]
Therefore for an eigenpair $(\lambda,(q,p))$ of  $\sigma(\Theta)$, we have $\widetilde\Omega\Omega p=-\lambda^2 p$  and $\widetilde\Omega\Omega (\Omega^{-1}q)=-\lambda^2 (\Omega^{-1}q)$ so that 
$-\lambda^2\in \sigma(\widetilde\Omega\Omega)\subset (0,\infty)$.

In fact, Proposition~\ref{prop:block_symmetrizer}
is equivalent to assert that $\Theta$ is diagonalizable in $\mathbb C$ with purely imaginary spectrum.
Indeed, from \eqref{general_block_symmetry}, set $J=S^{1/2}\Theta S^{-1/2}$. Since
$S\Theta+\Theta^\top S=S^{1/2}(J+J^\top)S^{1/2}=0$,
it satisfies
$J^\top =-J$ and thus 
it is diagonalizable with its spectrum included in $i\mathbb R$. 
The property transfers to the semblable matrix $\Theta$.
Conversely, from $\Theta=P\mathrm{diag}(\lambda_0,...,\lambda_{n-1})P^{-1}$, with all $\lambda_j\in i\mathbb R$, we infer the symmetrizer $S=(P^{-1})^\top P^{-1}$.
\end{rmk}

The construction simplifies considerably when $\widetilde \Omega$ is symmetric definite positive.

\begin{coro}\label{Prop:sym}
Let $\widetilde \Omega$ be symmetric definite positive. Then, the symmetrizer obtained in Proposition~\ref{prop:block_symmetrizer} reads
\begin{equation}\label{simple_symmetric_symmetrizer} 
S=\begin{pmatrix}\widetilde \Omega& 0\\
0 &  \Omega
\end{pmatrix}.\end{equation}
\end{coro}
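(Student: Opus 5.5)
The plan is to exhibit a particular choice of the diagonalizing matrix $R$ in \eqref{spectral_decomposition_G} for which the construction of Proposition~\ref{prop:block_symmetrizer} returns exactly $M=\Omega$ and $L=\widetilde\Omega$. I will then confirm the identity \eqref{general_block_symmetry} directly. The point is that $R$ in \eqref{spectral_decomposition_G} is not unique, so the statement should be read as: for a suitable, natural choice of $R$, the symmetrizer is \eqref{simple_symmetric_symmetrizer}.

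\emph{Step 1: the hypothesis of Proposition~\ref{prop:block_symmetrizer} holds.} Since $\Omega$ is symmetric positive definite, its square root $\Omega^{1/2}$ is well defined. The matrix $\widetilde\Omega\Omega$ is similar to
\[
\Omega^{1/2}\widetilde\Omega\Omega^{1/2}=\Omega^{1/2}(\widetilde\Omega\Omega)\Omega^{-1/2}.
\]
This matrix is symmetric and positive definite, because $\widetilde\Omega$ is. The spectral theorem therefore gives an orthogonal $Q$ and a positive diagonal $\Lambda$ with $\Omega^{1/2}\widetilde\Omega\Omega^{1/2}=Q\Lambda Q^\top$. It follows that
\[
\widetilde\Omega\Omega=R\Lambda R^{-1},\qquad R=\Omega^{-1/2}Q,\qquad R^{-1}=Q^\top\Omega^{1/2}.
\]
So \eqref{spectral_decomposition_G} holds with all $\mu_j>0$.

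\emph{Step 2: compute $M$ and $L$.} With this $R$ one gets
\[
M=(R^{-1})^\top R^{-1}=\Omega^{1/2}QQ^\top\Omega^{1/2}=\Omega.
\]
For $L$, I will use the relation derived in the proof of Proposition~\ref{prop:block_symmetrizer}, namely $L=\Omega^{-1}M\widetilde\Omega=\widetilde\Omega^\top M\Omega^{-1}$. This is the form actually used there; the extra factor $\Omega$ in \eqref{general_block_ML} appears to be a misprint. Substituting $M=\Omega$ gives $L=\widetilde\Omega$, and the second expression $\widetilde\Omega^\top\Omega\Omega^{-1}=\widetilde\Omega$ is consistent with it. Hence $S=\mathrm{diag}(\widetilde\Omega,\Omega)$, which is symmetric positive definite.

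\emph{Step 3: direct check.} As a safeguard against the misprint, I verify \eqref{general_block_symmetry} directly from the block formula computed in the proof of Proposition~\ref{prop:block_symmetrizer}. The off-diagonal blocks of $S\Theta+\Theta^\top S$ are $\widetilde\Omega^\top M-L\Omega$ and its transpose. With $M=\Omega$ and $L=\widetilde\Omega$, the first one equals $\widetilde\Omega\Omega-\widetilde\Omega\Omega=0$, so \eqref{general_block_symmetry} holds. There is no real obstacle; the only delicate point is this bookkeeping about the non-uniqueness of $R$ and the misprinted formula for $L$.
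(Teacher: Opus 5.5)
Your proposal is correct and follows essentially the same route as the paper. The paper likewise takes $R=\Omega^{-1/2}U$, with $U$ an orthogonal diagonalizer of $\Omega^{1/2}\widetilde\Omega\Omega^{1/2}$, notes that $(R^{-1})^\top R^{-1}=\Omega$, and checks that $M=\Omega$, $L=\widetilde\Omega$ satisfy $M\widetilde\Omega=\Omega L$. Your reading $L=\Omega^{-1}M\widetilde\Omega$, treating the trailing $\Omega$ in \eqref{general_block_ML} as a misprint, agrees with the formula actually used in the proof of Proposition~\ref{prop:block_symmetrizer}.
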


\noindent
{\bf Proof.}
Indeed, both $M=\Omega$ and $L=\widetilde \Omega$ are now symmetric definite positive and satisfy
$M\widetilde \Omega=\Omega L$.
The connection with Proposition~\ref{prop:block_symmetrizer} 
appears through the diagonalized form of
\[
  \mathcal L=\Omega^{1/2}\widetilde \Omega \Omega^{1/2}
  =U\Lambda U^\top, 
\]
where $\Lambda$ is the diagonal matrix made of the (real and positive) eigenvalues of  
the (symmetric definite positive) matrix $\mathcal L$.
Then, we observe that  \(R=\Omega^{-1/2}U\) satisfies
\begin{equation}\label{defR}
  \widetilde \Omega \Omega=R\Lambda R^{-1},
  \qquad
  (R^{-1})^\top R^{-1}=\Omega.
\end{equation}
Therefore 
 \eqref{simple_symmetric_symmetrizer} just corresponds to
 \eqref{general_block_ML} when \(\widetilde \Omega\) is symmetric.
 It defines the energy functional 
\[
Y^\top SY
  =q^\top \widetilde \Omega q+p^\top \Omega p.
\]
\QED

\begin{coro}[Symmetric rank-one perturbation]
\label{cor:symmetric_rank_one_symmetrizer}
Let
$
  \widetilde\Omega=\Omega-\alpha vv^{\top}
$
for some $v\in \mathbb R^{n}\setminus\{0\}$ and $\alpha >0$.
If
\begin{equation}
  \alpha v^{\top }\Omega^{-1}v<1,
  \label{eq:symmetric_rank_one_weak_coupling}
\end{equation}
then \[
  S=
  \begin{pmatrix}
    \Omega-\alpha vv^{\top}&0\\
    0&\Omega
  \end{pmatrix}
  \]
is symmetric positive definite and it satisfies
$
  S\Theta
  +\Theta^{\top}S=0.
$
\end{coro}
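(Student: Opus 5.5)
The plan is to reduce the statement to Corollary~\ref{Prop:sym}. Once $\widetilde\Omega=\Omega-\alpha vv^\top$ is known to be symmetric positive definite, that corollary gives directly that
\[
S=\begin{pmatrix}\widetilde\Omega&0\\0&\Omega\end{pmatrix}
\]
is a symmetrizer. Symmetry of $\widetilde\Omega$ is immediate, since $\Omega$ and $vv^\top$ are symmetric. So the only real content is positive definiteness of $\Omega-\alpha vv^\top$ under \eqref{eq:symmetric_rank_one_weak_coupling}.

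To prove positivity, I would conjugate by $\Omega^{-1/2}$, which is well defined because $\Omega$ is symmetric positive definite. Set $w=\Omega^{-1/2}v$. Then
\[
\Omega-\alpha vv^\top=\Omega^{1/2}\big(\mathbb I-\alpha ww^\top\big)\Omega^{1/2}.
\]
Congruence preserves positive definiteness, so it suffices to treat $\mathbb I-\alpha ww^\top$. This matrix has eigenvalue $1$ on $w^\perp$ and eigenvalue $1-\alpha|w|^2$ on $\mathrm{Span}(w)$. Since $|w|^2=v^\top\Omega^{-1}v$, hypothesis \eqref{eq:symmetric_rank_one_weak_coupling} gives exactly $1-\alpha|w|^2>0$.

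A direct alternative is Cauchy--Schwarz in the $\Omega$-inner product: $(v^\top x)^2\le (v^\top\Omega^{-1}v)(x^\top\Omega x)$. This yields
\[
x^\top(\Omega-\alpha vv^\top)x\ge \big(1-\alpha v^\top\Omega^{-1}v\big)\,x^\top\Omega x>0
\]
for $x\neq0$. I would present this version, since it also gives an explicit coercivity constant, which is useful for the quantitative estimates later in the paper.

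Positive definiteness of $S$ then follows because it is block diagonal with positive definite blocks $\widetilde\Omega$ and $\Omega$.

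For the identity $S\Theta+\Theta^\top S=0$, I would either invoke Corollary~\ref{Prop:sym} or check it in one line. Using the block formula from the proof of Proposition~\ref{prop:block_symmetrizer}, the off-diagonal blocks are $\widetilde\Omega^\top M-L\Omega$ and $M\widetilde\Omega-\Omega L$. With $M=\Omega$ and $L=\widetilde\Omega$, both equal $\pm(\Omega\widetilde\Omega-\widetilde\Omega\Omega)$, so we need $\Omega\widetilde\Omega=\widetilde\Omega\Omega$. This commutation fails in general, since $\Omega vv^\top\neq vv^\top\Omega$. So the identity must be rechecked with $L$ and $M$ assigned to the correct blocks, and the block placement is the step I expect to need care.

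The corollary's proof states $M=\Omega$, $L=\widetilde\Omega$ and requires $M\widetilde\Omega=\Omega L$, that is $\Omega\widetilde\Omega=\Omega\widetilde\Omega$, which holds trivially. The other condition, $\widetilde\Omega^\top M=L\Omega$, reads $\widetilde\Omega\Omega=\widetilde\Omega\Omega$ and is likewise trivial. So with the block assignment $L=\widetilde\Omega$, $M=\Omega$ as in the corollary, both off-diagonal blocks vanish identically. The earlier apparent obstruction came only from a transposition slip in the bookkeeping.

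I would therefore write out the product $S\Theta$ explicitly,
\[
S\Theta=\begin{pmatrix}0&-\widetilde\Omega\Omega\\ \Omega\widetilde\Omega&0\end{pmatrix},
\]
note that $\Theta^\top S=(S\Theta)^\top$ because $S$ is symmetric, and conclude that the sum is zero.
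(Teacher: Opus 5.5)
Your proof is correct and follows the paper's route: reduce to Corollary~\ref{Prop:sym}, then obtain positive definiteness of $\Omega-\alpha vv^\top$ from the Cauchy--Schwarz bound $(v^\top x)^2\le (v^\top\Omega^{-1}v)(x^\top\Omega x)$. In the final write-up, delete the paragraph claiming the off-diagonal blocks equal $\pm(\Omega\widetilde\Omega-\widetilde\Omega\Omega)$, which was a bookkeeping slip; as you then observed, with $L=\widetilde\Omega$, $M=\Omega$ both blocks vanish identically, and your explicit computation of $S\Theta$ together with $\Theta^\top S=(S\Theta)^\top$ settles the identity.
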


\noindent
{\bf Proof.}
This is a direct application of Corollary~\ref{Prop:sym}.
We only have to check that $\widetilde\Omega$ is symmetric definite positive.
For every \(x\in\mathbb R^n\), the Cauchy--Schwarz inequality  gives
\[
  (v^\top  x)^2
  \leq
  \bigl(v^\top  \Omega^{-1}v\bigr)
  \bigl(x^\top  \Omega x\bigr).
\]
Consequently, \eqref{eq:symmetric_rank_one_weak_coupling} yields
\[
  x^\top  \widetilde\Omega x
  =x^\top  \Omega x-\alpha(v^\top  x)^2
  \geq
  \left(1-\alpha v^\top  \Omega^{-1}v\right)
  x^\top  \Omega x>0,
\]
proving the claim.  
\QED

Let us now discuss examples relevant for applications we are interested in
where  $\widetilde \Omega$ is obtained as a rank-one  perturbation of a diagonal matrix:
$$\Omega=\mathrm{diag}(\omega_{0},...,\omega_{n-1}), \qquad 
 0<\omega_{0}<...<\omega_{n-1}.$$

\begin{example}[Rank-one perturbation of a diagonal matrix]\label{Rk1pert}
Consider the case where $$\widetilde \Omega =\Omega-\alpha uv^\top,$$ 
with 
 $\alpha>0$, and $u,v\in \mathbb R^{n}$ with positive components.
Notice that $\widetilde \Omega$ is not symmetric, thus breaking the skew symmetry of $\Theta$ which holds when $\alpha$ vanishes.
The applicability of the symmetrization relies on the 
the spectral properties 
 of $\widetilde \Omega\Omega=\Omega^2-\alpha u(\Omega v)^\top$.
 Owing to the simple structure of the matrices, the construction can be performed explicitly, by proceeding as follows.

We remind that, given two vectors $x,y$ in $\mathbb R^{n}$, $\mathrm{Ran}(xy^\perp)=\mathrm{Span}(x)$, 
$\mathrm{Ker}(xy^\perp)=(\mathrm{Span}(y))^\perp$ and 
$(xy^\top)x=
(y^\top x)x$. Thus the eigenvalues
are 0, with multiplicity $n$ and $y^\top x$  with multiplicity $1$: it follows that
$\mathrm{det}(\mathbb I+xy^\top)=1+y\cdot x$.
Next, we write, for $\mu\notin\{\omega_{0}^2,...,\omega^2_{n-1})$,
\[\begin{array}{lll}
\mathrm{det}(\widetilde\Omega\Omega-\mu \mathbb I)
&=&\mathrm{det}(
(\Omega^2-\mu \mathbb I)(
(\mathbb I-\alpha(\Omega^2-\mu \mathbb I)^{-1}u(\Omega v)^\top))
\\&=&
\mathrm{det}
(\Omega^2-\mu \mathbb I)\times
\mathrm{det}((\mathbb I -\alpha(\Omega^2-\mu \mathbb I)^{-1}u(\Omega v)^\top)
\end{array}\]
and we take advantage of the fact that $\Omega$ is diagonal:
 $(\Omega^2-\mu \mathbb I)^{-1}u(\Omega v)^\top$ remains rank-one  and we get
 \begin{equation}\label{f_appears}
 \mathrm{det}(\widetilde\Omega\Omega-\mu \mathbb I)
 =\ds\prod_{j=0}^{n-1} (\omega^2_{j}-\mu)\times \left(
 1-\alpha\ds\sum_{j=0}^{n-1} \ds\frac{\omega_{j} u_jv_j}{\omega^2_{j}-\mu}\right).
 \end{equation}
 For any $\mu\in \mathbb C$ the formula generalizes into
 \[
 \mathrm{det}(\widetilde\Omega\Omega-\mu \mathbb I)=
 \ds\prod_{j=0}^{n-1} (\omega^2_{j}-\mu)-\alpha \ds\sum_{j=0}^{n-1}\left(\omega_{j} u_jv_j\prod_{\ell\neq j} (\omega^2_{\ell}-\mu)\right),
 \]
 since the expressions on both sides of the equality are polynomials wrt $\mu$ and
 agree away from finitely many points.
 Consequently,
 \[ \mathrm{det}(\widetilde\Omega\Omega-\omega_k^2 \mathbb I)=
 -\alpha \omega_ku_kv_k \prod_{\ell\neq k} (\omega^2_\ell-\omega_k^2)\]
 does not vanish since we assume $u_kv_k>0$. 
 Eigenvalues are determined by studying 
 when the quantity \eqref{f_appears} vanishes, that is  when 
 $\mu$ is a zero of the function
 \[
 f:\mu\longmapsto 1-\alpha\ds\sum_{j=0}^{n-1} \ds\frac{\omega_j u_jv_j}{\omega^2_j-\mu}.\]
 The function $f$ is continuous and decreasing on the interval $(\omega^2_j,\omega^2_{j+1})$ with 
 $f(\omega^2_j)=+\infty$, $f(\omega^2_{j+1})=-\infty$, so it vanishes
 on such intervals.
 We obtain this way $n-1$ positive eigenvalues. 
 It remains to discuss the case where $u_0v_0>0$ and $\mu$ lies in $(-\infty,\omega_0^2)$.
By the same argument, there exists a zero $\mu_0$ of $f$ in this interval.
We guarantee $\mu_0>0$ by assuming $ \alpha v^\top\Omega^{-1}u<1$ which implies $f(0)>0$.
The associated eigenvectors are obtained by the formula
\[  p_\mu=(\Omega^2-\mu I)^{-1}u.
  \]
Indeed, $f(\mu)=0$ gives
\(\alpha(\Omega v)^{\top}p(\mu)=1\), and hence
$
  (\widetilde\Omega\Omega-\mu \mathbb I)p(\mu)
  =u-\alpha u(\Omega v)^{\top}p(\mu)=0.
$
This permits us to construct the matrix $R$ and then the symmetrizer $S$.
\end{example}

\begin{example}[One-row perturbation of a diagonal matrix]
\label{rem:one_row_rank_one_symmetrizer}
The construction is completely explicit when
\[
  \widetilde \Omega=\Omega-\alpha e_0 v^{\top},\qquad v\in \mathbb R^{n}\]
In that case
\[
\widetilde \Omega\Omega  =\Omega^2-\alpha e_0(\Omega v)^{\top}
\]
differs from \(\Omega^2\) only in its first row.
To be more specific, this is the upper diagonal matrix
 given by  
  \[
  \left(
     \raisebox{0.5\depth}{%
       \xymatrixcolsep{1ex}%
       \xymatrixrowsep{1ex}%
       \xymatrix{
         \omega_{0}^2 -\alpha \omega_{0}v_0 & -\alpha \omega_{1}v_1\  \ar@{-}[rrr] &&& -\alpha  \omega_{n-1}v_{n-1}  
         \\ 0 \ar @{.}[rrrddd] \ar @{.}[ddd]& \omega_{1}^2\ar @{-}[rrrddd]
         &0\ar@{.}[rr] \ar@{.}[rrdd] && 0 \ar@{.}[dd]
           \\           &&&& 
         \\
          &&  & &   0
         \\
         0\ar @{.}[rrr]&&&0& \omega_{n-1}^2
       }%
     }
   \right).\]
   The eigenvalues 
   are $$\mu_0=\omega_{0}^2-\alpha \omega_{0}v_0,\quad
   \mu_1=\omega^2_{1},\quad ...,\quad \mu_{n-1}=\omega^2_{n-1}.$$
   They are all positive and distinct provided
   \begin{equation}\label{smallVS}0<\alpha<\ds\frac{\omega_0}{v_0}.
   \end{equation}
The corresponding eigenvectors, that 
   give the columns of $R$ in \eqref{defR}, are
   \[\textrm{
   $r_0=e_0$ and 
   $r_j=e_j+a_j e_0$ with $ a_j = \ds\frac{\alpha\omega_{j} v_{j}}{\omega^2_{0}-\omega^2_{j} -\alpha v_{0}\omega_{0}}$  for $j\in \{1,...,n-1\}$.}\]
 Therefore the matrix $R$ has a simple expression
 $$
 R=\left(
     \raisebox{0.5\depth}{%
       \xymatrixcolsep{1ex}%
       \xymatrixrowsep{1ex}%
       \xymatrix{
         1  \ar @{-}[rrrrdddd]& a_1 \ar@{-}[rrr] &&& a_{n}
         \\ 0 \ar @{.}[rrrddd] \ar @{.}[ddd]&&0\ar@{.}[rr] \ar@{.}[rrdd] && 0 \ar@{.}[dd]
           \\           &&&& 
         \\
          &&  & &   0
         \\
         0\ar @{.}[rrr]&&&0& 1
       }%
     }
   \right)=\mathbb I +e_0a^\top
 ,$$
 where $a=(0,a_1,...,a_{n}\}$. Since $a^\top e_0=0$, we get
$R^{-1}=\mathbb I-e_0 a ^\top$
and $$M
=(\mathbb I-e_0 a^\top )^\top(\mathbb I-e_0 a^\top )
=\mathbb I-a e_0 ^\top -  e_0a^\top+aa^\top.$$
By definition this is a symmetric definite positive matrix.
We finally set $
  L=\Omega^{-1}M \widetilde \Omega$
 to find  an explicit
positive symmetrizer.  

A similar argument can be used to study the spectrum of $\Theta$: 
that $(\lambda,(q,p))$ is an eigenpair for $\Theta$ leads to 
 \[
 \lambda ^2 q=-\Omega\Big(\Omega -\alpha e_0V^\top\Big)q,\]
 with $q\neq 0$.
 In other words $-\lambda^2$ is an eigenvalue of 
 $\Omega^2-\alpha\Omega e_0 V^\top$.
   The matrix being upper triangular, its eigenvalues can be found on its diagonal. Owing to \eqref{smallVS}, they are all positive and we conclude that $\sigma(\Theta)\subset i\mathbb R$.
\end{example}

\begin{example}[The 3 levels V-system]
In the analysis of the V-system, we decided to work with a new set of variables, obtained through the matrix $\mathscr V$. 
We observe that $\big(\frac{\mathscr V}{\mathpzc{v}}\big)^2=\mathbb I$.
The matrices
$$\mathscr V\Omega \mathscr V^{-1}=\begin{pmatrix} a& -b \\-b& d\end{pmatrix},\qquad
 \mathscr V\widetilde\Omega \mathscr V^{-1}=\begin{pmatrix} a-\tilde\kappa& -b \\-b& d\end{pmatrix}
 $$ remain symmetric.
The natural symmetrizer for the variable $\hat Y=(y,z,\eta,\zeta)$ thus reads
\[\widehat S=\begin{pmatrix}  \mathscr V\widetilde\Omega \mathscr V^{-1}& 0 \\0& \mathscr V\Omega \mathscr V^{-1}\end{pmatrix}.\]
The symmetrizer used above arises by coming back to the variables in the order $Y=(y,\eta,z,\zeta)$.
\end{example}

\begin{lemma}[PBH condition]
\label{lem:PBH}
Let $\mathcal L$ be a symmetric definite positive $n\times n$ matrix and $w\in\mathbb R^n$. 
Let $\mathcal L=U\operatorname{diag}(\mu_1,...,\mu_n) U^\top$
be an orthogonal diagonalization of $\mathcal L$, and set
$  a=U^\top w.$
We denote $(u_0,...,u_{n-1})$ the columns of $U$.
The
pair $(\mathcal L,w)$ satisfies the PBH condition
\begin{equation}
  \mathcal Lu=\mu u,\qquad w^\top u=0
  \quad\Longrightarrow\quad u=0.
  \label{eq:PBH_Lw}
\end{equation}
iff 
 $\mathcal L$ has simple eigenvalues and, for any $j\in\{0,...,n-1\}$,
\(
  w^\top u_j\neq0\).
 Equivalently,  
 \(a_j\neq0\)  for any $j\in\{0,...,n-1\}$.
\end{lemma}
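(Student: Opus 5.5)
We prove the two implications separately, working throughout in the orthonormal eigenbasis $(u_0,\dots,u_{n-1})$ of $\mathcal L$. Since $U$ is orthogonal, $a_j=(U^\top w)_j=u_j^\top w=w^\top u_j$. This already gives the equivalence between the two formulations of the condition ($w^\top u_j\neq 0$ for all $j$, versus $a_j\neq0$ for all $j$), so only the main equivalence needs an argument.

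\emph{Sufficiency.} Assume the eigenvalues $\mu_j$ are simple and every $a_j\neq 0$. Let $u$ satisfy $\mathcal L u=\mu u$ and $w^\top u=0$. If $\mu$ is not an eigenvalue, then $u=0$ directly. Otherwise $\mu=\mu_j$ for exactly one $j$, and simplicity forces $u=\beta u_j$ for some scalar $\beta$. Then
\[
0=w^\top u=\beta\, a_j ,
\]
and since $a_j\neq0$ we get $\beta=0$, hence $u=0$. We should allow $\mu\in\mathbb C$ and $u\in\mathbb C^n$. This causes no difficulty: $\mathcal L$ is real symmetric, so its eigenvalues are real and the complex eigenspaces are just the complexifications of the real ones.

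\emph{Necessity.} We argue by contraposition, and there are two cases.

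\begin{enumerate}
\item Some eigenvalue $\mu$ has multiplicity at least $2$. Take two orthonormal eigenvectors $u_j,u_k$ for $\mu$. The linear form $x\mapsto w^\top x$ restricted to the $2$-dimensional space $\mathrm{Span}(u_j,u_k)$ has a nontrivial kernel. Concretely, $u=a_k u_j-a_j u_k$ works when $(a_j,a_k)\neq(0,0)$, and $u=u_j$ works otherwise. This gives a nonzero eigenvector $u$ with $w^\top u=0$, which violates \eqref{eq:PBH_Lw}.
\item All eigenvalues are simple but some $a_j=0$. Then $u=u_j$ itself is a nonzero eigenvector orthogonal to $w$, which again violates \eqref{eq:PBH_Lw}.
\end{enumerate}

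There is no real obstacle in this argument. The only point needing care is case~1 of the necessity direction: the argument must produce a nonzero vector in the eigenspace orthogonal to $w$, and the dimension count above does exactly that. The remaining bookkeeping is to check that restricting to real eigenvectors loses nothing.
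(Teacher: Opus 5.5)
Your proof is correct and follows essentially the same route as the paper. For necessity, the paper also argues that on an eigenspace of dimension at least two the linear form $x\mapsto w^\top x$ cannot be injective, and then tests the eigenvector $u_j$ itself when some $a_j=0$; the only difference is that you also write out the sufficiency direction (including the complex case), which the paper simply calls direct.
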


\noindent
{\bf Proof.} Suppose that \eqref{eq:PBH_Lw} holds.
We first prove that the eigenvalues of $\mathcal L$ are simple. Suppose, by
contradiction, that 
\[
  E_\mu=\mathrm{Ker}(\mathcal L-\mu \mathbb I)
\]
has dimension at least two. The restriction to $E_\mu$ of the linear
form
\[
  u\longmapsto w^\top u
\]
maps a space of dimension at least two into $\mathbb R$ and therefore
cannot be injective. Consequently, there exists
$u\in E_\mu\setminus\{0\}$ such that
\(  w^\top u=0\).
which contradicts \eqref{eq:PBH_Lw}. 
Next, we show 
that the coefficients $a_j=u_j^\top w=w^\top u_j$ do not vanish. 
If $a_j=0$ for some $j$, then the nonzero eigenvector $u_j$ would
satisfy
 $ \mathcal Lu_j=\mu_j u_j,$ and 
 $ w^\top u_j=0,$
again contradicting \eqref{eq:PBH_Lw}. 
The opposite implication is direct.
\QED

\noindent
{\bf Compatibility with a rank-one dissipation}

\noindent
Suppose now that $D$ is rank-one acting only on momentum variables
\begin{equation}
\label{rank_one_D_general}
  D=
  \begin{pmatrix}
    0&0\\
    0&uv^\top 
  \end{pmatrix}.
\end{equation}
Then
\[
  SD+D^\top S
  =
  \begin{pmatrix}
    0&0\\
    0&(Mu)v^\top +v(Mu)^\top 
  \end{pmatrix}.
\]
Lemma~\ref{lem:SD} shows that the last matrix is
non-negative semidefinite iff
\begin{equation}
\label{rank_one_compatibility}
  Mu=\tau v
  \qquad\text{for some }\tau\geq0.
\end{equation}
When \(\tau>0\), 
the differential system 
\eqref{abstract_block_oscillator} with \eqref{rank_one_D_general}
entails damping since, with $Y=(q,p)$,
\[
 \ds \frac{\mathrm d}{\mathrm dt} SY\cdot Y
  =-2g\tau|v^\top p|^2.
\]
We point out that condition \eqref{rank_one_compatibility} is independent of the construction
of the conservative symmetrizer: it is the additional alignment which makes
the rank-one term genuinely dissipative.

\section{Generalized V-systems}
\label{sec:generalized_V}

The model we are dealing with couples the  state of lowest energy  to \(n\) mutually uncoupled states:
\[
V= \begin{pmatrix} 0 & v^\top
 \\
 v & 0\end{pmatrix},\qquad
 v=(V_{1/2},...,V_{n-1/2}),\qquad V_{j+1/2}>0.
\]
The full Hamiltonian has the arrowhead form
\[
H=
\left(
     \raisebox{0.5\depth}{%
       \xymatrixcolsep{1ex}%
       \xymatrixrowsep{1ex}%
       \xymatrix{
         \hbar \omega_0  \ar @{-}[rrrrddddd] & V_{1/2} \ar @{-}[rrr]  &&& V_{n-1/2} 
         \\V_{1/2}   \ar @{-}[ddd]   & &0  \ar @{.}[rrdd]   \ar @{.}[rr]  &&0 \ar @{.}[dd] 
           \\      
         \\
          &0  \ar @{.}[dd]  \ar @{.}[rrdd]  &  & &   0
          \\
         \\
         V_{n-1/2} &0 \ar @{.}[rr] &&0& \hbar \omega_n
       }%
     }
   \right).
\]
The  model has similarities to the system introduced in   \cite{HOR}
where distant atoms have distinct excited states dipole-coupled to one common ground state.
Another  discussion involving closely spaced  atomic levels coupled to a single state arises in \cite{Demkov, Kirova}.

In order to remain consistent with the notation in Section~\ref{sec:Vmod}, we distinguish 
ground-to-excited coherences and excited-to-excited coherences by setting
\[\begin{array}{l}
  \rho_j:=\rho_{j,j}, \quad j\in\{0,...,n\},
  \qquad
  \rho_{j+1/2}:=\rho_{0,j+1}, \quad  j\in\{0,...,n-1\},
  \\
  \Xi_{j,k}:=\rho_{j,k},\quad j,k\in \{1,...,n\},\, j<n.
  \label{eq:gV_density_entries}
\end{array}\]
The wave is sourced by the observable
\[
  \operatorname{Tr}(\rho V)
  =2\operatorname{Re}\left(\sum_{j=0}^{n-1} V_{j+1/2}\rho_{j+1/2}\right).
 \]
We introduce the Bohr frequencies
\[
 \textrm{$\omega_{j+1/2}=\omega_{j+1}-\omega_0>0$ for $j\in \{0,...n-1\}$ and 
$\nu_{j,k}=\omega_k-\omega_j>0$ for 
  $1\leq j<k\leq n$}.
  \]
In what follows $\Omega$ stands for $\mathrm{diag}(\omega_{1/2},...,\omega_{n+1/2})$.
The Liouville equation
$
  i\hbar\dot\rho=[\hbar\Omega_{\mathrm{free}}+\Phi V,\rho]
$
can be cast as the following componentwise system:
\begin{equation}
\label{eq:gV_complex_system}
\begin{array}{l}
  \dot\rho_0
  =-\ds\frac{2\Phi}{\hbar}
    \sum_{\ell=0}^{n-1} V_{\ell+1/2}\operatorname{Im}\rho_{\ell+1/2},
  \\
  \dot\rho_j
  =\ds\frac{2\Phi}{\hbar}V_{j+1/2}\operatorname{Im}\rho_{j+1/2},
  \qquad j\in\{1,..., n\},
  \\
  \dot \rho_{j+1/2}
  =i\omega_{j+1/2}\rho_{j+1/2}
    -\ds\frac{i\Phi}{\hbar}
      \left(
        V_{j+1/2}(\rho_j-\rho_0)
        +\ds\sum_{\substack{k=1\\k\neq j}}^n V_{k+1/2}\Xi_{k,j}
      \right),
  \qquad j\in\{0,..., n-1\},\\
  \dot \Xi_{j,k}
  =i\nu_{j,k}\Xi_{j,k}
    -\ds\frac{i\Phi}{\hbar}
      \left(V_{j+1/2}\rho_{k+1/2}-V_{k+1/2} \rho_{j+1/2}^*\right),
  \qquad j,k\in\{1,...,n\}.
\end{array}
\end{equation}
coupled to 
\[
  (\partial_t^2-c^2\Delta)\psi
  =-2c^2\sigma\operatorname{Re}
      \left(\sum_{j=0}^{n-1} V_{j+1/2}\rho_{j+1/2}\right).
  \]
The system has been derived by using, for  \(j\in\{0,..., n-1\}\),
\[
  [V,\rho]_{0,j+1}
  =V_{j+1/2}(\rho_{j+1}-\rho_0)
   +\sum_{\substack{k=0\\k\neq j}}^{n-1} V_{k+1/2}\Xi_{k+1,j+1},
\]
whereas, for \(j,k\in\{0,...,n-1\}\),
\[
  [V,\rho]_{j+1,k+1}=V_{j+1/2}\rho_{k+1/2}-V_{k+1/2}\rho^*_{j+1/2}.
\]
These identities explain both the coupling of the ground-to-excited
coherences to the excited block and the absence of any direct field term in
the evolution of the populations other than through the  \(\rho_{j+1/2}\)'s.

We now introduce  real variables as in Section~\ref{sec:Vmod}:
\[
  2\rho_{j+1/2}=q_{j+1/2}+ip_{j+1/2},
  \qquad
  \xi_{j+1/2}=\rho_j-\rho_0,
  \qquad j\in\{0,...,n-1\},
 \]
and, for \(j,k\in\{1,...,n\}\), $j<k$,
\[
  2\Xi_{j,k}=q^\Xi_{j,k}+ip^\Xi_{j,k}.
 \]
We collect the quantities into vectors $
  q=(q_{1/2},... ,q_{n+1/2})
  $,  $ p=(p_{1/2},...,p_{n+1/2})$ and the matrix $\Xi$ accordingly.  
The trace constraint gives
\[
  \rho_0=\frac{1-\sum_{\ell=0}^{n-1}\xi_{\ell+1/2}}{n+1},
  \qquad
  \rho_j
  =\frac{1-\sum_{\ell=0}^{n-1}\xi_{\ell+1/2}}{n+1}+\xi_{j+1/2}.
 \]
In these variables, \eqref{eq:gV_complex_system} becomes
\[
\begin{array}{l}
  \dot\xi_{j+1/2}
  =\ds\frac{\Phi}{\hbar}
    \left(V_{j+1/2}p_{j+1/2}+\sum_{\ell=0}^{n-1} V_{\ell+1/2} p_{\ell+1/2}\right),
  \qquad j\in \{0,...,n-1\},
 \\
  \dot q_{j+1/2}
  =-\omega_{j+1/2}p_{j+1/2}
    +\ds\frac{\Phi}{\hbar}
      \ds\sum_{\substack{k=0\\k\neq j}}^{n-1} V_{k+1/2}p^\Xi_{k+1,j+1},
 \qquad j\in \{0,...,n-1\},
 \\
  \dot p_{j+1/2}
  =\omega_{j+1/2}q_{j+1/2}
    -\ds\frac{2\Phi}{\hbar}V_{j+1/2}\xi_{j+1/2}
    -\ds\frac{\Phi}{\hbar}
      \sum_{\substack{k=0\\k\neq j}}^{n-1} V_{k+1/2}q^\Xi_{k+1,j+1},
  \qquad j\in \{0,...,n-1\},
 \\
  \dot q^\Xi_{j+1,k+1}
  =-\nu_{j+1,k+1}p^\Xi_{j+1,k+1}
    +\ds\frac{\Phi}{\hbar}(V_{j+1/2}p_{k+1/2}+V_{k+1/2}p_{j+1/2}),
  \quad j,k\in \{0,...,n-1\}, \ j<k,
  \\
  \dot p^\Xi_{j+1,k+1}
  =\nu_{j+1,k+1}q^\Xi_{j+1,k=1}
    -\ds\frac{\Phi}{\hbar}(V_{j+1/2}q_{k+1/2}-V_{k+1/2}q_{j+1/2}),
  \qquad j,k\in \{0,...,n-1\}, \ j<k.
  \end{array}
\]
The source of the wave equation takes the particularly simple form
\(
  \operatorname{Tr}(\rho V)
=v^{\top}q.
 \)
With only 3 levels, we recover \eqref{Vsystembis}.

\begin{theo}\label{th_cv_VS} Assume 
\begin{equation}
\ds\frac{2\kappa}\hbar v^\top \Omega^{-1}v<1.
  \label{small2}
\end{equation}
Then,
$\lim_{t\to \infty}\big(q_{j+1/2},p_{j+1/2}\big)(t)=0$ for any $j\in \{0,...,n\}$.
%
In the pure case, if, moreover, the initial energy $\mathcal E(0)$ is smaller than $\hbar \omega _1$, then we have $
\lim_{t\to \infty}\rho(t)=e_0e_0^\top.$
Finally, in the latter case, there exists $K_*,\lambda_*,c_*>0$ such that for any $c\geq c_*$, 
$\|\rho(t)-e_0e_0^\top\|\leq K_*e^{-\lambda_* t/c}.
$
\end{theo}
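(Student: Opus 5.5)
Mirroring Section~\ref{sec:Vmod}, the argument has three parts: identification of the limit, characterization of the pure case, and exponential rate. \emph{First part.} Lemma~\ref{lem:estim} carries over verbatim, since $\mathrm{Tr}(\rho V)=v^\top q$ and $|q|\leq 1$. So does Lemma~\ref{lem:cv_p}: the localized energy/Barbalat/Tauberian argument only uses $\frac{\ud}{\ud t}\mathrm{Tr}(\rho V)=-v_\omega\cdot p$, now with $v_\omega=\Omega v$. Hence $(\Omega v)\cdot p(t)\to0$, and Corollary~\ref{coro_cv_varpi} gives $\varpi_S\to0$. I then take time translates $X^\nu$ and pass to the limit by Arzela--Ascoli as before. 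This yields a linear system $\dot X^\infty=A^\infty X^\infty$ with constant potential $\Phi^\infty=-\kappa v^\top q^\infty$, together with the constraint $(\Omega v)\cdot p^\infty\equiv0$.

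The key claim is $\Phi^\infty=0$, proved by the Ces\`aro-mean argument of Lemma~\ref{Phi0}. The averages $\langle X^\infty\rangle_{T_n}$ converge to an admissible $\mathscr X\in\mathrm{Ker}(A^\infty)$ with $v^\top q^{\mathscr X}=-\Phi^\infty/\kappa$. On the kernel, the $\dot p$ equations give
\[
\omega_{j+1/2}q_{j+1/2}=\frac{\Phi^\infty}{\hbar}\Big(2V_{j+1/2}\xi_{j+1/2}+\sum_{k\neq j}V_{k+1/2}q^\Xi_{k+1,j+1}\Big).
\]
Taking the inner product with $v$ and dividing by $\Phi^\infty\neq0$ gives $\hbar/\kappa=-\mathrm{Tr}(\rho O)$. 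Here $O$ is the generalized version of \eqref{defO}: its entry in position $(0,0)$ is $-2v^\top\Omega^{-1}v$, and its excited block is $O_{jk}=V_{j+1/2}V_{k+1/2}(\omega_{j+1/2}^{-1}+\omega_{k+1/2}^{-1})$ (with the convention for $q^\Xi$ signs fixed by the equations). The crude bound $\|O\|$ only gives the suboptimal condition. \textbf{This is the main obstacle}: to reach exactly \eqref{small2}, I would bound $\mathrm{Tr}(\rho O)$ over density matrices sharply. The excited block is $ww^\top\!\cdot$-like: it equals $\Omega^{-1}vv^\top+vv^\top\Omega^{-1}$. I would show that $-\mathrm{Tr}(\rho O)\leq 2v^\top\Omega^{-1}v$ by using $\rho_0\leq1$, positivity of $\rho$, and Cauchy--Schwarz on $2\,\mathrm{Re}(v^\top\Xi\,\Omega^{-1}v)\geq -\ldots$. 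Alternatively I would argue as in the ground-state proof (Proposition~\ref{prop:ground_state_fixed_spectrum}), interpreting the kernel relation as the Euler--Lagrange equation of the reduced energy and using the Cauchy--Schwarz bound \eqref{observable_offdiag_bound} with weights $\omega_{j+1/2}$. Either route would contradict \eqref{small2}. The paper announces such a sharp statement (Proposition~\ref{prop:optim}), which I would invoke if available.

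With $\Phi^\infty=0$, the pairs $(q_{j+1/2},p_{j+1/2})$ rotate at frequencies $\omega_{j+1/2}$, which are distinct. Hence $(\Omega v)\cdot p^\infty\equiv0$ with all $V_{j+1/2}\omega_{j+1/2}\neq0$ forces all of them to vanish, by linear independence of the exponentials. \emph{Second part.} In the pure case, $\mathrm{Tr}(\rho^2)=1=(\sum\rho_j)^2$ gives vanishing of $\rho_0\rho_j$ and of the $2\times2$ excited minors. So either $\rho_0^\infty=1$, or $\rho_0^\infty=0$; the latter is excluded since the limit energy $\hbar\sum\omega_j\rho_j^\infty\geq\hbar\omega_1$ exceeds $\mathcal E(0)$. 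Then $\Xi^\infty=0$ by the minor inequalities.

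\emph{Third part: the rate.} I would pass to $Y=(q,p)$ and write $\Phi=-\kappa v^\top q+\frac{\upgamma}{c}\frac{\ud}{\ud t}v^\top q+\mathcal R+\Phi_I$ as in \eqref{PhiS}. Near $e_0e_0^\top$ (where $\xi_{j+1/2}\approx-1$), the linear part is \eqref{abstract_block_oscillator} with $\widetilde\Omega=\Omega-\frac{2\kappa}{\hbar}vv^\top$ and $D$ rank one, namely $D=\frac{2\upgamma}{\hbar c}\,v(\Omega v)^\top$ acting on $p$. Corollary~\ref{cor:symmetric_rank_one_symmetrizer} under \eqref{small2} gives $S=\mathrm{diag}(\widetilde\Omega,\Omega)$. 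Then $Mu=\Omega v$ is aligned with $v_\omega=\Omega v$, so \eqref{rank_one_compatibility} holds with $\tau=1$ and the dissipation is $-2g|(\Omega v)^\top p|^2$. For the PBH check (Lemma~\ref{lem:PBH}), an eigenvector with $(\Omega v)^\top p=0$ yields $\Omega p$ as an eigenvector of $\Omega\widetilde\Omega$ orthogonal to $v$, hence an eigenvector of $\Omega^2$ orthogonal to $v$. This is impossible, since the $\omega_{j+1/2}$ are distinct and all $V_{j+1/2}>0$. Kalman/PBH then gives $\|e^{At}\|\leq K_*e^{-\lambda_*t/c}$, either via the perturbative eigenvalue argument of Proposition~\ref{estim_expA} (semisimple imaginary spectrum of $\Theta$, first-order shifts $-g\,|{\rm projection}|^2<0$) or via \cite{TucsnakWeiss2009}. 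Finally, the nonlinear bootstrap of Section~\ref{sec:nonlin} carries over unchanged: remainders are small beyond $T^\epsilon$, $\mathcal R$ is controlled by $c^{-2}\mathcal M_1$, and Gr\"onwall closes for $c\geq c_*$ and small $\epsilon$.
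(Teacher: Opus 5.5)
\textbf{Gap: the sharp exclusion of $\Phi^\infty\neq0$.} Everything downstream depends on $\Phi^\infty=0$, including the first assertion $q,p\to0$ for mixed states, and this is the step you leave open. Your first route cannot work. You use only the $\dot p$-equations of the kernel, i.e.\ the identity $\hbar/\kappa=-\mathrm{Tr}(\rho O)$, and then try to bound $-\mathrm{Tr}(\rho O)$ over \emph{all} density matrices. However, $-\mathrm{Tr}(\rho O)=2\rho_0\,v^\top\Omega^{-1}v-2\mathrm{Re}(v^\top\hat\rho\,\Omega^{-1}v)$, where $\hat\rho$ is the excited block. The excited block $\Omega^{-1}vv^\top+vv^\top\Omega^{-1}$ of $O$ is indefinite, with negative eigenvalue $v^\top\Omega^{-1}v-|v|\,|\Omega^{-1}v|$. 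Putting all the mass of $\rho$ on the corresponding eigenvector gives $-\mathrm{Tr}(\rho O)=|v|\,|\Omega^{-1}v|-v^\top\Omega^{-1}v$. This exceeds $2v^\top\Omega^{-1}v$ as soon as $|v|\,|\Omega^{-1}v|>3v^\top\Omega^{-1}v$; for instance $n=2$, $\omega_{1/2}=10^{-4}$, $\omega_{3/2}=1$, $v=(10^{-2},1)$ gives about $98>4$. So no combination of $\rho_0\le1$, positivity and Cauchy--Schwarz yields the bound, as the paper itself warns.

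Your second route does not close either. The kernel condition characterizes \emph{critical points} of $\mathscr E_{\rm red}$ on the isospectral orbit. Proposition~\ref{prop:ground_state_fixed_spectrum} only identifies the global minimizer, so it does not exclude non-minimizing critical points with $\mathrm{Tr}(\rho V)\neq0$. Invoking Proposition~\ref{prop:optim} does not help, because it is exactly the core of the paper's proof of this theorem.

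The missing idea is to use the \emph{full} kernel condition. The Ces\`aro limit $\mathscr X$ is a density matrix commuting with $H_\infty=\hbar\Omega_{\mathrm{free}}+\Phi^\infty V$, so one can write $\rho=\sum_m\uppi_m u^{(m)}u^{(m)*}$ in a common orthonormal eigenbasis. For an eigenpair $(\lambda,u)$ of $H_\infty$, the eigen-equations give $u^*Ou=2\hbar(\lambda-\hbar\omega_0)|u_0|^2/\Phi^{\infty2}$. Completing the square shows that $H_\infty-\hbar\omega_0\mathbb I$ has inertia $(n,1)$. Hence only the single eigenvector $u_-$ with eigenvalue $\lambda_-<\hbar\omega_0$ contributes positively to $-\mathrm{Tr}(\rho O)$. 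The secular equation then gives $-u_-^*Ou_-=2\hbar|u_{-,0}|^2\,v^\top(\hbar\Omega+(\hbar\omega_0-\lambda_-)\mathbb I)^{-1}v\le 2v^\top\Omega^{-1}v$. Therefore $\hbar/\kappa=-\mathrm{Tr}(\rho O)\le 2v^\top\Omega^{-1}v$, which contradicts \eqref{small2}.

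The rest of your proposal is sound. Your PBH check is a shortcut compared with the paper's interlacing argument and Lemma~\ref{lem:PBH}: $x=\Omega p$ is an eigenvector of $\Omega\widetilde\Omega$ with $v^\top x=0$, hence an eigenvector of $\Omega^2$. For the decay estimate, your perturbative/Kalman argument gives non-explicit constants, whereas the paper's explicit functional $SY\cdot Y+\frac{\upgamma}{\hbar c}K_\epsilon Y\cdot Y$ gives explicit ones.
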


By using the localized energy as in Section~\ref{LargeTimeVsys}
and reproducing the same reasoning mutatis mutandis, 
 we obtain the analogue to Lemma~\ref{lem:cv_p} and Corollary~\ref{coro_cv_varpi}:
 $\lim_{t\to \infty} (\Omega V)^\top p=0$ and $\lim_{t\to \infty} \omega_S(t,z)=0$
 (in $C^0([0,T];L^2(\mathbb R^3)-weak)$.
 Accordingly, for an certain increasing sequence $t^\nu$ 
  that tends to $\infty$, we can pass to the limit $X(t+t^\nu)\to X^\infty(t)$ uniformly on $[0,T]$, 
  and $\Phi(t+t^\nu)\to \Phi^\infty$. We can indeed show that 
  $(\Omega V)^\top p^\infty=0$ and 
  $v^\top q^\infty=-\kappa\Phi^\infty$. 
We turn to the analog of Lemma~\ref{admker}.
 
 \begin{proposition}\label{prop:optim}
Assume 
\eqref{small2}.
If $\Phi^\infty\neq 0$, then there is no density matrix \(\rho\) satisfying simultaneously
\[
  [\hbar \Omega_{\mathrm{free}}+\Phi^\infty V,\rho]=0,
  \textrm{ and }
  \Phi^\infty=-\kappa\mathrm{Tr}(\rho V).
\]
\end{proposition}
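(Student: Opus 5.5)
The plan is to work directly with the spectral structure of the arrowhead matrix $H^\infty=\hbar\Omega_{\mathrm{free}}+\Phi^\infty V$, rather than with the $8$-dimensional real kernel used in Lemma~\ref{admker}. Suppose, by contradiction, that $\Phi^\infty\neq0$ and that $\rho$ is a density matrix with $[H^\infty,\rho]=0$ and $\Phi^\infty=-\kappa\mathrm{Tr}(\rho V)$.

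\textbf{Step 1: reduce to eigenvectors of $H^\infty$.} Since $\rho$ and $H^\infty$ are commuting hermitian matrices, they can be diagonalized in a common orthonormal basis. Hence
\[
\rho=\sum_\ell\lambda_\ell u^{(\ell)}u^{(\ell)*},\qquad \lambda_\ell\ge0,\qquad \sum_\ell\lambda_\ell=1,
\]
where each $u^{(\ell)}$ is a unit eigenvector of $H^\infty$ with eigenvalue $E_\ell$. For such a $u$, write $x=E-\hbar\omega_0$ and let $u_0$ be its ground component. The arrowhead form gives
\[
(E-\hbar\omega_{j+1})u_{j+1}=\Phi^\infty V_{j+1/2}u_0,\qquad x\,u_0=\Phi^\infty\, v^\top u_{\rm exc},
\]
where $u_{\rm exc}=(u_1,\dots,u_n)$. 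If $u_0=0$, then $\mathrm{Tr}(uu^*V)=2\mathrm{Re}(u_0^*v^\top u_{\rm exc})=0$. Otherwise:
\begin{itemize}
\item the eigenvalue satisfies the secular equation $x=(\Phi^\infty)^2\sum_j V_{j+1/2}^2/(x-\hbar\omega_{j+1/2})$;
\item the observable is $\mathrm{Tr}(uu^*V)=2|u_0|^2x/\Phi^\infty$;
\item normalization gives $|u_0|^{-2}=1+(\Phi^\infty)^2\sum_jV_{j+1/2}^2/(x-\hbar\omega_{j+1/2})^2$.
\end{itemize}
Substituting into $\Phi^\infty=-\kappa\mathrm{Tr}(\rho V)$ gives the necessary identity
\[
\frac{(\Phi^\infty)^2}{2\kappa}=-\sum_\ell\lambda_\ell|u_0^{(\ell)}|^2x_\ell .
\]
The sign convention of $x$ has to be tracked carefully here: it is fixed by $\omega_{j+1/2}>0$ and by which eigenvalues lie below $\hbar\omega_0$.

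\textbf{Step 2: a uniform bound per eigenvector.} The key claim is that, for every eigenvector with $u_0\neq0$,
\[
\bigl|\,|u_0|^2x\,\bigr|\le(\Phi^\infty)^2\,\frac{v^\top\Omega^{-1}v}{\hbar}.
\]
Assumption \eqref{small2} says precisely that this right-hand side is strictly smaller than $(\Phi^\infty)^2/(2\kappa)$. Since $\sum_\ell\lambda_\ell=1$ and $\Phi^\infty\neq0$, the convex combination in Step~1 then cannot reach $(\Phi^\infty)^2/(2\kappa)$, which is the desired contradiction.

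To prove the claim, set $a_j=V_{j+1/2}^2$ and $d_j=x-\hbar\omega_{j+1/2}$, and use the secular equation to write
\[
|u_0|^2x=\frac{(\Phi^\infty)^2\sum_ja_j/d_j}{1+(\Phi^\infty)^2\sum_ja_j/d_j^2}.
\]
It then remains to compare this quantity with $(\Phi^\infty)^2\sum_ja_j/(\hbar\omega_{j+1/2})=(\Phi^\infty)^2\sum_ja_j/(x-d_j)$.

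\textbf{Main obstacle.} I expect this comparison to be the hard step. It is a genuinely sharp inequality, since it must reproduce exactly the threshold \eqref{small2}. As a consistency check, for three levels it should be equivalent to the bound on $\mathrm{Tr}(\rho O)$ with the matrix $O$ of \eqref{defO}, now in the sharp form.

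I would first try a Hellmann--Feynman interpretation. Writing $\Phi^\infty=\varepsilon$, one has
\[
|u_0|^2x=\varepsilon\,\partial_\varepsilon E/2\quad(\text{up to sign}),
\]
so the claim becomes a bound on the derivative of an eigenvalue branch $E(\varepsilon)$ of $\hbar\Omega_{\mathrm{free}}+\varepsilon V$. I would aim to control it by a second-order perturbation (Rayleigh-quotient) argument, in which the comparison quantity $\varepsilon^2v^\top\Omega^{-1}v/\hbar$ appears as the second-order Rayleigh--Schrödinger energy shift.

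If that fails, the fallback is the direct route of Lemma~\ref{admker}. There one writes the identity as $-\hbar/\kappa=\mathrm{Tr}(\rho O)$, with
\[
O=\bar v\bar w^\top+\bar w\bar v^\top-2(v^\top\Omega^{-1}v)e_0e_0^\top,\qquad \bar v=(0,v),\quad \bar w=(0,\Omega^{-1}v).
\]
This $O$ is obtained from the $(0,j+1)$ entries of $[H^\infty,\rho]=0$. One would then exploit the commutation constraint on the excited block to show that $\mathrm{Tr}(\rho O)\ge-2v^\top\Omega^{-1}v$. The crude bound by the smallest eigenvalue of $O$ is not sufficient, because the excited block $\bar v\bar w^\top+\bar w\bar v^\top$ can have a large negative eigenvalue.
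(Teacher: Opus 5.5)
\textbf{Gap: Step 2 is stated but not proved.} Your Step 1 is correct, and it reaches the paper's key identity more directly. The paper builds $K$ with $[K,\Omega_{\mathrm{free}}]=-V$, sets $O=[K,V]$, and evaluates $u^*Ou=2\hbar x|u_0|^2/(\Phi^\infty)^2$ on common eigenvectors. Your direct computation $\mathrm{Tr}(uu^*V)=2|u_0|^2x/\Phi^\infty$ gives the same relation $(\Phi^\infty)^2/(2\kappa)=-\sum_\ell\lambda_\ell|u_0^{(\ell)}|^2x_\ell$ (the paper's $\hbar/\kappa=-\mathrm{Tr}(\rho O)$) without $K$ or $O$.

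The gap is Step 2. The inequality that carries the whole contradiction is only stated. You call it the main obstacle and list candidate routes: Hellmann--Feynman, or falling back on $\mathrm{Tr}(\rho O)\ge-2v^\top\Omega^{-1}v$ via an unspecified use of the commutation relation. None of them is carried out, so as written the proposal does not prove the statement.

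Your formulation also makes the step harder than it is. Asking for $\bigl||u_0|^2x\bigr|\le(\Phi^\infty)^2v^\top\Omega^{-1}v/\hbar$ for \emph{every} eigenvector forces you to handle eigenvalues above $\hbar\omega_0$. There the $d_j$ are not bounded below by $\hbar\omega_{j+1/2}$: they may be small or of either sign, so your closed formula gives no direct comparison. The bound does hold there, but only through an extra argument, such as the sum rule $\sum_m|u_0^{(m)}|^2x_m=(H^\infty)_{00}-\hbar\omega_0=0$ combined with uniqueness of the eigenvalue below $\hbar\omega_0$.

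\textbf{Missing idea: use the sign of $x_\ell$.} You hint at this in Step 1 but then switch to absolute values; the paper instead uses that $u^*Ou$ has the sign of $\lambda-\hbar\omega_0$.
\begin{itemize}
\item In $(\Phi^\infty)^2/(2\kappa)=-\sum_\ell\lambda_\ell|u_0^{(\ell)}|^2x_\ell$, the terms with $x_\ell>0$ only decrease the right-hand side and can be dropped.
\item For $x_\ell<0$, every $d_j=x_\ell-\hbar\omega_{j+1/2}$ is negative with $|d_j|=\hbar\omega_{j+1/2}+|x_\ell|\ge\hbar\omega_{j+1/2}$. Your secular equation then gives $|x_\ell|=(\Phi^\infty)^2\sum_ja_j/|d_j|\le(\Phi^\infty)^2v^\top\Omega^{-1}v/\hbar$, and $|u_0^{(\ell)}|^2\le1$.
\item Hence $(\Phi^\infty)^2/(2\kappa)\le\bigl(\sum_{x_\ell<0}\lambda_\ell\bigr)(\Phi^\infty)^2v^\top\Omega^{-1}v/\hbar\le(\Phi^\infty)^2v^\top\Omega^{-1}v/\hbar$. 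Since $\Phi^\infty\neq0$, this contradicts \eqref{small2}.
\end{itemize}
This is the paper's Step~3. The paper also invokes Sylvester's law of inertia to know there is a single eigenvalue below $\hbar\omega_0$; with $\sum_\ell\lambda_\ell\le1$ that is not even needed. No perturbative or Hellmann--Feynman estimate is required: the sharp threshold comes out of this elementary comparison once the sign is used.
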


\noindent
{\bf Proof.}
For the generalized model, it is finally easier to deal with the density matrix instead of the coordinates.
Let $K\in \mathcal M_{n+1}(\mathbb R)$ (that will be constructed later on) be such that 
\[[K,\Omega_{\mathrm{free}}]=-V.\]
Then, we have
\[\begin{array}{lll}
\mathrm{Tr}(\rho V)&=&-\mathrm{Tr}([K,\Omega_{\mathrm{free}}]\rho)
=-\ds\frac1\hbar \mathrm{Tr}([K,\hbar \Omega_{\mathrm{free}} +\Phi^\infty V]\rho)
+\ds\frac{\Phi^\infty}\hbar \mathrm{Tr}([K, V]\rho)
\\&=&
+\ds\frac1\hbar \mathrm{Tr}([\hbar \Omega_{\mathrm{free}} +\Phi^\infty V,\rho]K)
+\ds\frac{\Phi^\infty}\hbar \mathrm{Tr}([K, V]\rho)
.\end{array}\]
by using the commutation relations \eqref{commut}.
Hence, if $\rho$ lies in the kernel of the operator 
$A^\infty:\rho\mapsto [\hbar \Omega_{\mathrm{free}} +\Phi^\infty V,\rho]$, then we obtain 
\[
\mathrm{Tr}(\rho V)=\ds\frac{\Phi^\infty}\hbar \mathrm{Tr}([K, V]\rho)
.\]
Imposing $\Phi^\infty=-\kappa\mathrm{Tr}(\rho V)$ with $\Phi^\infty\neq 0$ yields
\[
\mathrm{Tr}([K, V]\rho)=-\ds\frac{\hbar}{\kappa}.\]
Inspired by \eqref{defO},
we find 
\[
K=\begin{pmatrix}
    0&-(\Omega^{-1} v)^\top  \\
    \Omega^{-1} v&0
  \end{pmatrix}
,\qquad 
O:=[K,V]=
\begin{pmatrix}
    -2v^\top  \Omega^{-1}v&0\\
    0&\Omega^{-1}v v^{\top}+v(\Omega^{-1}v)^\top  
  \end{pmatrix}
.\]
Hence, assuming $\Phi^\infty\neq 0$ we arrive at the 
relation
\begin{equation}\label{eq_tr_rhoO} \ds\frac{\hbar}{\kappa}
  =-\mathrm{Tr}(\rho O).\end{equation}
  Bearing in mind that $\rho$ is a density matrix, we can derive  as in the proof of Lemma~\ref{admker} a rough estimate with $\|O\|$, but this would miss 
the sharp criterion \eqref{small2}.
We need to use further information coming from the fact that we are looking at density matrices that satisfy 
\begin{equation}\label{ZeroComm}[\hbar \Omega_{\mathrm{free}}+\Phi^\infty V,\rho]=0.\end{equation}
Equation \eqref{ZeroComm} tells us that  \(\hbar \Omega_{\mathrm{free}}+\Phi^\infty V\) and \(\rho\) are commuting Hermitian matrices; therefore they admit a common orthonormal eigenbasis and we can expand
$$
\rho=\sum_{m=0}^n \uppi_m u^{(m)}u^{(m)*},
\qquad
\uppi_m\geq0,\qquad \sum_m \uppi_m=1,
$$
where
$$
(\hbar \Omega_{\mathrm{free}}+\Phi^\infty V) u^{(m)}=\lambda_m u^{(m)}.
$$
We are now reduced to compute 
\[
\mathrm{Tr}(\rho O)=\ds\sum_{m=0}^n \uppi_m u^{(m)*}Ou^{(m)}.\]

{\it Step 1: Spectral decomposition.}
We write
\[\hbar \Omega_{\mathrm{free}}+\Phi^\infty V
=\begin{pmatrix} 0 & \Phi^\infty v^\top
\\
\Phi^\infty v & \hbar \Omega\end{pmatrix} + \hbar \omega_0\mathbb I.\]
We consider the associated quadratic form 
\[\begin{array}{lll}
u=(u_0,\hat u)\in \mathbb C\times \mathbb C^n \longmapsto 
u^*\begin{pmatrix} 0 & \Phi^\infty v^\top
\\
\Phi^\infty v & \hbar \Omega\end{pmatrix} u
&=&
\hbar \hat u^*\Omega \hat u + 2\Phi^\infty \mathrm{Re}(u_0^* v^\top \hat u)
\\&=
&\hbar 
\left|\Omega^{1/2}\Big(\hat u +u_0\ds\frac{\Phi^\infty}{\hbar}\Omega^{-1}v\Big)\right|^2 
\\
&&\qquad-\ds\frac{\Phi^{\infty 2}}{\hbar}v^\top \Omega^{-1} v |u_0|^2.\end{array}\]
Setting $y=(\hat u +u_0\frac{\Phi^\infty}{\hbar}\Omega^{-1}v)$, it casts as
$\hbar y^*\Omega y -\frac{\Phi^{\infty}2}{\hbar}v^\top \Omega^{-1} v |u_0|^2$: the signature is $(n,1)$.
By Sylvester’s law of inertia, we deduce that $\hbar \Omega_{\mathrm{free}}+\Phi^\infty V$ has one eigenvalue below $\hbar \omega_0$ --- that will be denoted $\lambda_-$ from now on ---  and $n$ eigenvalues above $\hbar \omega_0$.

{\it Step 2: Computation of $u^*Ou$.}
Let $(\lambda,u)$ be an eigenpair of $\hbar \Omega_{\mathrm{free}}+\Phi^\infty V$:
\begin{equation}\label{evH}
\Phi^\infty v^\top \hat u=(\lambda - \hbar \omega_0)u_0,\qquad 
\Phi^\infty vu_0+ \hbar \Omega \hat u=(\lambda - \hbar \omega_0)\hat u.\end{equation}
The second equation yields 
$\Phi^\infty v^\top \Omega^{-1} v u_0+ \hbar v^\top \hat u=(\lambda - \hbar \omega_0)v^\top \Omega^{-1}\hat u
$.
We use these relations to compute
\[\begin{array}{lll}
u^*Ou&=&-2v^\top\Omega^{-1} v |u_0|^2
+ 2\mathrm{Re}\big[(v^\top \Omega^{-1} \hat u )(\hat u^*v )
\big]
 \\[.4cm]
 &=&
 -2v^\top\Omega^{-1} v |u_0|^2+
 2\mathrm{Re}\left[\ds\frac{\Phi^\infty v^\top \Omega^{-1} v u_0+ \hbar v^\top \hat u}{\lambda - \hbar \omega_0}
 \times \ds\frac{(\lambda - \hbar \omega_0)u_0^*}{\Phi^\infty}
 \right]
 \\[.4cm]
 &=&
 \ds\frac{2\hbar }{\Phi^\infty}\mathrm{Re}\left[v^\top \hat u
\, u_0^*
 \right]
=\ds\frac{2\hbar(\lambda - \hbar \omega_0) }{\Phi^{\infty 2}} |u_0|^2
. \end{array}\]
In particular, $u^*Ou$ has the sign of $\lambda - \hbar \omega_0$.

{\it Step 3: Estimation of $\mathrm{Tr}(\rho O)$.}
Let $u_-\neq 0$ stand for a normalized eigenvector associated to $\lambda_-$.
From \eqref{evH}, we infer that necessarily $u_{-0}\neq 0$
(otherwise we would have $(\hbar \Omega +(\hbar\omega_0-\lambda_-)\mathbb I)\hat u=0$
where  $\hbar \Omega +(\hbar\omega_0-\lambda_-)\mathbb I$ is invertible, and thus $\hat u=0$ as well), and 
by writting
$\hat u_-=\Phi^\infty u_0 \big((\lambda - \hbar \omega_0)\mathbb I- \hbar \Omega\big)^{-1}v$, we obtain 
$$
(\lambda_- - \hbar \omega_0)
= \Phi^{\infty 2}v^\top  \big((\lambda_- - \hbar \omega_0)\mathbb I- \hbar \Omega\big)^{-1}v.
$$
Consequently, we get
\[\begin{array}{lll}
0\leq -u_-^*Ou_-&=&\ds\frac{2\hbar(\hbar \omega_0-\lambda_-) }{\Phi^{\infty 2}} |u_{-0}|^2
=2\hbar  |u_{-0}|^2\,  v^\top  \big(\hbar \Omega +( \hbar \omega_0-\lambda_-)\mathbb I \big)^{-1}v
\\[.4cm]&=&
2\hbar  |u_{-0}|^2\ds\sum_{j=0}^{n-1}\ds\frac{V_{j+1/2}^2}{\hbar \omega_{j+1/2} +
\hbar \omega_0-\lambda_-}
\\[.4cm]&\leq&
2 |u|^2\ds\sum_{j=0}^{n-1}\ds\frac{V_{j+1/2}^2}{ \omega_{j+1/2}}=
 2  v^\top\Omega^{-1} v. 
\end{array}\]
It leads to the following estimate
\[
-\mathrm{Tr}(\rho O)=(-\uppi_- u_-^*Ou_-) -\ds\sum_{\uppi_m\neq p_-} \uppi_mu_m^*Ou_m
\leq (-\uppi_- u_-^*Ou_-)\leq 2  v^\top\Omega^{-1} v,\]
since the coefficients $\uppi_m$ belong to $[0,1]$, and for $\uppi_m\neq \uppi_-$, $u_m^*Ou_m\geq 0$.
Therefore, \eqref{eq_tr_rhoO} and \eqref{ZeroComm} 
cannot be realized together when \eqref{small2} holds, and we finally deduce  that $\Phi^\infty=0$.
\QED

 The  limit system 
reduces to $\dot \rho^\infty=-i\hbar [\Omega_{\mathrm{free}},\rho^\infty]$, thus exhibiting an oscillating behavior for $\rho^\infty_{j+1/2}=\lambda_{j+1/2}e^{-i \omega_{j+1/2}t}$,
with $j\in \{0,...,n-1\}$ and for $\rho^\infty_{j,k}=\lambda_{j,k}e^{-i \nu_{j,k}t}$
with $j,k\in \{1,...,n\}$, while the populations $\rho_j^\infty$ 
remain constant.
The conditions $(\Omega v)^\top p^\infty=0$ and 
 $v^\top q^\infty=-\frac{\Phi^\infty}{\kappa}=0$ imply  $\rho^\infty_{j+1/2}=0$.
 In turn, we deduce that $\psi(t+t^\nu,z)$ tends to 0 weakly in $C^0([0,T];L^2(\mathbb R^3))$, and 
 passing to the limit in the energy tells us that
$
\mathcal E^\infty=\sum_{j=0}^n \hbar \omega_j \rho_j^\infty\leq \mathcal E_{\mathrm{init}}.
$
As a consequence, we note that if $\mathcal E_{\mathrm{init}}<\hbar \omega_1$, 
then necessarily $\rho^\infty_0>0$.
At this stage, we can also show that $q^\Xi,p^\Xi$ converge to 0 
in the sense of 
Cesaro, by reproducing the proof of Lemma~\ref{lem:cesaro}.
We can perform another  step  in the case of pure states where we additionally know that  $\rho^\infty=u^\infty u^{\infty*}$,
with $u^\infty\in \mathbb C^{n+1}$, is rank-one.
In particular, the following relations hold
\[
|\rho_{j+1/2}^\infty|^2
=u^\infty_0 u^{\infty*}_{j+1}u^{\infty*}_0 u^{\infty}_{j+1}
=|u^\infty_0 |^2 |u^{\infty}_{j+1}|^2
\qquad j\in \{0,...,n-1\}.\]
Since  $\rho_{j+1/2}^\infty=0$ and $\rho^\infty_0= |u^\infty_0 |^2>0$, 
we deduce that $u^{\infty}_{j+1}=0$ for any $j\in \{0,...,n-1\}$; in turn, 
$
\rho_{j,k}^\infty=u^\infty_j u^{\infty*}_{k}$
equally vanish for any pair $j,k\in\{1,...,n\}$, $j\neq k$.
We summarize our findings as follows.

 \begin{proposition}\label{prop_cv_gen}
 Assume \eqref{small2}.
 Then the limit satisfies
 $q^\infty=0$, $p^\infty=0$.
 
 Moreover, in the pure case, assuming that the initial 
 initial energy is smaller than $\hbar\omega_1$,  
  the limit is completely characterized:
 $\rho^\infty=e_0e_0^*$.
\end{proposition}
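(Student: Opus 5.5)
The plan follows the argument for the three-level case (Proposition~\ref{prop_cv} and Lemma~\ref{Phi0}), with Proposition~\ref{prop:optim} replacing Lemma~\ref{admker}.

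\textbf{Step 1: $\Phi^\infty=0$.} Suppose, by contradiction, that $\Phi^\infty\neq0$. The limit $X^\infty$ (equivalently $\rho^\infty(t)$) solves the linear Liouville equation $i\hbar\dot\rho^\infty=[\hbar\Omega_{\mathrm{free}}+\Phi^\infty V,\rho^\infty]$. It stays a density matrix and satisfies $\mathrm{Tr}(\rho^\infty V)=v^\top q^\infty=-\Phi^\infty/\kappa$ for all $t$. Take Ces\`aro means $\frac1{T_n}\int_0^{T_n}\rho^\infty(s)\,\mathrm ds$ and extract a convergent subsequence with limit $\bar\rho$. By convexity and compactness, $\bar\rho$ is a density matrix. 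Since $\rho^\infty$ is bounded, the averaged equation gives $[\hbar\Omega_{\mathrm{free}}+\Phi^\infty V,\bar\rho]=\lim (\rho^\infty(T_n)-\rho^\infty(0))/T_n=0$. By linearity, $\mathrm{Tr}(\bar\rho V)=-\Phi^\infty/\kappa$. Proposition~\ref{prop:optim} forbids exactly this configuration under \eqref{small2}, so $\Phi^\infty=0$.

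\textbf{Step 2: $q^\infty=p^\infty=0$.} With $\Phi^\infty=0$ the limit dynamics is free. Each ground-to-excited coherence then rotates as $\rho^\infty_{j+1/2}(t)=c_j e^{i\omega_{j+1/2}t}$, and the populations are constant. The constraint $(\Omega v)^\top p^\infty(t)\equiv0$, obtained from the analogue of Lemma~\ref{lem:cv_p}, reads $\sum_j V_{j+1/2}\omega_{j+1/2}\,\mathrm{Im}(c_j e^{i\omega_{j+1/2}t})=0$ for all $t$. Since the frequencies $\omega_{j+1/2}$ are distinct and positive, the functions $\cos(\omega_{j+1/2}t)$ and $\sin(\omega_{j+1/2}t)$ are linearly independent. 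Together with $V_{j+1/2}\omega_{j+1/2}\neq0$, this forces every $c_j=0$.

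This limit is independent of the sequence $t^\nu$. Therefore every sequence has a subsequence along which $(q,p)(t+t^\nu)\to0$ uniformly on $[0,T]$, which yields $\lim_{t\to\infty}(q,p)(t)=0$.

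\textbf{Step 3: the pure case.} Since $\Phi^\infty=0$, the field part $\psi(t+t^\nu)$ tends weakly to $0$. Weak lower semicontinuity of the wave energy then gives $\sum_j\hbar\omega_j\rho^\infty_j\le\mathcal E_{\mathrm{init}}<\hbar\omega_1$. Because $\sum_j\rho^\infty_j=1$ and $\omega_j\ge\omega_1$ for $j\ge1$, this forces $\rho^\infty_0>0$.

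Purity is preserved by the spectral invariance (the Corollary of Section~\ref{sec:cons}) and passes to the limit, so $\rho^\infty=u^\infty u^{\infty*}$. Then $0=|\rho^\infty_{j+1/2}|^2=|u^\infty_0|^2|u^\infty_{j+1}|^2$, which gives $u^\infty_{j+1}=0$ for every $j$. Hence $\rho^\infty=e_0e_0^*$, and uniqueness of the limit gives convergence of the whole trajectory.

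The main obstacle is Step~1. The averaging must produce a genuine stationary density matrix that keeps the \emph{nonzero} value of $\mathrm{Tr}(\rho V)$. This relies on the fact, inherited from $v_\omega\cdot p^\infty=0$, that $v^\top q^\infty$ is constant in time; the sharp bound itself is supplied by Proposition~\ref{prop:optim}.
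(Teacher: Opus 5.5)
Your proposal is correct and follows essentially the same route as the paper: it uses the Ces\`aro-averaging argument of Lemma~\ref{Phi0} together with Proposition~\ref{prop:optim} to rule out $\Phi^\infty\neq0$; then the free oscillation of the limit and the constraint $(\Omega v)^\top p^\infty\equiv0$ kill the ground-to-excited coherences; finally, the energy bound gives $\rho_0^\infty>0$, and in the pure case the rank-one identity $|\rho^\infty_{j+1/2}|^2=|u^\infty_0|^2|u^\infty_{j+1}|^2$ concludes. Your explicit use of the linear independence of $\cos(\omega_{j+1/2}t),\sin(\omega_{j+1/2}t)$ at distinct Bohr frequencies, and the subsequence argument that upgrades the limit along $t^\nu$ to a genuine limit, spell out steps the paper leaves implicit.
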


In order to establish the exponential decay, we proceed as follows.
 On the one hand, we rewrite the 
 self-consistent potential as we did in \eqref{PhiS} and we get
 \[
 \Phi_S(t)=
 -\kappa v^\top q(t) - \ds\frac{\upgamma}{c}(\Omega v)^\top p(t)+\mathcal R(t) 
 \]
with 
\[\begin{array}{lll}
\mathcal R(t)
&=&-\left(\ds\int_t^\infty k_c(s)\ud s \right)v^\top q(t)
- \left(\ds\int_t^\infty s k_c(s)\ud s\right)(\Omega v)^\top p(t)
\\&&+\ds\int_0^t k_c(t-s)\left(\ds\int_s^t (\tau-s) (\Omega v)^\top \dot p(\tau)\ud \tau\right)\ud s.
\end{array}\]
On the other hand, we bear in mind that Proposition~\ref{prop_cv_gen} 
tells us, in the pure-state case,
that
$$\xi_{j+1/2}(t)=\rho_{j+1}(t)-\rho_0(t)\xrightarrow[t\to \infty]{} -1$$  
and  what follows restricts to pure solutions.

Proceeding this way, we 
arrive at following system for $Y=(q,p)$
\begin{equation}\label{Ysys}
\dot Y=\Big(\Theta-\ds\frac{2\upgamma}{\hbar c}D\Big)Y+\mathcal S
\end{equation}
with the matrices
\[
\Theta=\begin{pmatrix}0 & -\Omega\\
 \widetilde \Omega& 0\end{pmatrix},\qquad 
  \widetilde \Omega=\Omega-\ds\frac{2\kappa} \hbar vv^\top ,
  \qquad
  D=
  \begin{pmatrix}
    0&0\\
    0&v(\Omega v)^\top 
  \end{pmatrix}.
\]
The remainder term
$\mathcal S$ is given by
\[
\ds\frac{\Phi}{\hbar}\ds\sum_{k\neq j}V_{k+1/2}p^\Xi_{k+1,j+1}
\]
on the $q$-components
and 
the following sum 
\[
-\ds\frac{2\Phi}{\hbar}V_{j+1/2}(1+\xi_{j+1/2})
+\ds\frac{2}{\hbar}V_{j+1/2}
(\mathcal R+\Phi_I
)
+\ds\frac{\Phi}{\hbar}\ds\sum_{k\neq j}V_{k+1/2}q^\Xi_{k+1,j+1}
\]
on the $p$-components.
The non linear analysis and the treatment of the 
remainder terms can be performed exactly as for the 3 levels system in Section~\ref{sec:nonlin}.
We focus on the linear problem and the justification of the critical estimate analog of  Proposition~\ref{estim_expA}. 
in fact we will obtain sharp estimates, by using the framework developed in Section~\ref{sec:symmetrizer}.
Note that we recover exactly the case handled in Corollary~\ref{cor:symmetric_rank_one_symmetrizer}
(with $\alpha=\frac{2\kappa}{\hbar}$).

As remarked above, under the weak-coupling condition \eqref{small2}
the matrix \(\widetilde \Omega\) is symmetric positive definite.  Hence
\eqref{simple_symmetric_symmetrizer} applies.  Moreover, 
we are led to
\eqref{rank_one_D_general} with $(u,v)\to (v\Omega, v)$ and $M\to \Omega$;
so that 
the compatibility condition \eqref{rank_one_compatibility} holds with
\(\tau=1\). Therefore, the linear system obtained from \eqref{Ysys} when disregarding the remainder 
$\mathcal S$ satisfies the energy dissipation relation
\[
 S=\begin{pmatrix}\widetilde \Omega  &0\\0& \Omega\end{pmatrix}
,\qquad
  SY\cdot Y
  =
  q^\top 
       \Big(\Omega-\ds\frac{2\kappa}{\hbar } vv^\top \Big)q
   +
   p^\top \Omega p,
  \qquad
  \ds\frac{\ud}{\ud t}SY\cdot Y=-\ds\frac{4\upgamma} {\hbar c}|(\Omega v)^\top p|^2.
\]
The simplicity of this identity comes from the fact that the conservative
rank-one perturbation and the dissipative rank-one term involve the same
coupling vector \(v\).
With 
\[
b=\begin{pmatrix}0\\
 \Omega v\end{pmatrix},\qquad d=\begin{pmatrix}0\\
v\end{pmatrix}\]
we get $D=db^\top$, $Sd=b$, $SD+D^\top S=2bb^\top$.
Accordingly, the dissipation casts as $-\frac{4\upgamma}{\hbar c}|b^\top Y|^2$; this is the rank-one dissipation mechanism of the model. 
We should now modify the energy functional to see that the dissipation propagates to all components of $Y$, and to quantify it.
We seek a symmetric matrix $K$ such that
\[K\Theta+\Theta^\top K= SD+D^\top S-2G\]
where $G$ is positive definite.
Since $\Theta$ flips the variables $q/p$, we search for $K$ with the form
\[
K=\begin{pmatrix} 0 &H\\ H^\top & 0\end{pmatrix} 
\]
so that 
\[K\Theta+\Theta^\top K= 
\begin{pmatrix}
 H\widetilde \Omega +\widetilde \Omega H^\top 
 & 0 \\
 0 & -H^\top \Omega- \Omega H
 \end{pmatrix}\]
 defines, owing to the off-diagonal form of $K$, 
pure $q$ or $p$-quadratic terms.

We wish to apply Lemma~\ref{lem:PBH} 
with 
$\mathcal L=\Omega^{1/2}\widetilde \Omega\Omega^{1/2}$ and 
$w=\Omega^{1/2}v$.
Indeed, 
\(
 \Omega^{1/2}\widetilde \Omega\Omega^{1/2}
\)
is symmetric positive definite, and there exists an orthogonal matrix $U$
and positive numbers $\mu_1,...,\mu_n$ such that
\[
  \Omega^{1/2}\widetilde \Omega\Omega^{1/2}=UMU^{\top},
  \qquad
  M=\operatorname{diag}(\mu_0,\ldots,\mu_{n-1}).
\]
Set
\[
  \Pi=\Omega^{-1/2}U.
\]
Then
\[
 \Pi^\top  \widetilde \Omega\Omega \Pi=M,
  \qquad
  \Pi^{\top}\Omega \Pi=\mathbb I.
\]
The PBH condition for $(\Omega v,\widetilde \Omega\Omega)$  is equivalent to
\eqref{eq:PBH_Lw}. Indeed
$ \mathcal Lu=\mu u$ iff
$u'=\Omega^{-1/2}u$ satisfies $ \widetilde \Omega\Omega u'=\mu u'$, while 
$w^\top u=(\Omega^{1/2} v^\top \Omega^{1/2}u'=(\Omega v)^\top u'$.
The PBH criterion can now be verified by using the spectral analysis in Example~\ref{Rk1pert}:
  the eigenvalues of $\widetilde \Omega\Omega$ are obtained as zeroes of 
  \[\mu\longmapsto 1-\ds\frac{2\kappa}{\hbar} \ds\sum_{j=0}^{n-1} \ds\frac{\omega_{j+1/2} v_{j+1/2}^2}{
  \omega_{j+1/2}^2-\mu}.\]
  We find $n$ distinct eigenvalues satisfying the   interlacing property
    $$0<\mu_0<\omega_{1/2}^2<\mu_1<...<\mu_{n-1}<\omega_{n-1/2}^2,$$ 
    where positivity of \(\mu_0\) follows from the weak-coupling condition \eqref{small2}.
  From the eigenvectors $u'_0,...,u'_{n-1}$ of $\widetilde \Omega\Omega$, also constructed in Example~\ref{Rk1pert},
  we set $a_j=(\Omega v)^\top u'_j= w^\top u_j$, with $w=\Omega^{1/2}v$, $u_j=\Omega^{1/2}u'_j$.

Now, let $0<\epsilon<\min\big\{a_j^2,\ j\in\{0,...,n-1\}\big\}$.
We construct by hand a matrix $Z_\epsilon$ such that
\[
 Z_\epsilon+Z_\epsilon^\top =2(aa^\top -\epsilon\mathbb I),\qquad
  Z_\epsilon M+MZ_\epsilon^\top =2\mathrm{diag}(\mu_j(a^2_j-\epsilon)).\]
  We find 
  \[
  (Z_\epsilon)_{j,k}=\left\{
  \begin{array}{ll} a^2_j-\epsilon\quad & \textrm{ if $j=k$},
  \\[.4cm]
  \ds\frac{2\mu_j}{\mu_j-\mu_k}a_ja_k\quad & \textrm{ if $j\neq k$.}\end{array}\right.\]
  Now we set 
  \[
  K_\epsilon=-
  \begin{pmatrix}
  0 & \Pi Z_\epsilon \Pi^{-1}
  \\
  (\Pi^{-1})^\top Z_\epsilon^\top \Pi^\top & 0
  \end{pmatrix}.\]
  It is symmetric by construction and we get
  \[
  K_\epsilon\Theta+\Theta^\top K_\epsilon=SD+D^\top S
  -
2
    \underbrace{\begin{pmatrix}
  \Pi\mathrm{diag}(\mu_j(a_j^2-\epsilon)) \Pi^\top  & 0
    \\
  0 & \epsilon\Omega
  \end{pmatrix}}_{:=G_\epsilon}\]
  where $G_\epsilon$ is indeed symmetric definite positive.
  
  We finally consider the modified energy
  \[\mathcal E_\epsilon(Y)=SY\cdot  Y + \ds\frac{\upgamma}{\hbar c}K_\epsilon Y \cdot Y.\]
  We obtain 
  \begin{equation}\label{derEnMod}\ds\frac{\ud}{\ud t} \mathcal E_\epsilon(Y)=
  -\ds\frac{2\upgamma}{\hbar c}
  \Big(|(\Omega v)^\top p|^2
  + 
  \underbrace{q^\top \Pi\mathrm{diag}(\mu_j(a_j^2-\epsilon)) \Pi^\top q
  +\epsilon p^\top \Omega p}_{=Y^\top G_\epsilon Y} + \ds\frac{2\upgamma}{\hbar c} (\Omega v)^\top p d^\top K_\epsilon Y
  \Big).\end{equation}
  The first three terms are coercive of order \(\upgamma/c\); the last term is of order \(\upgamma^2/c^2\) and is absorbed for \(\upgamma/c\) sufficiently small, by using a standard Young argument.
 Besides, we have to check that  $\mathcal E_\epsilon$ defines a norm, equivalent to the euclidian norm. This is where a smallness condition on $1/c$ comes up.

For a symmetric definite positive matrix $\Sigma$, we denote by $\lambda_{\min}(\Sigma)$, $\lambda_{\max}(\Sigma)$ its extremal eigenvalues.
Since $ |Y^{\top}K_\epsilon Y| \leq \|K_\epsilon\| |Y|^2$, we get
$$ \left(\lambda_{\min}(S)-\ds\frac{ \upgamma}{\hbar c}\|K_\epsilon\|\right)|Y|^2 \leq \mathcal E_{\epsilon}(Y) \leq \left(\lambda_{\max}(S)+\frac { \upgamma}{\hbar c}\|K_\epsilon\|\right)|Y|^2. $$
In particular, if
$ \frac{ \upgamma}{\hbar c}\|K_\epsilon\|\leq \frac{\lambda_{\min}(S)}{2},$
then
$$  \frac{\lambda_{\min}(S)}{2}|Y|^2 \leq \mathcal E_{\epsilon}(Y) \leq \frac{3\lambda_{\max}(S)}{2}|Y|^2.$$
Next, the last term in \eqref{derEnMod} is dominated by 
\[
 \ds\frac{|(\Omega v)^\top p|^2}{2}+  \ds\frac{2\upgamma^2}{\hbar^2 c^2} |K_\epsilon d|^2 | Y|^2.
\]
Hence, provided $$\ds\frac{2\upgamma^2}{\hbar^2 c^2} |K_\epsilon d|^2<\frac{\lambda_{\mathrm{min}}(G_\epsilon)}{2},$$
we deduce that 
\[\ds\frac{\ud}{\ud t} \mathcal E_\epsilon(Y)\leq -\ds\frac{\upgamma \lambda_{\mathrm{min}}(G_\epsilon)}{\hbar c}|Y|^2
\leq-\ds\frac{\upgamma}{\hbar c}\ds\frac{\lambda_{\mathrm{min}}(G_\epsilon)}{\lambda_{\max}(S)+\upgamma\|K_\epsilon\|/(\hbar c)} \mathcal E_\epsilon(Y)
\leq
-\ds\frac{\upgamma}{\hbar c} \ds\frac{2\lambda_{\mathrm{min}}(G_\epsilon)}{3\lambda_{\max}(S)} \mathcal E_\epsilon(Y)
.\]
 With Gr\"onwall's lemma, we arrive at 
\[
|Y(t)|\leq \sqrt{\ds\frac{3\lambda_{\max}(S)}{\lambda_{\min}(S)}}\exp\left(-\ds\frac{\upgamma}{\hbar c}
\ds\frac{\lambda_{\mathrm{min}}(G_\epsilon)}{3\lambda_{\max}(S)}t
\right)
|Y(0)|.\]

We can provide explicit estimates on \(\lambda_{\min}(S),\lambda_{\max}(S)\).
The construction \eqref{simple_symmetric_symmetrizer} yields,  
since $\widetilde\Omega\leq \Omega$,  $\lambda_{\mathrm{max}}(S)= \omega_{n+1/2}$
and $\lambda_{\mathrm{min}}(S)=\lambda_{\mathrm{min}}(\widetilde\Omega)\leq \omega_{1/2}$ can be determined 
reasoning as in 
 Example~\ref{Rk1pert}.
A rough estimate is given by using the Cauchy-Schwarz inequality
$
(v^\top q)^2\leq (v^\top \Omega^{-1}v)(q^\top \Omega q)$; it leads to 
$q^\top (\Omega-\frac\kappa\hbar vv^\top)q\geq (1- \frac\kappa\hbar v^\top\Omega^{-1}v)
q^\top\Omega q\geq (1- \frac\kappa\hbar v^\top\Omega^{-1}v) \omega_{1/2}|q|^2$.
Therefore, we get 
$\lambda_{\mathrm{min}}(S)\geq  \omega_{1/2}(1- \frac{2\kappa}\hbar v^\top\Omega^{-1}v)$, which is indeed positive by virtue of \eqref{small2}.

Let us derive further rough but explicit estimates. 
With the specific form of $K_\epsilon$, we have $\|K_\epsilon\|=\|\Pi Z_\epsilon \Pi^{-1}\|$ and
$|K_\epsilon d|=|\Pi Z_\epsilon a|$.
From $\Pi\Pi^\top=\Omega{-1}$, we infer 
$\|\Pi\|=\frac{1}{\sqrt{\omega_{1/2}}}$ and $\|\Pi^{-1}\|=\sqrt{\omega_{n-1/2}}$, and hence
$\|\Pi Z_\epsilon \Pi^{-1}\|\leq \sqrt{\frac{\omega_{n-1/2}}{\omega_{1/2}}\|Z_\epsilon}\|$,
$|\Pi Z_\epsilon a|\leq \frac{|Z_\epsilon a|}{\sqrt{\omega_{1/2}}}$.
Besides, we get 
$\lambda_{\mathrm{min}}(\Pi \mathrm{diag}(\mu_j(a_j^2-\epsilon)) \Pi^\top)\geq \frac{\min_j \mu_j(a_j^2-\epsilon)}{\omega_{n-1/2}}$.
Choosing $\epsilon=\frac{\min_j a_j^2}{2}$ yields a more explicit decay rate estimate.
  \QED

\section{Comments}
\label{sec:comm}

In this Section we comment on two obstructions of the previous results. Firstly,  a distinction appeared between pure states and mixed states; in the latter situations there remain persistent coherences, that only vanish ``in average''. Secondly, 
the analysis deals with the very specific ``V-structure' where the ground states in directly coupled to all  other excited states, but there is no direct excited-to-excited connections. One may wonder whether or not the damping
 holds for more general structures.

\subsection{Universality of the V-structure and  specificities of the pure case}
\label{subsec:pure_linearization}

\noindent
{\bf Linearization at a pure stationary state.}

\noindent
We keep
$
  H_{\rm free}=\hbar\Omega_{\rm free}$, with $\Omega_{\rm free}=\operatorname{diag}(\omega_0,\ldots,\omega_n),$
  satisfying \eqref{omega} 
and \(V\) is a real symmetric matrix with vanishing diagonal coefficients.  Let 
\[
  \rho_{\mathrm{min}}=e_0e_0^*
\]
stand for  the pure ground state.  Since
\(\operatorname{Tr}(\rho_{\mathrm{min}}V)=V_{0,0}=0\), the pair $
  \rho= \rho_{\mathrm{min}},
  \psi=0$
is a stationary solution of \eqref{Liouv}--\eqref{wave}.
Consider a perturbation of this state of the form
\[
  \rho^\varepsilon= \rho_{\mathrm{min}}+\varepsilon r+...,
  \qquad
  \psi^\varepsilon=\varepsilon\tilde \psi+...,
  \qquad
  \Phi^\varepsilon=\varepsilon\phi+....
\]
with remainders assumed to be negligible with respect to $\varepsilon$. 
At leading order, the coupled system becomes
\begin{equation}
\label{linearized_general_system}
\begin{array}{l}
  i\hbar\dot r
    =\hbar[\Omega_{\rm free},r]+\phi[V,\rho_{\mathrm{min}}],
    \\
  (\partial_t^2-c^2\Delta)\tilde \psi
    =-c^2\sigma\operatorname{Tr}(rV),
    \\
    \phi(t)=\ds\int_{\mathbb R^3}\sigma(z)\tilde \psi(t,z)\,\mathrm dz.
\end{array}
\end{equation}
The commutator with \(\rho_{\mathrm{min}}\) has a very specific form:
\[
  [V,\rho_{\mathrm{min}}]
  =\ds\sum_{j=1}^n
  V_{0,j}\left(e_je_0^*-e_0e_j^*\right).
\]
In particular, 
\(
  [V,\rho_{\mathrm{min}}]_{j,k}=0\)
whenever \(j,k\geq 1\).
Consequently, the entries of \(r\) satisfy
\[
\begin{array}{ll}
\dot r_{0,j}
    =i(\omega_j-\omega_0)r_{0,j}+\ds\frac{\phi}{\hbar} V_{0,j},
    &j\in\{1,\ldots,n\},
   \\
\dot r_{j,k}
    =i(\omega_k-\omega_j)r_{j,k},
    &j,k\in\{1,\ldots,n\},
   \\
  \dot r_{j,j}=0,
    &j\in\{0,\ldots,n\}.
\end{array}
\]
Notice that the equations for the excited-to-excited coherences is independent of the
potential \(\phi\):  these coherences may contribute to
the source \(\operatorname{Tr}(rV)\), and force the wave and the
ground-to-excited coherences, but there is no feedback on their own evolution.

We next identify the part of \eqref{linearized_general_system} which is
compatible with the constraint that the state remain pure.
Let
\[
  \mathscr M=\{uu^*,\ u\in\mathbb C^{n+1},\ u^*u=1\}
\]
be the manifold of pure density matrices.  We claim that its tangent space at \(\rho_{\mathrm{min}}\) is
\[
  \mathscr T_{\rho_{\mathrm{min}}}\mathscr M
  =\{we_0^*+e_0w^*,\ w\in\mathbb C^{n+1},\ w^*e_0=0\}.
\]
In particular, if \(r\in  \mathscr T_{\rho_{\mathrm{min}}}\mathscr M\), then
\[
  r_{j,k}=0\quad\text{for }j,k\geq1,
  \qquad
  r_{j,j}=0\quad\text{for every }j.
\]
Indeed, let  \(\varepsilon\mapsto u(\varepsilon)\) be a differentiable curve of unit
vectors such that \(u(0)=e_0\), and write
\[
  \dot u(0)=\alpha e_0+w,
  \qquad w^*e_0=0.
\]
Differentiating \(u(\varepsilon)^*u(\varepsilon)=1\) at
\(\varepsilon=0\) gives \(\operatorname{Re}\alpha=0\).  The purely imaginary
component \(\alpha e_0\) corresponds only to a change of phase and disappears
when differentiating \(u(\varepsilon)u(\varepsilon)^*\).  Hence
\[
  \left.\frac{\mathrm d}{\mathrm d\varepsilon}
  \big(u(\varepsilon)u(\varepsilon) ^*\big)
  \right|_{\varepsilon=0}
  =we_0^*+e_0w^*.
\]
Conversely, for every \(w\perp e_0\), the curve
\[
  \varepsilon\longmapsto u(\varepsilon)
  =\frac{e_0+\varepsilon w}{|e_0+\varepsilon w|}
\]
produces the tangent vector \(we_0^*+e_0w^*\).

We can now formulate the universality property of the V-diagram.
Setting
\(
  z_{j+1/2}=r_{0,j+1}\),  \(
  V_{j+1/2}=V_{0,j+1}\),   and,
\(
  \omega_{j+1/2}=\omega_{j+1}-\omega_0\),
 for $ j\in\{0,...,n-1\},\
$, one obtains
\[\begin{array}{l}
 \dot z_{j+1/2}=-i\omega_{j+1/2} z_{j+1/2}-i\ds\frac{\phi}{\hbar} V_{j+1/2},
   \\[.4cm]
  (\partial_t^2-c^2\Delta)\varphi
    =-2c^2\sigma
      \operatorname{Re}\left(\ds\sum_{j=0}^{n-1} V_{j+1/2}z_{j+1/2}\right).
\end{array}
\]
The two-way coupling between 
the scalar field and
the \(n\) ground-to-excited coherences is
driven  through the
single vector
\(
 v=(V_{1/2},...,V_{n-1/2})
\).
Hence, the restriction of the linearized system
\eqref{linearized_general_system} to  $\mathscr T_{\rho_{\mathrm{min}}}\mathscr M$ is a generalized
V-system, independently of the transition diagram encoded by \(V\).

The situation is different if perturbations are not restricted to the
pure-state manifold.  Let
\[
  Q=I-\rho_{\mathrm{min}}
\]
be the orthogonal projection onto the excited subspace.  
Since $\Omega_{\mathrm{free}}e_0=\omega_0 e_0$, 
we have $\Omega_{\mathrm{free}}\rho_{\mathrm{min}}=\omega_0\rho_{\mathrm{min}}=
\rho_{\mathrm{min}}\Omega_{\mathrm{free}}$ and $Q\Omega_{\mathrm{free}}\rho_{\mathrm{min}}
=0=\rho_{\mathrm{min}}\Omega_{\mathrm{free}}Q$.
Consequently, 
for any matrix $B$, we get 
$Q[\Omega_{\mathrm{free}},B]Q=[Q\Omega_{\mathrm{free}}Q,QBQ]$.
Projecting the first equations in
\eqref{linearized_general_system} on both sides by \(Q\), and using
\(Q[V,\rho_{\mathrm{min}}]Q=0\), gives
\[
  i\frac{\mathrm d}{\mathrm dt}(QrQ)
  =[Q\Omega_{\mathrm{free}} Q,QrQ],
\]
and thus 
\[
  Qr(t)Q
  =e^{-itQ\Omega_{\mathrm{free}}  Q}\,Qr(0)Q
  e^{itQ\Omega_{\mathrm{free}}  Q}.
 \]
  Since multiplication on the left and on the
right by unitary matrices preserves the euclidean norm, we have
\(
  \|Qr(t)Q\|
  =\|Qr(0)Q\|
\)
for all \(t\geq0\): the full linearized density matrix cannot
converge to zero unless \(Qr(0)Q=0\), 
which happens at first order for a
pure perturbation. 

\subsection{The ladder model as an example of  failure of dissipative compatibility}
\label{sec:ladder_obstruction}

The ladder model is a 
multi-level generalization of two-level Bloch equations where only nearest-neighbor coherences and transitions are retained.
The model arises as a simple description of  atoms or spin chains. 
We thus consider $n+1$ energy levels, labeled from 0 to $n$ and the system is driven by  transition only between
 the levels $j$ and $j+1$, 
 described by the coupling constant $V_{j+1/2}>0$.
 Therefore, the hamiltonian is defined with
 \[
 V= \left(
     \raisebox{0.5\depth}{%
       \xymatrixcolsep{1ex}%
       \xymatrixrowsep{1ex}%
       \xymatrix{
         0  \ar @{.}[rrrrdddd] & V_{1/2}  \ar @{-}[rrrddd] &0 \ar @{.}[rr] \ar @{.}[rrdd]&& 0\ar @{.}[dd] 
         \\V_{1/2} \ar @{-}[rrrddd]  & &&&
           \\          0  \ar @{.}[dd]\ar @{.}[ddrr]&&&& 0 
         \\
          &&  & &   V_{n-1/2}
         \\
         0 \ar @{.}[rr]&&0&V_{n-1/2}& 0
       }%
     }
   \right)=
   \ds\sum_{j=0}^{n-1}V_{j+1/2}
  \bigl(e_je_{j+1}^*+e_{j+1}e_j^*\bigr).
\]
It provides a useful counterpoint to the generalized
$V$-system:  the existence of directions which are not seen by
the interaction with the field leads to  a simple example where the
conservative part can be symmetrized, but the symmetrization does not define
a non-negative dissipation in the corresponding metric.

The pure ground state $\rho_{\mathrm{min}}=e_0e_0^*$, together with a vanishing wave
field, is a stationary solution.  
Writing the equations componentwise make the quantities 
\(
  \xi_j=\rho_{j+1,j+1}-\rho_{jj},
\)
appear; at equilibrium the corresponding vector reduces to 
\(
(-1,0,...,0)=-e_0.
\)

The failure of attraction to $\rho_{\mathrm{min}}$ can be seen with a simple explicit solution:
let $n\geq2$, pick $k\in\{2,...,n\}$ and let
  $a,b\in\mathbb C$ satisfy $|a|^2+|b|^2=1$.  Then, set
  \begin{equation}
    u(t)=a e^{-i\omega_0t}e_0+b e^{-i\omega_kt}e_k,
    \qquad
    \rho(t)=u(t)u(t)^*,
    \qquad
    \psi(t)=\partial_t\psi(t)=0.
    \label{eq:ladder_dark_pure_solution}
  \end{equation}
  We check that 
  \(
    \operatorname{Tr}(\rho(t)V)
    = (Vu)^*(t)u(t)=0.
  \) since the vertices $0$ and $k$ 
  are not adjacent in the ladder graph. Hence,
  \eqref{eq:ladder_dark_pure_solution} defines an exact solution of the coupled system.
    If $b\neq0$, this solution
  does not converge to $\rho_{\mathrm{min}}$ ast $t\to \infty$.
 Even the ground-to-excited coherences
  \[
    \rho_{0,k}(t)
    =a b^*\,e^{i(\omega_k-\omega_0)t},
  \]
  have a  modulus that does not decay when $ab\neq0$.  Taking $|b|$ arbitrarily
  small produces non-convergent pure solutions arbitrarily close to $\rho_{\mathrm{min}}$.
\\

The linearized  equation around $\rho_{\mathrm{min}}$ reads
\begin{equation}
  i\hbar\dot r
  =\hbar[\Omega_{\rm {free}},r]+\phi[V,\rho_{\mathrm{min}}],
  \label{eq:ladder_linearized_liouville}
\end{equation}
where $[V,\rho_{\mathrm{min}}]=V_{1/2}(e_1e_0^*-e_0e_1^*)$.
Consequently, we have
\[
  \dot r_{0,1}=i(\omega_1-\omega_0)r_{0,1}+i\frac{V_{1/2}}{\hbar}\phi,
  \qquad
  \dot r_{0,k}=i(\omega_k-\omega_0)r_{0,k},
  \quad \textrm{ for $k\in\{2,...,n\}$}.
  \]
Even,  the pure-state tangent space identified above already contains undamped directions.
There are additional undamped directions when the perturbation is not
restricted to the pure-state manifold.  Indeed, define
\[
  2r_{j,j+1}=q_{j+1/2}+ip_{j+1/2},\qquad 
  \nu_{j+1/2}=\omega_{j+1}-\omega_j.
  \qquad j\in\{0,...,n-1\}.
\]
These neighbouring coherences occur in the source:
\(
  \operatorname{Tr}(rV)=\sum_{j=0}^{n-1}V_{j+1/2}q_{j+1/2}.
  \)
Equation~\eqref{eq:ladder_linearized_liouville} gives
\[\begin{array}{ll}
  \dot q_{1/2}=-\nu_{1/2}p_{1/2},
  \qquad & 
  \dot p_{1/2}=\nu_{1/2}q_{1/2}+\ds\frac{2V_{1/2}}{\hbar}\phi,
 \\[.4cm]
  \dot q_{j+1/2}=-\nu_{j+1/2}p_{j+1/2},
  \qquad& 
  \dot p_{j+1/2}=\nu_{j+1/2} q_{j+1/2},
  \qquad j\in\{1,...,n-1\}.
\end{array}\]
In particular, we have the conservation property
\[
  q_{j+1/2}(t)^2+p_{j+1/2}(t)^2=q_{j+1/2}(0)^2+p_{j+1/2}(0)^2,
  \qquad \textrm{ for any $j\in\{1,...,n-1\}$}.
  \]
  The upper transitions can therefore force the wave and, through it, the
first transition, but there is no feedback on the upper transitions.  This
one-way coupling can be related to a  block-triangular matrix in the ODE system.
We relate these obstructions to the framework introduced in Section~\ref{sec:symmetrizer}.
Retaining the first two terms in the expansion of the memory kernel and
disregarding the remainder gives
\[
  \phi=-\kappa v^\top q-\frac{\gamma}{c}(\Omega_{\mathrm{lad}} v)^\top p,
  \qquad
  \Omega_{\mathrm{lad}}=\operatorname{diag}(\nu_{1/2},\ldots,\nu_{n-1/2}).
  \label{eq:ladder_local_field_approximation}
\]
We collect the ground-to-excited coherences in a single vector $Z=(z_{1/2},...,z_{n-1/2})$ with 
$z_{j+1/2}=
(q_{j+1/2},p_{j+1/2})$.
We have
\[
  \frac{\mathrm d}{\mathrm dt}Z
  =AZ,\qquad A=
  \begin{pmatrix}
    A_{00}&A_{01}&\cdots&A_{0,n-1}\\
    0&J_1&&0\\
    \vdots&&\ddots&\\
    0&0&&J_{n-1}
  \end{pmatrix},
  \]
with
\[
  \begin{aligned}
  A_{00}
    &=\begin{pmatrix}
       0&-\nu_0\\
       \nu_{1/2}-\frac{2\kappa V_{1/2}}{\hbar} V_{1/2}&-\frac{2\upgamma V_{1/2}}{\hbar c} \nu_{1/2}V_{1/2}
     \end{pmatrix},
  &
  A_{0j}
    &=\begin{pmatrix}
       0&0\\
       -\frac{2\kappa V_{1/2}}{\hbar} V_{j+1/2}&-\frac{2\upgamma V_{1/2}}{\hbar c} \nu_{j+1/2}V_{j+1/2}
     \end{pmatrix},\\[1ex]
  J_j
    &=\begin{pmatrix}0&-\nu_{j+1/2}\\ \nu_{j+1/2}&0\end{pmatrix},
  & &\hspace{-2em}j\in\{1,...,n-1\}.
  \end{aligned}
  \]
  It follows that
\[
  \det(\lambda I-A)
  =\det(\lambda I-A_{00})
   \prod_{j=1}^{n-1}(\lambda^2+\nu_{j+1/2}^2),
 \]
which implies
\begin{equation}
  \big\{\pm i\nu_{j+1/2},\, j\in \{1,...,n-1\}\big\}\subset\sigma(A).
  \label{eq:ladder_imaginary_spectrum}
\end{equation}
Consequently,  the reduced linearized system does  not verify an exponential decay estimate (as soon as
there are at least three levels).

Reorganizing the variables as $Y=(q,p)$, the
linearized system has the form
\[
  \begin{array}{l}
  \dot Y=(\Theta-gD)Y,
  \,
  \Theta=\begin{pmatrix}0&-\Omega_{\mathrm{lad}}\\ \widetilde\Omega_{\mathrm{lad}}&0\end{pmatrix},\, \widetilde \Omega_{\mathrm{lad}}=
  \Omega_{\mathrm{lad}}-\frac{2\kappa V_{1/2}}{\hbar} e_0v^\top,\,
  D=\begin{pmatrix}0&0\\0&e_0(\Omega_{\mathrm{lad}} v)^\top\end{pmatrix}.
  \end{array}
 \]
Under the positivity and nonresonance assumptions stated in Example~\ref{rem:one_row_rank_one_symmetrizer}, the matrix $\widetilde \Omega_{\mathrm{lad}}\Omega_{\mathrm{lad}}$ has
positive real eigenvalues.  That construction therefore provides a positive
definite conservative symmetrizer
\[
  S=\begin{pmatrix}L&0\\0&M\end{pmatrix},
  \qquad
  S\Theta+\Theta^\top S=0.
\]
This part does not involve $D$; the
obstruction occurs with the compatibility with the matrix $D$.

\begin{lemma}
  \label{prop:ladder_failure_compatibility}
  Assume that $n\geq2$, and that $V_{j+1/2}>0$, 
  $\nu_{j+1/2}\neq\nu_{1/2}$ for every $j\in\{1,...,n-1\}$.  Then, there is no symmetric positive
  definite matrix $S$ such that
  \begin{equation}
    S\Theta+\Theta^\top S=0,
    \qquad
    SD+D^\top S\geq0.
    \label{eq:ladder_incompatible_requirements}
  \end{equation}
\end{lemma}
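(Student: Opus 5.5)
The plan is to argue by contradiction and combine the rank-one alignment of Lemma~\ref{lem:SD} with the conservative identity $S\Theta=-\Theta^\top S$. Suppose a symmetric positive definite $S$ satisfies \eqref{eq:ladder_incompatible_requirements}. I will \emph{not} assume that $S$ is block diagonal, since the statement excludes every symmetrizer and not only those of the form \eqref{general_block_S}. Write $\alpha=\frac{2\kappa V_{1/2}}{\hbar}$ and $w=\Omega_{\mathrm{lad}}v$, so that $w_j=\nu_{j+1/2}V_{j+1/2}>0$. The dissipative matrix is rank one,
\[
D=d\,b^\top,\qquad d=\begin{pmatrix}0\\ e_0\end{pmatrix},\qquad b=\begin{pmatrix}0\\ w\end{pmatrix},
\]
hence $SD+D^\top S=(Sd)b^\top+b(Sd)^\top$.

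\emph{Step 1: alignment.} By Lemma~\ref{lem:SD}, nonnegativity forces $Sd=\tau b$ for some $\tau\ge 0$. Pairing this identity with $d$ gives $0<d^\top Sd=\tau\, b^\top d=\tau w_0$. Since $w_0>0$, this yields $\tau>0$.

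\emph{Step 2: propagation by the conservative part.} Iterating $S\Theta=-\Theta^\top S$ gives $S\Theta^2=(\Theta^\top)^2S$. Therefore
\[
S\Theta^2 d=(\Theta^\top)^2 Sd=\tau(\Theta^\top)^2 b .
\]
From Remark~\ref{im_spec}, $\Theta^2=-\mathrm{diag}(\Omega_{\mathrm{lad}}\widetilde\Omega_{\mathrm{lad}},\widetilde\Omega_{\mathrm{lad}}\Omega_{\mathrm{lad}})$, and $(\Theta^\top)^2$ has the transposed blocks. The key observation, taken from Example~\ref{rem:one_row_rank_one_symmetrizer}, is that
\[
\widetilde\Omega_{\mathrm{lad}}\Omega_{\mathrm{lad}}=\Omega_{\mathrm{lad}}^2-\alpha e_0 w^\top
\]
is upper triangular. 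Consequently $e_0$ is an eigenvector, with eigenvalue $\mu_0=\nu_{1/2}^2-\alpha w_0$. Hence $\Theta^2 d=-\mu_0 d$. Substituting into the identity above and using $Sd=\tau b$ with $\tau\neq0$ gives
\[
(\Theta^\top)^2 b=-\mu_0 b,\qquad\text{that is}\qquad (\widetilde\Omega_{\mathrm{lad}}\Omega_{\mathrm{lad}})^\top w=\mu_0 w .
\]

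\emph{Step 3: contradiction with nonresonance.} Compute directly:
\[
(\widetilde\Omega_{\mathrm{lad}}\Omega_{\mathrm{lad}})^\top w=\Omega_{\mathrm{lad}}^2 w-\alpha\,w\,(e_0^\top w)=\Omega_{\mathrm{lad}}^2 w-\alpha w_0\, w .
\]
Compare components $j\ge1$, which exist because $n\ge2$. We obtain $(\nu_{j+1/2}^2-\alpha w_0)w_j=(\nu_{1/2}^2-\alpha w_0)w_j$. Since $w_j=\nu_{j+1/2}V_{j+1/2}\neq0$, this forces $\nu_{j+1/2}^2=\nu_{1/2}^2$, hence $\nu_{j+1/2}=\nu_{1/2}$ because both are positive. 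This contradicts the assumption, so no such $S$ exists.

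The main obstacle is Step~2, namely realising that an arbitrary symmetrizer can be handled through $S\Theta^2=(\Theta^\top)^2S$ alone. That identity transports the alignment $Sd=\tau b$ into an eigenvector condition on $b$. The argument closes only because $d$ is an exact eigenvector of $\Theta^2$, which is precisely the one-row structure of $\widetilde\Omega_{\mathrm{lad}}$. Physically, this is the one-way coupling of the ladder: the upper transitions force the first one, but receive no feedback from it. Positivity of $S$ enters only in Step~1, to guarantee $\tau>0$.
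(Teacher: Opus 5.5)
Your proof is correct and follows the paper's argument. The alignment $Sd=\tau b$ with $\tau>0$ is exactly the paper's $Oe_0=0$, $Me_0=\tau\Omega_{\mathrm{lad}}v$. Applying $S\Theta^2=(\Theta^\top)^2S$ to $d$ reads off the $0$-th column of the paper's identity $M\widetilde\Omega_{\mathrm{lad}}\Omega_{\mathrm{lad}}=\Omega_{\mathrm{lad}}\widetilde\Omega_{\mathrm{lad}}^\top M$, and its $(j,0)$ entries give the same contradiction with $\nu_{j+1/2}\neq\nu_{1/2}$. Squaring the symmetrizer relation, rather than combining the lower-left block with the symmetry of $L$, is a tidier coordinate-free packaging of the same route.
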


\noindent{\bf Proof.}
  Write a general symmetric matrix in block form as
  \[
    S=\begin{pmatrix}L&O\\O^\top&M\end{pmatrix}.
  \]
  The lower-left block of the first identity in
  \eqref{eq:ladder_incompatible_requirements} gives
  \(
    M\widetilde \Omega_{\mathrm{lad}}=\Omega_{\mathrm{lad}}\ L.
  \)
  Since $L$ is symmetric, it follows that
  \begin{equation}
    M\widetilde \Omega_{\mathrm{lad}}\Omega_{\mathrm{lad}}=\Omega_{\mathrm{lad}}
     \widetilde \Omega_{\mathrm{lad}}^\top M.
    \label{eq:ladder_conservative_symmetry_identity}
  \end{equation}
  We remark that  the upper-left block of $SD+D^\top S$ vanishes.
 Then,  non-negativity of $SD+D^\top S$ therefore forces the off-diagonal block to vanish and,
  by the rank-one criterion of Lemma~\ref{lem:SD},
  \begin{equation}
    Oe_0=0,
    \qquad
    Me_0=\tau\Omega_{\mathrm{lad}} v
    \quad\text{for some }\tau\geq0.
    \label{eq:ladder_required_alignment}
  \end{equation}
  Since $M$ is positive definite, necessarily $\tau>0$.

  The $(j,0)$ component of
  \eqref{eq:ladder_conservative_symmetry_identity} reads
  \[
    (\nu_{1/2}^2-\nu_{j+1/2}^2)M_{j0}
    =\ds\frac{2\kappa V_{1/2}}{\hbar} \bigl((Me_0)_j\nu_{1/2}V_{1/2}-\nu_{j+1/2}V_{j+1/2}M_{00}\bigr),
    \qquad j\in \{1,...,n-1\}.
  \]
 The right-hand side
  vanishes,
  since \eqref{eq:ladder_required_alignment} yields \(M_{j0}=(Me_0)_j
=\tau\nu_{j+1/2}V_{j+1/2}\),
   whereas the left-hand side becomes
  \[
    \tau(\nu_{1/2}^2-\nu_{j+1/2}^2)\nu_{j+1/2}V_{j+1/2}\neq0,
  \]
 a contradiction.
\QED

For the ladder system,
the non zero block is $e_0(\Omega_{\mathrm{lad}} v)^\top$, and
Proposition~\ref{prop:ladder_failure_compatibility} shows that the required
alignment $Me_0\parallel\Omega_{\mathrm{lad}} v$ is impossible.
This also explains why the algebraic condition
\[
  \Theta Y=\lambda Y,
  \qquad
  DY=0
  \quad\Longrightarrow\quad
  Y=0
\]
(which can be readily checked) cannot be used  as a stand-alone stability criterion;  
non-negativity dissipation $SD+D^\top S\geq 0$ should also be verified.  That preliminary structural
property fails for the ladder system, consistently with the undamped modes
displayed in \eqref{eq:ladder_imaginary_spectrum}.

\subsection{Numerical illustrations}
\label{sec:V_numerical_study}

The purpose of this section is to challenge numerically the results obtained for the three-levels V-system. We pay a specific attention to verify  the 
$\frac1c$ decay law, to investigate the behavior of the system for reduced values of the wave-speed, and the effect
of violating the weak-coupling hypothesis; we will also explore the fate of the excited-state
coherence in a mixed state.  

The wave equation is treated in radial symmetry: up to a slight redefinition of the source $\sigma$, 
we are led to solve the one-dimensional wave equation \cite{Vi4}.
The wave equation is thus solved on a computational domain $[0,R]$.
Since the mechanisms of energy evacuation at infinity are critical, we have to treat carefully the boundary condition at the right-end of the domain $R\gg R_*$.
Fortunately, the one dimensional geometry allows us to use the exact outgoing condition $\partial_t \psi+c\partial_r \psi\big|_{r=R}=0$
 \cite{EnMa}.
At the end of the day, with standard methods, we readily obtain a second-order accurate scheme for the wave equation.
For the source, we use the radial $C^2$ Wendland function
\[
 \sigma(r)=C\left(1-\frac r{R_*}\right)_+^4
 \left(1+4\frac r{R_*}\right),
\]
with $R_*<R$, 
normalized by $\|\sigma\|_{L^2(\mathbb R^3)}=1$. This function is compactly supported 
and its Fourier transform does not vanish \cite{Wend}.

 We can mix  up the resolution of the wave equation with a standard ODE solver for the
 components of the density matrix, with a construction inspired from Verlet's scheme in order to preserve the Hamiltonian structure of the problem \cite{Vi4}.
 However, this approach does not preserve the spectrum of the density matrix, and can produce slightly negative values of the populations: the scheme does not conserve the spectrum of the density matrix. 
 It can be completed by a mere projection step to restore these properties.
 An efficient alternative consists in updating the density matrix by 
 \[
 \rho_{n+1}=U_n\rho_nU_n^*, \qquad
 U_n=\exp\left[-\frac{i\Delta t}{\hbar}
 \bigl(\hbar \Omega_{\mathrm{free}}+\Phi_{n+1/2}V\bigr)\right],
\]
 where the potential is defined from the wave-resolution step.
 As far as we are concerned by the three level system, 
 the exponential can be evaluated at reasonable cost by computing the spectral decomposition of the matrix $\bigl(H_0+\Phi_{n+1/2}V\bigr)$.
 The method has been tested against mesh and time step refinements, and domain variations.  
 Trace, purity  and spectrum are robustly preserved and we do not observe a drift of the spectral invariants.
 The parameters for the simulations are collected in Table~\ref{tab1}.
 We numerically obtain 
\[
 \int_{\mathbb R^3}\sigma=3.60876486,
 \qquad
 \kappa=0.89067140,
 \qquad
 \gamma
 =1.03635204.
\]
 The displayed simulations have been realized with $\Delta r=.04$, $\Delta t=\min(10^{-2},\frac{.75\Delta r}{c})$.
 The initial data for the wave equation is always set to $0$.

\begin{table}[htbp!]
\begin{center}
\begin{tabular}{|c|c|c|c|c|c|c|} 
\hline 
 $\hbar$  & $\omega_{1/2}$ & $\omega_{3/2}$ & $V_{1/2}$ & $V_{3/2}$
& $R$ & $R_\sigma$ 
\\
 \hline 
 $1 $ & $1
 $  & $1.65
 $ & $.45
 $ & $.36
 $  & 14& $2.5$\\  
\hline 

\end{tabular}
\caption{Parameters of the model}
\label{tab1}
\end{center}
\end{table}

 \noindent
 {\bf Pure state, the $\frac1c$ decay rate.}
 We start from 
 \[\rho(0)=u_0u_0^*,\qquad
 u_0=(\sqrt{.78}, \sqrt{.15}e^{.2 i}, \sqrt{.07}e^{-.6i}).\]
 The initial energy is below  the first excited energy level.
 Results are displayed in Figure~\ref{fig:V_num_c_sweep} and 
 numerical rates are collected in Table~\ref{tab:V_num_c_sweep}.
 For $c\geq 1.5$, we obtain very similar decay curves $\frac tc\mapsto 
 \|\rho(t)-e_0e_0^*\|$, with a slope  coherent with \(
 \gamma_{\rm num}(c)\sim\frac{0.1355}{c}
\).
  For smaller $c$'s, we observe different slopes or the decay disappears on the observed time scale.
  For instance, 
  the computation for $c=0.35$ is still far from the ground state at $t=3000$.
  The $e^{-\gamma t/c}$ is a predicted asymptotic behavior, and it is not possible to decide  whether we are observing a very long transient 
  state for small $c$'s or if the exponential decay fails. We will go back to this issue elsewhere.
  We also compare the behavior of the fully coupled system
  with the behavior  when the potential is directly defined by 
  the reduced formula $\Phi_{\mathrm {red}}(t)=-\kappa v^\top q(t)-\frac{\gamma}{c}(\Omega v)^\top p(t)$.
  The relative discrepancy decreases from about six percent at $c=2$ to less
than one percent at $c=6$, supporting the analysis.\\

 \begin{figure}[htbp]
 \centering
 \includegraphics[width=5.5cm]{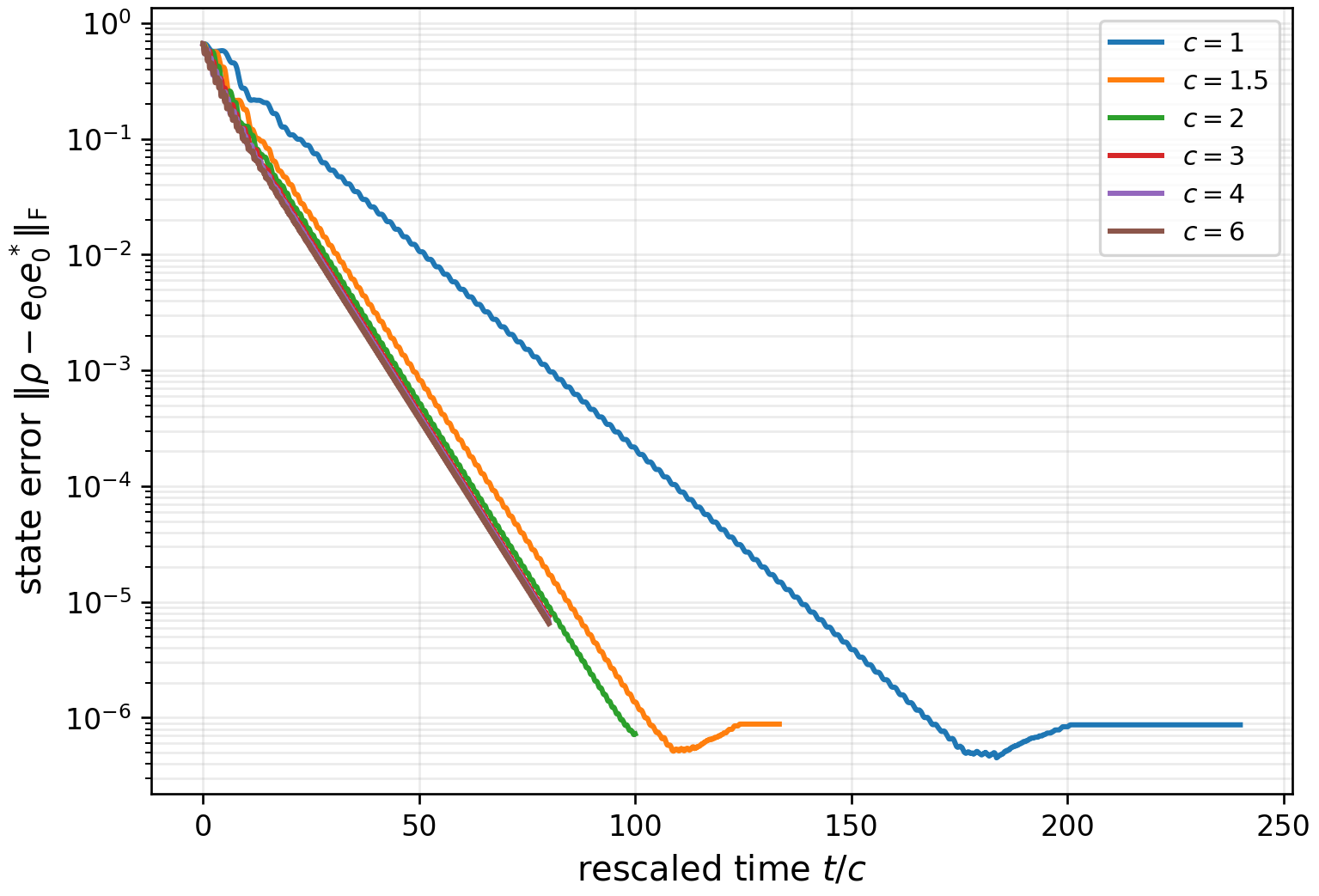}~\includegraphics[width=5.5cm]{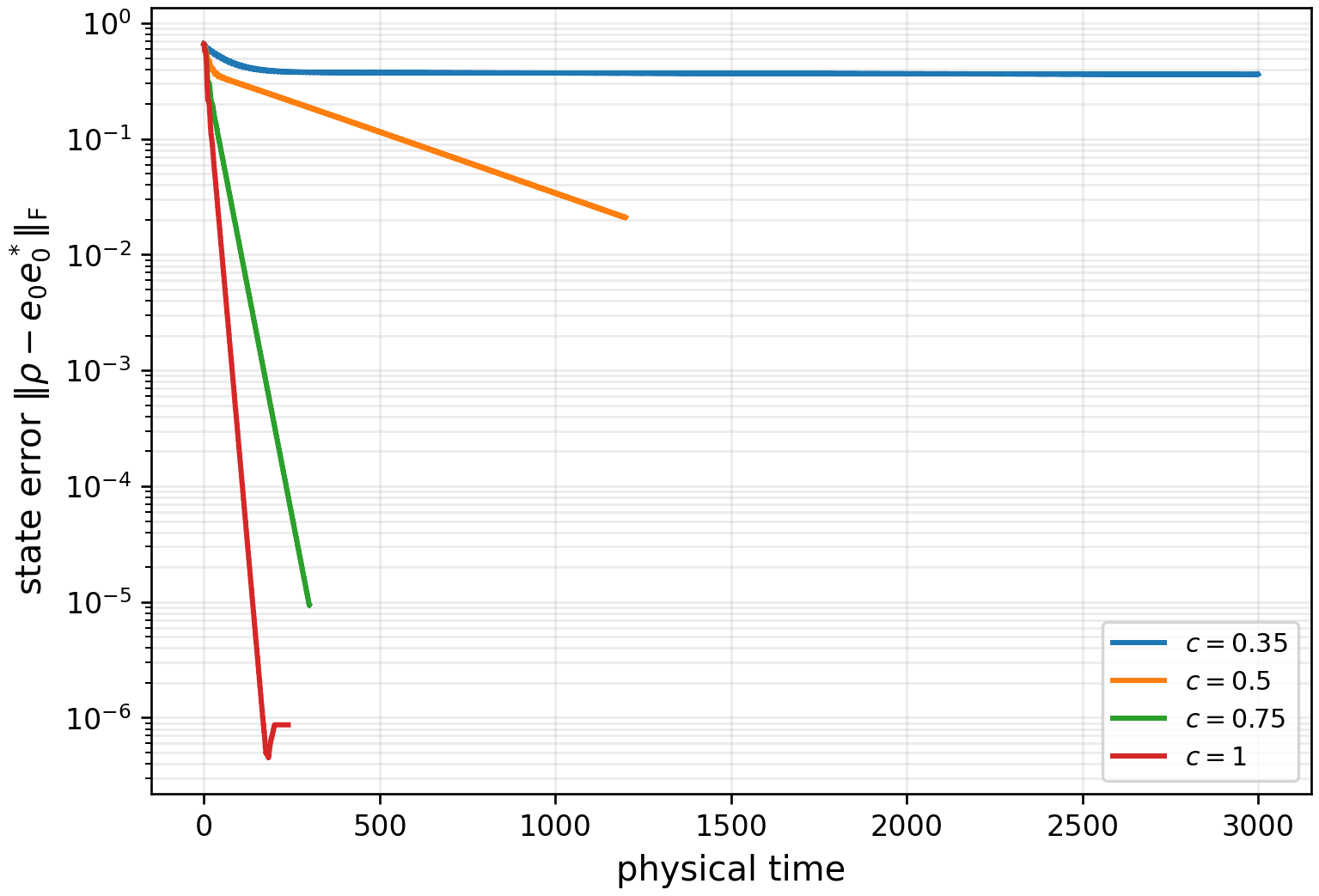}\includegraphics[width=5.5cm]{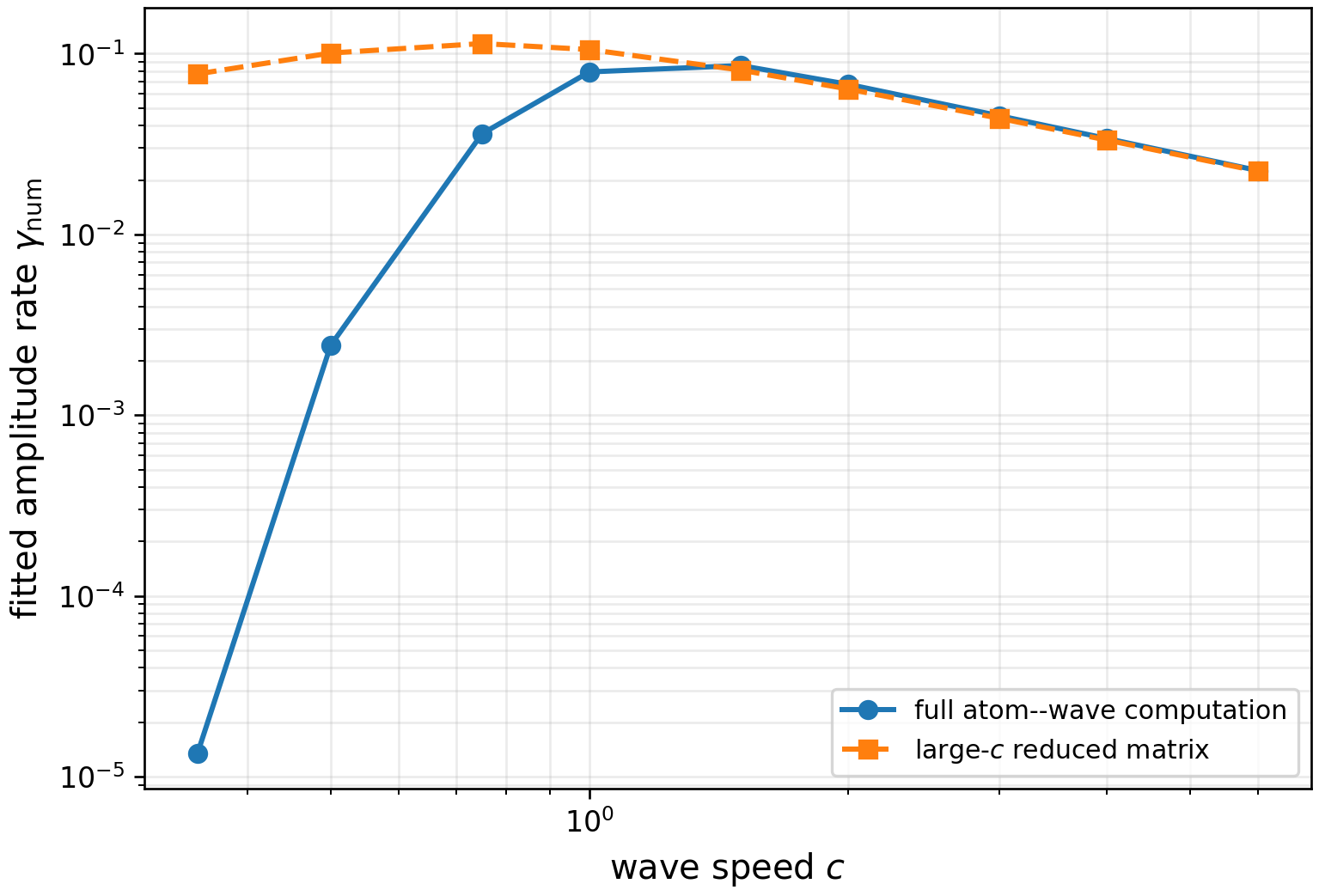}
 \caption{Pure-state computations.  Left: collapse of the ground-state
 error in the rescaled time $t/c$ in the large-$c$ regime.  Middle:
 slowdown at small $c$.  Right: fitted rate for the complete
 atom--wave system and 
 the reduced large-$c$ system  
 }
 \label{fig:V_num_c_sweep}
\end{figure}

 \begin{table}[htbp]
 \centering
 \begin{tabular}{@{}|c|c|c|c|c|c|@{}}
\hline
 $c$ & $T$ & $\gamma_{\rm num}$ & $c\gamma_{\rm num}$
     & $\gamma_{\rm red}$ &  $\|\rho(T)-e_0e_0^*\|$\\
\hline
 $0.35$ & $3000$ & $1.35\,10^{-5}$ & $4.73\,10^{-6}$ & $7.73\,10^{-2}$ & $3.61\,10^{-1}$\\
 $0.50$ & $1200$ & $2.435\,10^{-3}$ & $1.217\,10^{-3}$ & $1.008\,10^{-1}$ & $2.42\,10^{-2}$\\
 $0.75$ & $300$  & $3.603\,10^{-2}$ & $2.702\,10^{-2}$ & $1.137\,10^{-1}$ & $1.60\,10^{-5}$\\
 $1.00$ & $240$  & $7.924\,10^{-2}$ & $7.924\,10^{-2}$ & $1.054\,10^{-1}$ & $8.68\,10^{-7}$\\
 $1.50$ & $200$  & $8.598\,10^{-2}$ & $1.290\,10^{-1}$ & $8.112\,10^{-2}$ & $8.81\,10^{-7}$\\
 $2.00$ & $200$  & $6.770\,10^{-2}$ & $1.354\,10^{-1}$ & $6.376\,10^{-2}$ & $1.23\,10^{-6}$\\
 $3.00$ & $240$  & $4.535\,10^{-2}$ & $1.360\,10^{-1}$ & $4.390\,10^{-2}$ & $1.28\,10^{-5}$\\
 $4.00$ & $320$  & $3.394\,10^{-2}$ & $1.357\,10^{-1}$ & $3.329\,10^{-2}$ & $1.18\,10^{-5}$\\
 $6.00$ & $480$  & $2.256\,10^{-2}$ & $1.354\,10^{-1}$ & $2.236\,10^{-2}$ & $1.14\,10^{-5}$\\
\hline
 \end{tabular}
 \caption{Fitted amplitude rates in the pure-state case}
 \label{tab:V_num_c_sweep}
\end{table}

 \noindent
 {\bf Violation of the weak-coupling condition.}
 We  replace $v=(V_{1/2},V_{3/2})$ by $sv$ and work with $c=4$.   
 The consider the sufficient condition
\begin{equation}
 \chi_{\rm crude}(s)
 =\frac{2\kappa s^2|v|^2}{\hbar\omega_{1/2}}<1,
 \label{eq:V_num_crude_condition}
\end{equation}
whose critical multiplier is $s_{\rm crude}=1.30015$,
and the sharp condition  \[
 \chi(s)=\ds\frac{2\kappa s^2}{\hbar}
 \left(\ds\frac{V_{1/2}^2}{\omega_{1/2}}
       +\frac{V_{3/2}^2}{\omega_{3/2}}\right)<1,
 \]
with the threshold $s_*=1.41331$.
Results are given in Figure~\ref{fig:V_num_weak_sweep} and we observe a neat 
 distinction.  At $s=1.34$
and $s=1.40$, condition~\eqref{eq:V_num_crude_condition} is violated but
$\chi<1$; the numerical solution still converges to $e_0e_0^*$.  At $s=1.42$,
where $\chi=1.00949$, the solution first approaches the ground state and then
departs along the weakly unstable direction.  For larger $s$ it rapidly
settles to a nonzero-field  state.
This new state is not merely a numerical plateau.  It can be predicted by solving
the finite-dimensional self-consistency equations
\begin{equation}
 (H_0+\Phi^\infty V)a=\lambda a,
 \qquad
 \Phi^\infty=-\kappa (Va)^\top a,
 \qquad \|a\|=1.
 \label{eq:V_num_dressed_equilibrium}
\end{equation}
The numerical tail and the solution of~\eqref{eq:V_num_dressed_equilibrium}
agree as shown in Table~\ref{tab:V_num_weak_sweep}.  The sign of $\Phi^\infty$ selects
one of two symmetry-related branches; the table displays its modulus.
The simulations therefore suggest that a qualitative change occurs  according to \eqref{small}: the pure ground state  becomes unstable
and nontrivial self-consistent equilibria bifurcate from it.
\\

\begin{figure}[htbp]
 \centering
 \includegraphics[width=7cm]{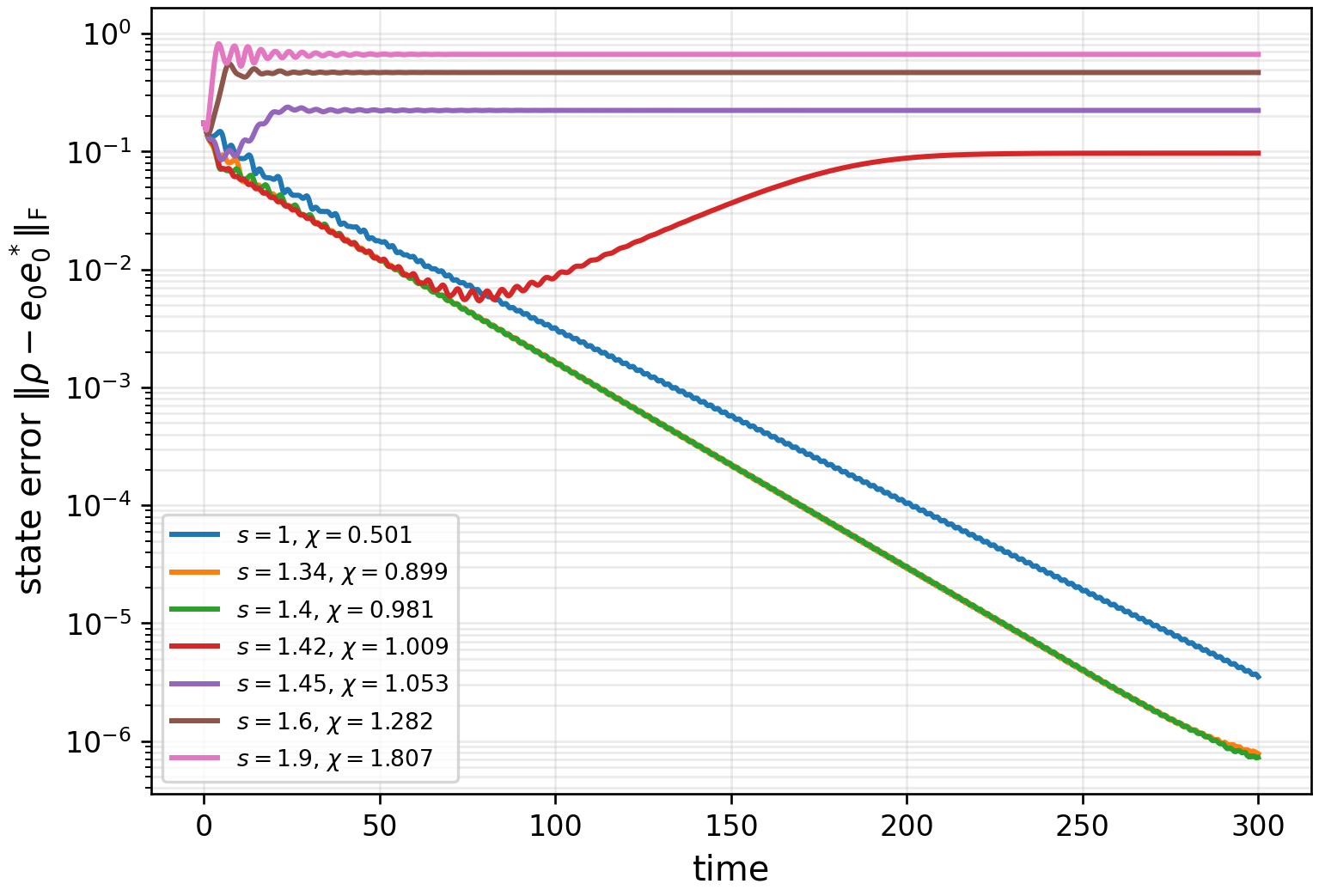}~\includegraphics[width=7cm]{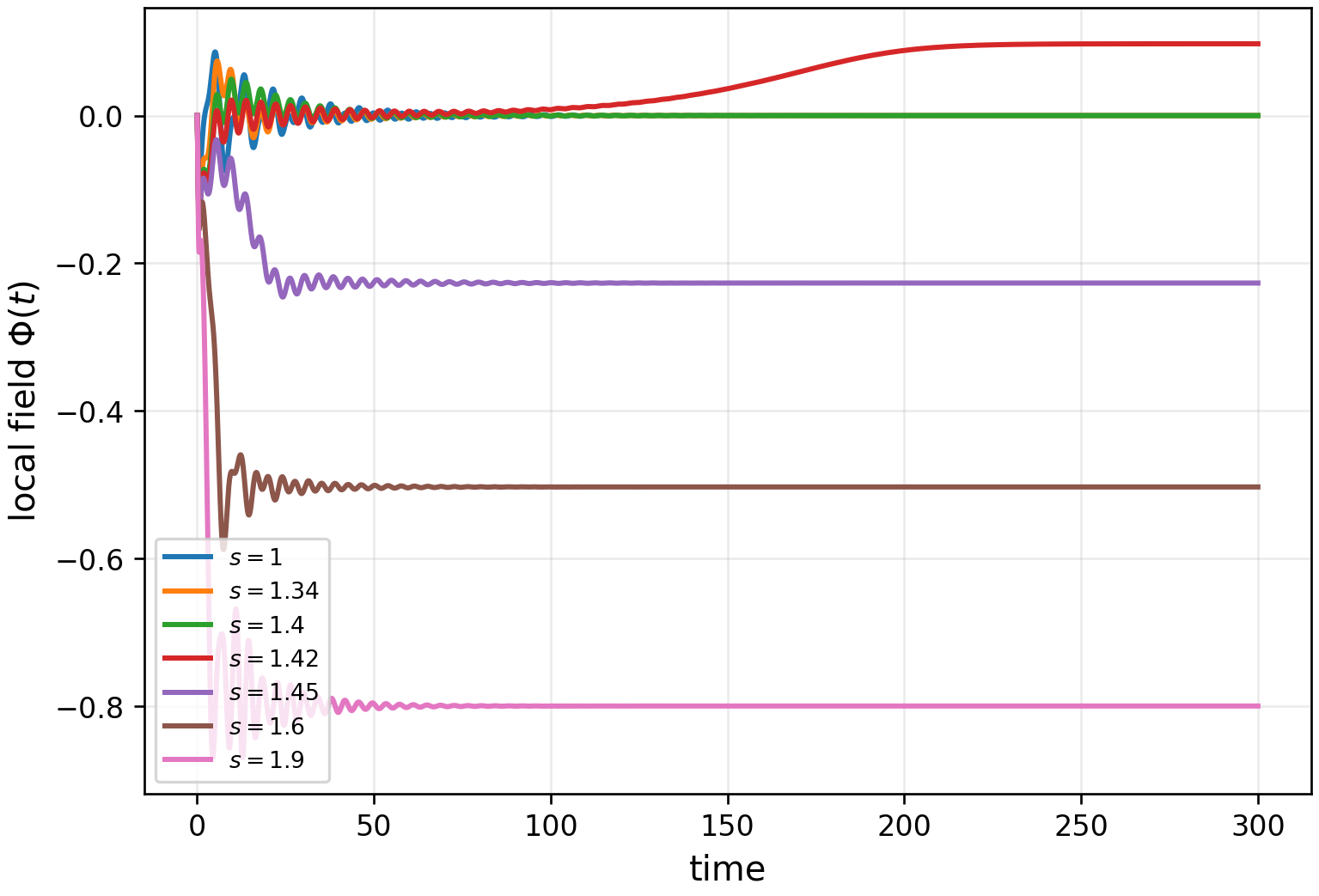}
 \includegraphics[width=7cm]{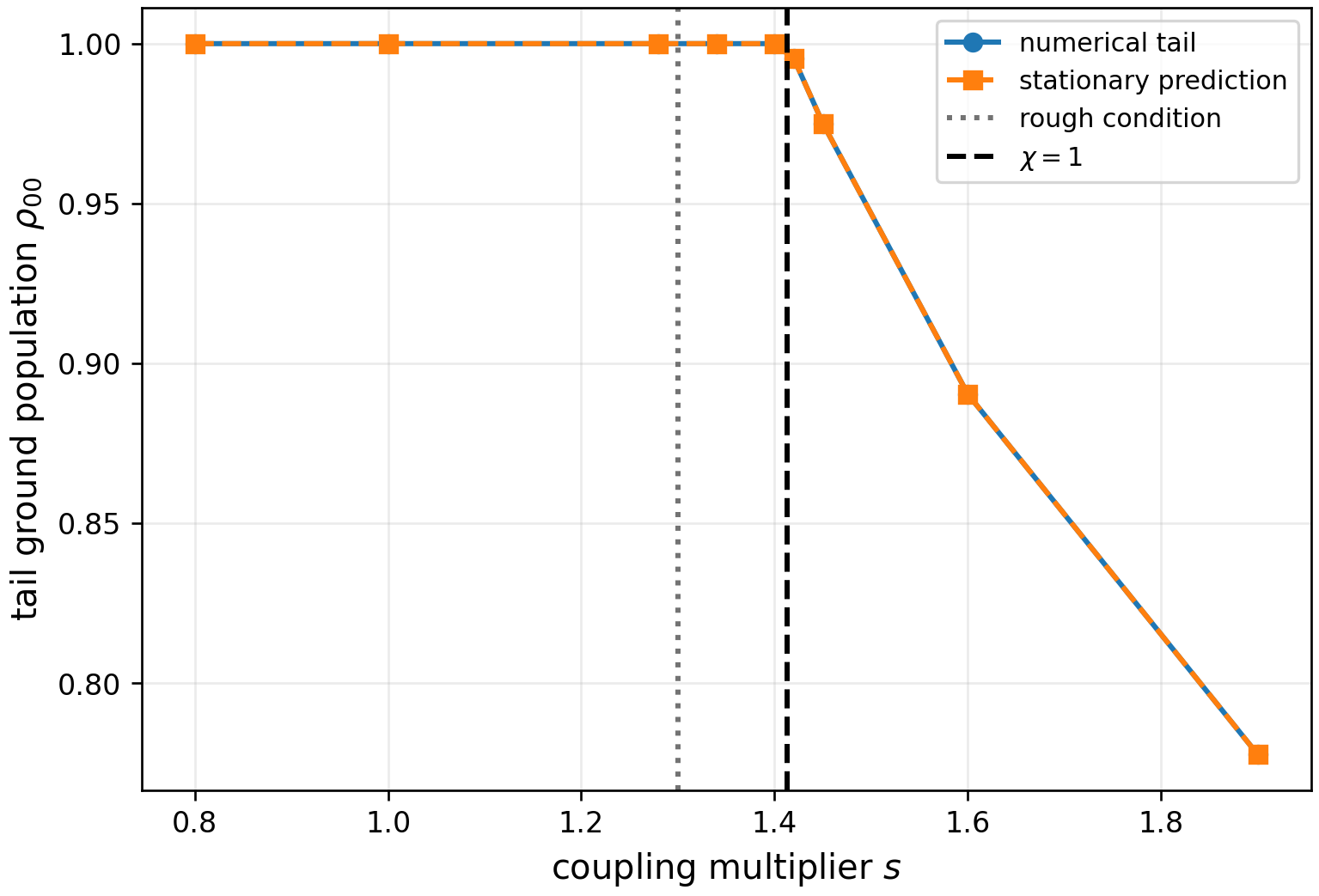}~\includegraphics[width=7cm]{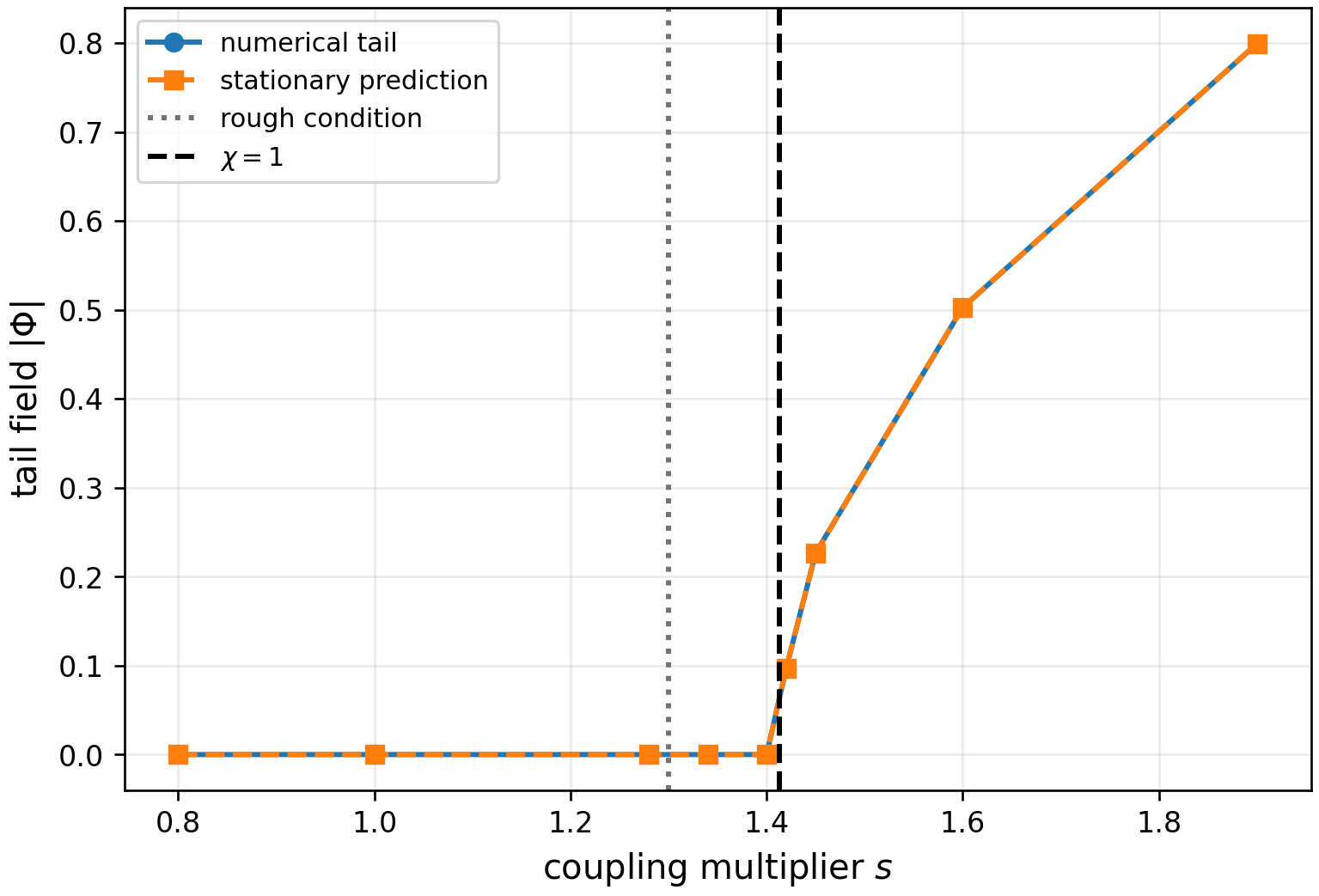}
 \caption{Coupling sweep at $c=4$.  The dotted vertical line is the threshold
~\eqref{eq:V_num_crude_condition}; the dashed line is
 the sharp value $\chi=1$.  Above the latter value, the ground state is
 replaced by a  stationary state with nonzero field.
 Here $ |\Phi|^{\mathrm{tail}}$ is the median   of $|\Phi(t)|, $
for $.9T\leq t\leq T$.}
 \label{fig:V_num_weak_sweep}
\end{figure}

\begin{table}[htbp]
 \centering
 \begin{tabular}{@{}|c|c|c|c|c|c|@{}}
 \hline
 $s$ & $\chi_{\rm crude}$ & $\chi$ & $\rho_{0}(T)$
     & $|\Phi^{\rm tail}|$ & $|\Phi|^{\rm stat}=|\kappa (Va)^\top a|$\\
 \hline
 $1.28$ & $0.9693$ & $0.8202$ & $1.000000$ & $2.66\,10^{-7}$ & $0$\\
 $1.34$ & $1.0622$ & $0.8989$ & $1.000000$ & $2.44\,10^{-7}$ & $0$\\
 $1.40$ & $1.1595$ & $0.9813$ & $1.000000$ & $2.50\,10^{-7}$ & $0$\\
 $1.42$ & $1.1929$ & $1.0095$ & $0.995302$ & $0.097218$ & $0.097228$\\
 $1.45$ & $1.2438$ & $1.0526$ & $0.975030$ & $0.226718$ & $0.226718$\\
 $1.60$ & $1.5145$ & $1.2816$ & $0.890377$ & $0.502809$ & $0.502809$\\
 $1.90$ & $2.1356$ & $1.8073$ & $0.777695$ & $0.799428$ & $0.799428$\\
 \hline
 \end{tabular}
 \caption{Weak-coupling sweep.  The stationary predictions also reproduce
 the three populations; only the ground population and field are displayed.
 Here, $ |\Phi|^{\mathrm{tail}}$ is the median   of $|\Phi(t)|, $
for $.9T\leq t\leq T$.}
 \label{tab:V_num_weak_sweep}
\end{table}

 \noindent
 {\bf Mixed states: persistence and creation of $\rho_{12}$.}
 Consider two states with the same
spectrum
\[
\sigma(\rho(0))= \{0.62,0.28,0.10\}.
\]
The first is diagonal, the second is defined by
$
 \rho(0)=U_{0.6,0.4}\rho_{\mathrm{ref}}U_{0.6,0.4}^{*},
 $
with unitary rotation
$$
U_{\theta,\varphi}
=
\begin{pmatrix}
1&0&0\\
0&\cos\theta&-e^{-i\varphi}\sin\theta\\
0&e^{i\varphi}\sin\theta&\cos\theta
\end{pmatrix},
$$
It produces a nonzero \(1\text{--}2\) coherence while preserving the spectrum of the reference density matrix.
Both have $\rho_{1/2}=\rho_{3/2}=0$, so that the field is identically zero.
For the first state, $\rho_{X}=0$; for the second, we get a crossed coherence of constant modulus
\[
 |\rho_{X}(t)|
 =(0.28-0.10)|\sin(0.6)\cos(0.6)|
 =0.08388352
\]
for every time.  Thus isospectrality cannot determine the asymptotic
trajectory of a mixed state, even before considering a radiative transient.
This exemplifies the obstruction to a full damping 
of the coherences.

Next, we test possible creation of quantum coherences. To this end, we take
\[
 \rho^\delta(0)
 =\ds\frac12 a_{1/2}a^*_{1/2}
  +\frac12 a_{3/2}^\delta a_{3/2}^{\delta *},
\]
where
\[
 a_{01}=\sqrt{0.55}\,e_0+\sqrt{0.45}\,e_1,
 \qquad
 a_{02}^\delta=\sqrt{0.55}\,e_0
                +e^{i\delta}\sqrt{0.45}\,e_2.
\]
We perform simulation with 8 different values of $\delta$.
For every $\delta$, the initial excited coherence vanishes and the spectrum is
the same:
\[
 \rho_{X}^\delta(0)=0,
 \qquad
\sigma(\rho^\delta(0))=\{0,0.225,0.775\}.
\]
Nevertheless, Figure~\ref{fig:V_num_mixed_creation} shows that a coherence of
size approximately $0.11$ is generated.  At $T=250$, $|\rho_{1/2}|^2+|\rho_{3/2}|^2$ has
decreased up to  $8.9\,10^{-5}$ and $1.37\,10^{-4}$, whereas
\(
 0.109916\leq |\rho_{X}(T)|\leq0.111646.
\)
This variation is modest for the chosen family, but it is many orders of
magnitude larger than the spectral drift of the numerical method.  The
demodulated quantity $e^{-i\omega_Xt}\rho_{12}(t)$ has a tail standard
deviation below $6\,10^{-9}$ in all eight runs.
As already observed in the proof of Lemma~\ref{lem:cesaro}, we have
\[
 e^{-i\omega_Xt}\rho_{X}(t)
 =\rho_{12}(0)
 -\frac{i}{\hbar}\int_0^t e^{-i\omega_Xs}\Phi(s)
 \bigl(V_{1/2}\rho_{1/2}(s)
       -V_{3/2}\rho_{3/2}^*(s)\bigr)\,\mathrm ds.
\]
It shows that the residual complex amplitude depends on 
 the complete radiative history ; it is not
a function of the eigenvalues of $\rho(0)$ alone; it also explains why a coherence absent initially
is created generically.

In these computations, 
$\rho_{0}$ tends to $0.775$: the ground state asymptotically carries the largest conserved eigenvalue.
The remaining \(2\times2\) excited block has the conserved eigenvalues \(0.225\) and \(0\), although its two diagonal populations may both be nonzero because of a persistent excited-state coherence.
its entries must satisfy the exact isospectral relation
\[
 (\rho_{1}-\rho_{2})^2+4|\rho_{X}|^2=(0.225)^2.
 \]
Figure~\ref{fig:V_num_mixed_creation} shows the expected decay of the $\rho_{1/2}, \rho_{3/2}$ (left), but also the creation of the coherence $\rho_X$, with  a final amplitude that depends on the initial state. 
The numerics confirms that 
off-diagonal entry can persist, can be created from zero, and depends on the
transient trajectory.  The appropriate limiting object  has the form
\[
 \begin{pmatrix}
 \rho_0^\infty&0&0\\
 0&\rho_1^\infty&r_Xe^{i\omega_Xt}\\
 0&\overline{r_X}e^{-i\omega_Xt}&\rho_2^\infty
 \end{pmatrix},
\]
and it is not a stationary asymptotic density matrix.

\begin{figure}[htbp]
 \centering
 \includegraphics[width=5.5cm]{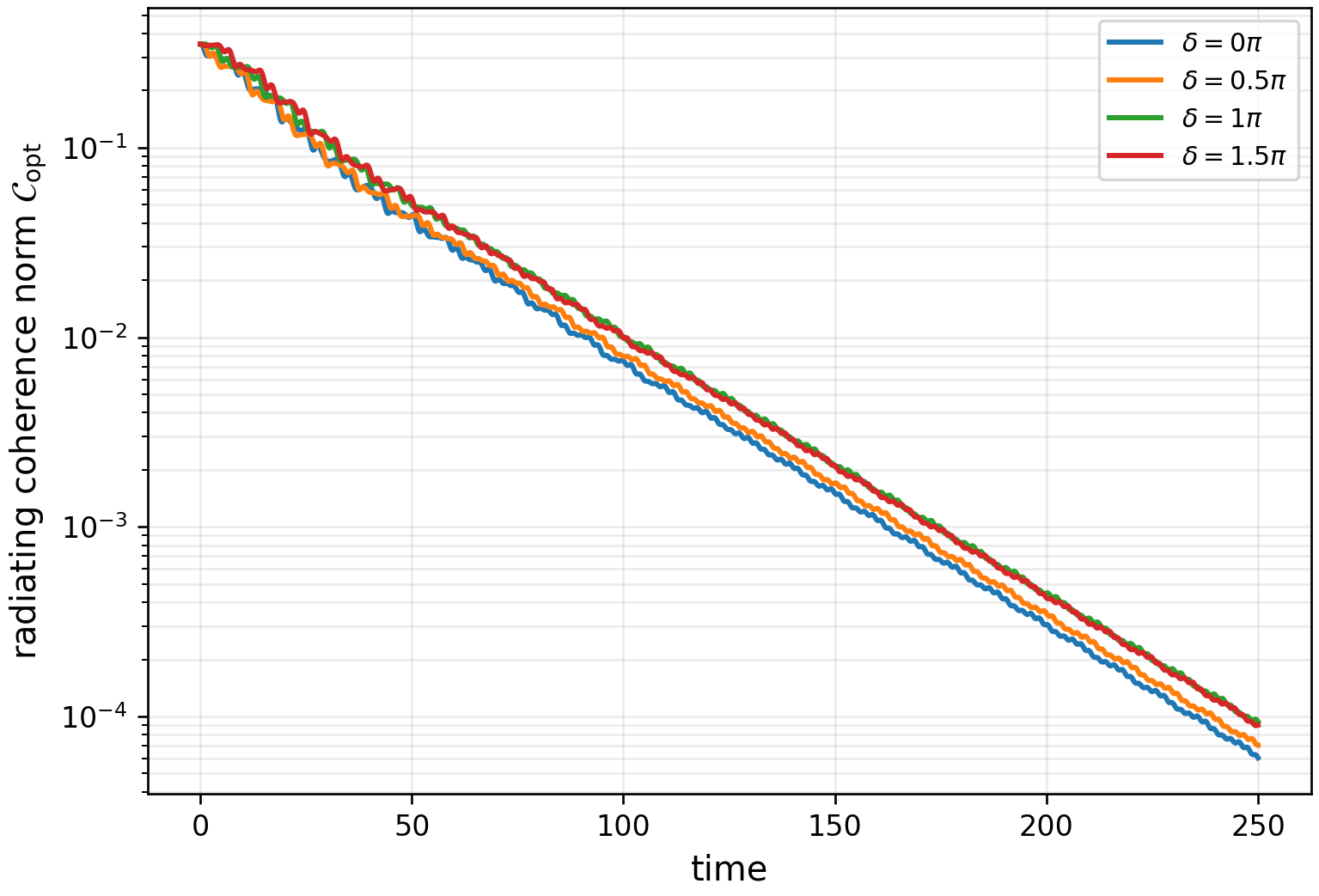}~
 \includegraphics[width=5.5cm]{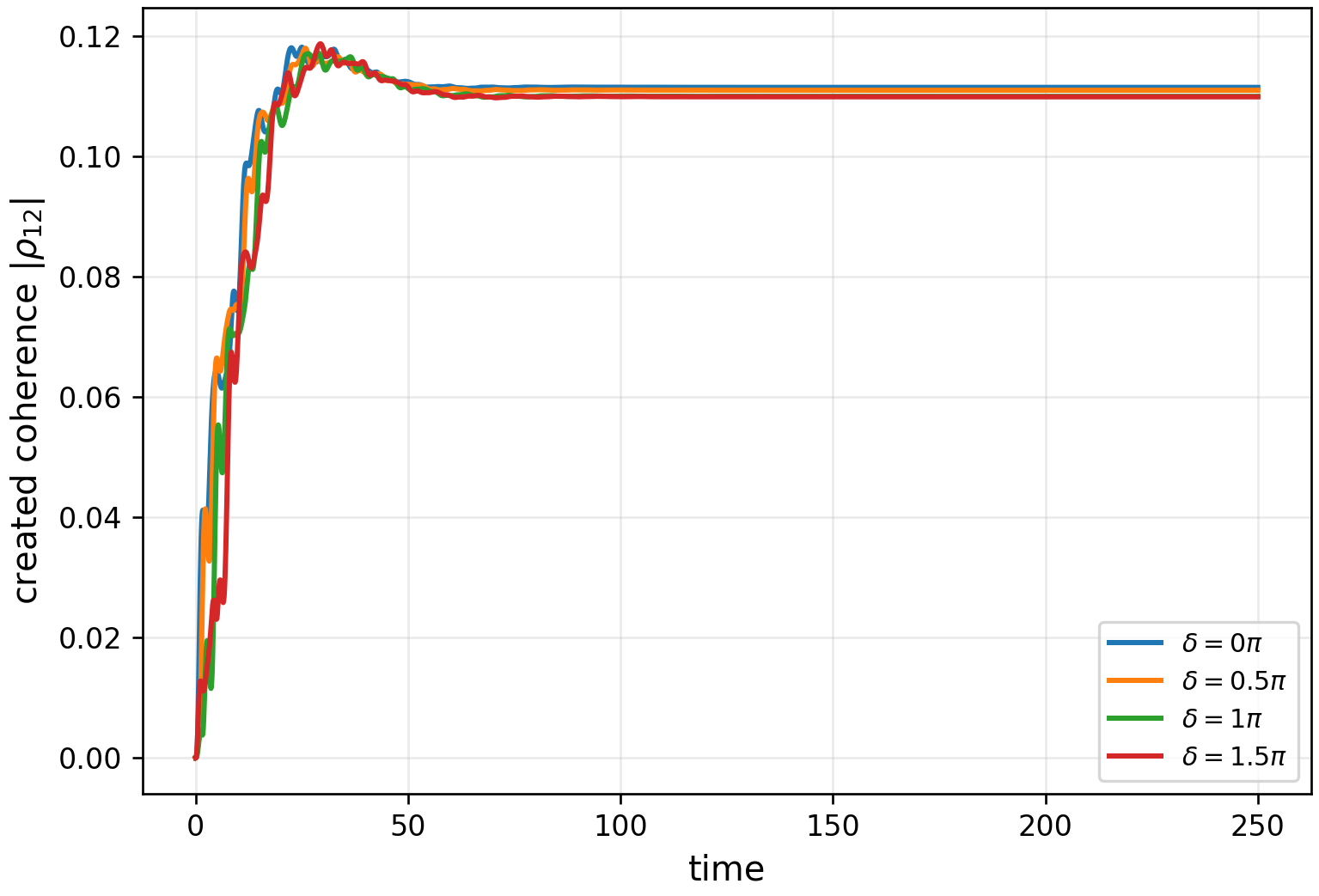}~\includegraphics[width=5.5cm]{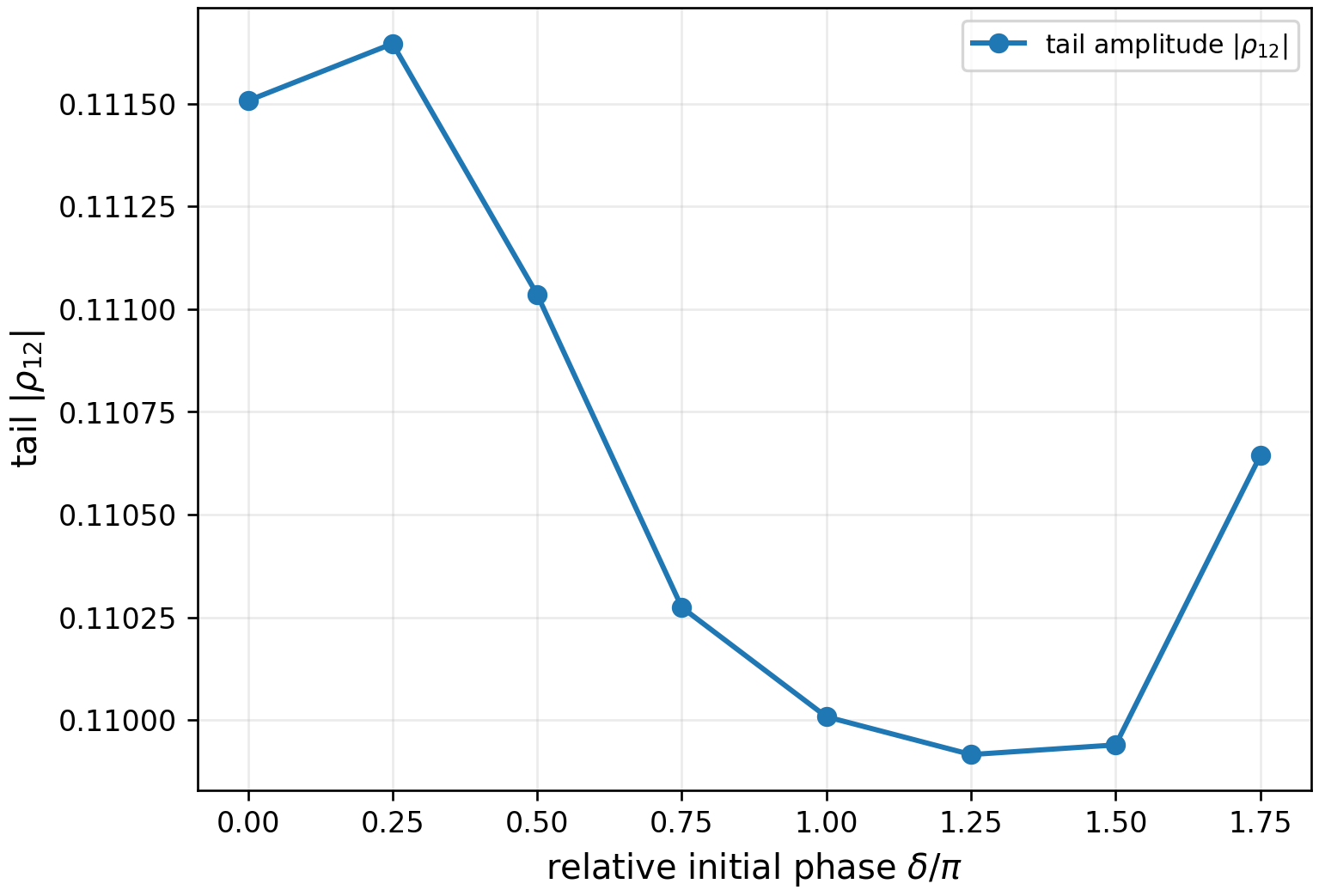}
 \caption{Mixed states at $c=3$.  Left: decay of the 
 $\rho_{1/2}, \rho_{3/2}$ coherences.
   Middle:  creation of $\rho_{X}$ from zero for four isospectral initial states.  Right: dependence
 of the residual modulus on the relative initial phase.}
 \label{fig:V_num_mixed_creation}
\end{figure}

\section*{Acknowledgements}
This work was supported by the ANR project MaDynOS (ANR-24-CE40-3535). Part of this work was carried out during a visit to the Brin Mathematics Research Center at the University of Maryland, with additional support from the CNRS International Research Project KinEq. The author warmly thanks 
Doron Levy and 
the staff of the Center for their hospitality and Antoine Mellet for many fruitful and inspiring discussions.

%
 \bibliography{NStates}
\bibliographystyle{plain}

   \end{document}